\documentclass{article}
\usepackage{amssymb,amsfonts,amsmath,amsthm,amsopn,amstext,amscd,latexsym,xy,stmaryrd}
\usepackage[hidelinks]{hyperref}
\theoremstyle{plain}
\usepackage{verbatim}
\usepackage{mathrsfs}

\input xy

\usepackage{tikz}
\usetikzlibrary{decorations.markings}
\usetikzlibrary{arrows}

\xyoption{all}
\setlength{\textheight}{8.75in}
\setlength{\textwidth}{6.5in}
\setlength{\topmargin}{0.0in}
\setlength{\headheight}{0.0in}
\setlength{\headsep}{0.0in}
\setlength{\leftmargin}{0.0in}
\setlength{\oddsidemargin}{0.0in}
\setlength{\parindent}{3pc}
\newtheorem{theorem}{Theorem}[section]
\newtheorem{lemma}[theorem]{Lemma}
\newtheorem{proposition}[theorem]{Proposition}
\newtheorem{corollary}[theorem]{Corollary}
\newtheorem{definition}[theorem]{Definition}

\theoremstyle{remark}
\newtheorem{remark}[theorem]{Remark}
\numberwithin{equation}{section}
\numberwithin{paragraph}{section}

\DeclareMathOperator{\Hom}{Hom}

\DeclareMathOperator{\Ad}{Ad}

\DeclareMathOperator{\rank}{rank}

\DeclareMathOperator{\ad}{ad}
\DeclareMathOperator{\ord}{ord}

\DeclareMathOperator{\ind}{ind}
\DeclareMathOperator{\Fitt}{Fitt}

\DeclareMathOperator{\val}{val}

\DeclareMathOperator{\tr}{tr}
\newcommand{\St}{\mathrm{St}}
\newcommand{\uni}{\mathrm{uni}}

\DeclareMathOperator{\End}{End}

\DeclareMathOperator{\Frob}{Frob}
\DeclareMathOperator{\Ann}{Ann}

\DeclareMathOperator{\Spec}{Spec}

\DeclareMathOperator{\coker}{coker}
\newcommand{\st}{\St}
\DeclareMathOperator{\fl}{fl}
\DeclareMathOperator{\chibar}{\overline \chi}
\newcommand{\loc}{{\rm loc}}

\renewcommand{\min}{\operatorname{min}}
\renewcommand{\mod}{\operatorname{mod}}

\newcommand{\card}[1]{|#1|}
\newcommand{\cA}{{\mathcal A}}
\newcommand{\cB}{{\mathcal B}}

\newcommand{\cD}{{\mathcal D}}
\newcommand{\cE}{{\mathcal E}}

\newcommand{\cI}{{\mathcal I}}

\newcommand{\cK}{{\mathcal K}}

\newcommand{\cN}{{\mathcal N}}
\newcommand{\cO}{{\mathcal O}}

\newcommand{\cR}{{\mathcal R}}
\newcommand{\cS}{{\mathcal S}}
\newcommand{\cT}{{\mathcal T}}

\newcommand{\ffrm}{{\mathfrak m}}

\newcommand{\frp}{{\mathfrak p}}
\newcommand{\frq}{{\mathfrak q}}

\newcommand{\GL}{\mathrm{GL}}

\newcommand{\CNLO}{{\mathrm{CNL}_\cO}}
\newcommand{\CNL}{{\mathrm{CNL}}}

\newcommand{\es}{\varnothing}

\newcommand{\cOmega}{{\widehat{\Omega}}}
\newcommand{\cotimes}{\widehat{\otimes}}
\newcommand{\invlim}{\varprojlim}
\newcommand{\ds}{\displaystyle}
\newcommand{\half}{\mathcal{H}}
\newcommand{\Rt}{\widetilde{R}}

\newcommand{\rhobar}{\overline{\rho}}

\newcommand{\A}{\mathbf A}
\newcommand{\Q}{\mathbf Q}
\newcommand{\Z}{\mathbf Z}
\newcommand{\T}{\mathbf T}
\newcommand{\F}{\mathbf F}
\newcommand{\ra}{\rightarrow}
\newcommand{\into}{\hookrightarrow}
\newcommand{\onto}{\twoheadrightarrow}

\newcommand{\cng}[1]{\Psi_{#1}}
\newcommand{\cngid}[1]{\eta_{#1}}
\newcommand{\coh}{{\rm coh}}

\newcommand{\df}{{\mathrm d}}
\def\Mat#1#2#3#4{{{ 
\left( \begin{array}{cc}  #1 & #2 \\ #3 &  #4   \end{array}  \right)}}}

%
%
%
\newcommand{\unr}{{\mathrm{unr}}}

\newcommand\blfootnote[1]{%
  \begingroup
  \renewcommand\thefootnote{}\footnote{#1}%
  \addtocounter{footnote}{-1}%
  \endgroup
}

\title{Wiles defect for Hecke algebras  that are not complete intersections}

\author{Gebhard B\"ockle, Chandrashekhar B. Khare, Jeffrey Manning}

\begin{document}
\maketitle

\begin{abstract}
In his work on modularity theorems, Wiles proved a numerical criterion for a map of rings $R\to T$ to be an isomorphism of complete intersections. He used this to show that certain deformation rings and Hecke algebras associated to a mod $p$ Galois representation at non-minimal level are isomorphic and complete intersections, provided the same is true at minimal level.

In this paper we study Hecke algebras acting on cohomology of Shimura curves arising from maximal orders in indefinite quaternion algebras over the rationals localized at a semistable irreducible mod $p$ Galois representation $\rhobar$.  If $\rhobar$ is scalar at some primes dividing the discriminant of the quaternion algebra, then the Hecke algebra is still isomorphic to the deformation ring, but  is not a complete intersection, or even Gorenstein, so the Wiles numerical criterion cannot apply.

We consider a  weight 2 newform  $f$ which contributes to the cohomology of the Shimura curve and gives rise to an augmentation  $\lambda_f$ of the Hecke algebra. We quantify the failure of the Wiles numerical criterion at $\lambda_f$ by computing the associated  {\it Wiles defect} purely in terms of the local behavior at primes dividing the discriminant of the global Galois representation $\rho_f$ which $f$ gives rise to by the Eichler--Shimura construction.  One of the main tools used in the proof is Taylor--Wiles--Kisin patching.
\end{abstract}



\section{Introduction}

Let $p>2$ be prime. Let $E$ be a finite extension of $\Q_p$ with valuation ring $\cO$, residue field $k$ and a uniformizer $\varpi$.  All our rings will be $\cO$-algebras.   We say that an $\cO$-algebra $R$
that is a complete Noetherian local ring,
and is finite flat as an $\cO$-module, with residue field $k$,
is a complete intersection
if $R \simeq \cO[[T_1,\cdots,T_n]]/(f_1,\cdots,f_n)$ for some~$n$.%
\blfootnote{2020 Mathematics Subject Classification:~11F80}
\blfootnote{Keywords: Wiles defect, modularity theorem, Cohen-Macaulay}

 Wiles' numerical isomorphism criterion played an important role in his work \cite{Wiles} on modularity of elliptic curves.  
 Wiles  characterised rings that were finite flat over $\mathcal O$ which were complete intersections, and used the criterion to deduce modularity lifting theorems in non-minimal level  from those in minimal level.

  Wiles and Lenstra proved the following result (\cite{Wiles}, \cite{Lenstra}).
  
\begin{theorem}\label{numerical criterion}
  Let $R$ and $T$ be complete Noetherian local $\cO$-algebras with residue
  field $k$, $T$ finite flat as an $\cO$-module, 
  and $\phi:R \rightarrow T$ a surjective map of local $\cO$-algebras. Let
  $\lambda:T \rightarrow \cO$ be a homomorphism of local $\cO$-algebras, 
  and set $\Phi_R={\rm ker}(\lambda\phi)/{\rm ker}(\lambda\phi)^2$ and 
  $\eta_T=\lambda(\Ann_T({\rm ker}(\lambda)))$.
  Then $\card{\cO/\eta_T} \leq \card{\Phi_R}$, where in the case
  $\card{\cO/\eta_T}$ is infinite this is interpreted to mean
  that $\Phi_R$ is infinite too. Assume that $\eta_T$ is not
  zero. Then the following are equivalent:

\begin{itemize}

\item The equality $\card{\Phi_R}=\card{\cO/\eta_T}$ is satisfied.

\item The rings $R$ and $T$ are complete intersections, and $\phi$ is
      an isomorphism.

\end{itemize}

\end{theorem}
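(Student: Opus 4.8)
The plan is to establish the inequality first and then the equivalence. Throughout write $E=\Frac(\cO)$, $\mathfrak{p}=\ker(\lambda\phi)$, $I=\ker(\lambda)$, so that $\Phi_R=\mathfrak{p}/\mathfrak{p}^2$ and $\eta_T=\lambda(\Ann_T(I))$, and set $\Phi_T:=I/I^2$.

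\emph{Step 1 (reductions).} Since $\lambda$ and $\phi$ are $\cO$-algebra maps, $\lambda\phi\colon R\to\cO$ splits the structure map $\cO\to R$, so $R=\cO\oplus\mathfrak{p}$ and $T=\cO\oplus I$ as $\cO$-modules. As $\phi$ is surjective, $\phi(\mathfrak{p})=I$ and $\phi(\mathfrak{p}^2)=I^2$, whence $\phi$ induces a surjection $\Phi_R\onto\Phi_T$ whose kernel is the image of $\ker\phi$; in particular $\card{\Phi_T}\le\card{\Phi_R}$, so for the inequality it suffices to prove $\card{\cO/\eta_T}\le\card{\Phi_T}$, a statement about $(T,\lambda)$ alone. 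Inverting $\varpi$ one checks that $\eta_T\neq 0$ iff the augmentation $T\otimes_\cO E\to E$ exhibits $E$ as a direct factor of $T\otimes_\cO E$, iff $I/I^2$ is $\cO$-torsion, iff $\Phi_T$ (hence $\Phi_R$) is finite; this settles the infinite case, so I assume $\eta_T\neq 0$ from now on.

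\emph{Step 2 (the Wiles--Lenstra lemma).} The key is: for $T$ finite flat over $\cO$ with augmentation $\lambda$ and $\eta_T\neq 0$ one has $\card{\cO/\eta_T}\le\card{\Phi_T}$, with equality iff $T$ is a complete intersection. To prove it I would choose a presentation $\pi\colon A:=\cO[[x_1,\dots,x_n]]\onto T$ with the $\pi(x_i)$ a minimal system of generators of $I$; then $\pi$ respects augmentations, $J:=\ker\pi\subseteq\mathfrak{n}:=(x_1,\dots,x_n)$, and $\Phi_T=I/I^2\cong\mathfrak{n}/(\mathfrak{n}^2+J)\cong\cO^n/L$, where $L\subseteq\cO^n=\mathfrak{n}/\mathfrak{n}^2$ is the $\cO$-submodule spanned by the linear parts of the elements of $J$. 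Since $\eta_T\neq 0$, $L$ has rank $n$; taking generators $f_1,\dots,f_m$ of $J$ and letting $B\in M_{m\times n}(\cO)$ record their linear parts, $\card{\Phi_T}=\card{\cO/\mathfrak{d}}$ with $\mathfrak{d}=\Fitt_0(\cO^n/L)$ the ideal of $n\times n$ minors of $B$. On the other side $\Ann_T(I)=(J:\mathfrak{n})/J$, so $\eta_T$ is the ideal of augmentation values of $(J:\mathfrak{n})$; moreover $\eta_T\neq 0$ forces $\Ann_T(I)$ to be free of rank one with $\lambda$ injective on it, yielding a short exact sequence $0\to I\oplus\Ann_T(I)\to T\to\cO/\eta_T\to 0$ that re-expresses $\card{\cO/\eta_T}$. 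The heart is then the comparison of $\eta_T$ with $\mathfrak{d}$: using Cramer/adjugate identities one manufactures, from any $n$ of the $f_i$, an element of $(J:\mathfrak{n})$ whose augmentation is (up to the relevant power of $\varpi$) the corresponding $n\times n$ minor of $B$, giving $\card{\cO/\eta_T}\le\card{\cO/\mathfrak{d}}$; and the two coincide precisely when $J$ can be generated by $n=\rank L$ elements, equivalently by a regular sequence, equivalently $T$ is a complete intersection.

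\emph{Step 3 (the equivalence).} If $\phi$ is an isomorphism of complete intersections, then $\Phi_R=\Phi_T$ and Step 2 gives $\card{\Phi_R}=\card{\cO/\eta_T}$. Conversely suppose $\card{\Phi_R}=\card{\cO/\eta_T}<\infty$. Then every inequality in $\card{\cO/\eta_T}\le\card{\Phi_T}\le\card{\Phi_R}$ is an equality, so by Step 2 $T$ is a complete intersection, and $\Phi_R\onto\Phi_T$ is an isomorphism, whence $\ker\phi\subseteq\mathfrak{p}^2$. To deduce $\ker\phi=0$: finiteness of $\Phi_R$ makes $\mathfrak{m}_R/\mathfrak{m}_R^2$ finite-dimensional, so $R=A/J_R$ with $A=\cO[[x_1,\dots,x_n]]$ and the $x_i$ mapping to a minimal system of generators of $\mathfrak{p}$; since $\Phi_R\cong\Phi_T$ have the same minimal number of generators, the $\phi(x_i)$ form a minimal system of generators of $I$, so $T=A/J_T$ with $J_R\subseteq J_T$ and with the linear parts of $J_R$ and of $J_T$ spanning the same rank-$n$ lattice $L\subseteq\cO^n$. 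As $T$ is a complete intersection and $J_T$ has height $n$, write $J_T=(g_1,\dots,g_n)$ with $g_1,\dots,g_n$ a regular sequence, and choose $f_j\in J_R$ having the same linear part as $g_j$. Writing $f_j=\sum_k a_{jk}g_k$ in $A$ and comparing linear parts (the linear parts of the $g_k$ being $E$-linearly independent, since they span $L$) forces $(a_{jk}(0))_{j,k}=I_n$, hence $(a_{jk})\in\GL_n(A)$ and $(f_1,\dots,f_n)=(g_1,\dots,g_n)=J_T$. Since $(f_1,\dots,f_n)\subseteq J_R\subseteq J_T$, we conclude $J_R=J_T$, i.e.\ $\ker\phi=0$; so $\phi$ is an isomorphism and $R\cong T$ is a complete intersection.

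\emph{Main obstacle.} The hard part is the commutative-algebra lemma of Step 2, and within it the precise comparison of the congruence ideal $\eta_T$ with the Fitting ideal $\mathfrak{d}$ of $\Phi_T$: one must pin down, via adjugate identities together with a Smith-normal-form analysis of the linear-part matrix $B$, exactly which multiples of the minors of $B$ occur as augmentations of elements of $(J:\mathfrak{n})$, all while carefully separating the $\varpi$-direction from the $x_i$-directions (e.g.\ keeping track of whether $\varpi\in\mathfrak{m}_T^2$), which affects the bookkeeping though not the final statement. Once Step 2 is in hand, the bootstrap in Step 3 — from ``$\Phi_R\xrightarrow{\sim}\Phi_T$ and $T$ a complete intersection'' to ``$\phi$ an isomorphism'' — is comparatively soft.
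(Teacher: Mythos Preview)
The paper does not give its own proof of this theorem; it is stated with attribution to \cite{Wiles} and \cite{Lenstra} and then used as a black box. So there is no in-paper proof to compare against.

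Your outline follows the standard Wiles--Lenstra argument and is essentially correct. Step~1 and Step~3 are fine; in particular, the bootstrap in Step~3 (choosing $f_j\in J_R$ with the same linear parts as a regular sequence generating $J_T$, then using the invertibility of the constant-term matrix) is the usual way to upgrade $\Phi_R\xrightarrow{\sim}\Phi_T$ to $J_R=J_T$.

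The one place where your sketch is thin is exactly where you flag it: in Step~2, the inequality $\mathfrak d\subseteq\eta_T$ via the adjugate trick is clean (writing $f_i=\sum_j c_{ij}x_j$ in $A$, one gets $\det(c_{ij})\,x_k\in J$ for all $k$, hence $\det(c_{ij})\in(J:\mathfrak n)$, and evaluating at $0$ gives the minor), but the converse direction of the equality characterization---that $\eta_T\subseteq\mathfrak d$ when $T$ is a complete intersection---is not just ``$J$ is generated by $n$ elements''. One needs the Tate/Wiebe fact that for a local zero-dimensional complete intersection $A/(f_1,\dots,f_n)$ the socle (equivalently $\Ann_T(I)$) is principal, generated by the image of the Jacobian-type determinant $\det(c_{ij})$; this is what pins $\eta_T$ down rather than merely bounding it from below. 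Filling that in (or citing it) would complete your Step~2.
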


This paper arose out of trying to answer the following question which arises in a situation studied in \cite{RibetMult2}. Consider the elliptic curve $X_1(11)$ defined by $y^2z+yz^2=x^3-x^2z$, and the mod $p=3$ representation $\rhobar$ arising from it. Its Serre invariants are $k(\rhobar)=2, N(\rhobar)=11, \epsilon(\rhobar)=1$ and its image is $\GL_2(\F_3)$.

Let $q\ne 3,11$ be a prime at which $\tr(\rhobar({\rm Frob}_{q}))\equiv \pm(q+1)\pmod p$. By  Ribet's level-raising results \cite{RibCong} we know that there is a newform $f \in S_2(\Gamma_0(11q))$ which gives rise to $\rhobar$. More precisely we fix an embedding $\iota: K_f \hookrightarrow \overline \Q_p$, with $K_f=\Q(a_n(f))$ the number field generated by the Fourier coefficients $f=\Sigma_na_n(f) q^n$ of $f$, and assume  $\cO$ is the ring of integers  of the completion of  $\iota(K_f)$.  Then by $f$ gives rise to $\rhobar$, we mean the Galois representation $\rho_{f,\iota}:G_\Q \ra \GL_2(\cO)$ arising from $f,\iota$ by the Eichler-Shimura  construction is residually isomorphic to $\rhobar$.

Consider the Shimura curve $X$ over $\Q$  arising from  the maximal order of the indefinite quaternion algebra that is ramified at $11$ and $q$. Let $\T$ be the $\cO$-algebra
generated  by Hecke operators $T_r$ for $r$ prime, acting on $H^1(X,\cO)$. Then $f$ gives rise (via Jacquet-Langlands)  to a homomorphism $\lambda_f:  \T \ra \cO$, with kernel $\frp_f$ contained in a maximal ideal $\ffrm$. Write $Q = \{11,q\}$, let $\T^Q:=\T_{\ffrm}$ and let $R^Q$ be the appropriate Galois deformation ring of $\rhobar$ parameterizing lifts of $\rhobar$ of fixed determinant which are Steinberg at $11$ and $q$ (which we define explicitly in section \ref{ssec:global def} as $R^{\st}$). Then there is a  Galois representation $\rho_{\ffrm}:G_\Q \ra \GL_2(\T^Q)$ with residual representation $\rhobar$ inducing a surjective morphism $\phi^Q:R^Q\onto \T^Q$ of $\cO$-algebras.

This is now the setting of Theorem \ref{numerical criterion}, applied to these rings and the morphism $\lambda_f:\T^Q\to\cO$. Thus one can consider $\Phi_f^Q:=\Phi_{R^Q}$ and $\eta_f^Q := \eta_{\T^Q}$, referred to as the \emph{cotangent space} and \emph{congruence ideal} of $f$, and Theorem \ref{numerical criterion} implies $|\Phi_f^Q|\ge |\cO/\eta_f^Q|$. We also refer to $\cO/\eta_f^Q$ as the \emph{congruence module} of $f$.  

In the case when $\rhobar({\rm Frob}_{q})\ne\pm {\rm Id}$, automorphy lifting techniques, in the form of the Taylor--Wiles--Kisin patching method, imply that $R^Q$ and $\T^Q$ are complete intersections and $\phi^Q$ is an isomorphism. In this case, Theorem \ref{numerical criterion} gives the equality $|\Phi_f^Q| = |\cO/\eta_f^Q|$. 

On the other hand, we say that $q$ is a \emph{trivial prime} for $\rhobar$ if $\rhobar({\rm Frob}_{q})=\pm {\rm Id}$, which implies that $q\equiv 1\pmod{p}$, as $\det\rhobar$ is the cyclotomic character (as shown in \cite{RibetMult2}, the first such prime for this specific $\rhobar$ is $q=193$, at which $\rhobar({\rm Frob}_{193})=- {\rm Id}$). For such a $q$, patching techniques no longer imply that $R^Q$ and $\T^Q$ are complete intersections. In fact, computations of Shotton in \cite{Shotton}, combined with patching, still imply that $\phi^Q$ is an isomorphism, but also show that $\T^Q$ is \emph{not} a complete intersection (nor is it Gorenstein). Theorem \ref{numerical criterion} then implies that $|\Phi_f^Q| > |\cO/\eta_f^Q|$.

Trivial primes have played an important role   in automorphy lifting (for instance, Taylor's method of Ihara avoidance \cite{TaylorIharaAvoidance}), and  lifting Galois representations (\cite{HR}, \cite{FKP}). Trivial primes are very useful, because of their versatility in  allowing lifts of different types or ``killing dual  Selmer classes'' as in 
 loc. cit. Trivial primes  behave rather differently from  the other primes, and our paper explores this difference in the context of  the Wiles numerical criterion. 

Thus our primary goal in this paper is to determine the ratio $|\Phi^Q_f|/|\cO/\eta^Q_f|$, or equivalently the $\cO$-module $ {\eta^Q_f}/\Fitt_\cO(\Phi^Q_f)$, in this situation. This is called the {\em Wiles defect} of $\T^Q$  relative to $\lambda_f$ as defined in \cite{TiUr}.  

Specializing our main theorem Theorem \ref{mc}  to this particular case gives:

\begin{theorem}\label{thm:example}
 Let $n_q$ be the maximal integer such that $\rho_{f,\iota}$ locally at $q$ is $\pm{\rm Id}$ mod $\varpi^{n_q}$ (i.e. the maximal $n_q$ such that $\rho_{f,\iota}|_{G_{\Q_q}}$ is congruent to $\pm {\rm Id}$  mod $\varpi^{n_q}$).  Then ${\eta^Q_{f}}/\Fitt_\cO(\Phi^Q_{f})$ is $\cO/\varpi^{2n_q}$.
\end{theorem}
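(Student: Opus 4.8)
The plan is to compute both quantities $\eta^Q_f$ and $\Fitt_\cO(\Phi^Q_f)$ separately, reducing each to a purely local computation at the trivial prime $q$ via the Taylor--Wiles--Kisin patching method. Since $\phi^Q\colon R^Q\onto\T^Q$ is known to be an isomorphism (by Shotton's local computations combined with patching), we may identify $R^Q$ with $\T^Q$ throughout, so that $\Phi^Q_f$ is literally the cotangent space of $R^Q$ at $\lambda_f$ and $\eta^Q_f$ is the congruence ideal of $\T^Q$ at $\lambda_f$. The key structural input is that patching expresses $\T^Q$ (completed along the patching variables) as a quotient $R_\infty/\mathfrak a$ of a power series ring $R_\infty = R_\infty^{\loc}[[x_1,\dots,x_g]]$ over a completed tensor product of local deformation rings, where the local ring at $q$ is the ``trivial prime'' Steinberg-type local deformation ring $R_q^{\st}$ studied by Shotton; and the patched module $M_\infty$ is free over $R_\infty$ away from the obstruction coming from $q$. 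Concretely, $R_q^{\st}$ is not regular — it has the shape $\cO[[A,B,C]]/(AB - (\text{something involving } \varpi^{n_q}))$ or similar — and this single non-regular local factor is responsible for the entire Wiles defect.

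First I would set up the patching datum carefully, recording that $M_\infty$ is a maximal Cohen--Macaulay $R_\infty$-module, that $\T^Q = R_\infty/(\underline y)$ for a regular sequence $\underline y$ of patching variables, and that $M_\infty$ descends to $H^1$ of the Shimura curve. Second, I would compute the congruence module: $\cO/\eta^Q_f \cong \cO/\eta_{M_\infty}$ where the latter is the congruence module of the $R_\infty$-module $M_\infty$ at the augmentation induced by $\lambda_f$, and because $M_\infty$ is free over $R_\infty^{\loc}$ in all factors except the one at $q$ (and at $q$ it is the explicit $R_q^{\st}$-module coming from Shotton / Ribet's analysis), this reduces to computing $\eta$ for the local module at $q$. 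The expected answer there is that $\cO/\eta_{M_\infty}$ has length $n_q$, matching the fact that $\rho_{f,\iota}|_{G_{\Q_q}}$ is congruent to $\pm\Id$ precisely modulo $\varpi^{n_q}$: the congruence ideal detects the "depth" to which the local representation is reducible/scalar. Third, I would compute $\Fitt_\cO(\Phi^Q_f)$: writing $\mathfrak p_f = \ker(\lambda_f)$ we have $\Phi^Q_f = \mathfrak p_f/\mathfrak p_f^2$, and again patching plus the explicit presentation of $R_q^{\st}$ lets one read off the relevant Fitting ideal. The non-regularity of $R_q^{\st}$ (one relation, so one "extra" generator of the cotangent space relative to the complete intersection case) contributes exactly a factor of $\varpi^{n_q}$ to $\Fitt_\cO(\Phi^Q_f)$ beyond what $\eta^Q_f$ gives, so $\eta^Q_f/\Fitt_\cO(\Phi^Q_f)\cong\cO/\varpi^{n_q}\cdot\cO/\varpi^{n_q}$... but the stated answer is $\cO/\varpi^{2n_q}$, so in fact the defect is concentrated: I would argue that $\Fitt_\cO(\Phi^Q_f) = \varpi^{2n_q}\cdot\eta^Q_f$ as ideals of $\cO$, i.e. the single non-complete-intersection local factor at $q$ contributes $\varpi^{2n_q}$ (and all other primes, being either minimal or non-trivial level-raising primes, contribute nothing to the ratio because their local rings are regular or complete intersections with the relevant freeness).

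The main obstacle I expect is the precise local computation at $q$: one must (a) identify Shotton's local deformation ring $R_q^{\st}$ at a trivial prime explicitly enough to compute both its cotangent space at $\lambda_f$ and the annihilator/congruence ideal of the relevant local module, and (b) track how the valuation $n_q$ — defined in terms of the congruence $\rho_{f,\iota}|_{G_{\Q_q}}\equiv\pm\Id\pmod{\varpi^{n_q}}$ — enters these two computations, showing it enters the congruence ideal to order $n_q$ but the Fitting ideal of the cotangent space to order $3n_q$ (equivalently, that $\Fitt_\cO\Phi^Q_f$ has length $2n_q$ more than $\cO/\eta^Q_f$). A secondary technical point is the bookkeeping needed to pass from the patched statement to the actual Hecke algebra $\T^Q$: one must verify that quotienting $R_\infty$ and $M_\infty$ by the patching variables does not alter the ratio $\eta/\Fitt_\cO\Phi$, which follows because the patching variables form a regular sequence on both $R_\infty$ and $M_\infty$ and one has good control of the augmentation $\lambda_f$ under this quotient — this is where the maximal Cohen--Macaulay property of $M_\infty$ and the Gorenstein-ness of $R_\infty$ away from the $q$-factor are essential.
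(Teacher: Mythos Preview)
Your proposal has a genuine gap in the computation of the congruence ideal $\eta^Q_f$. You claim that $\cO/\eta^Q_f \cong \cO/\eta_{M_\infty}$ and that this can be reduced to a purely local computation at $q$ via patching. But patching does \emph{not} directly compute congruence modules: the congruence ideal $\eta$ is a global invariant of the finite ring $\T^Q$ (or of the $\T^Q$-module $M^{\st}$), and quotienting $R_\infty$ by the patching variables does not preserve it in any obvious way. Indeed, the paper explicitly notes in its ``Future work'' section that computing the Wiles defect directly from the local deformation rings is a problem they hope to address in a sequel --- so what you are proposing is precisely what the paper does \emph{not} yet know how to do. Relatedly, your description of $R_q^{\st}$ as having a presentation ``involving $\varpi^{n_q}$'' is incorrect: the ring $R_q^{\st}$ depends only on $\rhobar|_{G_q}$, and $n_q$ enters only through the position of the augmentation $\lambda_f$ inside $\Spec R_q^{\st}$.

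The paper's actual route is indirect. One introduces the auxiliary unrestricted deformation ring $R(Q)$ and Hecke algebra $\T(Q)$ at level $\Gamma_0(NQ^2)$, which \emph{are} complete intersections by Wiles and Diamond, so the Wiles defect there vanishes. Then one computes separately (i) the change in cotangent space between $R(Q)$ and $R^Q$, which \emph{is} accessible via patching and reduces to the local calculation $\ell_\cO(\Phi_{\lambda,R_q^\square/R_q^{\st}}) = m_q + \ord_\cO(q^2-1) - 2n_q$ (Proposition~\ref{key-local-comp}), and (ii) the change in congruence module between $\T(Q)$ and $\T^Q$, which requires the Ribet--Takahashi results on degrees of Shimura curve parametrizations and yields $m_q + \ord_\cO(q^2-1)$ (Proposition~\ref{change-of-eta}, after identifying the Hecke-theoretic and cohomological congruence modules via Theorem~\ref{thm:surjective} and the result of \cite{Manning}). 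The Wiles defect is the difference, namely $2n_q$. Your proposal misses both the auxiliary complete intersection and the Ribet--Takahashi input entirely.
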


We use a different (logarithmic) normalization of the Wiles defect (see Definition \ref{WilesDefect} below) to make
it behave well with change of coefficient ring $\cO$.
With our  definition the Wiles defect $\delta_{\lambda_f,\T^Q}(\T^Q)$  (of $\T^Q$  relative to $\lambda_f$) in the theorem above becomes $2n_q/e$ where $e$ is the ramification index of $\cO$.  

In the full version of our results, see Theorem \ref{mc}, we allow $f$ to be an arbitrary newform of squarefree level and trivial nebentypus, for which the residual representation $\rhobar$ satisfies the Taylor--Wiles conditions, as well as a condition on the ramification, and we allow the quaternion algebra $D$ to be ramified at multiple trivial primes for $\rhobar$. For ease of exposition, we will continue to focus on the case outlined above for the remainder of the introduction.

\begin{remark}\label{Remark-OnComputations}
As a small point, we remark that while Theorem \ref{thm:example} is technically a complete answer to the question of determining the Wiles defect, the numbers $n_q$ may be difficult to compute in the specific case considered here. The reason for this is that while the original representation $\rhobar:G_{\Q}\to \GL_2(\F_3)$ at level $11$ is defined by a modular form with rational coefficients, the coefficient field $K_f$ of the newform $f\in S_2(\Gamma_0(11q))$ may be arbitrarily large.

For example, in the case of the first trivial trivial prime $q = 193$, one can show by process of elimination that the newform $f\in S_2(\Gamma_0(11\cdot 193))$ giving rise to $\rhobar$ is the form computed in \cite[\href{http://www.lmfdb.org/ModularForm/GL2/Q/holomorphic/2123/2/a/h/}{Newform 2123.2.a.h}]{lmfdb}, for which we have $[K_f:\Q] = 46$. This may make a precise determination of $n_{193}$ computationally infeasible.

In section \ref{semistable} we consider a different specialization of Theorem \ref{mc} to the case when the newform $f$ has rational coefficients (but we do not assume that the residual representation $\rhobar$ arises from a modular form with rational coefficients at minimal level). In this case, $f$ corresponds to a semistable elliptic curve $E_f$, and we show that $n_q$ may be computed in terms of the Tate uniformization of $E$ at $q$. This allows us to give several numerical computations of Wiles defects via our main theorem.
\end{remark}

In the study of congruences between modular forms (cf. \cite{Hida}), and computing congruence ideals attached to a newform $f$, it is found that \emph{cohomological} congruence ideals $\eta_f^{Q,\coh}$, defined similarly to $\eta_f^Q$ but in terms of the $\T^Q$ module $H^Q:=H^1(X,\cO)_{\ffrm}$ rather than $\T^Q$,\footnote{We are suppressing some subtleties here to do with the fact that $H^1(X,\cO)_{\ffrm}$ has generic rank $2$ as a $\T^Q$-module, rather than $1$.} are easier to study (and are related directly to algebraic parts of $L$-values) than the congruence ideal $\eta_f^Q$ of the relevant Hecke algebra $\T^Q$.

In \cite{DiamondMult1}, Diamond gives a generalization of Theorem \ref{numerical criterion} which shows that $|\Phi_f^Q| = |\cO/\eta_f^{Q,\coh}|$ if and only if $\phi^Q:R^Q\to \T^Q$ is an isomorphism of complete intersections and $H^Q$ is free of rank $2$ over $\T^Q$.

To relate the congruence ideals $\eta^Q_f$ and $\eta^{Q,\coh}_f$, the method in most references (\cite{Hida1}) is to prove by delicate arithmetic-geometric means  that $H^Q$ is actually free of rank $2$ over $\T^Q$, which trivially implies that $\eta^Q_f = \eta^{Q,\coh}_f$.

In the case when $q$ is not a trivial prime, the same patching arguments that imply $R^Q$ and $\T^Q$ are complete intersections were shown in \cite{DiamondMult1} and \cite{Fujiwara} to imply that $H^Q$ is free over $\T^Q$ (a fact which was also shown more directly in \cite{RibetMult2}). On the other hand, when $q$ is a trivial prime, the main result of \cite{RibetMult2} shows that $H^Q$ is not free over $\T^Q$. Instead it is proved that $\dim_k H^Q/\ffrm=4$, whereas $H^Q[1/p]$ is free of rank $2$ over $\T^Q\otimes\Q_p$, which is referred to as a multiplicity two phenomenon.

The work of the third author in \cite{Manning} uses patching techniques to get another proof of Ribet's result in a more general situation. In addition to re-proving Ribet's multiplicity two result, this approach also yields an additional statement about the endomorphism ring of $H^Q$ as a $\T^Q$-module (see Theorem 1.2 of loc. cit.). In Theorem \ref{thm:surjective} we show that this endomorphism statement implies that $\eta_f^Q = \eta_f^{Q,\coh}$, even in the case when $H^Q$ is not free over $\T^Q$.\footnote{Note that this does not contradict Diamond's numerical criterion from \cite{DiamondMult1}, since in the case when $H^Q$ is not free $\T^Q$ will not be a complete intersection, and so $|\Phi_f^Q| > |\cO/\eta_f^Q| = |\cO/\eta_f^{Q,\coh}|$.}

This shows that a weaker condition than freeness can suffice to show that the two congruence modules are the same, which holds in certain cases when freeness is actually false. Moreover, the techniques of \cite{Manning} give a possible approach for proving this condition in more generality, by carefully analyzing the geometry of certain local Galois deformation rings. We hope that these ideas will be useful more generally to relate congruence ideals of Hecke algebras and congruence ideals of cohomology groups arising from arithmetic manifolds on which they naturally act. 

\subsection{Strategy of proof of Theorem \ref{mc}}

Here is a sketch  of the strategy of the proof of our main theorem,  Theorem \ref{mc}. The notation we use here is not the same as in the main text,  and we simplify the arguments, skipping  over some nuances, to indicate the broad strategy. We consider the full deformation ring $R(Q)$ which arises from considering semistable deformations of determinant the $p$-adic cyclotomic character $\epsilon$,  that are unramified outside $Q=\{3,11,q\}$, and finite flat at 3. We consider  the  anemic  Hecke algebra $\T(Q)$ (without operators $U_q$ for $q \in Q$)  that  acts faithfully on the cohomology $H^1(X_0(11q^2),\cO)_{\ffrm_q}$ of the modular curve.  (Here $\ffrm_Q$ is a maximal ideal of the full Hecke algebra which is a pull back of the maximal ideal $\ffrm$ of $\T^Q$.) There is a surjective map of $\cO$-algebras $R(Q) \onto \T(Q)$ 
that is an    isomorphism of complete intersections $R(Q)\xrightarrow{\sim} \T(Q)$ by \cite{Wiles} and \cite{DiamondMult1}.

The rings  $R^Q$ and $\T^Q$ are quotients of $R(Q)$ and $\T(Q)$ respectively. We can therefore pull our augmentation $\lambda_f:\T^Q\to\cO$ back to $\lambda_f:\T(Q)\to\cO$, and use that for this pull back, the Wiles defect is trivial by   Theorem \ref{numerical criterion} and  Theorem \ref{fred}. For this one uses another augmentation $\lambda_g: \T(Q) \to \cO$ arising from a newform $g \in S_2(\Gamma_0(N(\rhobar))$ which exists because of the level lowering results of \cite{RibetInv100}. Thus to compute the Wiles defect of $\T^Q$, it suffices to compute the change in cotangent spaces (at $\lambda_f$)  between $R(Q)$ and $R^Q$ and the change in congruence modules between $\T(Q)$ and $\T^Q$.

The change in cotangent spaces is considered in section \ref{sec:patchandgrow}. The main idea is that the Taylor--Wiles--Kisin patching method (see Theorem \ref{thm:patching} for details) implies that $R(Q)$ and $R^Q$ are quotients of certain explicit rings $R_\infty$ and $R_\infty^{\st}$ by a regular sequence. Here $R_\infty$ and $R_\infty^{\st}$ are both power series rings over completed tensor products of various local Galois deformation rings. In all cases relevant to us, the local deformation rings were computed precisely in \cite{Shotton}, and thus we can determine $R_\infty$ and $R_\infty^{\st}$ completely explicitly.

Pulling back $\lambda_f$ to augmentations on $R_\infty^{\st}$ and $R_\infty$, one can then consider the cotangent spaces $\Phi_{R_\infty^{\st}}$ and $\Phi_{R_\infty}$, which depend only on the rings $R_\infty^{\st}$ and $R_\infty$ and the map $\lambda:R_\infty^{\st}\to \cO$, and not on the structures of $R(Q)$ and $R^Q$. While it is not possible to directly determine $\Phi_{R^Q}$ and $\Phi_{R(Q)}$ from $\Phi_{R_\infty^{\st}}$ and $\Phi_{R_\infty}$ (at least without knowing the regular sequence generating $\ker(R_\infty\onto R(Q))$ and $\ker(R_\infty^{\st}\onto R^Q)$) we can show via a commutative algebra argument (Theorem \ref{mc1}) that $\ker(\Phi_{R(Q)}\onto \Phi_{R^Q})$ is isomorphic to $\ker(\Phi_{R_\infty}\onto \Phi_{R_\infty^{\st}})$. The cardinality of the later kernel can then be computed precisely by a computation with the local deformation rings (Proposition \ref{key-local-comp}) which gives the desired comparison between $|\Phi_{R(Q)}|$ and $|\Phi_{R^Q}|$.

The change in congruence modules between $\cO/\eta_{R(Q)}$ and $\cO/\eta_{R^Q}$ is considered in sections \ref{sec:congruences} and \ref{sec:ribet}. We first equate these with the corresponding cohomological congruence modules, via Theoprem \ref{fred} in the modular curve case, and the work of \cite{Manning} combined with Theorem \ref{thm:surjective} (as described above) in the Shimura curve case (see Theorem \ref{thm:sat} and Corollary \ref{cor:sat}). The change in cohomological congruence modules is then computed by an application of  the work of \cite{RibetInv100} and  \cite{RiTa} (see Proposition \ref{change-of-eta}).

Relying on the results of \cite{RiTa} in this way requires us to assume that $\rhobar$ is ramified at least at one prime in the discriminant of $D$. By using the Jacquet--Langlands isomorphism to add or remove certain primes from the discriminant of $D$, we may reduce this condition to simply requiring that $\rhobar$ is ramified and Steinberg at enough primes, regardless of whether they are in the discriminant of $D$ (see Corollary \ref{eta:cor2}).

For this paper we have, for simplicity, restricted to the case when $\rhobar$ is a semistable representation of $G_{\Q}$, so this is only a fairly minor restriction. However this would be a much more restrictive condition if we were to give the natural generalizations of our results to possibly non-semistable representations of $\rhobar:G_F\to\GL_2(k)$, for $F$ a totally real number field. It would thus be worthwhile to find an alternative approach to computing the change in congruence modules, or to directly computing the Wiles defect, which does not depend on the results of \cite{RiTa}.

\subsection{Future work}

Theorem \ref{mc} shows that the Wiles defect depends solely on the local properties of the representation $\rho_{f,\iota}$ at the primes in the discriminant of $D$, and thus only on the ring $R_\infty^{\st}$ and the induced augmentation $\lambda:R_\infty^{\st}\to \cO$ from section \ref{sec:patchandgrow}. Based on this, one might hope that the Wiles defect could be computed directly from the local deformation rings, similarly to how the change in cotangent spaces was computed in section \ref{sec:patchandgrow}. Such a computation would remove the reliance on the results \cite{RiTa} from our approach, and would thus allow us to significantly weaken the hypotheses of our main theorem. It would also potentially allow us to compute the Wiles defect in contexts where there is no natural generalization of \cite{RiTa}. We intend to pursue these ideas in a follow-up paper.

The work of Hida \cite{Hida}  proves a formula of the type  $${L(1,\ad(f)) \over { i\pi\Omega}}=\card{ \cO/\eta_f(\T(Q))},$$ (up to a unit in $\cO$), relating $L$-values and congruence modules (we blur here the distinction between congruences module for the Hecke algebra $\T(Q)$ and the  congruence module for its module  $H^1(X_0(NQ),\cO)_\ffrm$). Here  $L(s,\ad(f))$ is  the $L$-function and $\Omega$ a suitable period  of   the adjoint motive associated to $f$. The work of Wiles \cite{Wiles}, when combined with \cite{Hida},  gives a proof of  the Bloch-Kato conjecture  (in many cases) for the corresponding  $L$-value by showing the equality of the orders of the congruence module $\cO/\eta_f({\T(Q)})$  and the cotangent space $\Phi_{R(Q)}$  associated  to $f$ (the latter is the  Bloch-Kato Selmer group associated to the adjoint motive of $f$).  Our work does not directly involve $L$-functions, but one could ask  if the Wiles defect we study has a relation to $L$-values (perhaps related to the quotient of ``cohomological'' and ``motivic'' periods associated to the form $f$ or its Jacquet-Langlands correspondent on the quaternion algebra $D_Q$). The observations in \S \ref{semistable} that relate  the Wiles defect,  in the case of augmentations of Hecke algebras arising from semistable elliptic curves,  via our main theorem,  to tame regulators of local  periods of the Tate  elliptic curves at the primes in $Q$ (cf. \cite{MT}), give  a geometric decription of the Wiles defect   in the case we study  and  are suggestive of such a relationship.

 We end the introduction by making a few remarks contrasting the contexts of \cite{TiUr} and our paper, which both  study the Wiles defect. The paper \cite{TiUr} considers Hecke algebras which act on the Betti cohomology of hyperbolic 3-manifolds. Thus  the reason for the failure of complete intersection of the Hecke algebras studied in loc. cit. is global, namely that the relevant Betti cohomology (the relevant degree being 1 and 2)  is not concentrated in a single degree (and thus the defect $\ell_0=1$). In \S 5 of \cite{TiUr}, the Wiles defect of the Hecke algebras studied there is  conjecturally related  to (integral) regulators of motivic elements, and to the ratio of cohomological and motivic periods. 
 
 In the classical situation studied in this paper, of Hecke algebras acting on the cohomology $H^1(X_Q,\cO)$ of Shimura curves ${X_Q}_{/\Q}$
 arising from an indefinite quaternion algebra $D_Q$ over $\Q$, the reason for the failure of the Hecke algebra to be a complete intersection is local, namely that the {\em Steinberg}  deformation rings at {\em trivial} primes dividing the discriminant of $D_Q$ are not complete intersections. We compute in many cases the Wiles defect (cf. Theorem \ref{mc}), and give an explicit formula for the Wiles defect (which is a global quantity) as a sum of local terms at such primes. The local terms  at a prime $q$  measure the depth of the congruence of $\rho_f|_{G_q}$ to the trivial representation up to sign,  where $G_q$ is a decomposition group at $q$.

\subsection{Structure of the paper}
We describe the contents of the paper.  The key sections of the paper are as follows. In section \ref{sec:congruence} we develop the commutative algebra we need in this paper related to congruence modules (cf. Theorem \ref{thm:surjective})  and define the Wiles defect (cf. Definition \ref{WilesDefect}).  In sections \ref{sec_deformation_theory} and \ref{hecke} we introduce the deformation rings and Hecke algebras which are the main objects studied in the paper.
In sections \ref{sec:patch} and \ref{sec:patchandgrow} we use patching to show that certain relative local cotangent spaces are isomorphic to related relative global cotangent spaces (cf. Theorem \ref{thm:patching}, Theorem \ref{mc1} and Corollary \ref{cor:main}). In section \ref{sec:local-calc} we give the computation of the cardinality of relative local cotangent spaces at a trivial prime $q$ (cf. Proposition \ref{key-local-comp}).  In section \ref{sec:congruences}  we show that to prove our main theorem about the Wiles defect $\delta_\lambda(\T^Q)$ we study, Theorem \ref{mc}, it is enough to compute a related  cohomological Wiles defect  $\delta_{\lambda,T^Q} (H^1(X_Q,\cO)_{\ffrm}  )$ related to the $\T^Q$ module $H^1(X_Q,\cO)_{\ffrm}$ (cf. Corollary \ref{cor:sat}). In section \ref{sec:ribet} we use \cite{RiTa}
to compute  the change of the cohomological $\eta$-invariant when we pull back our augmentation $\T^Q$ to $\T(Q)$ (cf. Proposition \ref{change-of-eta}). In section \ref{sec:defect} we pull all our earlier work together to prove Theorem \ref{mc}. We end in section \ref{semistable} by computing the Wiles defect for semistable elliptic curves in terms of tame regulators of  local Tate periods (cf. Proposition \ref{prop:MT} and Corollary \ref{cor:MT}) and giving some numerical examples.

\subsection*{Acknowledgements}

G.B. received support from the DFG grant FG 1920.  We would like to thank the referees for a critical reading that led to  an improvement of the exposition and  the correction of  a few inaccuracies of an earlier version.

\subsection*{Notation}

We will fix a prime $p>2$ throughout the paper and fix an embedding $\iota: \overline \Q \hookrightarrow  \overline \Q_p$.

For any field $F$, we will let $G_F$ denote its absolute Galois group. For any prime $q$ we will write $G_q:=G_{\Q_q}$, and let $I_q\le G_q$ be the inertia subgroup. For the entirety of this paper, we will fix a choice of embedding $G_q\into G_{\Q}$ for each prime $q$. All representations of $G_F$ will be assumed to be continuous.

We denote by $\epsilon$ the $p$-adic cyclotomic character $\epsilon:G_\Q \ra \Z_p^\times$ and  also  any  of the characters  to $R^\times$ we get  for any $\Z_p$-algebra $R$, by composing it with the map $\Z_p \ra R$.

For a newform $f$, we denote by $\rho_{f, \iota}$ (or simply $\rho_f$ as we have fixed the embedding $\iota$) the Galois representation associated  by Eichler--Shimura to $f,\iota$.

We will denote by $\cO$ the valuation ring of a finite extension $E$ of $\Q_p$ and by $k$ the residue field of $\cO$, and we let $\CNLO$ be the category of complete, Noetherian local $\cO$-algebras with residue field $k$.

For an $\cO$-algebra $R$, an augmentation $\lambda$ of $R$ will always mean a surjective $\cO$-algebra homomorphism $\lambda:R\to \cO'$, where $\cO'$ is the ring of integers in a finite extension of $E$ (we will almost always take $\cO=\cO'$). For an $\cO$-module $M$ that is a finite abelian group, we denote by $\ell_\cO(M)$ the length of $M$ as an $\cO$-module.
 For $\alpha \in \cO$, we denote by $\ord_\cO(\alpha)=\ell_\cO(\cO/(\alpha))$.

For a Galois representation $\rhobar:G_{\Q}\to \GL_2(\overline{\F}_p)$ which is finite flat at $p$, we will let $N(\rhobar)$ represent its Artin conductor.

\section{Trivial primes for $\rhobar$}\label{sec:trivial primes}

Let $\rhobar:G_{\bf Q} \rightarrow \GL_2(k)$ be a continuous and irreducible mod $p$ representation.

\begin{definition}
  We say that a prime  $q$ is \emph{trivial} for $\rhobar$ if $q $ is 1 mod $p$, $\rhobar$  is unramified at $q$ and $\rhobar({\rm Frob}_q)= \pm {\rm Id}$. We say a prime  $q$ is  \emph{Steinberg} for $\rhobar$ if  $\rhobar|_{G_{q}}$ up to an unramified twist is of the form  \[  \left( \begin{array}{cc} \epsilon& \ast \\ 0 & 1 \end{array} \right),\]  with $ \epsilon$ the mod $p$ cyclotomic     
         character and $G_q$ a decomposition group at $q$.
 \end{definition}

We impose that $\rhobar$  satisfies the following conditions:

\begin{itemize}

\item ${\rm det}(\rhobar)=\epsilon$.

\item Each prime $\ell (\ne p)$ at which $\rhobar$ is ramified is Steinberg for $\rhobar$. 

\item $\rhobar|_{G_p}$ is finite flat.

\item    $\rhobar$ is modular, i.e., arises from a newform in $S_2(\Gamma_0(N(\rhobar)))$  with respect to an
         embedding $\iota_p:\overline{{\bf Q}} \rightarrow
         \overline{\bf Q}_p$ that we fix.   This is implied by the previous hypotheses by Serre's modularity conjecture which is now a theorem \cite{KW}.

\end{itemize}

Note that  $N(\rhobar)>1$ and square-free, and our conditions imply that $\Ad^0(\rhobar)$ is absolutely irreducible
and the order of ${\rm im}(\rhobar)$ is divisible by $p$ (otherwise $\rhobar$ has Serre weight 2
and Artin conductor 1 by assumptions on determinant and ramification above). This  implies that $\rhobar|_{\Q(\zeta_p)}$ is irreducible and thus satisfies the ``Taylor-Wiles hypothesis'' needed for the arguments below. 

In our work below we will fix positive square-free and coprime integers $N$ and $Q$, with the property that:

\begin{itemize}  
\item  $N$ divides $N(\rhobar)$; 
\item  $N(\rhobar)$ divides $NQ$;
\item all primes dividing $Q$ are Steinberg for $\rhobar$.
\end{itemize}  

We also fix a prime $r>>0$ such that no lift of $\rhobar|_{G_r}$ is ramified at $r$. This exists by our hypothesis on $\rhobar$ and \cite[Lemma 3]{DT2}.  This allows us to impose $\Gamma_1(r^2)$ level structure whenever needed, and so we may ignore below  any issues arising from isotropy.
 
\section{Cotangent spaces, congruence modules and the Wiles defect}\label{sec:congruence}

\begin{definition} Given any $R \in \CNLO$ and any continuous homomorphism $\lambda: R \ra \cO$  we define the \emph{cotangent space of $R$ with respect to $\lambda$} to be $\Phi_{\lambda,R}:= (\ker \lambda)/(\ker \lambda)^2$.
\end{definition}

Note that 
\cite[\href{https://stacks.math.columbia.edu/tag/00RP}{Lemma 00RP}, \href{https://stacks.math.columbia.edu/tag/02HP}{Lemma 02HP}]{stacks-project} implies that $\Phi_{\lambda,R} = \Omega_{R/\cO} \otimes_\lambda \cO$, where $\Omega_{R/\cO}$ is the $R$-module of K\"ahler differentials.

First note the following:

\begin{lemma}\label{lem:cot fin}
	If $R$ is a finite, local, reduced $\cO$-algebra, then $\Phi_{\lambda,R}$ is a finite group.
\end{lemma}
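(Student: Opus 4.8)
The plan is to reduce the assertion to showing that $\Phi_{\lambda,R}$ is a finitely generated torsion $\cO$-module. First I would note that, since $R$ is finite over $\cO$, it is Noetherian, so $I:=\ker\lambda$ is a finitely generated $R$-module; hence $\Phi_{\lambda,R}=I/I^2$ is a finitely generated module over $R/I$. Because $\lambda$ is a surjective $\cO$-algebra homomorphism (the composite $\cO\to R\xrightarrow{\lambda}\cO$ is the identity), we have $R/I\cong\cO$, so $\Phi_{\lambda,R}$ is a finitely generated $\cO$-module. Since $\cO$ is a complete DVR with finite residue field $k$, such a module is finite as an abelian group if and only if it is $\varpi$-power torsion, i.e. if and only if $\Phi_{\lambda,R}\otimes_\cO E=0$. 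So it remains to prove this vanishing.

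Next I would analyze $R\otimes_\cO E=R[1/\varpi]$. This is a finite-dimensional $E$-algebra, and it is reduced, because inverting $\varpi$ is a localization and localizations of the reduced ring $R$ are reduced. As $E$ has characteristic $0$ it is perfect, so a reduced finite $E$-algebra is a finite product of finite separable field extensions of $E$: $R\otimes_\cO E\cong\prod_i E_i$. The augmentation $\lambda$ induces a surjective $E$-algebra homomorphism $\lambda_E\colon R\otimes_\cO E\to E$; since any $E$-algebra homomorphism from a field extension $E_i$ of $E$ into $E$ forces $E_i=E$, the map $\lambda_E$ must be the projection onto a single factor $E_{i_0}=E$. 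Therefore $\ker\lambda_E$ is the ideal generated by the complementary idempotent, and in particular $\ker\lambda_E=(\ker\lambda_E)^2$.

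Finally I would combine these observations: tensoring the short exact sequence $0\to I^2\to I\to\Phi_{\lambda,R}\to 0$ with the flat $\cO$-module $E$ gives $\Phi_{\lambda,R}\otimes_\cO E=(I/I^2)\otimes_\cO E=\ker\lambda_E/(\ker\lambda_E)^2=0$, whence $\Phi_{\lambda,R}$ is a finitely generated torsion $\cO$-module and so finite. (Equivalently, one may use the identification $\Phi_{\lambda,R}=\Omega_{R/\cO}\otimes_\lambda\cO$ recorded above together with $\Omega_{(R\otimes_\cO E)/E}=\prod_i\Omega_{E_i/E}=0$, each $E_i/E$ being separable; this is the same computation.)

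There is no deep difficulty here; the one point requiring care — the "main obstacle", though a mild one — is that $R$ need not be $\cO$-flat, so one cannot argue that $R$ embeds into $R\otimes_\cO E$ and must instead only use that $R\otimes_\cO E$ is reduced, which follows from reducedness of $R$ since passing to $R[1/\varpi]$ is a localization. Everything else in the argument is formal.
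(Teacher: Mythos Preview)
Your proof is correct and follows essentially the same approach as the paper: both arguments show that $\Phi_{\lambda,R}$ is a finitely generated $\cO$-module and that it vanishes after tensoring with $E$, using that $R\otimes_\cO E$ is a finite product of field extensions of $E$. The paper phrases step two via K\"ahler differentials, i.e.\ $\Omega_{R[1/\varpi]/E}=0$, which is precisely the alternative formulation you note at the end; your direct computation that $\ker\lambda_E$ is idempotent-generated and hence equals its square is the same fact unwound.
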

\begin{proof}
	As $R$ is reduced, $R[1/\varpi] = R\otimes_\cO E$ is as well. It follows that $R[1/\varpi]$ is a product of finite extensions of $E$, and so $\Omega_{R/\cO}\otimes_\cO E = \Omega_{R[1/\varpi]/E} = 0$. Thus
	\[\Phi_{\lambda,R}\otimes_\cO E = (\Omega_{R/\cO} \otimes_\lambda \cO)\otimes_\cO E = (\Omega_{R/\cO}\otimes_\cO E)\otimes_\lambda E  = 0\]
	But now as $R$ is a finite $\cO$-algebra, $\Omega_{R/\cO}$ is a finitely generated $R$-module, and hence $\Phi_{\lambda,R}$ is a finitely generated $\cO$-module. It follows that $\Phi_{\lambda,R}$ is indeed finite.
\end{proof}

For the remainder of this section, let $R$ denote a finite, local $\cO$-algebra. Assume that $R$ is $\varpi$-torsion free and reduced. This implies that the map $R \to R[1/\varpi] = R\otimes_\cO E$ is injective, and $R[1/\varpi]$ is a product of finite extensions of $E$. Also fix an augmentation $\lambda:R\to \cO$. 

For any finitely generated $R$-module $M$, let $M^* = \Hom_{\cO}(M,\cO)$, with its natural $R$-module structure. We say that $M$ is \emph{reflexive} if the natural map $M\to M^{**}$ is an isomorphism (note that this is equivalent to simply saying that $M\cong M^{**}$, since $M^*$, and hence $M^{**}$, is reflexive for any $M$). Note that if $M$ is finitely generated over $R$, then it is also finitely generated over $\cO$, so it follows that $M$ is a reflexive $R$-module if and only if it is finitely generated over $\cO$ and $\varpi$-torsion free.

For any reflexive $R$-module $M$, note that we have natural isomorphisms
\[(M[\ker \lambda])\otimes_\cO E \cong (M\otimes_\cO E)/\ker \lambda \cong (M/\ker\lambda)\otimes_{\cO}E.\]
We will define the $\lambda$-rank of $M$ to be $\rank_\lambda M = \dim_E (M[\ker \lambda])\otimes_\cO E$, so that if $\rank_\lambda M = d$ then  $M[\ker \lambda] \cong \cO^d$ as an $\cO$-module. It is not hard to see that $\rank_\lambda M = \rank_\lambda M^*$.

Now let $\omega = R^* = \Hom_{\cO}(R,\cO)$ be the dualizing module of $R$. Then $\omega$ is a reflexive $R$-module and we have $\omega^* = R$.

For any reflexive $R$-module $M$ we have a natural, $R$-equivariant perfect pairing $\langle\ ,\ \rangle_M:M^*\times M\to \cO$ (given by $\langle f,x\rangle_M = f(x)$). Note that this induces an isomorphism $M[\ker \lambda] \cong \Hom_{\cO}(M^*/\ker \lambda,\cO)$. Also, identifying $M$ with $M^{**}$, we have $\langle x,y\rangle_{M^*} = \langle y,x\rangle_M$. 

We now make the following definition:

\begin{definition}\label{def:cng}
	For any reflexive $R$-module $M$, the \emph{congruence module of $M$ with respect to $\lambda$}, denoted $\cng{\lambda}(M)$, is the cokernel of the composition
	\[\iota_{M,\lambda}:M[\ker \lambda]\to M\xrightarrow{\sim} \Hom_\cO(M^*,\cO) \to \Hom_{\cO}(M^*[\ker \lambda],\cO).\]
\end{definition}

The map $\iota_{M,\lambda}$ is an isomorphism after inverting $\varpi$, so it is always injective, and $\cng{\lambda}(M)$ is always finite.

\begin{lemma}\label{lem:cng det}
	If $M$ is reflexive and $x_1,\ldots,x_d$ (resp. $f_1,\ldots,f_d$) is an $\cO$-basis for $M[\ker \lambda]$ (resp. $M^*[\ker \lambda]$) then $\cng{\lambda}(M)$ is isomorphic to the cokernel of the map $\cO^d\to \cO^d$ given by the matrix $(\langle f_i,x_j\rangle_M)_{ij}$. In particular, $\card{\cng{\lambda}(M)} = \card{\cO/\left(\det(\langle f_i,x_j\rangle_M)_{ij}\right)}$.
\end{lemma}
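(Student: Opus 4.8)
The plan is to unwind Definition \ref{def:cng} entirely in terms of the chosen bases and read off the map as a matrix. First I would fix the $\cO$-basis $x_1,\dots,x_d$ of $M[\ker\lambda]$ and the $\cO$-basis $f_1,\dots,f_d$ of $M^*[\ker\lambda]$. By the discussion preceding the lemma, the perfect pairing $\langle\ ,\ \rangle_M : M^*\times M\to\cO$ induces an isomorphism $M[\ker\lambda]\xrightarrow{\sim}\Hom_\cO(M^*/\ker\lambda,\cO)$; I would compose this with the isomorphism $\Hom_\cO(M^*/\ker\lambda,\cO)\xrightarrow{\sim}\Hom_\cO(M^*[\ker\lambda],\cO)$ obtained by observing that $M^*$ reflexive gives $(M^*[\ker\lambda])\otimes_\cO E\cong(M^*/\ker\lambda)\otimes_\cO E$, hence both are free $\cO$-modules of the same rank $d$, and the natural map between them (again induced by the perfect pairing on $M^*$) is injective with finite cokernel — but in fact here one wants the composite $\iota_{M,\lambda}$ exactly as written. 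The key point is simply that, having chosen the bases, the $\cO$-linear map $\iota_{M,\lambda}:M[\ker\lambda]\to\Hom_\cO(M^*[\ker\lambda],\cO)$ sends $x_j$ to the functional $f\mapsto\langle f,x_j\rangle_M$, so its matrix with respect to the basis $(x_j)$ on the source and the dual basis $(f_i^\vee)$ of $\Hom_\cO(M^*[\ker\lambda],\cO)$ on the target has $(i,j)$-entry $\langle f_i,x_j\rangle_M$.

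Concretely, the steps are: (1) identify, via the definition, $\iota_{M,\lambda}(x_j)$ with the element of $\Hom_\cO(M^*[\ker\lambda],\cO)$ given by evaluation $f\mapsto f(x_j)=\langle f,x_j\rangle_M$ — this is just the chain $M[\ker\lambda]\hookrightarrow M\xrightarrow{\sim}\Hom_\cO(M^*,\cO)\to\Hom_\cO(M^*[\ker\lambda],\cO)$ traced through, where the middle isomorphism is $x\mapsto(f\mapsto f(x))$ because $M$ is reflexive, and the last arrow is restriction; (2) expand $\iota_{M,\lambda}(x_j)=\sum_i \langle f_i,x_j\rangle_M\, f_i^\vee$ where $\{f_i^\vee\}$ is the $\cO$-dual basis of $\Hom_\cO(M^*[\ker\lambda],\cO)$ corresponding to $\{f_i\}$; (3) conclude that $\cng\lambda(M)=\coker(\iota_{M,\lambda})$ is the cokernel of the $\cO$-linear map $\cO^d\to\cO^d$ with matrix $A=(\langle f_i,x_j\rangle_M)_{ij}$. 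For the final sentence, since $\iota_{M,\lambda}$ is injective (as noted after Definition \ref{def:cng}, because it becomes an isomorphism after inverting $\varpi$), $A$ has nonzero determinant, and the structure theorem for modules over the PID-like ring $\cO$ — or more simply Smith normal form over the DVR $\cO$ — gives $\card{\coker A}=\card{\cO/(\det A)}$.

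I do not anticipate a serious obstacle here; this is essentially a bookkeeping lemma. The one place to be slightly careful is step (1): one must check that the composite really is "evaluation at $x_j$ restricted to $M^*[\ker\lambda]$", i.e. that the isomorphism $M\xrightarrow{\sim}M^{**}=\Hom_\cO(M^*,\cO)$ appearing in the definition is indeed the canonical one $x\mapsto\mathrm{ev}_x$ (it is, by the definition of reflexivity used in the paper), and that restricting $\mathrm{ev}_x$ along $M^*[\ker\lambda]\hookrightarrow M^*$ gives precisely $f\mapsto\langle f,x\rangle_M$ for $f\in M^*[\ker\lambda]$, which is immediate from $\langle f,x\rangle_M=f(x)$. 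The only other point worth stating explicitly is the elementary fact that for an injective endomorphism of a finite free module over a DVR, the cokernel has length equal to the valuation of the determinant; this follows from Smith normal form and is where the "In particular" clause comes from.
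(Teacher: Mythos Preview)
Your proposal is correct and follows essentially the same approach as the paper: identify $M[\ker\lambda]$ and $\Hom_\cO(M^*[\ker\lambda],\cO)$ with $\cO^d$ via the chosen bases and observe that $\iota_{M,\lambda}$ becomes the matrix $(\langle f_i,x_j\rangle_M)_{ij}$. The paper's proof is in fact just two sentences and omits the Smith normal form justification of the ``In particular'' clause that you spell out; your slightly meandering first paragraph (introducing and then abandoning the identification with $\Hom_\cO(M^*/\ker\lambda,\cO)$) could be dropped without loss.
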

\begin{proof}
	If we identify $M[\ker \lambda]$ and $M^*[\ker \lambda]$ with $\cO^d$ using the bases, $\iota_{M,\lambda}$ is identified with the map $\cO^d\to \cO^d$ given by the matrix $(\langle f_i,x_j\rangle_M)_{ij}$. The claim follows.
\end{proof}

In light of this lemma, in the case where $\rank_\cO M[\ker \lambda] = \rank_\cO M^*[\ker \lambda] = 1$, we define the ideal $\cngid{\lambda}(M) = (\langle f_1,x_1\rangle_M)\subseteq \cO$, where $M[\ker \lambda] = \cO x_1$ and $M^*[\ker \lambda] = \cO f_1$. Then we have $\cng{\lambda}(M)\cong \cO/\cngid{\lambda}(M)$, and $\cngid{\lambda}(M)$ is independent of the choice of $x_1$ and $f_1$.

\begin{lemma}\label{lem:cng det self-dual}
	If $M$ is a reflexive $R$-module with an $R$-equivariant perfect pairing $\langle\ ,\ \rangle:M\times M\to \cO$ (i.e. $M$ is \emph{self-dual}) and $x_1,\ldots,x_d$ is a basis for $M[\ker \lambda]$ then $\cng{\lambda}(M)$ is isomorphic to the cokernel of the map $\cO^d\to \cO^d$ given by the matrix $(\det(\langle x_i,x_j\rangle)_{ij}$. In particular, $\card{ \cng{\lambda}(M) }= \card{\cO/\left(\det(\langle x_i,x_j\rangle)_{ij}\right)}$. (Note that $\cng{\lambda}(M)$ is independent of this choice of pairing $\langle\ ,\ \rangle$, by definition.)
\end{lemma}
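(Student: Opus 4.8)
The plan is to reduce the statement to Lemma \ref{lem:cng det} by using the self-duality pairing to manufacture a compatible pair of $\cO$-bases of $M[\ker\lambda]$ and $M^*[\ker\lambda]$. First I would observe that giving an $R$-equivariant perfect pairing $\langle\ ,\ \rangle\colon M\times M\to\cO$ is the same as giving an $R$-module isomorphism $\psi\colon M\xrightarrow{\sim} M^*=\Hom_\cO(M,\cO)$, namely $\psi(x)=\langle x,-\rangle$. Indeed, $R$-linearity of $\psi$ for the standard action $(r\cdot f)(y)=f(ry)$ on $M^*$ is precisely the $R$-equivariance of the pairing, and bijectivity of $\psi$ is precisely perfectness. (One could equally use $\psi(x)=\langle -,x\rangle$; this only transposes the matrix appearing below, and hence changes nothing about its determinant or cokernel.)

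Next, since $\psi$ is an isomorphism of $R$-modules it carries the submodule $M[\ker\lambda]$ isomorphically onto $M^*[\ker\lambda]$. Hence, given an $\cO$-basis $x_1,\dots,x_d$ of $M[\ker\lambda]$, the elements $f_i:=\psi(x_i)=\langle x_i,-\rangle$ for $i=1,\dots,d$ form an $\cO$-basis of $M^*[\ker\lambda]$. I would then apply Lemma \ref{lem:cng det} to this pair of bases: it yields that $\cng{\lambda}(M)$ is the cokernel of the map $\cO^d\to\cO^d$ with matrix $(\langle f_i,x_j\rangle_M)_{ij}$, where $\langle\ ,\ \rangle_M\colon M^*\times M\to\cO$ is the evaluation pairing. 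Unwinding, $\langle f_i,x_j\rangle_M=f_i(x_j)=\langle x_i,x_j\rangle$, so this matrix is exactly $(\langle x_i,x_j\rangle)_{ij}$, which gives the first assertion; the statement about $\card{\cng{\lambda}(M)}$ then follows from the corresponding clause of Lemma \ref{lem:cng det}. Finally, the independence of $\cng{\lambda}(M)$ from the chosen pairing is immediate, since by Definition \ref{def:cng} the module $\cng{\lambda}(M)$ depends only on $M$ and $\lambda$.

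There is essentially no serious obstacle here; the argument is a direct specialization of Lemma \ref{lem:cng det}. The only points deserving a moment's care are (i) verifying that $\psi$ is genuinely $R$-linear for the standard $R$-module structure on $M^*$, so that it does restrict to an isomorphism on the $\ker\lambda$-torsion submodules, and (ii) tracking which slot of $\langle\ ,\ \rangle$ is used, so that $\langle f_i,x_j\rangle_M$ unwinds to $\langle x_i,x_j\rangle$ rather than its transpose — and, as noted, even a transpose would leave $\card{\cng{\lambda}(M)}$ unchanged.
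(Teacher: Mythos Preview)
Your proposal is correct and follows essentially the same approach as the paper: define the $R$-module isomorphism $\varphi\colon M\xrightarrow{\sim}M^*$ induced by the self-duality pairing, set $f_i=\varphi(x_i)$, and apply Lemma~\ref{lem:cng det}. The paper's proof is just a terser version of what you wrote.
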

\begin{proof}
	The pairing $\langle\ ,\ \rangle$ induces an isomorphism $\varphi:M\xrightarrow{\sim}M^*$ of $R$-modules for which $\langle x,y\rangle = \langle \varphi(x),y\rangle_M$. Letting $f_i = \varphi(x_i)$, the claim now follows from the previous lemma.
\end{proof}

\begin{lemma}\label{lem:cng direct sum}
	If $M_0$ and $M_1$ are reflexive $R$-modules, then $\cng{\lambda}(M_0\oplus M_1) \cong \cng{\lambda}(M_0)\oplus \cng{\lambda}(M_1)$.
\end{lemma}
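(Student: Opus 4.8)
The plan is to show that the congruence module construction $\cng{\lambda}(-)$ commutes with finite direct sums by unwinding Definition \ref{def:cng} and checking that each functor appearing in the composition $\iota_{M,\lambda}$ is additive. First I would observe that $M \mapsto M^* = \Hom_\cO(M,\cO)$ is additive, so $(M_0\oplus M_1)^* \cong M_0^* \oplus M_1^*$ naturally as $R$-modules; likewise $M \mapsto M[\ker\lambda]$ is left exact hence additive, giving $(M_0\oplus M_1)[\ker\lambda] \cong M_0[\ker\lambda]\oplus M_1[\ker\lambda]$, and similarly on the dual side. The natural reflexivity isomorphism $M \xrightarrow{\sim} M^{**}$ and the evaluation map $\Hom_\cO(M^*,\cO) \to \Hom_\cO(M^*[\ker\lambda],\cO)$ are both compatible with direct sums as well (the latter because restricting a homomorphism along the inclusion $M^*[\ker\lambda]\hookrightarrow M^*$ respects the decomposition). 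Therefore the composite map $\iota_{M_0\oplus M_1,\lambda}$ is, under these identifications, literally the direct sum $\iota_{M_0,\lambda}\oplus\iota_{M_1,\lambda}$.

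The second and final step is to take cokernels: since cokernel is additive (it is a colimit), $\coker(\iota_{M_0,\lambda}\oplus\iota_{M_1,\lambda}) \cong \coker(\iota_{M_0,\lambda})\oplus\coker(\iota_{M_1,\lambda})$, which by definition is $\cng{\lambda}(M_0)\oplus\cng{\lambda}(M_1)$. This gives the claimed isomorphism $\cng{\lambda}(M_0\oplus M_1)\cong\cng{\lambda}(M_0)\oplus\cng{\lambda}(M_1)$.

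The only point requiring a small amount of care — and the step I would flag as the main (mild) obstacle — is verifying that the \emph{natural} maps making up $\iota_{M,\lambda}$ respect the direct-sum decomposition strictly, i.e. that all the identifications are the canonical ones rather than merely abstract isomorphisms, so that the two composites agree on the nose. Concretely one must check that the perfect pairing $\langle\ ,\ \rangle_{M_0\oplus M_1}$ is the orthogonal direct sum of $\langle\ ,\ \rangle_{M_0}$ and $\langle\ ,\ \rangle_{M_1}$ (which is immediate from its definition $\langle f,x\rangle = f(x)$), and that the inclusion $(M_0\oplus M_1)^*[\ker\lambda]\hookrightarrow (M_0\oplus M_1)^*$ is the direct sum of the corresponding inclusions. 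None of this is deep; it is a routine diagram chase, and once the naturality is in place the statement follows formally from additivity of the constituent functors. Alternatively, one could give a one-line proof via Lemma \ref{lem:cng det}: choosing $\cO$-bases of $M_i[\ker\lambda]$ and $M_i^*[\ker\lambda]$ and concatenating them produces bases of $(M_0\oplus M_1)[\ker\lambda]$ and its dual for which the matrix $(\langle f_i,x_j\rangle)_{ij}$ is block-diagonal with blocks the matrices for $M_0$ and $M_1$, whence the cokernel is the direct sum of the two cokernels.
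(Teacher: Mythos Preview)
Your proposal is correct and follows essentially the same approach as the paper: both arguments verify that each constituent of $\iota_{M,\lambda}$ is compatible with the direct-sum decomposition, conclude that $\iota_{M_0\oplus M_1,\lambda}=\iota_{M_0,\lambda}\oplus\iota_{M_1,\lambda}$, and then take cokernels. Your alternative via Lemma~\ref{lem:cng det} and the block-diagonal matrix is a nice bonus, but the main line of argument matches the paper's proof.
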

\begin{proof}
	Let $M = M_0\oplus M_1$ and note that we have natural isomorphisms $M[\ker \lambda] = (M_0\oplus M_1)[\ker \lambda] = M_0[\ker \lambda]\oplus M_1[\ker \lambda]$, $M^* = \Hom_{\cO}(M_0\oplus M_1,\cO) \cong M_0^*\oplus M_1^*$, $M^*[\ker \lambda] =  M_0^*[\ker \lambda]\oplus M_1^*[\ker \lambda]$ and $\Hom_\cO(M^*[\ker \lambda],\cO)\cong \Hom_\cO(M_0^*[\ker \lambda],\cO)\oplus \Hom_\cO(M_1^*[\ker \lambda],\cO)$.
	
	Under these identifications we have 
	\[\iota_{M,\lambda} = \iota_{M_0,\lambda}\oplus \iota_{M_1,\lambda}:M_0[\ker \lambda]\oplus M_1[\ker \lambda]\to \Hom_\cO(M_0^*[\ker \lambda],\cO)\oplus \Hom_\cO(M_1^*[\ker \lambda],\cO),\]
	from which the claim follows.
\end{proof}

\begin{lemma}\label{lem:cng dual}
	For any reflexive module $M$, $\cng{\lambda}(M)\cong \cng{\lambda}(M^*)$.
\end{lemma}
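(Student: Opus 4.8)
The plan is to show that the map $\iota_{M^*,\lambda}$ defining $\cng{\lambda}(M^*)$ is, after the canonical identifications, exactly the $\cO$-linear dual of the map $\iota_{M,\lambda}$ defining $\cng{\lambda}(M)$, and then to invoke the fact that a map between free $\cO$-modules of equal finite rank has cokernel isomorphic to that of its dual.

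First I would unwind the definitions. Since $M$ is reflexive and $M[\ker\lambda]$, $M^*[\ker\lambda]$ are finite free $\cO$-modules of the same rank $d=\rank_\lambda M = \rank_\lambda M^*$, applying $(-)^\vee := \Hom_\cO(-,\cO)$ to $\iota_{M,\lambda}\colon M[\ker\lambda]\to \Hom_\cO(M^*[\ker\lambda],\cO)$ and using the canonical double-duality for finite free $\cO$-modules yields a map $M^*[\ker\lambda]\to \Hom_\cO(M[\ker\lambda],\cO)$. A direct check on elements — tracing through $\iota_{M,\lambda}(x) = (f\mapsto f(x))$ together with the identity $\langle x,y\rangle_{M^*} = \langle y,x\rangle_M$ under $M\cong M^{**}$ recorded just before Definition \ref{def:cng} — shows that this dual map coincides with $\iota_{M^*,\lambda}$; that is, $(\iota_{M,\lambda})^\vee = \iota_{M^*,\lambda}$. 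Hence $\cng{\lambda}(M^*) = \coker(\iota_{M^*,\lambda}) = \coker\big((\iota_{M,\lambda})^\vee\big)$.

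It then remains to see that for an injective $\cO$-linear map $\phi\colon \cO^d\to \cO^d$ with finite cokernel one has $\coker(\phi^\vee)\cong \coker(\phi)$. I would obtain this by applying $\Hom_\cO(-,\cO)$ to the exact sequence $0\to \cO^d\xrightarrow{\phi}\cO^d\to \coker(\phi)\to 0$: since $\coker(\phi)$ is finite, $\Hom_\cO(\coker\phi,\cO)=0$, and since $\cO^d$ is free, $\Ext^1_\cO(\cO^d,\cO)=0$, so the long exact sequence gives $\coker(\phi^\vee)\cong \Ext^1_\cO(\coker\phi,\cO)$; finally, for any finite $\cO$-module $C$ one has $\Ext^1_\cO(C,\cO)\cong C$ (from $0\to\cO\to E\to E/\cO\to 0$ and $\Hom_\cO(C,E/\cO)\cong C$ over the DVR $\cO$). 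Alternatively, and even more elementarily, one can put $\phi$ in Smith normal form $\phi = UDV$ with $U,V\in \GL_d(\cO)$, observe that $\phi^\vee$ has the same Smith normal form $D$, and read off $\coker\phi\cong\coker\phi^\vee\cong\bigoplus_i\cO/(d_i)$; combined with Lemma \ref{lem:cng det} this also shows the matrix $(\langle f_i,x_j\rangle_M)_{ij}$ and its transpose have isomorphic cokernels, giving a second route to the claim.

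I do not expect a serious obstacle here; the only point requiring care is the bookkeeping in the first step — keeping straight the several canonical isomorphisms (reflexivity $M\cong M^{**}$, double-duality of the finite free modules $M[\ker\lambda]$ and $M^*[\ker\lambda]$, and the symmetry $\langle x,y\rangle_{M^*}=\langle y,x\rangle_M$) and verifying that their composite really identifies $(\iota_{M,\lambda})^\vee$ with $\iota_{M^*,\lambda}$ on the nose, rather than up to an unwanted sign or twist.
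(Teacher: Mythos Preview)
Your proposal is correct. The paper's own proof is exactly your ``second route'': it invokes Lemma~\ref{lem:cng det} together with the identity $\langle f_i,x_j\rangle_M = \langle x_j,f_i\rangle_{M^*}$, so that the matrix computing $\cng{\lambda}(M^*)$ is the transpose of the one computing $\cng{\lambda}(M)$, and a matrix and its transpose have isomorphic cokernels. Your primary approach --- identifying $\iota_{M^*,\lambda}$ with $(\iota_{M,\lambda})^\vee$ directly and then appealing to $\coker(\phi^\vee)\cong\coker(\phi)$ --- is a coordinate-free repackaging of the same argument; once you choose bases it becomes the transpose statement, so there is no genuine difference in content.
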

\begin{proof}
	Follows from Lemma \ref{lem:cng det}, and the fact that $\langle f_i,x_j\rangle_M = \langle x_j,f_i\rangle_{M^*}$.
\end{proof}

\begin{lemma}\label{lem:cng(R)}
	$\cngid{\lambda}(R)=\lambda(R[\ker \lambda])$.
\end{lemma}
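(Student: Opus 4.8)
The plan is to unwind the definition of $\cngid{\lambda}(R)$ together with that of the evaluation pairing $\langle\ ,\ \rangle_R$, and to identify explicitly the two rank-one $\cO$-modules $R[\ker\lambda]$ and $\omega[\ker\lambda]$ that enter it, where $\omega = R^\ast = \Hom_\cO(R,\cO)$.

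First I would check that $\cngid{\lambda}(R)$ is even defined, i.e. that $\rank_\lambda R = \rank_\lambda\omega = 1$. Since $R$ is $\varpi$-torsion free and reduced, $R[1/\varpi]$ is a product of finite field extensions of $E$; as $\lambda\colon R\to\cO$ is surjective, its base change $R[1/\varpi]\to E$ is a surjection of $E$-algebras, hence projection onto one factor, and that factor must be $E$ itself. Thus $(R\otimes_\cO E)/\ker\lambda\cong E$, so by the natural isomorphism $(R[\ker\lambda])\otimes_\cO E\cong (R\otimes_\cO E)/\ker\lambda$ recorded earlier we get $\rank_\lambda R = 1$, whence also $\rank_\lambda\omega = \rank_\lambda R = 1$. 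In particular $R[\ker\lambda]$ is a free $\cO$-module of rank one; write $R[\ker\lambda] = \cO x_1$. Then $\lambda(R[\ker\lambda]) = \lambda(\cO x_1) = \cO\,\lambda(x_1) = (\lambda(x_1))$, using $\cO$-linearity of $\lambda$.

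Next I would identify $\omega[\ker\lambda]$. By definition of the $R$-module structure on $\omega$, a functional $f\in\Hom_\cO(R,\cO)$ lies in $\omega[\ker\lambda]$ exactly when $f(s\cdot x) = 0$ for all $s\in\ker\lambda$ and $x\in R$; taking $x = 1$, and conversely noting that $sx\in\ker\lambda$, this is equivalent to $f|_{\ker\lambda} = 0$, i.e. to $f$ factoring through $R/\ker\lambda$. Since $\lambda$ is surjective it induces an isomorphism $R/\ker\lambda\xrightarrow{\sim}\cO$, so $\omega[\ker\lambda] = \Hom_\cO(R/\ker\lambda,\cO) = \cO\lambda$, and we may take $f_1 = \lambda$ as a generator. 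Finally, since $\langle\ ,\ \rangle_R\colon\omega\times R\to\cO$ is evaluation, $\langle f_1,x_1\rangle_R = \lambda(x_1)$, so $\cngid{\lambda}(R) = (\langle f_1,x_1\rangle_R) = (\lambda(x_1)) = \lambda(R[\ker\lambda])$, as desired.

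There is no serious obstacle here; the proof is a direct unwinding. The only points that need a modicum of care are the identification $\omega[\ker\lambda] = \cO\lambda$, which uses the surjectivity of $\lambda$ (so that $R/\ker\lambda\cong\cO$), and the preliminary verification that $\rank_\lambda R = 1$ so that $\cngid{\lambda}(R)$ is defined in the first place.
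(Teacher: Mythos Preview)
Your proof is correct and follows essentially the same approach as the paper's: write $R[\ker\lambda]=\cO\alpha$, identify $R^*[\ker\lambda]=\omega[\ker\lambda]=\Hom_\cO(R/\ker\lambda,\cO)=\cO\lambda$, and then compute $\cngid{\lambda}(R)=(\langle\lambda,\alpha\rangle_R)=(\lambda(\alpha))=\lambda(R[\ker\lambda])$. You have simply added the extra verification that $\rank_\lambda R=1$ and spelled out the identification of $\omega[\ker\lambda]$ in more detail, both of which the paper leaves implicit.
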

\begin{proof}
	Let $R[\ker \lambda] = \cO\alpha$, and note that $R^*[\ker \lambda] = \omega[\ker \lambda] = \Hom_{\cO}(R/\ker \lambda,\cO) = \cO\lambda\subseteq R^*$. Then by Lemma \ref{lem:cng det}, $\cngid{\lambda}(R) = (\langle \lambda,\alpha\rangle_R) =(\lambda(\alpha)) = \lambda(R[\ker\lambda])$.
\end{proof}

Lemma \ref{lem:cng(R)} shows that our definition agrees with the definition used in \cite{Wiles}. We also note the following, which implies that our definition of $\Psi_\lambda(M)$ agrees with the module $\Omega$ considered in \cite[Theorem 2.4]{DiamondMult1}:

\begin{lemma}
Let $I_\lambda = R[\ker \lambda]\subseteq R$. For any reflexive $R$-module $M$ we have $\ds\cng{\lambda}(M)\cong \frac{M}{M[\ker\lambda]+M[I_\lambda]}$.
\end{lemma}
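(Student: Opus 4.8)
The plan is to unwind all the definitions and show both sides are computed by the same pairing data. Recall that $\cng{\lambda}(M)$ is the cokernel of $\iota_{M,\lambda}\colon M[\ker\lambda]\to \Hom_\cO(M^*[\ker\lambda],\cO)$, which by Lemma \ref{lem:cng det} is represented (after choosing $\cO$-bases $x_1,\dots,x_d$ of $M[\ker\lambda]$ and $f_1,\dots,f_d$ of $M^*[\ker\lambda]$) by the matrix $A=(\langle f_i,x_j\rangle_M)_{ij}$, so $\cng{\lambda}(M)\cong\cO^d/A\cO^d$. The goal is to identify $M/(M[\ker\lambda]+M[I_\lambda])$ with the same cokernel.

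First I would analyze the submodule $M[I_\lambda]$. Since $I_\lambda=R[\ker\lambda]$ is an ideal of $R$, killed by $\ker\lambda$, it is an $\cO$-module (via $\lambda$); write $I_\lambda=\cO\alpha$ as in Lemma \ref{lem:cng(R)}, so that $M[I_\lambda]=\{x\in M:\alpha x=0\}=M[\alpha]$. The key observation is that $M[\ker\lambda]$ and $M[I_\lambda]$ together span $M\otimes_\cO E$: indeed $\ker\lambda\cdot I_\lambda=0$ inside $R$, while $\ker\lambda+I_\lambda$ contains an element that is a unit after inverting $\varpi$ (one can see this because $R[1/\varpi]$ is a product of fields and $\lambda$ picks out one factor, on which $I_\lambda[1/\varpi]$ is the whole factor and $\ker\lambda[1/\varpi]$ is the complementary factors). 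Hence $M/(M[\ker\lambda]+M[I_\lambda])$ is a finite $\cO$-module, and the natural map $M[\ker\lambda]\hookrightarrow M\twoheadrightarrow M/M[I_\lambda]$ becomes an isomorphism after $\otimes_\cO E$; so $\cng{\lambda}(M)$ and $M/(M[\ker\lambda]+M[I_\lambda])$ are both presented as cokernels of maps $\cO^d\to\cO^d$ that agree rationally, and it remains to match them integrally.

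The bridge is the perfect pairing $\langle\ ,\ \rangle_M\colon M^*\times M\to\cO$, which induces $M/M[I_\lambda]\cong M/M[\alpha]\hookrightarrow \Hom_\cO(M^*[\alpha],\cO)$ — but $M^*[\alpha]=M^*[I_\lambda]$ and, again using that $M^*[\ker\lambda]$ and $M^*[I_\lambda]$ span $M^*\otimes_\cO E$ with the first factor being the $\lambda$-part, the annihilator/duality bookkeeping identifies $M^*[I_\lambda]$ rationally with a complement of $M^*[\ker\lambda]$; more usefully, the perfect pairing restricts to a perfect pairing between $M^*[\ker\lambda]$ and $M/M[I_\lambda]$ up to the cokernel we want. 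Concretely, I would show that under $\langle\ ,\ \rangle_M$ the composite $M[\ker\lambda]\hookrightarrow M\to M/M[I_\lambda]\to\Hom_\cO(M^*[\ker\lambda],\cO)$ is exactly $\iota_{M,\lambda}$ (this is essentially the definition of $\iota_{M,\lambda}$, since $\Hom_\cO(M^*,\cO)\cong M$ via the pairing and one then restricts to $M^*[\ker\lambda]$), and that $M/M[I_\lambda]\to\Hom_\cO(M^*[\ker\lambda],\cO)$ is injective with the same image as $M/(M[\ker\lambda]+M[I_\lambda])$ mapping in — giving $M/(M[\ker\lambda]+M[I_\lambda])\cong\coker(\iota_{M,\lambda})=\cng{\lambda}(M)$.

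The main obstacle I anticipate is the integral (as opposed to rational) bookkeeping in the last step: one must verify that the map $M/M[I_\lambda]\to\Hom_\cO(M^*[\ker\lambda],\cO)$ is actually an isomorphism (not merely a rational isomorphism), so that the cokernel of $M[\ker\lambda]\to M/M[I_\lambda]$ literally equals the cokernel of $\iota_{M,\lambda}$. For this I would use reflexivity of $M$ together with the fact that $\alpha$ is a nonzerodivisor on the $\varpi$-torsion-free module $M/(\text{complementary part})$, or argue directly that $M^*[\ker\lambda]$ is an $\cO$-direct summand of $M^*$ with complement $M^*[I_\lambda]$ (which holds because $R[1/\varpi]$ splits as a product and $R$, being reduced and $\varpi$-torsion-free, has $R^*$ splitting compatibly after localizing — though integrally one only gets a finite-index inclusion, and that finite index is precisely reabsorbed into the congruence module). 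Handling this discrepancy cleanly — perhaps by choosing bases adapted to both decompositions and computing determinants as in Lemma \ref{lem:cng det} — is where the real care is needed; everything else is a formal diagram chase.
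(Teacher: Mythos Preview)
Your overall architecture is right, and you correctly isolate the crux: one must show that the map
\[
M/M[I_\lambda]\longrightarrow \Hom_\cO(M^*[\ker\lambda],\cO)
\]
is an \emph{isomorphism}, not just a rational isomorphism. But you stop at precisely this point, and the suggestions you offer do not close it. The idea that $M^*[\ker\lambda]$ and $M^*[I_\lambda]$ are complementary $\cO$-direct summands is false in general (as you yourself note, one only gets a finite-index inclusion), and appealing to determinants via Lemma~\ref{lem:cng det} would at best show the two modules have the same order, not that they are isomorphic.

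The missing observation is this. Write $\frp_\lambda=\ker\lambda$. Since $M^*[\frp_\lambda]$ is \emph{saturated} in $M^*$ (it is the kernel of an $R$-action on a $\varpi$-torsion-free module), the restriction map $M\cong\Hom_\cO(M^*,\cO)\to\Hom_\cO(M^*[\frp_\lambda],\cO)$ is \emph{surjective}. It clearly factors through $M/\frp_\lambda M$, and after inverting $\varpi$ the resulting surjection becomes an isomorphism (this is the rational splitting you already know). Hence $\Hom_\cO(M^*[\frp_\lambda],\cO)$ is exactly the maximal $\varpi$-torsion-free quotient of $M/\frp_\lambda M$, i.e.\ $M$ modulo the saturation of $\frp_\lambda M$. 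Now the saturation of $\frp_\lambda M$ in $M$ is $M[I_\lambda]$: indeed $\frp_\lambda M\subseteq M[I_\lambda]$ since $I_\lambda\cdot\frp_\lambda=0$, $M[I_\lambda]$ is itself saturated (kernel of an action on a torsion-free module), and rationally $(\frp_\lambda M)[1/\varpi]=(M[1/\varpi])[I_\lambda]$ because $R[1/\varpi]$ splits as a product. This gives the desired isomorphism $M/M[I_\lambda]\xrightarrow{\sim}\Hom_\cO(M^*[\frp_\lambda],\cO)$ directly, after which your diagram chase finishes the proof. This is the route the paper takes; your sketch was circling it but never landed on the saturation argument.
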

\begin{proof}
Write $\frp_\lambda = \ker\lambda$. We first claim that the natural map $M\xrightarrow{\sim} \Hom_\cO(M^*,\cO)\to \Hom_{\cO}(M^*[\frp_\lambda],\cO)$ induces an isomorphism $M/M[I_\lambda]\xrightarrow{\sim}\Hom_{\cO}(M^*[\frp_\lambda],\cO)$.

The map $M\to \Hom_{\cO}(M^*[\frp_\lambda],\cO)$ clearly factors through $M/\frp_\lambda M$. Also as $M^*[\frp_\lambda]$ is saturated in $M^*$, this map is surjective, so we have a surjection $M/\frp_\lambda M\onto \Hom_{\cO}(M^*[\frp_\lambda],\cO)$.

It is easy to see that this map is an isomorphism after inverting $\varpi$ (this follows from the fact that $M[1/\varpi] = \frp_\lambda M[1/\varpi]\oplus I_\lambda M[1/\varpi]$ and $M^*[1/\varpi] = \frp_\lambda M^*[1/\varpi]\oplus I_\lambda M^*[1/\varpi]$), and so the map identifies $\Hom_{\cO}(M^*[\frp_\lambda],\cO)$ with the maximal $\varpi$-torsion free quotient of $M/\frp_\lambda M$. But now as $M[1/\varpi]/\frp_\lambda M[1/\varpi] = (M[1/\varpi])[I_\lambda]$, $M[I_\lambda]$ is the saturation of $\frp_\lambda M$ in $M$ as an $\cO$-module, and so $M/M[I_\lambda]$ is the maximal $\varpi$-torsion free quotient of $M/\frp_\lambda M$. Thus we indeed have the desired natural isomorphism $M/M[I_\lambda]\xrightarrow{\sim} \Hom_{\cO}(M^*[\frp_\lambda],\cO)$.

But now identifying $M/M[I_\lambda] = \Hom_{\cO}(M^*[\frp_\lambda],\cO)$ under this isomorphism, the map $\iota_{M,\lambda}:M[\frp_\lambda]\to  \Hom_{\cO}(M^*[\frp_\lambda],\cO)$ from Definition \ref{def:cng} is identified with the map $M[\frp_\lambda]\into M\onto M/M[I_\lambda]$ and so 
\[\cng{\lambda}(M) = \coker(\iota_{M,\lambda}) = \frac{M/M[I_\lambda]}{(M[\frp_\lambda]+M[I_\lambda])/M[I_\lambda]}\cong \frac{M}{M[\frp_\lambda]+M[I_\lambda]}.\]
\end{proof}

Our main goal for this paper is to compare the orders of $\Phi_{\lambda,R}$, $\cng{\lambda}(R)$ and $\cng{\lambda}(M)$ (for a specific choice of $M$). We thus make the following definition:

\begin{definition}\label{WilesDefect}
	Let $R$ denote a finite, local $\cO$-algebra, which is $\varpi$-torsion free and reduced. Let $M$ be a reflexive $R$-module with $\rank_\lambda M = d>0$.

	 The \emph{Wiles defect} of $M$ is the quantity
	\[\delta_{\lambda,R}(M) = \frac{d\log\card{\Phi_{\lambda,R}}-\log\card{\cng{\lambda}(M)}}{d \log\card{\cO/p}},\] 
	which we will denote by $\delta_\lambda(M)$ when $R$ is clear from context.

\end{definition}

Note that $\delta_{\lambda,R}(M)$ is a rational number. Also note that by this definition, if $E'/E$ is a finite extension with ring of integers $\cO'$, then $\delta_{\lambda\otimes \cO',R\otimes \cO'}(M \otimes \cO') = \delta_{\lambda,R}(M)$. Essentially, $\delta_{\lambda,R}(M)$ is independent of our choice of coefficient ring. This  allows us to assume that $\cO$ is as large as needed in the arguments below.

Also note that for any $k\ge 1$, $\delta_{\lambda,R}(M^{\oplus k}) = \delta_{\lambda,R}(M)$ by Lemma \ref{lem:cng direct sum}.

As shown in \cite{DiamondMult1}, we have:

\begin{theorem}
	Let $R$ be as above, and let $M$ be a reflexive $R$-module. Let $d = \rank_\lambda M$, and assume that $M$ has rank at most $d$ at each generic point of $R$. Then $\delta_{\lambda,R}(M)\ge 0$, and we have $\delta_{\lambda,R}(M) = 0$ if and only if $R$ is a complete intersection and $M$ is free of rank $d$ over $R$.
\end{theorem}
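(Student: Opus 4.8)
The plan is to reduce the statement to the case $M = R$ (the Wiles--Lenstra numerical criterion, Theorem \ref{numerical criterion}) by a careful bookkeeping of how $\Phi$, $\cng{\lambda}$ and the notion of "freeness/complete intersection" behave under the operations in Diamond's setup. The cleanest route follows the argument of \cite{DiamondMult1}. First I would recall that $\delta_{\lambda,R}(M)\ge 0$ is equivalent to the inequality $\card{\cng{\lambda}(M)}\le \card{\Phi_{\lambda,R}}^d$; I would prove this by the standard trick of choosing a surjection $\mc{O}[[x_1,\dots,x_n]] \onto R$ and noting that $\Phi_{\lambda,R}$ is computed from a presentation, while $\cng{\lambda}(M)$ is a determinant of a $d\times d$ pairing matrix by Lemma \ref{lem:cng det} --- so one needs a lower bound on the valuation of that determinant in terms of the cotangent space, exactly as in Wiles' proof of the $d=1$ case.

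Next, for the equality case, one direction is easy: if $R$ is a complete intersection and $M\cong R^d$, then $\cng{\lambda}(M)\cong\cng{\lambda}(R)^{\oplus d}$ by Lemma \ref{lem:cng direct sum}, so $\card{\cng{\lambda}(M)} = \card{\cng{\lambda}(R)}^d$, and then $\card{\cng{\lambda}(R)} = \card{\Phi_{\lambda,R}}$ by Theorem \ref{numerical criterion}, giving $\delta_{\lambda,R}(M)=0$. The substantive direction is the converse: assuming $\delta_{\lambda,R}(M)=0$, deduce that $R$ is a complete intersection and $M$ is free. Here I would run Diamond's argument: reduce to the case where $R$ is a power-series quotient, use the Tate/Lenstra description of $\cng{\lambda}(R)$ and the Fitting-ideal interpretation of $\Phi_{\lambda,R}$, and then use the hypothesis that $M$ has rank $\le d$ at every generic point of $R$ to control the pairing matrix. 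The point is that equality forces no "slack" anywhere: each generic point must contribute exactly $d$ to the rank of $M$ and $R$ must be a complete intersection so that the Lenstra inequality at the level of $R$ is itself an equality; a depth/reflexivity argument (using that $M$ is reflexive and $R$ Cohen--Macaulay once it's a complete intersection) then upgrades "generically free of rank $d$" to "free of rank $d$".

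Concretely, the key steps in order: (i) prove $\card{\cng{\lambda}(M)}\le \card{\Phi_{\lambda,R}}^d$ via a presentation of $R$ and Lemma \ref{lem:cng det}, establishing $\delta_{\lambda,R}(M)\ge 0$; (ii) show equality forces $\card{\cng{\lambda}(R)}=\card{\Phi_{\lambda,R}}$ (by factoring the pairing matrix estimate through the subring generated by $R$ acting on $M$, or by comparing with the rank-one invariants), hence $R$ is a complete intersection and $R\to R$ is "as good as possible" by Theorem \ref{numerical criterion}; (iii) with $R$ now a complete intersection, hence Gorenstein and Cohen--Macaulay of dimension $1$, use that $M$ is reflexive (equivalently finite free over $\mc{O}$ and $\varpi$-torsion-free) together with the generic-rank-$d$ hypothesis to conclude $M$ is maximal Cohen--Macaulay of generic rank $d$ over the one-dimensional Gorenstein ring $R$, and then that the equality $\card{\cng{\lambda}(M)}=\card{\cng{\lambda}(R)}^d$ forces $M\cong R^d$ (e.g. via Fitting ideals of $M$ and the fact that $\cng{\lambda}$ detects the discrepancy between $M$ and $R^d$); and (iv) assemble these to get $\delta_{\lambda,R}(M)=0 \Leftrightarrow$ the stated conditions.

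The main obstacle I anticipate is step (iii): passing from the numerical equality $\card{\cng{\lambda}(M)}=\card{\Phi_{\lambda,R}}^d$ to the structural conclusion "$M$ free of rank $d$." One has to rule out the possibility that $M$ is only generically free but genuinely non-free while still having a congruence module of the extremal size --- this is where the hypothesis "rank at most $d$ at each generic point" is essential, and where a careful localization-at-height-one / Fitting-ideal argument is needed, rather than anything formal. I would handle it by writing $\card{\cng{\lambda}(M)}$ as a product of local contributions over the generic points (using that after inverting $\varpi$, $R[1/\varpi]$ is a product of fields and $M[1/\varpi]$ decomposes accordingly), observing that each factor is bounded below by the corresponding factor for $R^d$ with equality iff $M$ is free at that point, and combining with the complete-intersection (hence reduced, generically reduced, equidimensional) structure of $R$ from step (ii) to propagate freeness from the generic points to all of $R$ via reflexivity and depth. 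Once $M$ is free, its rank must be $d$ since $\rank_\lambda M = d$, completing the argument.
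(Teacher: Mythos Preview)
The paper does not give its own proof of this theorem: it is stated immediately after the sentence ``As shown in \cite{DiamondMult1}, we have:'' and is simply attributed to Diamond's paper (specifically \cite[Theorem~2.4]{DiamondMult1}). So there is no in-paper argument for you to be compared against; your proposal is effectively an attempt to reconstruct Diamond's proof.

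As a sketch of Diamond's argument your outline has the right overall shape, but step~(iii) is where your plan diverges from what Diamond actually does and where your reasoning is vaguest. You propose to decompose $\card{\cng{\lambda}(M)}$ as a product of local contributions over generic points and argue that equality at each forces freeness there, then propagate via reflexivity. Diamond instead works more directly: once $R$ is a complete intersection (hence Gorenstein of dimension~$1$), he shows that the numerical equality forces $M$ to be maximal Cohen--Macaulay over $R$, and then invokes the Auslander--Buchsbaum formula to conclude $M$ has projective dimension~$0$, i.e.\ is free. Your ``localization plus Fitting ideal'' route is plausible but would need substantially more care to make rigorous---in particular, the claim that each local factor is bounded below with equality iff $M$ is free at that point is not obviously true without already knowing something like Gorensteinness, and ``propagating freeness from generic points via reflexivity'' is not a standard move over a one-dimensional ring that may have singularities. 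If you want a self-contained proof, I'd recommend following Diamond's depth/Auslander--Buchsbaum argument rather than the localization approach you sketch.
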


In the situation we will consider in this paper, $R$ will not be a complete intersection (or even Gorenstein) and $M$ will not be free over $R$, and so both of the Wiles defects $\delta_{\lambda,R}(R)$ and $\delta_{\lambda,R}(M)$ will be positive. Our goal will be to determine these defects.

We will use the following theorem below to compare $\delta_{\lambda,R}(R)$ and $\delta_{\lambda,R}(M)$:

\begin{theorem}\label{thm:surjective}
	Let $M$ be a reflexive $R$-module \emph{of $\lambda$-rank $1$}. Then $\card{\cng{\lambda}(R)}\ge \card{\cng{\lambda}(M)}$, and so $\delta_{\lambda,R}(R)\le \delta_{\lambda,R}(M)$. Equality holds if the natural map $R\to \End_{R}(M)$ is an isomorphism.
\end{theorem}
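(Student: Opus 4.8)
The plan is to analyze the congruence module $\cng{\lambda}(M)$ for a reflexive $R$-module $M$ of $\lambda$-rank $1$ by exploiting the map $R\to \End_R(M)$, together with the functoriality of congruence modules under the inclusion of a reduced finite $\cO$-algebra into a finite extension. First I would set $S = \End_R(M)$, which is a finite $R$-algebra, and observe that since $M$ is $\varpi$-torsion free and $R[1/\varpi]$ is a product of fields, $S[1/\varpi] = \End_{R[1/\varpi]}(M[1/\varpi])$ splits as a product according to the field factors of $R[1/\varpi]$ over which $M$ has nonzero rank; in particular $S$ is again a finite, reduced, $\varpi$-torsion free local $\cO$-algebra (local because $R$ is and $S$ is module-finite over $R$ with the same residue field, using that $M$ is a faithful module with nonzero reduction), and $M$ is naturally an $S$-module. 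Because $\rank_\lambda M = 1$, the augmentation $\lambda$ extends uniquely to an augmentation $\widetilde\lambda : S\to\cO$: indeed $M[\ker\lambda]\otimes_\cO E\cong (M\otimes_\cO E)/\ker\lambda$ is one-dimensional over $E$, so the factor of $S[1/\varpi]$ acting on it is $E$ itself, giving the extension. Thus we are in the situation $R\subseteq S$ (well, $R\to S$, which on the relevant components is injective) with compatible augmentations.

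The key comparison step is to show $\cng{\widetilde\lambda}(S)\cong\cng{\lambda}(M)$ as $S$-modules (hence as $\cO$-modules), and then to show $|\cng{\widetilde\lambda}(S)|\le|\cng{\lambda}(R)|$ with equality iff $R=S$. For the first isomorphism: $M$ is a reflexive, and in fact (after identifying the relevant component) a faithful, $S$-module of $\widetilde\lambda$-rank $1$; one wants to argue $M$ is self-dual or at least that $\cng{\widetilde\lambda}(M)=\cng{\widetilde\lambda}(S)$, using Lemma \ref{lem:cng det} and the perfect pairings $\langle\ ,\ \rangle_M$, $\langle\ ,\ \rangle_{M^*}$. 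Concretely, pick $\cO$-generators $x_1$ of $M[\ker\lambda]$ and $f_1$ of $M^*[\ker\lambda]$; the congruence ideal is $(\langle f_1,x_1\rangle_M)=(f_1(x_1))$. On the other hand $\cngid{\widetilde\lambda}(S)=\widetilde\lambda(S[\ker\widetilde\lambda])$ by Lemma \ref{lem:cng(R)}. The link is that multiplication gives $S[\ker\widetilde\lambda]\cdot M\subseteq M[\ker\lambda]$ and the pairing $f_1(x_1)$ measures exactly the index, via the isomorphism $M[\ker\lambda]\cong\Hom_\cO(M^*/\ker\lambda,\cO)$ noted before Definition \ref{def:cng}; I would make this precise by choosing a uniformizing parameter and tracking lengths. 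For the inequality $|\cng{\widetilde\lambda}(S)|\le|\cng{\lambda}(R)|$: the composite $R[\ker\lambda]\hookrightarrow S[\ker\widetilde\lambda]$ (valid since $\ker\widetilde\lambda\cap R=\ker\lambda$) gives $\lambda(R[\ker\lambda])\subseteq\widetilde\lambda(S[\ker\widetilde\lambda])$, i.e. $\cngid{\lambda}(R)\subseteq\cngid{\widetilde\lambda}(S)$, hence $|\cng{\lambda}(R)|\ge|\cng{\widetilde\lambda}(S)| = |\cng{\lambda}(M)|$. When $R\to\End_R(M)$ is an isomorphism, $S=R$, $\widetilde\lambda=\lambda$, and the two congruence modules coincide, giving equality. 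The statement about Wiles defects is then immediate from Definition \ref{WilesDefect}, since $\log|\Phi_{\lambda,R}|$ is unchanged and $\log|\cng{\lambda}(R)|\ge\log|\cng{\lambda}(M)|$ with $d=1$.

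The main obstacle I anticipate is making the identification $\cng{\widetilde\lambda}(S)\cong\cng{\lambda}(M)$ rigorous: one needs that $M$, as an $S$-module on its supporting component, is "close enough" to being free of rank one — more precisely, that the natural pairing computation of $\cng{\lambda}(M)$ via $(\langle f_i,x_j\rangle_M)$ agrees with the computation of $\cngid{\widetilde\lambda}(S)$ via $\widetilde\lambda(S[\ker\widetilde\lambda])$. This is not automatic because $M$ need not be a free $S$-module; however, since $\rank_\lambda M = 1$ the modules $M[\ker\lambda]$, $S[\ker\widetilde\lambda]$, $\omega_S[\ker\widetilde\lambda]$ are all free of rank one over $\cO$, and the relevant ideals are all principal, so the comparison reduces to a single valuation computation which one can do after tensoring with $E$ and tracking denominators. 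An alternative, possibly cleaner, route is to prove directly that for a reflexive $R$-module $M$ of $\lambda$-rank one, $\cng{\lambda}(M)$ depends only on the $S=\End_R(M)$-module structure and equals $\cng{\widetilde\lambda}(S)$ whenever $M$ is a reflexive $S$-module containing $S$ with $S/M$ supported away from $\ker\widetilde\lambda$ — i.e. isolate a lemma that $\cng{\lambda}$ is insensitive to such modifications — and I would structure the write-up around that lemma. Either way, the Wiles-defect inequality falls out formally once the congruence-module comparison is in hand.
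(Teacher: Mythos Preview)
Your approach introduces $S = \End_R(M)$ and aims to factor the comparison through $\cng{\widetilde\lambda}(S)$, but the central identification $\cng{\widetilde\lambda}(S) \cong \cng{\lambda}(M)$, which you rightly flag as ``the main obstacle,'' is never actually established. You sketch two possible routes but carry out neither, and both run into the difficulty that $M$ need not be free (or even close to free) over $S$; pushing either sketch through essentially forces one back into the direct argument the paper uses. There is also a structural issue you gloss over: under the theorem's hypotheses (only $\lambda$-rank $1$, with no bound on the generic rank of $M$), $S[1/\varpi] \cong \prod_i \End_{E_i}(M_i)$ is a product of matrix algebras and need not be commutative, so the congruence-module formalism of Section~\ref{sec:congruence} is not even set up for $S$. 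Your claim that $S$ is a ``reduced \ldots\ local $\cO$-algebra'' admitting a unique extension $\widetilde\lambda$ of $\lambda$ therefore requires additional hypotheses or justification not present in the proposal.

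The paper's proof avoids $S$ entirely until the final step. It writes $R[\ker\lambda] = \cO\alpha$ and $M^*[\ker\lambda] = \cO\beta$, notes that $\beta:M\to\cO$ is surjective, and computes directly: since $\alpha M \subseteq M[\ker\lambda]$,
\[
\cngid{\lambda}(M) = \beta(M[\ker\lambda]) \supseteq \beta(\alpha M) = \lambda(\alpha)\beta(M) = (\lambda(\alpha)) = \cngid{\lambda}(R),
\]
which gives the inequality at once. For the equality statement the paper argues contrapositively: if the inclusion is strict then $\beta(\alpha M)\subseteq \varpi\beta(M[\ker\lambda])$, and since $\beta$ is injective on $M[\ker\lambda]$ this forces $\alpha M\subseteq \varpi M$. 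Then $x\mapsto \varpi^{-1}\alpha x$ is a well-defined element of $\End_R(M)$ not in the image of $R$ (as $\alpha\notin\varpi R$), so $R\to\End_R(M)$ fails to be an isomorphism. This is a short, elementary argument that never requires analysing $\End_R(M)$ as a ring in its own right.
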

\begin{proof}
	First note that $M^*[\ker \lambda]\subseteq M^* = \Hom_\cO(M,\cO)$ consists precisely of the $\cO$-module homomorphisms $f:M\to\cO$ satisfying $f(rx) = \lambda(r)f(x)$ for all $r\in R$, $x\in M$.
	
	Let $R[\ker \lambda] = \cO\alpha$ and $M^*[\ker \lambda] = \cO\beta$ for some $\alpha\in R$, $\beta\in M^*$. Regarding $\beta$ as a map $\beta:M\to \cO$, note that $\beta$ must be surjective (as otherwise we would have $\varpi^{-1}\beta\in M^*[\ker \lambda]$, contradicting the choice of $\beta$).
	
	By Lemmas \ref{lem:cng det} and \ref{lem:cng(R)}, $\cngid{\lambda}(R) = (\lambda(\alpha))$ and $\cngid{\lambda}(M) = \beta(M[\ker \lambda])$. Now as $\alpha\in R[\ker \lambda]$, we clearly have $\alpha M\subseteq M[\ker \lambda]$ and thus
	\[\cngid{\lambda}(M) = \beta(M[\ker \lambda]) \supseteq \beta(\alpha M) = \lambda(\alpha)\beta(M) = \cO\lambda(\alpha) = \cngid{\lambda}(R),\]
	which automatically implies $\card{\cng{\lambda}(R)}\ge \card{\cng{\lambda}(M)}$.
	
	Moreover, if we have $\card{\cng{\lambda}(R)}>\card{\cng{\lambda}(M)}$, then we must have $\beta(\alpha M)\subsetneq\beta(M[\ker \lambda])$, and so $\beta(\alpha M) \subseteq \varpi\beta(M[\ker \lambda]) = \beta(\varpi M[\ker \lambda])$. As $\alpha M\subseteq M[\ker\lambda]$ and $\beta|_{M[\ker\lambda]}:M[\ker\lambda]\to\cO$ is injective (as $M[\ker\lambda]$ is a free rank $1$ $\cO$-module) this implies that $\alpha M\subseteq \varpi M[\ker \lambda] \subseteq \varpi M$.
	
	As $M$ is $\varpi$-torsion free, it follows that we may define an endomorphism $f:M\to M$ by $f(x) = \varpi^{-1}\alpha x$. Clearly $f\in \End_R(M)$ and if $i:R\to \End_R(M)$ is the natural map we have $i(\alpha) = \varpi f$. In particular $i(\alpha)\in \varpi\End_R(M)$. But now as $R[\ker \lambda]$ is saturated in $R$, $\alpha\not\in \varpi R$, and so $i:R\to \End_R(M)$ cannot be an isomorphism.
	
	Thus if we assume that $i$ is an isomorphism, it follows that $\card{\cng{\lambda}(R) }= \card{\cng{\lambda}(M)}$.
\end{proof}

\begin{remark}
It is worth pointing out that the converse to the second claim in Theorem \ref{thm:surjective} is not true. That is, it is possible that $\rank_\lambda M = 1$ and $\card{\cng{\lambda}(R)} = \card{\cng{\lambda}(M)}$ for some $\lambda$ (and in fact, for \emph{every} $\lambda$) but $R\to \End_R(M)$ fails to be an isomorphism.

To see this, let $\widetilde{R}$ be any reduced, local $\varpi$-torsion free $\cO$-algebra, and let $\widetilde{\lambda}_1:\widetilde{R}\to\cO^{(1)},\ldots, \widetilde{\lambda}_r:\widetilde{R}\to \cO^{(r)}$ be all of the augmentations of $\widetilde{R}$ (where each $\cO^{(i)}$ is a finite extension of $\cO$). For the sake of simplicity assume that $\cO^{(1)} = \cdots = \cO^{(r)}  = \cO$ (and so $\widetilde{R}[1/\varpi] \cong E^r$). For each $i$, write $\widetilde{R}[\ker \widetilde{\lambda}_i] = \cO\alpha_i$.

Now let $R\subseteq \widetilde{R}$ be the $\cO$-subalgebra of $\widetilde{R}$ generated by $\alpha_1,\ldots,\alpha_r$. It is not hard to see that $R$ is also a reduced, local $\varpi$-torsion free $\cO$-algebra, that $R[1/\varpi]= \widetilde{R}[1/\varpi]\cong E^r$ and that the augmentations of $R$ are all of the form $\lambda_i:=\widetilde{\lambda}_i|_{R}:R\to\cO$. But one can now easily check that $R[\ker \lambda_i] = \cO\alpha_i = \widetilde{R}[\ker \widetilde{\lambda}_i] = \widetilde{R}[\ker \lambda_i]$ for all $i$. So treating $\widetilde{R}$ as an $R$-module we now see that $\rank_{\lambda_i}\widetilde{R} = 1$ and $\cng{\lambda_i}(R) = \cO/(\lambda_i(\alpha_i)) = \cng{\widetilde{\lambda}_i}(\widetilde{R})$ for all $i$. On the other hand we clearly have $\widetilde{R}\subseteq \End_R(\widetilde{R})$ so provided that $R\ne \widetilde{R}$ (that is, that $\widetilde{R}$ is not generated as an $\cO$-algebra by $\alpha_1,\ldots,\alpha_r$) the map $R\to \End_R(\widetilde{R})$ cannot be an isomorphism. 

It is easy to find examples of rings $\widetilde{R}$ for which $R\ne \widetilde{R}$, so in general the hypothesis that $R\to \End_R(M)$ is an isomorphism is strictly stronger than the hypothesis that $\card{\cng{\lambda}(R)} = \card{\cng{\lambda}(M)}$ for some, or even all, $\lambda$.
\end{remark}

\section{Galois deformation theory}\label{sec_deformation_theory}

\subsection{Generalities}

We  use section 5 of  \cite{Thorne}  as reference for the  material recalled in this subsection.

Let $p$ be an odd prime, and let $E$ be a coefficient field with valuation ring $\cO$ and residue field $k$. We fix a continuous, absolutely irreducible representation $\overline{\rho} : G_{\Q} \to \GL_2(k)$ and assume that $\det \overline{\rho}=\epsilon$, the cyclotomic character. We will assume that $k$ contains the eigenvalues of all elements in the image of $\overline{\rho}$. We also fix a finite set $S$ of primes containing $p$, and all primes at which $\overline{\rho}$ is ramified (we also allow $S$ to contain finitely many other primes in addition to these).

Let $q \in S$. We write $\cD_q^\square : \CNL_{\cO} \to \mathrm{Sets}$ for the functor that associates to $R \in \mathrm{CNL}_{\cO}$ the set of all continuous homomorphisms $r : G_{\Q_q} \to \GL_2(R)$ such that $r \mod \ffrm_R = \overline{\rho}|_{G_{\Q_q}}$ and $\det r$ is the cyclotomic character $\epsilon$. It is easy to see that $\cD_q^\square$ is represented by an object $R_q^{\square} \in \CNL_{\cO}$.
\begin{definition}\label{def_gen}
Let $q \in S$. A local deformation problem for $\overline{\rho}|_{G_{\Q_q}}$ is a subfunctor $\cD_q \subset \cD_q^\square$ satisfying the following conditions:
\begin{itemize}
\item $\cD_q$ is represented by a quotient $R_q$ of $R_q^\square$.
\item For all $R \in \CNL_{\cO}$, $a \in \ker(\GL_2(R) \to \GL_2(k))$ and $r \in \cD_q(R)$, we have $a r a^{-1} \in \cD_q(R)$.
\end{itemize}
\end{definition}
We will write $\rho_q^\square : G_{\Q_q} \to \GL_2(R_q^\square)$ for the universal lifting. If a quotient $R_q$ of $R_q^\square$ corresponding to a local deformation problem $\cD_q$ has been fixed, we will write $\rho_q : G_{\Q_q} \to \GL_2(R_q)$ for the universal lifting of type $\cD_q$.

We recall \cite[Lemma~5.12]{Thorne}.

\begin{lemma}\label{lem_local}
Let $R_q \in \mathrm{CNL}_{\cO}$ be a quotient of $R_q^\square$ satisfying the following conditions:
\begin{enumerate}
\item The ring $R_q$ is reduced, and not isomorphic to $k$.
\item Let $r : G_{\Q_q} \to \GL_2(R_q)$ denote the specialization of the universal lifting, and let $a \in \ker(\GL_2(R_q) \to \GL_2(k))$. Then the homomorphism $R_q^\square \to R_q$ associated to the representation $a r a^{-1}$ by universality factors through the canonical projection $R_q^\square \to R_q$.
\end{enumerate}
Then the subfunctor of $\cD_q^\square$ defined by $R_q$ is a local deformation problem.
\end{lemma}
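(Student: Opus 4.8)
\emph{Plan.} I would verify the two conditions of Definition~\ref{def_gen} for the subfunctor $\cD_q \subseteq \cD_q^\square$ ``defined by $R_q$'', i.e.\ the subfunctor with $\cD_q(R) = \{\, r \in \cD_q^\square(R) : \text{the classifying map } R_q^\square \to R \text{ of } r \text{ factors through } R_q \,\}$. The first condition is immediate: since $\cD_q^\square$ is represented by $R_q^\square$, the lifts whose classifying map factors through the quotient $R_q$ are in natural bijection with $\Hom_{\CNL_\cO}(R_q,R)$, so $\cD_q$ is represented by $R_q$ and is a subfunctor of $\cD_q^\square$. For the second condition, note first that for $a \in \ker(\GL_2(R) \to \GL_2(k))$ the conjugate $a r a^{-1}$ has the same reduction mod $\mathfrak m_R$ and the same determinant $\epsilon$ as $r$, so $a r a^{-1} \in \cD_q^\square(R)$; the real content is to show $a r a^{-1} \in \cD_q(R)$, and this is where hypotheses (1) and (2) enter.

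The strategy is to encode conjugation as a single universal operation. Put $T := R_q^\square[[X_{11},X_{12},X_{21},X_{22}]]$ and let $A := \mathrm{Id} + (X_{ij}) \in \ker(\GL_2(T) \to \GL_2(k))$. Pushing the universal framed lifting $\rho_q^\square$ forward to $T$ and conjugating by $A$ yields an element of $\cD_q^\square(T)$, hence a classifying map $\Theta : R_q^\square \to T$. By construction, for any $R \in \CNL_\cO$, any $r \in \cD_q^\square(R)$ with classifying map $g : R_q^\square \to R$, and any $a \in \ker(\GL_2(R) \to \GL_2(k))$, the classifying map of $a r a^{-1}$ is $h \circ \Theta$, where $h : T \to R$ restricts to $g$ on $R_q^\square$ and sends $X_{ij} \mapsto (a-\mathrm{Id})_{ij}$. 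Now if $r \in \cD_q(R)$, then $g$ kills $I := \ker(R_q^\square \onto R_q)$, so $h$ factors through $\overline T := T/IT = R_q[[X_{ij}]]$; writing $\overline\Theta : R_q^\square \to \overline T$ for the induced reduction of $\Theta$, it therefore suffices to show that $\overline\Theta$ kills $I$ — a statement about $\overline\Theta$ alone, independent of $R$, $r$, $a$.

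To prove $\overline\Theta(I) = 0$, fix $y \in I$. For every $a \in \ker(\GL_2(R_q) \to \GL_2(k))$, substituting $X_{ij} \mapsto (a-\mathrm{Id})_{ij}$ into $\overline\Theta(y) \in R_q[[X_{ij}]]$ produces the image of $y$ under the classifying map $R_q^\square \to R_q$ of $a \rho_q a^{-1}$, where $\rho_q$ is the universal type-$\cD_q$ lifting over $R_q$; by hypothesis (2) this classifying map factors through $R_q^\square \onto R_q$, hence kills $I \ni y$, so the substitution gives $0$. As $a$ ranges over $\ker(\GL_2(R_q) \to \GL_2(k))$, the entries of $a - \mathrm{Id}$ range over all of $\mathfrak m_{R_q}$, so $\overline\Theta(y)$ is a power series over $R_q$ vanishing at every point of $\mathfrak m_{R_q}^{\oplus 4}$. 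I would then invoke the commutative-algebra fact that this forces $\overline\Theta(y)=0$: if $A$ is a complete Noetherian local ring that is reduced and not equal to its residue field $k$, then any $F \in A[[X_1,\dots,X_n]]$ with $F(\mathbf b) = 0$ for all $\mathbf b \in \mathfrak m_A^{\oplus n}$ vanishes. Indeed, ``reduced and $A \ne k$'' forces $\mathfrak m_A$ not to be a minimal prime (otherwise $A$ would be reduced and Artinian, hence $=k$), so every minimal prime $\mathfrak p$ of $A$ has $\dim A/\mathfrak p \ge 1$; reducing mod each $\mathfrak p$, choosing $0 \ne t \in \mathfrak m_{A/\mathfrak p}$, evaluating at the $t^n$, and using continuity of power series together with the integral-domain property lets one strip off the coefficients one at a time and conclude $F \equiv 0 \bmod \mathfrak p$ for every $\mathfrak p$; flatness of $A[[X]]$ over $A$ and $\bigcap_\mathfrak p \mathfrak p = 0$ then give $F=0$. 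Applying this with $A = R_q$ completes the argument.

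The main obstacle is precisely this last commutative-algebra input, and in particular recognizing that ``reduced and $\ne k$'' is exactly the hypothesis that makes it work — it fails for Artinian rings such as $k[\epsilon]/(\epsilon^2)$, so hypothesis (1) cannot simply be dropped — and then carrying out the evaluation-and-continuity argument over the domains $A/\mathfrak p$. Everything else is formal bookkeeping with classifying maps and the universal conjugation $\Theta$, and the determinant and residual-reduction conditions cause no difficulty since conjugation preserves them.
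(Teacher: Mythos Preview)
The paper does not prove this lemma; it merely recalls it from \cite[Lemma~5.12]{Thorne}. Your argument is correct and is essentially the standard proof found in the literature: universalize the conjugating matrix by passing to $T=R_q^\square[[X_{ij}]]$, reduce to showing that the composite $\overline\Theta\colon R_q^\square\to R_q[[X_{ij}]]$ kills $I=\ker(R_q^\square\to R_q)$, use hypothesis~(2) to see that $\overline\Theta(y)$ vanishes at every point of $\mathfrak m_{R_q}^{\oplus 4}$, and then invoke the commutative-algebra fact that a power series over a reduced complete Noetherian local ring of positive dimension which vanishes on all of $\mathfrak m^{\oplus n}$ is identically zero. One cosmetic remark: the appeal to flatness of $A[[X]]$ over $A$ at the end is unnecessary---once each coefficient of $F$ lies in every minimal prime $\mathfrak p$, reducedness of $A$ gives $F=0$ coefficientwise.
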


We now specialize to the situation of this paper.
Consider our fixed $\rhobar$, and the integers $N$ and $Q$, with $N|N(\rhobar)|NQ$, that we have fixed. (Recall that all the primes in $Q$ are Steinberg for $\rhobar$.)  Let $S$ be the set of primes which divide $N(\rhobar)pQ$. We briefly recall the local and global deformation rings that we consider in the paper, and some of their known properties. For convenience, we will also assume that the coefficient ring $E$ contains all $(q^2-1)$th roots of unity, for each $q|Q$ (recall that as shown in the previous section, the Wiles defect is unaffected by expanding the coefficient ring, so this assumption is harmless).

\subsection{Local deformation rings}\label{Subsec:LocDefRings}

For any prime $q$, let $R_q^\square\in\CNLO$ be the full framed local deformation ring of $\rhobar|_{G_q}$  for lifts of fixed determinant $\epsilon$.  We consider  below quotients of $R_q^\square$
which are defined as in \cite{Kisin} as reduced, flat over $\cO$ quotients of $R_q^\square$, and are  characterised by the $\overline \Q_p$-valued points of their generic fiber; \cite{Kisin} computes  the  dimension of generic fibers of the quotients we consider, and  proves that they are regular.  The quotients we consider satisfy Lemma \ref{lem_local} and thus  give rise to deformation conditions in the sense of Definition \ref{def_gen}. We need finer integral properties of the quotients we consider that are stated in Proposition \ref{prop:R_v-Atq}.

\medskip

\noindent For $\ell|N$,
\begin{itemize}

\item  let $R_\ell^{\min}$ be the minimally ramified  quotient of $R_\ell$ (which parametrises minimally ramified deformations)  and let $R_p^{\fl}$ be the flat quotient of $R_p$ (which parametrises flat deformations). 
\end{itemize}

\noindent For $q|Q$, the Steinberg quotient  $R_q^\St$ of $R_q^\square$  is defined as follows:

\begin{itemize}

\item If $q$ is ramified in $\rhobar$, then the Steinberg quotient $R_q^\St$ is defined as the minimally ramified  quotient~of~$R_q^\square$.

\item If $q$ is unramified in $\rhobar$ and $q\not\equiv -1\pmod p$, then  the Steinberg quotient $R_q^\St$  is the  unique reduced quotient of $R_q^\square$ characterised by the fact that the $L$-valued points of its generic fiber, for any finite extension $L/E$, correspond to representations which up to an unramified twist are  of the form $\begin{pmatrix} \epsilon &*\\0& 1\end{pmatrix}$.

\item If $q$ is unramified in $\rhobar$ and $q\equiv -1\pmod p$, then $\overline{\rho}(\Frob_q)$ takes two distinct eigenvalues $\alpha_q, \beta_q \in k$ such that $\alpha_q / \beta_q = q$ (note that $\alpha_q,\beta_q=\pm 1$).  
 Thus $\rhobar|_{G_q}$ in some basis $e_1,e_2$ is of the form 
  \[  \left( \begin{array}{cc} \varepsilon \chibar&  0\\ 0 & \chibar \end{array} \right)\]  where  we assume that $\chibar(\Frob_q)=\beta_q$. We define $R_q^{\St(\beta_q)}$ as the unique reduced quotient of $R_q^\square$ characterised by the fact that the $L$-valued points of its generic fiber, for any finite extension $L/E$, correspond to representations of the form 
 \[ \left( \begin{array}{cc} \varepsilon \chi& \ast \\ 0 & \chi\end{array} \right)\]
such that $\chi$ reduces to $\chibar$.

Below we will fix a $\beta_q$, and, for uniformity of notation, generally we write $R_q^\St$ for $R_q^{\St(\beta_q)}$.

\end{itemize}

\noindent For $q|Q$ unramified in $\rhobar$, the unipotent quotient  $R_q^\uni$ of $R_q^\square$ is defined as
\begin{itemize}
\item 
the unique reduced quotient of $R_q^\square$ such that $\Spec R_q^\uni=\Spec R_q^\St\cup\Spec R_q^{\min}$ inside $\Spec R_q^\square$, where here $R_q^{\min}$ is the quotient of $R_q^\square$ that parametrized unramified deformations.

If $q\equiv -1\pmod p$, we write $R_q^{\uni(\beta_q)}$ for $R_q^\uni$ if we wish to indicate the $\beta_q$ used to define $R_q^\st$.
\end{itemize}

\medskip

Recall from \cite[\S~3]{BLGHT2011} that $R\in\CNL_\cO$ is called {\em geometrically integral}, if for all finite field extensions $E'\supset E$ with integral closure $\cO'$ of $\cO$, the ring $R\otimes_\cO\cO'$ is an integral domain. 

The following result summarizes basic ring theoretic properties of the $R_q^?$.
\begin{proposition}\label{prop:R_v-Atq}
The following hold:
\begin{enumerate}
\item We have $R^{\fl}_q \cong \cO[[X_1,X_2,X_3,X_4]]$ for $q=p$ and $R^{\min}_q \cong \cO[[X_1,X_2,X_3]]$ for all $q\neq p$.
\item For $q\ne p$, the ring $R_q^\square$ is a complete intersection, reduced, and flat over $\cO$ of relative dimension $3$. 
\item For $q|Q$, the ring $R_q^{\St}$ is Cohen--Macaulay, flat of relative dimension $3$ over $\cO$ and geometrically integral, and if $q$ is not a trivial prime for $\rhobar$, we in fact have $R^{\St}_q\cong \cO[[X_1,X_2,X_3]]$.
\item For each $q|Q$ and each minimal prime $\frp$ of $R_q^\square$, $R_q^\square/\frp$ is flat over $\cO$ and geometrically integral.
\item For $q|Q$ such that $\rhobar$ is unramified,  the reduced ring $R_q^\uni$ is Cohen-Macaulay and flat over $\cO$ of relative dimension $3$.
\end{enumerate}
Moreover the rings $R_q^?$ in 1.--5. are the completion of a finite type $\cO$-algebra at a maximal ideal. 
\end{proposition}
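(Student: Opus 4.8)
The plan is to establish Proposition \ref{prop:R_v-Atq} by citing the explicit computations of local deformation rings in the work of Shotton \cite{Shotton} and Kisin \cite{Kisin}, and then assembling the ring-theoretic consequences. First I would fix notation: for a prime $q$, the framed local deformation ring $R_q^\square$ of $\rhobar|_{G_q}$ with fixed determinant $\epsilon$ is defined a priori as a quotient of a power series ring over $\cO$, and the quotients $R_q^{\min}$, $R_q^{\fl}$, $R_q^{\St}$, $R_q^{\uni}$ are defined as the reduced, $\cO$-flat quotients characterized by the $\overline{\bbQ}_p$-points of their generic fiber (so that Lemma \ref{lem_local} applies and they give local deformation problems). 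Parts (1) and the second clause of (3) are the assertions that, in the ``generic'' cases, these rings are themselves formally smooth over $\cO$: for $q = p$ the flat (Fontaine--Laffaille) deformation ring is $\cO[[X_1,X_2,X_3,X_4]]$ by \cite{Kisin} (or the standard Fontaine--Laffaille computation), for $q \ne p$ minimally ramified the ring is $\cO[[X_1,X_2,X_3]]$ again by a direct computation (e.g.\ \cite{Shotton} or \cite[\S 5]{Thorne}), and for $q \mid Q$ not a trivial prime the Steinberg ring is $\cO[[X_1,X_2,X_3]]$ by Shotton's explicit description.

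Next I would handle parts (2), (3), (4), (5) in the remaining (trivial prime) cases, which are the substantive ones. The key input is Shotton's \cite{Shotton} explicit presentation of $R_q^\square$ and of the Steinberg and unipotent quotients when $q$ is a trivial prime (so $q \equiv 1 \pmod p$ and $\rhobar|_{G_q}$ is trivial up to twist, or $q \equiv -1 \pmod p$ with the eigenvalue splitting as in \S\ref{Subsec:LocDefRings}). From Shotton's presentation one reads off directly: that $R_q^\square$ is a complete intersection, reduced, $\cO$-flat of relative dimension $3$ (part (2) — here one should note that for a trivial prime $R_q^\square$ is not regular but is still a complete intersection, being cut out by the ``correct'' number of equations in the $\square$-variables); that $R_q^{\St}$ is Cohen--Macaulay and $\cO$-flat of relative dimension $3$, and that after base change to $\cO'$ it remains an integral domain, i.e.\ is geometrically integral (part (3) — Cohen--Macaulayness and the domain property both being visible from the explicit equations, using e.g.\ Serre's criterion or a direct argument that the defining ideal is prime and the ring satisfies $(S_2)$); that each $R_q^\square/\frp$ for $\frp$ minimal is $\cO$-flat and geometrically integral (part (4) — the minimal primes of $R_q^\square$ are again listed explicitly in \cite{Shotton}); and that $R_q^{\uni}$, whose spectrum is by definition $\Spec R_q^{\St} \cup \Spec R_q^{\min}$ inside $\Spec R_q^\square$, is reduced, Cohen--Macaulay and $\cO$-flat of relative dimension $3$ (part (5) — here one checks that the union of the two components is connected in codimension one, so gluing two Cohen--Macaulay schemes of the same dimension along a Cohen--Macaulay subscheme of codimension one keeps the result Cohen--Macaulay; concretely this also falls out of Shotton's presentation).

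For the final sentence — that each $R_q^?$ in (1)--(5) is the completion of a finite type $\cO$-algebra at a maximal ideal — I would argue as follows. All the local deformation rings are quotients of $R_q^\square$, and $R_q^\square$ is the completion at a maximal ideal of the framed deformation ring of $\rhobar|_{G_q}$ over the finite-type base parametrizing lifts; more directly, $R_q^\square$ is a quotient of $\cO[[X_1,\dots,X_N]]$ by an ideal generated by polynomials (coming from the finitely many relations defining a representation of the profinite group $G_q$ with its finite topological generation modulo a suitable open subgroup, together with the determinant condition), so $R_q^\square$ is the completion at the augmentation ideal of a finite type $\cO$-algebra $A_q^\square$; the quotient presentations in \cite{Shotton} are by polynomial equations, so each $R_q^?$ is likewise the completion of $A_q^\square/(\text{polynomial relations})$, a finite type $\cO$-algebra, at a maximal ideal. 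Alternatively, since excellence and the property of being a complete intersection / Cohen--Macaulay / geometrically integral / reduced are all insensitive to completion at a maximal ideal for a finite type $\cO$-algebra, one just needs the existence of \emph{some} finite type model, which the explicit presentations provide.

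The main obstacle is part (5), and more precisely the Cohen--Macaulayness of $R_q^{\uni}$ at a trivial prime: one must show that gluing the Steinberg component and the unramified component of $\Spec R_q^\square$ yields a Cohen--Macaulay ring, which is not automatic for a reducible scheme and requires knowing how the two components meet (their scheme-theoretic intersection must itself be Cohen--Macaulay of one less dimension, i.e.\ the components are glued ``nicely''). I expect this, together with verifying geometric integrality of $R_q^{\St}$ in the $q \equiv -1 \pmod p$ case (where the residual representation is split rather than trivial), to be where the real content lies; both are handled by appealing to and slightly repackaging the explicit computations in \cite{Shotton}, so the proof will largely consist of carefully matching our normalizations (fixed determinant $\epsilon$, the choice of $\beta_q$) to those of loc.\ cit.\ and then quoting the relevant statements.
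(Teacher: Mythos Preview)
Your proposal is correct and follows essentially the same approach as the paper: cite the explicit computations of \cite{Shotton} (and \cite{CHT} for part (1), where the paper uses \cite[\S 2.4]{CHT} rather than \cite{Kisin}), and for part (5) verify Cohen--Macaulayness of $R_q^{\uni}$ via the gluing/intersection-sum argument, which the paper carries out later in the proof of Proposition~\ref{key-local-comp} using the exact sequence $0\to \cR/\cI_q^{\uni}\to R_q^{\unr}\times R_q^{\St}\to \cR/(\cI_q^{\St}+\cI_q^{\unr})\to 0$ together with \cite[Exercise~18.13]{Eisenbud}. The one point where the paper is slightly slicker is geometric integrality of $R_q^{\St}$: rather than checking primality of the defining ideal after base change, the paper observes that $R_q^{\St}\otimes_\cO\cO'$ is again a universal Steinberg ring (now over $\cO'$) and hence a domain by the same argument that shows $R_q^{\St}$ is.
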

\begin{proof}
Assertion 1 is from \cite[Sections~2.4.1 and~2.4.4]{CHT} and Assertion 2 from \cite[Theorem~2.5]{Shotton2016}, except that one has to modify the statements and proofs there by including the further condition that the determinant be $\epsilon$, which in turn results in a drop of dimensions by 1. Assertion 3 is from \cite[Section 5]{Shotton}, where here one has to impose the additional condition that the image of Frobenius has determinant $1$ and one has to modify the proofs accordingly. That $R_q^\St$ is geometrically integral is not stated explicitly in \cite{Shotton}. However the base changes $R_q^\St\otimes_\cO\cO'$ have again an interpretation as a universal Steinberg ring and are thus all integral domains. Assertion 4 follows similarly from the results in \cite[Section 5]{Shotton}.

Assertion 5 will be addressed in the proof of Proposition~\ref{key-local-comp}, where we give a description of $R_q^\uni$ as a quotient of a power series ring over $\cO$ by some explicit relations. In that proof we also recall more detailed information about $R_q^\St$ and $R_q^\square$; this would easily provide a direct proof of 2.~except for the reducedness.

In all cases it is obvious from the explicit nature of the rings $R_q^?$ given in the references that the rings are the completion of a ring of finite type over $\cO$ at a maximal ideal. 
\end{proof}

Define $R_q = R_q^\square$ for $q\in Q$, and further:
\begin{align*}
R_{\loc} &= \left[\widehat{\bigotimes_{q|Q}}R_q\right]\widehat{\otimes}\left[\widehat{\bigotimes_{\ell|N}}R^{\min}_\ell\right]\widehat{\otimes}R^{\fl}_p&
&\text{and,}&
R_{\loc}^{\St} &= \left[\widehat{\bigotimes_{q|Q}}R^{\St}_q\right]\widehat{\otimes}\left[\widehat{\bigotimes_{\ell|N}}R^{\min}_\ell\right]\widehat{\otimes}R^{\fl}_p.
\end{align*}

\begin{lemma}\label{Lemma-CompletedTensor}
Suppose $\cR_i$, $i=1,2$, lie in $\CNL_\cO$ and are flat over $\cO$. Let $\cR=\cR_1\widehat\otimes_\cO \cR_2$ be their completed tensor product. Then the following hold:
\begin{enumerate}
\item The ring $\cR$ is flat over $\cO$.
\item If both $\cR_i$ are Cohen-Macaulay, then so is $\cR$.
\item If both $\cR_i$ are Gorenstein, then so is $\cR$.
\item If both $\cR_i$ are local complete intersection rings, then so is $\cR$.
\item If both $\cR_i$ are reduced and if $\cR_2$ is the completion of a finite type $\cO$-algebra $R_2$ at a maximal ideal $\ffrm_2$, then $\cR$ is reduced. 
\item If both $\cR_i$ are geometrically integral, then so is $\cR$.
\item If the minimal primes of $\cR_1$ are $\frp_1,\ldots,\frp_r$ and the minimal primes of $\cR_2$ are $\frq_1,\ldots,\frq_s$ and each $\cR_1/\frp_i$ and $\cR_2/\frq_j$ is flat over $\cO$ and geometrically integral, then the distinct minimal primes of $\cR$ are $\frp_i\cR+\frq_j\cR$ for $i=1,\ldots,r$ and $j=1,\ldots,s$.
\item If both $\cR_i$ are the completion of a finite type $\cO$-algebra $R_i$ at a maximal ideal $\ffrm_i$, then so is $\cR$.
\end{enumerate}
\end{lemma}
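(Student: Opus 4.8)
The plan is to reduce all eight assertions to two ingredients: the behaviour of ring-theoretic properties under completed tensor products over a \emph{field}, and the fact that, once (1) is known, $\varpi$ is a regular element of $\cR$. Throughout I fix presentations $\cR_i\cong\cO[[\underline{x}_i]]/I_i$, with $\underline{x}_1$ and $\underline{x}_2$ disjoint tuples of variables and $I_i$ finitely generated; then
\[\cR\cong\cO[[\underline{x}_1,\underline{x}_2]]/(I_1+I_2)\cong\cR_1[[\underline{x}_2]]/I_2\cR_1[[\underline{x}_2]],\]
which is visibly Noetherian, complete, local with residue field $k$, and a quotient of a regular ring, and moreover $\cR/\varpi\cR\cong(\cR_1/\varpi\cR_1)\widehat\otimes_k(\cR_2/\varpi\cR_2)$. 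In particular (8) is immediate, and if each $\cR_i$ is the completion of a finite type $\cO$-algebra $A_i$ at a maximal ideal, then $\cR$ is the completion of the finite type $\cO$-algebra $A_1\otimes_\cO A_2$ at a maximal ideal.

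For (1) I would prove $\operatorname{Tor}_1^\cO(\cR,k)=0$. Base-changing the short exact sequence $0\to I_2\to\cO[[\underline{x}_2]]\to\cR_2\to0$ of $\cO$-flat modules along the flat map $\cO[[\underline{x}_2]]\to\cR_1[[\underline{x}_2]]$ (flat since $\cR_1$ is $\cO$-flat) yields $0\to I_2\cR_1[[\underline{x}_2]]\to\cR_1[[\underline{x}_2]]\to\cR\to0$ with $\cO$-flat outer terms; because the first sequence remains exact after $-\otimes_\cO k$ (being a sequence of $\cO$-flats) and $\cR_1[[\underline{x}_2]]/\varpi\cong(\cR_1/\varpi\cR_1)[[\underline{x}_2]]$ is flat over $k[[\underline{x}_2]]$ (as $\cR_1/\varpi\cR_1$ is free over the field $k$), the second sequence also remains exact after $-\otimes_\cO k$; hence $\operatorname{Tor}_1^\cO(\cR,k)=0$ and $\cR$ is $\cO$-flat. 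Given this, $\varpi$ is a nonzerodivisor on $\cR$ lying in no minimal prime, so $\cR$ is Cohen--Macaulay (resp.\ Gorenstein, resp.\ a local complete intersection) if and only if $\cR/\varpi\cR$ is; and since $\varpi$ is likewise regular on each $\cR_i$, each $\cR_i/\varpi\cR_i$ inherits the corresponding property. Assertions (2)--(4) then follow by applying the standard transitivity statements for a flat local homomorphism $A\to B$ — namely $B$ is Cohen--Macaulay / Gorenstein / a local complete intersection provided $A$ and the closed fibre $B/\ffrm_A B$ are — to the flat local homomorphism $\cR_1/\varpi\cR_1\to(\cR_1/\varpi\cR_1)\widehat\otimes_k(\cR_2/\varpi\cR_2)$, whose closed fibre is $\cR_2/\varpi\cR_2$.

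Assertion (5) needs more, and this is where the hypothesis that $\cR_2$ is the completion of a finite type $\cO$-algebra enters. After replacing $R_2$ by its localization at $\ffrm_2$ we may assume $R_2$ is local, $\cO$-flat and reduced, and then $\cR$ is the completion of the local ring $S:=(\cR_1\otimes_\cO R_2)_\ffrm$. Since $\cR_1$ is $\cO$-flat and reduced it embeds in $\cR_1[1/\varpi]$, so $\cR_1\otimes_\cO R_2$ embeds in $\cR_1[1/\varpi]\otimes_E R_2[1/\varpi]$; as $\operatorname{char} E=0$ the reduced $E$-algebra $R_2[1/\varpi]$ is geometrically reduced, hence this tensor product, and so also $S$, is reduced. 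But $S$ is essentially of finite type over the excellent ring $\cR_1$, hence excellent, so its completion $\cR$ is reduced as well. For (6) I would follow \cite[\S~3]{BLGHT2011}: unwinding the definition of geometric integrality, and replacing $\cO$ by $\cO'$, it suffices to show $\cR$ is a domain; since $\cR$ is $\cO$-flat by (1) it embeds in $\cR[1/\varpi]$, so it suffices that $\cR[1/\varpi]$ be a domain, and this follows from the geometric integrality of $\cR_1[1/\varpi]$ and $\cR_2[1/\varpi]$ over $E$, which in characteristic zero is a consequence of the finite base-change hypothesis on the $\cR_i$.

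Assertion (7) then follows formally. Each $\frp_i\cR+\frq_j\cR$ is prime because $\cR/(\frp_i\cR+\frq_j\cR)\cong(\cR_1/\frp_i)\widehat\otimes_\cO(\cR_2/\frq_j)$ is a domain by (6). Since $\operatorname{nil}(\cR_1)$ is nilpotent and contains $\prod_i\frp_i$, the ideal $\operatorname{nil}(\cR_1)\cR$ is nilpotent, so every prime of $\cR$ contains the product $\prod_i\frp_i\cR$ and hence some $\frp_i\cR$, and likewise some $\frq_j\cR$; therefore $\operatorname{Spec}\cR=\bigcup_{i,j}V(\frp_i\cR+\frq_j\cR)$, and the minimal primes of $\cR$ are the minimal elements among the $\frp_i\cR+\frq_j\cR$. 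Finally, contracting along $\cR_1\to\cR$ (which sends $\frp_i\cR+\frq_j\cR$ to $\frp_i$, using that $\cR_1/\frp_i\to(\cR_1/\frp_i)\widehat\otimes_\cO(\cR_2/\frq_j)$ is injective) and along $\cR_2\to\cR$ shows that there are no containments or coincidences among the $\frp_i\cR+\frq_j\cR$, so these are exactly the distinct minimal primes of $\cR$. The main obstacle is (5) and (6): unlike flatness and the Cohen--Macaulay, Gorenstein and lci properties, reducedness and (geometric) integrality are not stable under arbitrary completed tensor products, and the argument must genuinely exploit the characteristic-zero generic fibre together with excellence (via the finite type hypothesis, in (5)) and the input of \cite{BLGHT2011} on geometric integrality of completed tensor products (in (6)); all the remaining assertions reduce cleanly to flatness over $\cO$ and the case of coefficients in a field.
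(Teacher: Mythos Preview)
Your proof is correct and largely parallel to the paper's, but the treatment of (2)--(4) follows a genuinely different route. You reduce to showing that $\cR/\varpi\cR=(\cR_1/\varpi)\widehat\otimes_k(\cR_2/\varpi)$ has the relevant property, then invoke transitivity along the \emph{flat} local map $\cR_1/\varpi\to(\cR_1/\varpi)\widehat\otimes_k(\cR_2/\varpi)$ with closed fibre $\cR_2/\varpi$. The paper instead invokes the Cohen structure theorem to find Noether normalisations $\cS_i\subset\cR_i/\varpi$ by power series rings, uses Cohen--Macaulayness to get freeness of $\cR_i/\varpi$ over $\cS_i$, and then argues that $\cR/\varpi$ is finite free over the regular ring $\cS_1\widehat\otimes_k\cS_2$; for (3) the paper cites \cite{WITO} (or combines systems of parameters and socle dimensions), and for (4) it simply concatenates presentations. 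Your route is shorter and more uniform; the paper's avoids having to know that $A\to A\widehat\otimes_k B$ is flat for $A,B\in\CNL_k$.

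That flatness is the one point you assert without proof. It is true, and the cleanest justification is the local flatness criterion: writing $B=k[[\underline y]]/J$, one has $A\widehat\otimes_k B=A[[\underline y]]\otimes_{k[[\underline y]]}B$, and since $k[[\underline y]]\to A[[\underline y]]$ is flat (the Koszul complex on $\underline y$ shows $\operatorname{Tor}^{k[[\underline y]]}_i(A[[\underline y]],k)=0$ for $i>0$), a flat $A[[\underline y]]$-resolution of $A\widehat\otimes_k B$ is obtained by base-changing a free $k[[\underline y]]$-resolution of $B$; tensoring with $A/\ffrm_A$ returns the original resolution of $B$, whence $\operatorname{Tor}_1^A(A\widehat\otimes_k B,k)=0$. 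You should add a sentence to this effect.

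For (5) your argument via excellence of $\cR_1$ and of rings essentially of finite type over it is equivalent in content to the paper's Nagata-ring argument; one small point you glide over is why the localisation $(R_2)_{\ffrm_2}$ is reduced --- this is immediate since it injects into its completion $\cR_2$, which is reduced by hypothesis. Your treatment of (1), (6)--(8) is fine: the paper simply cites \cite{Kisin} for (1) and \cite{BLGHT2011} for (6)--(7), whereas you spell out (1) and (7) directly.
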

\begin{proof}
Part 1 is proved in the first paragraph of \cite[Proof of 3.4.12]{Kisin}. 

For part~2 recall from \cite[\href{https://stacks.math.columbia.edu/tag/045J}{Lemma 045J}]{stacks-project}, that for a flat local homomorphism $R\to S$ of local Noetherian rings, the ring $S$ is Cohen-Macaulay if and only if $R$ and $S/\ffrm_R S$ are Cohen-Macaulay. We deduce on the one hand that the rings $\cR_i/\pi\cR_i$ are Cohen-Macaulay for $i=1,2$ and on the other that to show that $\cR$ is Cohen-Macaulay, we need to establish that $\cR/\pi\cR=(\cR_1/\pi\cR_1)\widehat\otimes_k (\cR_2/\pi\cR_2)$ is Cohen-Macaulay. 

Now by the Cohen structure theorem in the form given in \cite[\href{https://stacks.math.columbia.edu/tag/032D}{Lemma 032D}]{stacks-project}, there exist power series rings $\cS_i\subset\cR_i/\pi\cR_i$ over $k$ such that $\cR_i/\pi\cR_i$ is finite over $\cS_i$, $i=1,2$.\footnote{That statement of \cite[\href{https://stacks.math.columbia.edu/tag/032D}{Lemma 032D}]{stacks-project} assumes the ring to be a domain. However its proof (Case I, with $\Lambda=k$) does not make use of this hypothesis.} Since $\cR_i/\pi\cR_i$ is Cohen-Macaulay, it follows from \cite[\href{https://stacks.math.columbia.edu/tag/00R4}{Lemma 00R4}]{stacks-project}, that $\cR_i/\pi\cR_i$ is free of finite rank over $\cS_i$. Hence $\cR/\pi\cR$ is a ring extension of the power series ring $\cS=\cS_1\widehat\otimes_k \cS_2$ that is free of finite rank over $\cS$. The ring $\cS$ is regular and in particular Cohen-Macaulay, and all fibers of $\cS\to\cR/\pi\cR$ are Artin rings and thus also Cohen-Macaulay. Invoking again \cite[\href{https://stacks.math.columbia.edu/tag/045J}{Lemma 045J}]{stacks-project}, we see that $\cR/\pi\cR$ is Cohen-Macaulay. Let us mention that our proof of part 2 is inspired by \cite[Appendix~A]{Caraiani+5}.

The first half of the proof of part 2 goes through with Gorenstein or local complete intersection instead of Cohen-Macaulay, as well, by appealing to \cite[\href{https://stacks.math.columbia.edu/tag/0BJL}{Proposition 0BJL}]{stacks-project} and \cite[\href{https://stacks.math.columbia.edu/tag/09Q2}{Proposition 09Q2}]{stacks-project}. The remaining argument for part~3 is \cite[Theorem~6]{WITO}. There is also a simple direct proof. One can combine systems of parameters for $\cR_i/\pi\cR_i$, $i=1,2$, to obtain a system of parameters for $(\cR_1/\pi\cR_1)\widehat\otimes_k (\cR_2/\pi\cR_2)$, and then deduce that the socle of the resulting quotient has $k$-dimension $1$ using that this holds for the quotients of the individual factors. Even more elementary is the completion of the argument for part~4. We write $\cR_i/\pi\cR_i$ as a quotient of a power series ring $\cS_i$ over $k$ by an ideal $\cI_i=(f_{i,1},\ldots, f_{i,h_i})$ such that $h_i=\dim \cS_i-\dim \cR_i/\pi\cR_i$. Then $\cR/\pi\cR=\cS/\cI$ for $\cS$ the power series ring $\cS_1\widehat\otimes_k\cS_2$ over $k$ and $\cI=(\{f_{1,j}\otimes 1:j\in\{1,\ldots,h_1\}\}\cup\{ 1\otimes f_{2,j}:j\in\{1,\ldots,h_2\})$, so that $\dim \cS-\dim \cR/\pi\cR=h_1+h_2$.

To establish part 5, observe first that the $\cR_i$ as well as $R_2$ and $\cR_1\otimes R_2$ are Nagata rings by \cite[\href{https://stacks.math.columbia.edu/tag/0335}{Proposition 0335}]{stacks-project}. By \cite[\href{https://stacks.math.columbia.edu/tag/07NZ}{Lemma 07NZ}]{stacks-project}, the completion of a Nagata ring is reduced if and only if the ring itself is reduced. From this and our hypothesis, we see that $\cR_1$ and $R_2$ are reduced and it suffices to show that $\cR_1\otimes_\cO R_2$ is reduced. Now $\cR_1$ and $R_2$ each embed into a finite product of fields $\cK_j$ and $K_{j'}$. These fields must contain $E$, and it follows that $\cR_1\otimes_\cO R_2$ embeds into the finite product of the rings $\cK_j\otimes_E K_{j'}$. The latter are reduced because $E$ is of characteristic $0$ and thus $K_{j'}$ is separable over $E$; see \cite[\href{https://stacks.math.columbia.edu/tag/030U}{Lemma 030U}]{stacks-project}. Therefore $\cR_1\otimes_\cO R_2$ has no non-zero nilpotent elements and is hence reduced.

Parts~6 and 7 are from \cite[Lemma 3.3]{BLGHT2011}, and part 8 is obvious.
\end{proof}

Proposition~\ref{prop:R_v-Atq} and  with Lemma~\ref{Lemma-CompletedTensor} yield:
\begin{proposition}\label{prop:R_v-Loc}
The ring $R_{\loc}$ is a reduced complete intersection, the ring $R^{\St}_{\loc}$ is a Cohen--Macaulay domain, and both are flat over $\cO$.

Moreover, each irreducible component of $\Spec R_{\loc}$ is of  the form 
\[\Spec \left[\widehat{\bigotimes_{q|Q}}R_q^\square/\frp^{(q)}\right]\widehat{\otimes}\left[\widehat{\bigotimes_{\ell|N}}R^{\min}_\ell\right]\widehat{\otimes}R^{\fl}_p\]
where each $\Spec R_q/\frp^{(q)}$ is an irreducible component of $\Spec R_q^\square$ (that is each $\frp^{(q)}$ is a minimal prime of~$R_q$).
\end{proposition}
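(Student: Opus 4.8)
The plan is to deduce the proposition by combining Proposition~\ref{prop:R_v-Atq} with Lemma~\ref{Lemma-CompletedTensor}, running an induction on the number of completed tensor factors. First I would collect the properties of the building blocks. The power series rings $R^{\min}_\ell\cong\cO[[X_1,X_2,X_3]]$ and $R^{\fl}_p\cong\cO[[X_1,X_2,X_3,X_4]]$ are $\cO$-flat, regular (hence Cohen--Macaulay and local complete intersection), geometrically integral (as $\cO'[[X_\bullet]]$ is a domain for every finite extension $\cO'$ of $\cO$), and completions of finite type $\cO$-algebras at a maximal ideal; by Proposition~\ref{prop:R_v-Atq}(2) and its final assertion, each $R^\square_q$ with $q|Q$ is $\cO$-flat, reduced, a local complete intersection, and a completion of a finite type $\cO$-algebra at a maximal ideal; by Proposition~\ref{prop:R_v-Atq}(3), each $R^{\St}_q$ with $q|Q$ is $\cO$-flat, Cohen--Macaulay, and geometrically integral.

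For the first assertion about $R_{\loc}$, I would write it as an iterated completed tensor product, at each stage of the form $\cR=\cR_1\widehat{\otimes}_\cO\cR_2$ where $\cR_2$ is the next building block and $\cR_1$ is the completed tensor product of those adjoined so far; by the inductive hypothesis $\cR_1$ is $\cO$-flat, reduced, a local complete intersection, and (by Lemma~\ref{Lemma-CompletedTensor}(8)) a completion of a finite type $\cO$-algebra at a maximal ideal. Then Lemma~\ref{Lemma-CompletedTensor}(1),(4),(5) --- the last applying because $\cR_2$ is such a completion --- show that $\cR$ is $\cO$-flat, a local complete intersection, and reduced, so $R_{\loc}$ is a reduced complete intersection flat over $\cO$. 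The identical induction for $R^{\St}_{\loc}$, using instead Lemma~\ref{Lemma-CompletedTensor}(1),(2),(6), shows $R^{\St}_{\loc}$ is $\cO$-flat, Cohen--Macaulay, and geometrically integral; specializing $E'=E$ in the definition of geometrically integral shows in particular that it is a domain.

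For the description of the irreducible components of $\Spec R_{\loc}$ I would apply Lemma~\ref{Lemma-CompletedTensor}(7) inductively. The factors $R^{\min}_\ell$ and $R^{\fl}_p$ are ($\cO$-flat, geometrically integral) domains, so each has unique minimal prime $(0)$ with quotient itself; by Proposition~\ref{prop:R_v-Atq}(4), for each $q|Q$ and each minimal prime $\frp^{(q)}$ of $R^\square_q$ the quotient $R^\square_q/\frp^{(q)}$ is $\cO$-flat and geometrically integral. Thus the hypothesis of Lemma~\ref{Lemma-CompletedTensor}(7) --- that the quotients of both factors by their minimal primes be $\cO$-flat and geometrically integral --- holds for every building block, and it is preserved for the running completed tensor product by Lemma~\ref{Lemma-CompletedTensor}(1),(6) together with the inductive description of its minimal primes. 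Hence the minimal primes of $R_{\loc}$ are precisely the pairwise distinct ideals $\sum_{q|Q}\frp^{(q)}R_{\loc}$ with each $\frp^{(q)}$ a minimal prime of $R^\square_q$, and
\[
R_{\loc}\big/\left(\sum_{q|Q}\frp^{(q)}R_{\loc}\right)\;\cong\;\left[\widehat{\bigotimes_{q|Q}}R^\square_q/\frp^{(q)}\right]\widehat{\otimes}\left[\widehat{\bigotimes_{\ell|N}}R^{\min}_\ell\right]\widehat{\otimes}R^{\fl}_p;
\]
recalling that $R_q=R^\square_q$ for $q\in Q$ and that these quotients are $\cO$-flat, geometrically integral domains, this is exactly the claimed form of the irreducible components.

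I do not anticipate a genuine obstacle: the mathematical content lies entirely in Proposition~\ref{prop:R_v-Atq} and Lemma~\ref{Lemma-CompletedTensor}, and what remains is organizing the induction. The only point requiring care is verifying at each stage that the auxiliary hypotheses of Lemma~\ref{Lemma-CompletedTensor}(5) (the finite-type/maximal-ideal condition on one factor) and of Lemma~\ref{Lemma-CompletedTensor}(7) (flatness and geometric integrality of all the relevant quotients) persist for the partially formed completed tensor product, which is why one invokes parts (8) and (1),(6) of that lemma along the way.
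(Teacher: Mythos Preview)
Your proposal is correct and follows exactly the approach the paper takes: the paper's own proof is simply the one-line statement that the result follows from Proposition~\ref{prop:R_v-Atq} and Lemma~\ref{Lemma-CompletedTensor}, and what you have written is a careful unpacking of that deduction. The inductive organization and the attention to which parts of Lemma~\ref{Lemma-CompletedTensor} are invoked at each step (including the auxiliary hypotheses for parts (5) and (7)) are precisely what is needed to make the one-line proof rigorous.
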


\subsection{Global deformation rings}\label{ssec:global def}

Now we set up the notation for the corresponding global deformation rings that we use later.

Let $R^{\square}$  (resp. $R^{\square, \St}$) be  the  framed global deformation rings that parametrise strict equivalence classes of  tuples  $(\rho, \{ t_q \}_{q \in S})$, where:

\begin{itemize}

\item  $\rho : G_\Q \to \GL_2(R)$ is a lifting  of $\rhobar$, with $R\in\CNL_\cO$, of determinant $\epsilon$

 \item $\rho$ is  unramified outside $S$, of determinant $\epsilon$,   and for each $\ell \in S$, $\rho|_{G_\ell}$ satisfies the local deformation condition arising from:
\begin{itemize}
	\item $R_\ell^{\rm min}$ (for $\ell | N$), $R_p^{\fl}$ (for $\ell=p$),  $R_\ell^\square$  for $\ell \in Q$
	\item respectively, $R_\ell^{\rm min}$ (for $\ell | N$), $R_p^{\fl}$ (for $\ell=p$),  $R_\ell^\St$ (for $\ell \in Q$)
\end{itemize}
 
 \item  $t_q$ is an element of $\ker(\GL_2(R) \to \GL_2(k))$.

  \end{itemize}
  
  Here two framed liftings $(\rho_1, \{ t_q \}_{q \in S})$ and $(\rho_2, \{ u_q \}_{q \in S})$ are said to be strictly equivalent if there is an element $a \in \ker(\GL_2(R) \to \GL_2(k))$ such that $\rho_2 = a \rho_1 a^{-1}$ and $u_q = a t_q$ for each $q \in S$. 
  
  As $\rhobar$ is irreducible, we also have the corresponding unframed  rings $R$ and $R^\St$,  which are canonically $\cO$-subalgebras of   $R^{\square}$ and $R^{\square, \St}$, and  furthermore (non-canonically) 
$R^{\square}=R[[X_1,\cdots X_{4j-1}]]$ and $R^{\square, \St}=R^\St[[X_1,\cdots X_{4j-1}]]$ with $j$ the cardinality of $S$. (We will   also by harmless abuse of notation sometimes  view  $R$ and $R^\St$ as (non-canonical)  quotients of  the  corresponding framed  deformation rings by the framing variables $X_i$.)

The natural transformation $(\rho, \{ t_q \}_{q \in S}) \mapsto ( t_q^{-1} \rho|_{G_{\Q_q}} t_q)_{q \in S}$ induces canonical maps  $R_{\loc} \to R^{\square}$  and  $R^{\St}_{\loc} \to R^{\square, \St} $  which are homomorphisms in $\CNL_\cO$, and we have 
$R^{\square, \St}=  R^{\square} \otimes_{R_{\loc}} R^{\St}_{\loc}$.

\section{Hecke algebras and newforms}\label{hecke}

  \subsection{Modular curves and Shimura curves}\label{ssec:modular curve}
  
  From now on, let $D$ be a quaternion algebra over $\Q$ (either definite or indefinite) and let $Q$ be the set of finite primes at which $D$ is ramified.  (By abuse of notation, we will also frequently use $Q$ to denote the product of all the primes in the set $Q$. The context will make clear which meaning is intended.) As all of our results are trivial in the case when $Q = \es$, we will assume that $Q\ne \es$, for convenience. Fix a square-free  integer $N$ not divisible by any prime in $Q$. Let $\Gamma_0(N\Pi_{q \in Q} q^2) $ and $\Gamma_0^Q(N) $ be congruence subgroups for $\GL_2$ and $D^\times$, respectively, where   $\Gamma_0^Q(N)$ is maximal compact at primes in $Q$, and upper triangular mod $\ell$ for all $\ell|N$. We will  denote  $\Gamma_0(N\Pi_{q \in Q} q^2)$ by $\Gamma_0(NQ^2)$ using our abuse of notation.
  
  We consider  a prime $p$  below not dividing $2NQ$ and a finite extension $E/\Q_p$, and let $\cO$ be the ring of integers in $E$, $\varpi$ a uniformizer and $k=\cO/\varpi$ the residue field. We will assume below that $E$ is sufficiently large  so that $\cO$ contains the Fourier coefficients of all newforms in $S_2(\Gamma_0(NQ))$. 

  Let $K_0(NQ^2)\subseteq \GL_2(\A_{\Q,f})$ and $K_0^Q(N)\subseteq D^\times(\A_{\Q,f})$ be the corresponding compact open subgroups.
  If $D$ is definite, let
\[S^Q(\Gamma_0^Q(N)) = \left\{f: D^\times(\Q)\backslash D^\times(\A_{\Q,f})/K_0^Q(N) \to \cO\right\}.\]
 If $D$ is indefinite, let $X^Q_0(N)$ be the (compact) Riemann surface $D^\times(\Q)\backslash \left(D^\times(\A_{\Q,f})\times \half\right)/K_0^Q(N)$ (where $\half$ is the complex upper half plane). Give $X^Q_0(N)$ its canonical structure as an algebraic curve over $\Q$, 
and let    $S^Q(\Gamma_0^Q(N))=H^1(X^Q_0(N),\cO)$.
We also consider $S(NQ^2)=H^1(X_0(NQ^2),\cO)$.  Let $\T(NQ^2)$ and $\T^Q(N)$ be the  $\cO$-algebras at level $\Gamma_0(NQ^2)$ and $\Gamma_0^Q(N)$, respectively, generated by  the Hecke operators $T_r$ for  primes $r$ coprime to $pNQ$ acting on  $H^1(X_0(NQ^2),\cO)$ and $S^Q(\Gamma_0^Q(N))$.
Note that by the Jacquet-Langlands correspondence, $\T^Q(N)$ is a quotient of $\T(NQ^2)$.

Let $f \in S_2(\Gamma_0(NQ))$  be a newform of level $NQ$ such that all its Fourier coefficients lie in $E$, and consider the corresponding $\cO$-algebra homomorphism $\lambda_f:\T(NQ^2) \to \cO$.   We will fix this newform and our main results will be in relation to $f$. By the Jacquet-Langlands correspondence, this also gives a related homomorphism $\T^Q(N) \to \cO$ that we denote by the same symbol $\lambda_f$.    We denote the corresponding maximal ideals which contain the prime ideal $\ker(\lambda_f)$ by   $\ffrm$. Let $\rho_f : G_\Q \to \GL_2(\cO)$ be the Galois representation associated by Eichler and Shimura to $f$ and assume   that the corresponding residual  Galois representation  $\rhobar_f=\rhobar :G_\Q\to \GL_2(k)$ is  absolutely irreducible. By enlarging $\cO$ if necessary,  we may assume that $k$ contains all eigenvalues of $\rhobar(\sigma)$ for all $\sigma\in G_\Q$. We assume that $N|N(\rhobar)|NQ$. The Galois representation $\rho_f:G_\Q \to \GL_2(E)$, with irreducible residual representation $\rhobar$,  is locally at $q$ of the form \[   \left( \begin{array}{cc} \epsilon& \ast \\ 0 & 1 \end{array} \right),\]   up to twist by an unramified character $\chi$ of order dividing 2.  The $\beta_q \in \{\pm 1\}$  of the previous section  will be  chosen so that  $\rho_f|_{G_q}$ gives rise to a point of  $\Spec   R_q^{\st(\beta_q)}$ in what follows (and thus depends on whether $\chi$ is trivial  at $q$ or not).

There is an oldform $f^Q$ in $S_2(\Gamma_0(NQ^2))$  with corresponding newform $f$ which is characterised by the property that it is an eigenform for the  Hecke operators  $T_\ell$ for $\ell$ prime with $(\ell,NQ^2)=1$ and $U_\ell $ for $\ell|NQ^2$ and  such that $a_q(f^Q)=0$, i.e., $f^Q|U_q=0$  for $q \in Q$. Let  $\lambda_{f^Q}:  \T^{\rm full}(NQ^2) \to \cO$   be the induced homomorphism of the full Hecke algebra $\T^{\rm full}(NQ^2)$   acting on $H^1(X_0(NQ^2),\cO)$ which is generated as an  $\cO$-algebra by the  action of the Hecke operators $T_\ell$ for $(\ell,NQ^2)=1$ and $U_\ell $ for $\ell|NQ^2$ on $S(NQ^2)=H^1(X_0(NQ^2),\cO)$. Let $\ffrm_Q$ be the  maximal ideal of $\T^{\rm full}(NQ^2)$ that contains $\ker(\lambda_{f^Q})$.  

By Ribet's level lowering result \cite{RibetInv100} there is a newform $g \in S_2(\Gamma_0(N(\rhobar)))$ which gives rise to $\rhobar$  and an augmentation $\lambda_g:\T(NQ^2) \to \cO$ whose kernel is  contained in the maximal ideal $\ffrm$. There is an oldform $g^Q$ in $S_2(\Gamma_0(NQ^2))$  with corresponding newform $g$ which is characterised by the property that it is an eigenform for the  Hecke operators  $T_\ell$ for $(\ell,NQ^2)=1$ and $U_\ell $ for $\ell|NQ^2$ and  such that $a_q(g^Q)=0$, i.e., $g^Q|U_q=0$  for $q \in Q$. Let  $\lambda_{g^Q}:  \T^{\rm full}(NQ^2) \to \cO$   be the induced homomorphism of the full Hecke algebra $\T^{\rm full}(NQ^2)$   acting on $H^1(X_0(NQ^2),\cO)$ which is generated as an  $\cO$-algebra by the  action of the Hecke operators $T_\ell$ for $(\ell,NQ^2)=1$ and $U_\ell $ for $\ell|NQ^2$ on $S(NQ^2)=H^1(X_0(NQ^2),\cO)$. The kernel of $\lambda_{g^Q}$ is contained in the maximal ideal $\ffrm_Q$ that $f^Q$ gives rise to.  

The homomorphism $\lambda_f: \T^Q(N) \to \cO$ 
 extends to the full Hecke algebra  acting on  $S^Q(\Gamma_0^Q(N))$, and we denote  by $\ffrm_Q'$ the maximal ideal
 of $\T^Q(N)$ which contains its kernel.

We define $\T$ and $\T^{\st}$  to be the image of  $\T(NQ^2)$ and $\T^Q(N)$  in the endomorphisms of the finitely generated $\cO$-modules  $H^1(X_0(NQ^2),\cO)_{\ffrm_Q}$ and $S^Q(\Gamma_0^Q(N))_{\ffrm_Q'}$ respectively. We are primarily interested in $\T^{\st}$ in this paper, but we study it via looking at the surjective map of $\cO$-algebras $\T \onto \T^\St$ produced by the Jacquet-Langlands correspondence that yields a Hecke equivariant inclusion $S_2(\Gamma_0^Q(N)) \into S_2(\Gamma_0(NQ^2))$.

We have the corresponding universal modular deformation $\rho^{\mod}:G_\Q \to \GL_2(\T)$ by results of Carayol which is a specialiastion of the universal representation $G_\Q \to \GL_2(R)$.

Define
\[M(NQ^2)= \Hom_{\T[G_\Q]}(\rho^{\mod},S(NQ^2)_{\ffrm_Q}^*).\]
If $D$ is definite, define $M^{\st}(N) = S^Q(\Gamma_0^Q(N))_{\ffrm_Q'}^*$ and if $D$ is indefinite, define
\[M^{\st}(N)= \Hom_{\T[G_\Q]}(\rho^{\mod},S^Q(\Gamma_0^Q(N))_{\ffrm_Q'}^*).\]

As in \cite{Carayol2} we have   the following:

\begin{lemma}
  The evaluation map $M(NQ^2)\otimes_\T \rho^{\mod}\to S(NQ^2)_{\ffrm_Q}^*$ is an isomorphism, as is $M^{\st}(N)\otimes_\T \rho^{\mod}\to S^Q(\Gamma_0^Q(N))_{\ffrm_Q'}^*$ when $D$ is indefinite. In particular, as $\T$-modules we have $S(NQ^2)_{\ffrm_Q}^* = M(NQ^2)^{\oplus 2}$ and, when $D$ is indefinite,  $S^Q(\Gamma_0^Q(N))_{\ffrm_Q'}^* = M^{\st}(N)^{\oplus 2}$.
\end{lemma}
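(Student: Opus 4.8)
The plan is to recognize this as an instance of Carayol's lemma on Hecke modules with absolutely irreducible residual Galois representation, and to recall the Morita-theoretic argument behind it. Write $V=S(NQ^2)^*_{\ffrm_Q}$; it is $\varpi$-torsion free (the $\cO$-dual of an $\cO$-free cohomology group, localized at $\ffrm_Q$) and carries commuting faithful $\T$- and $G_\Q$-actions. The representation $\rho^{\mod}$ extends $\T$-linearly to a $\T$-algebra homomorphism $\alpha\colon\T[G_\Q]\to\End_\T(\T^2)=M_2(\T)$. Reducing $\alpha$ modulo the maximal ideal $\ffrm_\T$ of $\T$ gives the group-algebra map attached to $\rhobar$, which is surjective onto $M_2(k)$ since $\rhobar$ is absolutely irreducible (Burnside); hence $\alpha$ is surjective by Nakayama's lemma applied to the finitely generated $\T$-module $M_2(\T)$. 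Let $I=\ker\alpha$, so $\T[G_\Q]/I\cong M_2(\T)$; since $M_2(\T)$ is generated as a ring by $\T$ and the image of $G_\Q$, for any $M_2(\T)$-module $W$ one has $\Hom_{\T[G_\Q]}(\rho^{\mod},W)=\Hom_{M_2(\T)}(\T^2,W)$.

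The crucial step is that the $G_\Q$-action on $V$ kills $I$, so that $V$ becomes a module over $M_2(\T)$. As $V$ is $\varpi$-torsion free it embeds into $V_E:=V\otimes_\cO E$, and, by flatness of $\cO\to E$, $I$ embeds into $I\otimes_\cO E=\ker(\T_E[G_\Q]\to M_2(\T_E))$ where $\T_E:=\T\otimes_\cO E$; thus it suffices to show that $I\otimes_\cO E$ annihilates $V_E$. Now $\T_E$ acts faithfully on $H^1(X_0(NQ^2),\cO)_{\ffrm_Q}\otimes_\cO E$, which is a semisimple module (the $T_\ell$ for $\ell\nmid NQp$ act semisimply on the parabolic cohomology picked out by the non-Eisenstein ideal $\ffrm_Q$), so $\T_E$ is reduced, hence a finite product of fields $\prod_iE_i$, and $V_E=\prod_i(V_E)_i$ with each $(V_E)_i$ a nonzero $E_i[G_\Q]$-module. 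By the Eichler–Shimura congruence relation, $\Frob_\ell$ acts on $(V_E)_i$ with characteristic polynomial $X^2-T_\ell X+\ell$ for $\ell\nmid NQp$, and the $G_\Q$-representation on $H^1(X_0(NQ^2),E)$ is semisimple (being a Tate twist of a piece of the Tate module of the Jacobian, this is Faltings); therefore $(V_E)_i$ is a nonzero direct sum of copies of the irreducible representation $\rho_{f_i}$ attached to the newform corresponding to $E_i$. In particular the action of $E_i[G_\Q]$ on $(V_E)_i$ factors through the image $M_2(E_i)$ of $\rho_{f_i}$, so the action of $\T_E[G_\Q]$ on $V_E$ factors through $\prod_iM_2(E_i)=M_2(\T_E)=\T_E[G_\Q]/(I\otimes_\cO E)$; that is, $I\otimes_\cO E$ annihilates $V_E$.

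With $V$ now a faithful module over $M_2(\T)$, Morita equivalence for $M_2(\T)$ gives, with $e=E_{11}$, a natural isomorphism $\T^2\otimes_\T eV\xrightarrow{\sim}V$; under $eV=\Hom_{M_2(\T)}(\T^2,V)=\Hom_{\T[G_\Q]}(\rho^{\mod},V)=M(NQ^2)$ this is precisely the evaluation map $\rho^{\mod}\otimes_\T M(NQ^2)\to S(NQ^2)^*_{\ffrm_Q}$, which is therefore an isomorphism; and since $\rho^{\mod}$ is free of rank $2$ over $\T$ we obtain $S(NQ^2)^*_{\ffrm_Q}\cong M(NQ^2)^{\oplus2}$ as $\T$-modules. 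For $D$ indefinite the identical argument applies with $\T^{\st}$, $\ffrm_Q'$, $V=S^Q(\Gamma_0^Q(N))^*_{\ffrm_Q'}$ and the Jacobian of $X^Q_0(N)$, using that its $H^1$ again carries a semisimple $G_\Q$-representation with $\tr\Frob_\ell=T_\ell$ for $\ell\nmid NQp$ (Carayol, Faltings); note that $M^{\st}(N)$ is a $\T^{\st}$-module via $\T\twoheadrightarrow\T^{\st}$ and $\rho^{\mod}\otimes_\T M^{\st}(N)=\rho^{\mod,\st}\otimes_{\T^{\st}}M^{\st}(N)$, so the evaluation map is the Morita isomorphism over $\T^{\st}$. (When $D$ is definite there is nothing to prove, as $M^{\st}(N)$ is defined to be $S^Q(\Gamma_0^Q(N))^*_{\ffrm_Q'}$.)

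The substantive content is packaged into the two geometric inputs — Carayol's realization of $\rho^{\mod}$ inside the localized cohomology, and the Eichler–Shimura relation together with Faltings' semisimplicity — which together force the cohomology to be ``$\rhobar$-typical'' in the precise sense that $\T[G_\Q]$ acts through $M_2(\T)$; granting this, the surjectivity of $\alpha$, the passage from the $\varpi$-integral to the $\varpi$-inverted statement, and the Morita bookkeeping are formal. I expect the one point where care is genuinely needed is the semisimplicity claim: it is essential that each $(V_E)_i$ be a genuine direct sum of copies of $\rho_{f_i}$ rather than merely a successive extension, since otherwise the $\T[G_\Q]$-action would not factor through $M_2(\T)$ and the evaluation map would fail to be surjective.
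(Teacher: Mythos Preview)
Your proof is correct and is precisely the argument behind the paper's citation ``As in \cite{Carayol2}'': surjectivity of $\T[G_\Q]\to M_2(\T)$ via Burnside--Nakayama, reduction of the annihilation statement to the generic fibre by $\varpi$-torsion-freeness, semisimplicity there via Faltings applied to the Jacobian, and then Morita. The paper gives no proof beyond the reference, so there is nothing further to compare.
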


 We remark that, using  the notation of \S  \ref{sec:congruences}, below this implies that   for any $\lambda:\T \to \cO$, $\delta_{\lambda,\T}(S(NQ^2)_{\ffrm_Q}) = \delta_{\lambda,\T}(M(NQ^2))$ and  for any $\lambda:\T^\st \to \cO$, $\delta_{\lambda,\T^{\st}}(S^Q(\Gamma_0^Q(N))_{\ffrm_Q'}) = \delta_{\lambda,\T^{\st}}(M^{\st}(N))$.

 We have the following key theorem due to Wiles \cite{Wiles}  and   Diamond  \cite{DiamondMult1}.
 
 \begin{theorem}\label{fred}
 The surjective map $ R \onto \T$ of complete Noetherian local $\cO$-algebras  is an isomorphism of complete intersections. Furthermore  
 $S(NQ^2)_{\ffrm_Q}=H^1(X_0(NQ^2),\cO)_{\ffrm_Q}$ is free of rank 2 over $\T$, and $M(NQ^2)$ is free of rank one over $\T$.
 \end{theorem}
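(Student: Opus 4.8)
The plan is to deduce everything from the Taylor--Wiles--Kisin patching machinery applied to the anemic Hecke algebra $\T = \T(NQ^2)$ acting on $H^1(X_0(NQ^2),\cO)_{\ffrm_Q}$, together with Wiles' numerical criterion (Theorem \ref{numerical criterion}). First I would recall that, since $\rhobar$ satisfies the Taylor--Wiles hypothesis ($\rhobar|_{\Q(\zeta_p)}$ irreducible) and is modular, flat at $p$, and minimally ramified or Steinberg at the relevant primes, the standard patching construction produces a power series ring $R_\infty = R_{\loc}[[x_1,\dots,x_g]]$ (over the local deformation ring $R_{\loc}$ of Proposition \ref{prop:R_v-Loc}) and a patched module $M_\infty$ which is finite free over $\cO[[y_1,\dots,y_d]]$ with $d = \dim R_\infty - \dim\T$; here $M_\infty/\mathfrak a \cong M(NQ^2)$ for the appropriate ideal $\mathfrak a$, and $R_\infty$ acts on $M_\infty$ through a quotient that recovers $R \onto \T$ modulo $\mathfrak a$. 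Since $R_{\loc}$ is a reduced complete intersection flat over $\cO$ of the correct dimension (Proposition \ref{prop:R_v-Loc}, using that none of the primes in $Q$ are trivial primes in the modular-curve case — this is where the $q^2$-level structure, i.e. $R_q^\square$, rather than $R_q^{\St}$ at a trivial prime, is crucial, so $R_\infty$ really is a complete intersection), a dimension count shows $M_\infty$ is free over $R_\infty$ and $R_\infty$ is finite free over $\cO[[y_1,\dots,y_d]]$; thus $R_\infty$ is itself a complete intersection.

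Next I would descend: quotienting $R_\infty$ by the regular sequence $(y_1,\dots,y_d)$ (which one checks is $M_\infty$-regular and $R_\infty$-regular because $R_\infty$ is Cohen--Macaulay of the right dimension) yields that $R = R_\infty/(y_i)$ is a complete intersection, that $M(NQ^2) = M_\infty/(y_i)$ is free over $R$, and that the map $R \to \T$, which is a priori only surjective, becomes an isomorphism because $M(NQ^2)$ is a faithful $\T$-module that is simultaneously free over $R$ — so $R$ and $\T$ have the same $\cO$-rank, forcing $\phi\colon R \onto \T$ to be an isomorphism. Then $M(NQ^2)$ is free of rank one over $\T$: its generic rank over $\T$ is $1$ by the lemma of \cite{Carayol2} recalled above (which gives $S(NQ^2)_{\ffrm_Q}^* = M(NQ^2)^{\oplus 2}$ and $S(NQ^2)_{\ffrm_Q}$ has generic rank $2$ over $\T$), and a free module of generic rank one has rank one. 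Finally, $S(NQ^2)_{\ffrm_Q} = M(NQ^2)^{\oplus 2}$ as $\T$-modules (using self-duality of $H^1$ under the Hecke action, so $S(NQ^2)_{\ffrm_Q}^* \cong S(NQ^2)_{\ffrm_Q}$ up to the standard twist), hence $S(NQ^2)_{\ffrm_Q}$ is free of rank $2$ over $\T$.

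Alternatively, and perhaps more in keeping with the paper's stated attribution to \cite{Wiles} and \cite{DiamondMult1}, one can argue via Theorem \ref{numerical criterion} directly: by Wiles' patching and the minimal-case arguments, one shows $|\Phi_{\lambda_g, R}| = |\cO/\eta_{\lambda_g}(\T)|$ for an augmentation $\lambda_g$ coming from a level-$N(\rhobar)$ newform (using level lowering \cite{RibetInv100} to produce $g$), which by Theorem \ref{numerical criterion} forces $\phi\colon R \to \T$ to be an isomorphism of complete intersections; the freeness of $H^1$ and of $M(NQ^2)$ then follows from Diamond's refinement in \cite{DiamondMult1} (the $R$-module $M_\infty$ being free over $R_\infty$ translates, after descent, into $M(NQ^2)$ free over $\T$, whence $H^1$ is free of rank $2$). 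The main obstacle in either route is verifying that the patching setup genuinely applies in the squarefree non-minimal level situation with the precise local conditions chosen here — in particular that $R_{\loc}$ computed from the Shotton local deformation rings has exactly the expected dimension so that the numerical patching inequality is an equality — but since we are told the primes in $Q$ are being handled with $\Gamma_0(q^2)$-structure (full framed $R_q^\square$) rather than Steinberg conditions, $R_{\loc}$ is a complete intersection and the classical argument of \cite{Wiles}, \cite{DiamondMult1} goes through verbatim; this is why the theorem is attributed to those references.
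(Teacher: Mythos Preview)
Your alternative route is essentially the paper's approach: prove the minimal-level isomorphism $R_\es\cong\T_\es$ (with complete intersection and freeness), use Ribet level raising to bound the cotangent space at the augmentation $\lambda_g$ coming from a newform in $S_2(\Gamma_0(N(\rhobar)))$ against the cohomological congruence module, and then invoke Diamond's numerical criterion \cite[Theorem~2.4]{DiamondMult1} to conclude that $R\to\T$ is an isomorphism of complete intersections and $H^1(X_0(NQ^2),\cO)_{\ffrm_Q}$ is free of rank~$2$. The freeness of $M(NQ^2)$ then follows as you say.

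Your first route, however, has a genuine gap. The step ``since $R_{\loc}$ is a complete intersection of the correct dimension, a dimension count shows $M_\infty$ is free over $R_\infty$'' is not valid: maximal Cohen--Macaulay modules over a local complete intersection need not be free. The Auslander--Buchsbaum argument that gives freeness in the Diamond--Fujiwara setup requires $R_\infty$ to be \emph{regular}. But $R_q^\square$ is not regular whenever $\rhobar$ is unramified at $q$ (indeed $\Spec R_q^\square$ then has at least two irreducible components, the unramified one and a Steinberg one, so $R_q^\square$ is not even a domain). Since $Q$ is allowed to contain primes at which $\rhobar$ is unramified --- that is the entire point of the non-minimal setup --- $R_\infty=R_{\loc}[[x_1,\ldots,x_g]]$ is genuinely singular, and the direct patching route gives only that $M_\infty$ is maximal Cohen--Macaulay, that $R_\infty$ acts faithfully, and (after descent) that $R\to\T$ is an isomorphism; it does \emph{not} yield freeness of $M(NQ^2)$ or the complete-intersection property of $R$ without further input. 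This is precisely why the paper (and Wiles--Diamond originally) prove Theorem~\ref{fred} via the numerical criterion rather than by direct patching. Your parenthetical ``none of the primes in $Q$ are trivial primes in the modular-curve case'' is also not right: the primes in $Q$ may well be trivial for $\rhobar$; what changes at level $NQ^2$ is only that one uses $R_q^\square$ rather than $R_q^\St$, and while $R_q^\square$ is a complete intersection, that is not the property the freeness argument needs.
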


 \begin{proof}
 This follows from the arguments of  \cite[Theorem 3.4]{DiamondMult1}.  We sketch Diamond's argument. One first  proves an $R_\es=\T_\es$ theorem as in  \cite[Theorem 3.1]{DiamondMult1} for minimal deformation and Hecke rings for $\rhobar$ that come with an augmentation induced by a newform $g \in S_2(\Gamma_0(N(\rhobar))$ as above. One also  proves that  these rings are complete isomorphisms and the  appropriate localization of the cohomology of  a  modular curve $X_\es$, $H^1(X_\es,\cO)_\ffrm$, is free as a $\T_\es$ module.   Then one  observes that  $H^1(X_0(NQ^2),\cO)_{\ffrm_Q} \otimes E $ is free of rank 2 over $\T \otimes E$, which follows from the choice of $\ffrm_Q$ above  so as to contain the kernel of the augmentation  of the full Hecke algebra $\T^{\rm full}(NQ^2)$ induced by an oldform  $g^Q$  with  associated newform $g$  as above (see also  \cite[Proposition 4.7]{DDT}) .  Using level raising arguments  due to \cite{RibCong} , one then  deduces  that if $x,y$ is a basis of   the free $\cO$ module $H^1(X_0(NQ^2),\cO)_{\ffrm_Q}[\lambda_{g^Q}]=H^1(X_0(NQ^2),\cO)_{\ffrm_Q}[\lambda_{g}]$
 of rank 2, then $\# \ker(\lambda_R)/\ker(\lambda_R)^2\leq\#\cO/\langle x, y \rangle_Q$ where the pairing $\langle \  , \ \rangle_Q$ is a certain alternating $\T$-bilinear, $\cO$-perfect pairing on  $H^1(X_0(NQ^2),\cO)_{\ffrm_Q} $. By appealing to \cite[Theorem 2.4]{DiamondMult1} one deduces that  $ R \onto \T$   is an isomorphism of complete intersections and 
 $S(NQ^2)_{\ffrm_Q}=H^1(X_0(NQ^2),\cO)_{\ffrm_Q}$ is free of rank 2 over $\T$. In particular $\T$ is Gorenstein.  As $S(NQ^2)_{\ffrm_Q}^*=M(NQ^2)^{\oplus 2}$ we deduce from this that $M(NQ^2)$ is projective, and as $\T$ is a local ring, that $M(NQ^2)$ is a free $\T$ module of rank one.
 \end{proof}

Note that classical generic multiplicity one results for modular forms imply that $M(NQ^2)$ and $M^{\st}(N)$ have rank $1$ at each generic point of   $\T^{\st}$. In contrast to what we have proved about $M(NQ^2)$,  when $Q$ contains a prime $q$ that is trivial for $\rhobar$, it is proved in \cite{Manning}  that $M^\st(N)$ is not  free of rank one over $\T^\st$, nor is $\T^\st$ a complete intersection. 

 \begin{remark}
  In \cite{DiamondMult1}, a slightly less restricted deformation functor  is considered than the one represented by $R$ above (for instance at primes dividing $N(\rhobar)$ he  puts no restriction on ramification while we have imposed minimality at primes dividing $N$). Whilst we could also work
  with his conditions, we have chosen to work with our slightly more restrictive deformation conditions at primes dividing $N$, and the arguments
  of loc. cit. we have quoted apply  {\it mutatis mutandis}. 
  \end{remark}
  
  \begin{remark}
  In \cite{DiamondMult1} the analog of the rings $R$ and $\T$ above are denoted $R_\Sigma$ and $\T_\Sigma$ (where $\Sigma$ is the set $Q$ here), the maximal ideal $\ffrm_Q$ is denoted by  $\ffrm_\Sigma$, and  the analog of  the modular curve $X_0(NQ^2)$ we consider here is denoted  by $X_\Sigma$. We remark that  as in loc. cit. although $\T$  is an ``anemic Hecke algebra'',  defined without including  the operators $U_q$ for $q \in Q$, the maximal ideal $\ffrm_Q$ is a  suitable maximal ideal of the full Hecke algebra with $U_q \in \ffrm_Q$.  This  ensures that  $H^1(X_0(NQ^2),\cO)_{\ffrm_Q} \otimes E $ is free of rank 2 over $\T \otimes E$ which is a pre-requisite  to apply   \cite[Theorem 2.4]{DiamondMult1} and   get the refined integral statement that  $H^1(X_0(NQ^2),\cO)_{\ffrm_Q}$ is free of rank 2 over $\T$. This  way of avoiding use of Hecke operators $U_q$ for $q|Q$, when proving $R \onto \T$ is an isomorphism  with non-minimal conditions at $Q$,  goes back to the original arguments of Wiles in  \cite[Chapter 3]{Wiles} that prove modularity results in the non-minimal case. Wiles uses the numerical criterion   Theorem \ref{numerical criterion},
  modularity results in the minimal case and level raising.  Diamond further enhances the argument in \cite{DiamondMult1} to get that $H^1(X_0(NQ^2),\cO)_{\ffrm_Q}$ is free of rank 2 over $\T$, without having to  know {\it a priori }  Gorenstein properties of Hecke algebras.
  
  It is an interesting feature of the proof of the main result of this paper,  Theorem \ref{mc} below, that to prove results  about the Wiles defect of the weight 2  newform $f$ of level $NQ$ that we are interested in, we have to use another newform of  weight 2 and minimal level $N(\rhobar)$ via its use in the proof of Theorem \ref{fred}.
 \end{remark}

\subsection{Abelian varieties of $\GL_2$ type with multiplicative reduction at $q$}
  
  We note  a useful result about Tate modules of abelian varieties $A$ over $\Q$ of $\GL_2$-type, with field of endomorphisms over $\Q$ a number field  $F$, and which have multiplicative reduction at a prime $q$. Let $p$ be a  prime, and consider $F \otimes \Q_p= \prod_v F_v$,  which induces a factorization ${\rm Ta}_p(A) \otimes \Q_p= \prod_{v} {\rm Ta}_v(A)$ and ${\rm Ta}_v(A)=F_v^2$, and we denote by $\rho_{A,v}:G_\Q \ra \GL_2(F_v)$ the representation of $G_\Q$ arising from its action  on ${\rm Ta}_v(A)$.   If $\cO_v$ as usual is  the valuation ring of $F_v$, $\rho_{A,v}$ has an integral model with values in $\GL_2(\cO_v)$.  We assume that the determinant of $\rho_{A,v}$ is  the $p$-adic  cyclotomic character $\varepsilon$ which is assured in our applications as $A=A_f$  arise from a newform $f$ in $S_2(\Gamma_0(M))$ for some positive integer $M$.

\begin{lemma}\label{uniformization}
\begin{enumerate}
\item   If $\rho_{A,v}$ is  residually irreducible, then $\rho_{A,v}$ is integrally well-defined, i.e.,   $G_\Q$  preserves a unique lattice of $E_v^2$ upto scaling.

 \item   Furthermore  the restriction $\rho_{A, v}|_{G_q}$ of an integral model of $\rho_{A,v}$ is 
 $\GL_2(\cO_v)$-conjugate to   an unramified twist of  
  \[  \left( \begin{array}{cc} \varepsilon & \ast \\ 0 & 1\end{array} \right). \]   The reducible representation $\rho_{A, v}|_{G_q}$ when regarded as valued in $\GL_2(E)$   is ramified and non-split.
  \end{enumerate}
\end{lemma}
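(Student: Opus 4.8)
The plan is to prove Lemma \ref{uniformization} by combining the modularity of $A=A_f$ with the theory of Tate curves and the classical local-global description of $\rho_f|_{G_q}$ for $f$ a weight-$2$ newform of level divisible by $q$ but not by $q^2$.

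\medskip

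\noindent\textbf{Part (1).} First I would recall that if $\rho_{A,v}$ is residually irreducible, then by Schur's lemma the residual representation $\overline{\rho}_{A,v}$ has only scalar $G_\Q$-equivariant endomorphisms, hence $\End_{k[G_\Q]}(\overline{\rho}_{A,v}) = k$. A standard lemma (see e.g. the discussion of lattices in \cite{Carayol2} or the argument in \cite{DDT}) shows that under this hypothesis any two $G_\Q$-stable $\cO_v$-lattices in $E_v^2$ are homothetic: if $L_1, L_2$ are two such lattices, rescale so that $L_2 \subseteq L_1$ but $L_2 \not\subseteq \varpi_v L_1$; then $L_1/L_2$ is a nonzero proper $G_\Q$-submodule of $L_1/\varpi_v^m L_1$ for suitable $m$, and considering the smallest such $m$ one extracts from $L_1/\varpi_v L_1 \to L_1/L_2$ a nonzero, non-injective (hence by irreducibility, impossible) endomorphism of $\overline{\rho}_{A,v}$ unless $L_1 = L_2$. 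This gives uniqueness of the lattice up to scaling.

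\medskip

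\noindent\textbf{Part (2).} For the local shape at $q$, the key input is that $A = A_f$ for a newform $f \in S_2(\Gamma_0(M))$ with $q \| M$. At such a prime the automorphic representation $\pi_q$ attached to $f$ is a twist of the Steinberg representation by an unramified quadratic character; by the local Langlands correspondence (in the form established by Carayol, Langlands, Deligne for $\GL_2$ — compatibility of $\rho_f|_{G_q}$ with $\pi_q$ up to the usual normalization), $\rho_{A,v}|_{G_q}$ is, up to an unramified twist $\chi$ with $\chi^2 = 1$, an extension
\[
0 \longrightarrow E_v(\varepsilon) \longrightarrow \rho_{A,v}|_{G_q} \longrightarrow E_v \longrightarrow 0,
\]
i.e. of the asserted upper-triangular form with $\varepsilon$ on the diagonal (using $\det \rho_{A,v} = \varepsilon$). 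That the integral model is $\GL_2(\cO_v)$-conjugate into this form with the same shape follows from Part (1): the unique Galois-stable lattice must be compatible with the unique Galois-stable line $E_v(\varepsilon)$, so the lattice is an extension of $\cO_v$ by $\cO_v(\varepsilon)$ in $\GL_2(\cO_v)$. Finally, to see that the extension is \emph{ramified and non-split} when viewed over $E$, I would invoke the theory of the Tate curve: $A$ has multiplicative reduction at $q$, so over $\Q_q$ (after the unramified twist accounting for potentially-multiplicative vs. split-multiplicative reduction) $A$ is a quotient of a Tate curve $E_{q_A}$ with Tate parameter $q_A \in \Q_q^\times$, and its $p$-adic Tate module sits in the extension
\[
0 \longrightarrow \Z_p(1) \longrightarrow \mathrm{Ta}_p(E_{q_A}) \longrightarrow \Z_p \longrightarrow 0
\]
classified (via Kummer theory, $H^1(G_q, \Z_p(1)) = \widehat{\Q_q^\times}$) by the element $q_A$. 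Since $\mathrm{val}_q(q_A) = -\mathrm{ord}_q(j_A) > 0$ is nonzero, the class of $q_A$ in $\Q_q^\times \otimes \Q_p$ is nonzero and, because $\mathrm{val}_q(q_A) \neq 0$, it is not in the image of the units $\cO_q^\times$; hence the extension of $E_v$-representations is non-split, and moreover its restriction to inertia $I_q$ is non-split (the valuation component makes the class ramified), so $\rho_{A,v}|_{G_q}$ is ramified and non-split.

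\medskip

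\noindent\textbf{Main obstacle.} The genuinely delicate point is the last assertion — that non-splitness persists after inverting $p$, i.e. that the extension class is nonzero in $H^1(G_q, E_v(1))$ rather than merely in the torsion-free part being possibly trivial. This is where one cannot argue purely formally from the residual picture (residually the extension could even be split while integrally being non-split, or vice versa); one must use the precise arithmetic of the Tate parameter, namely that $q \| M$ forces $\mathrm{val}_q(q_A) = n_q'$ a \emph{positive} integer, so the Kummer class $q_A \bmod (\Q_q^\times)^{p^\infty}$ has nontrivial valuation component and is therefore nonzero and ramified in $H^1(G_q, E_v(1)) = \widehat{\Q_q^\times} \otimes E_v$. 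I would make sure to cite the Tate curve uniformization and the compatibility of $\rho_f$ with the Steinberg local representation carefully here, as everything downstream (the computation of $n_q$ in terms of the Tate period in Section \ref{semistable}) rests on this.
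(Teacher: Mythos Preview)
Your Part (1) is essentially the paper's: both defer to Carayol's result that residual irreducibility forces a unique stable lattice up to homothety.

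Your Part (2) takes a genuinely different route. The paper argues purely geometrically: since $A$ has purely toric reduction at $q$, the Mumford--Raynaud--Tate uniformization gives an exact sequence of $\cO_v[G_q]$-modules
\[
0 \longrightarrow \Hom_{\cO_v}({\cal X}(A,q)_{\cO_v},\cO_v(1)) \longrightarrow \mathrm{Ta}_p(A)_{\cO_v} \longrightarrow {\cal X}(A^d,q)_{\cO_v} \longrightarrow 0,
\]
where the character group pieces are free of rank $1$ over $\cO_v$; this immediately yields the integral upper-triangular shape, and ramification falls out of the uniformization. No modularity, no local Langlands. Your approach instead imports the $E_v$-shape from local-global compatibility at a Steinberg prime (so you are assuming $A=A_f$, which the lemma does not) and then descends to $\cO_v$ via Part (1). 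That works, but it trades an elementary geometric input for a deep theorem, and it proves strictly less than the lemma as stated.

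There is also a genuine imprecision: you write that ``$A$ is a quotient of a Tate curve $E_{q_A}$'' and compute the extension class via a single Tate parameter $q_A\in\Q_q^\times$. This is only literally correct when $\dim A=1$. For $A$ of $\GL_2$-type with $[F:\Q]>1$, the $q$-adic uniformization is $A\cong (\bbG_m^g)/L$ for a lattice $L$ of rank $g$, and the object replacing the single Tate parameter is the character group ${\cal X}(A,q)$ together with the monodromy pairing. Your Kummer-theoretic computation of the extension class and its ramification would need to be rewritten in that language --- at which point you have reproduced exactly the paper's argument. So the ``main obstacle'' you flag is real, but the fix is the Mumford--Raynaud exact sequence rather than the elliptic-curve Tate parameter.
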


\begin{proof}
The first part is  a well-known result of  Carayol \cite{Carayol2}. The second part  is a consequence of the Mumford-Raynaud-Tate  $q$-adic uniformization of the abelian variety $A$ over $\Q_q$ which has multiplicative reduction at $q$ 
and weight-monodromy results.
Namely,  consider the character
group ${\cal X}(A,q)$  (with values in $\Z_p$) of
the torus of the reduction of the N\'eron model of the abelian
variety $A$
at the prime $q$. We denote by  ${\cal X}(A,q)_{\cO_v}={\cal X}(A_q) \otimes \cO_v$ where the tensor product is over ${\rm End}(A)  \otimes \Z_p$. Note that as $A$ has purely
toric reduction at $q$, we have the exact sequence of $\cO_v[G_q]$-modules
$$ 0 \rightarrow {\rm Hom}_{\cO_v}(({\cal X}(A,q)_{\cO_v},\cO_v(1))
\rightarrow {\rm Ta}_p(A)_{\cO_v} \rightarrow {\cal
X}(A^d,q)_{\cO_v}  \rightarrow 0.$$
Note that ${\rm Hom}_{\cO_v}(({\cal X}(A,q)_{\cO_v},\cO_v(1)),  {\cal
X}(A^d,q)_{\cO_v}  $ are both free $\cO_v$-modules of rank 1 (as they are both torsion-free and after inverting $p$ are both isomorphic to $F_v$), where $A^d$ denotes the dual abelian variety of $A$. The fact that the representation is ramified follows
from the $q$-adic   uniformization.
\end{proof}

We will use this when $A=A_f$ with $A_f$ the abelian variety associated by Shimura to a newform $f \in S_2(\Gamma_0(M))$ that is Steinberg at a place $q$.
The shape of the Galois representation $\rho_f|_{G_q}$ helps to apply the computations of section \ref{sec:local-calc}
to Theorem \ref{mc1}.

\section{Patching}\label{sec:patch}

Let $R$ and $R^{\st}$ be the full (resp. Steinberg) unframed global deformation rings for $\rhobar$, as defined in section \ref{sec_deformation_theory}. Note that the representations $\rho^{\mod}:G_\Q\to \GL_2(\T)$ and $\rho^{Q,\mod} = \rho^{\mod}\otimes_\T\T^{\st}:G_\Q\to\GL_2(\T^{\st})$ induce surjective maps $R\to \T$ and $R^{\st}\to \T^{\st}$, which are compatible with quotient maps $R\to R^{\st}$ and $\T\to\T^{\st}$. 

As noted in Section \ref{ssec:global def}, we may (non-canonically) view $R$ and $R^{\st}$ as quotients of $R^\square$ and $R^{\square,\st}$ respectively, and moreover these quotient maps fit into a commutative diagram:

\begin{center}
	\begin{tikzpicture}
	\node(11) at (2,2) {$R_{\loc}$};
	\node(21) at (4,2) {$R^\square$};
	\node(31) at (6,2) {$R$};
	\node(41) at (8,2) {$\T$};
	
	\node(10) at (2,0) {$R_{\loc}^{\st}$};
	\node(20) at (4,0) {$R^{\square,\st}$};
	\node(30) at (6,0) {$R^\st$};
	\node(40) at (8,0) {$\T^{\st}$};	
	
	\draw[->>] (11)--(10);
	\draw[->>] (21)--(20);
	\draw[->>] (31)--(30);
	\draw[->>] (41)--(40);

	\draw[-latex] (11)--(21);
	\draw[->>] (21)--(31);
	\draw[->>] (31)--(41);
	
	\draw[-latex] (10)--(20);
	\draw[->>] (20)--(30);
	\draw[->>] (30)--(40);
	\end{tikzpicture}
\end{center}

Using these maps, we may view $M(NQ^2)$ and $M^{\st}(N)$ as $R_{\loc}$- and $R^{\st}_{\loc}$-modules, respectively.

The following results are fairly standard, we  only sketch the proofs:

\begin{lemma}\label{lem:R_loc smooth}
	Treating $M(NQ^2)$ as an $R_{\loc}$-module, $\Spec (R_{\loc}\otimes_{\cO} E)$ is formally smooth at each point in the support of $M(NQ^2)\otimes_{\cO}E$. In particular, each point in the support of $M(NQ^2)\otimes_{\cO}E$ lies in exactly one irreducible component of $\Spec R_{\loc}$.
\end{lemma}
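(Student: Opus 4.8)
The plan is to reduce the statement to the corresponding statement for each local factor of $R_{\loc}$, and then to invoke the explicit computations of the local deformation rings from \cite{Shotton} (as summarized in Proposition \ref{prop:R_v-Atq}) together with the classical fact that the modular forms $f^Q$ contributing to $S(NQ^2)_{\ffrm_Q}$ have Galois representations whose local restrictions are ``generic'' points of the relevant local deformation rings. First I would recall that $M(NQ^2)^{\oplus 2} \cong S(NQ^2)_{\ffrm_Q}^*$ is free of rank $2$ over $\T$ by Theorem \ref{fred}, so the support of $M(NQ^2)\otimes_\cO E$ in $\Spec(R_{\loc}\otimes_\cO E)$ is the image of $\Spec(\T\otimes_\cO E)$. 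The points of $\Spec(\T\otimes_\cO E)(\overline E)$ correspond to the (Galois representations attached to the) newforms of level dividing $NQ^2$ which are congruent to $\rhobar$, are $\epsilon$-de-determinant, flat at $p$, minimally ramified at $\ell \mid N$, and which — because of the choice of $\ffrm_Q$ forcing $U_q \in \ffrm_Q$ — are such that $\rho_f|_{G_q}$ is (an unramified twist of) a genuinely ramified, non-split extension of $1$ by $\epsilon$ at each $q \mid Q$; this last point is exactly the content of Lemma \ref{uniformization}(2) applied to $A = A_f$.

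The key step is then formal smoothness of $\Spec(R_q^\square \otimes_\cO E)$ at such a point, for each $q \mid Q$, together with the analogous (trivial, since those factors are power series rings over $\cO$) statement at $\ell \mid N$ and at $p$. For $q \mid Q$: by Proposition \ref{prop:R_v-Atq}(2) the ring $R_q^\square$ is flat over $\cO$ of relative dimension $3$, and by Proposition \ref{prop:R_v-Atq}(4) each $R_q^\square/\frp$ for $\frp$ a minimal prime is flat over $\cO$ and geometrically integral. The generic fiber $\Spec(R_q^\square \otimes_\cO E)$ is $3$-dimensional, and its regular locus — in fact, the non-Steinberg points and the ``generic Steinberg'' points (those corresponding to ramified non-split extensions, as opposed to the split or unramified-twist-of-trivial loci) — are exactly the smooth points; this is precisely what Shotton computes in \cite[Section 5]{Shotton} in his description of the irreducible components and their singular loci. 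A point coming from $\rho_f|_{G_q}$ lies on the Steinberg component $\Spec R_q^{\St}$ and, by Lemma \ref{uniformization}(2), is a ramified non-split point, hence lies in the smooth locus of that component and on no other component. Combining the local statements via Lemma \ref{Lemma-CompletedTensor}(7) (which identifies the irreducible components of the completed tensor product $R_{\loc}$ with the products of the local components) gives formal smoothness of $\Spec(R_{\loc}\otimes_\cO E)$ at each point of $\Supp(M(NQ^2)\otimes_\cO E)$, and the ``in particular'' clause follows since a point lying in two distinct irreducible components of a reduced scheme is necessarily singular.

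I expect the main obstacle to be bookkeeping rather than conceptual: one must carefully match up the abstract point of $\Spec(R_{\loc}\otimes_\cO E)$ arising from a given classical newform $f^Q$ with the explicit stratification of $\Spec(R_q^\square \otimes_\cO E)$ in \cite{Shotton}, and verify that the ``$U_q \in \ffrm_Q$, $a_q(f^Q) = 0$'' normalization genuinely forces $\rho_{f^Q}|_{G_q}$ to land in the smooth (ramified, non-split) locus of the Steinberg component at \emph{every} $q \mid Q$ simultaneously — including the trivial primes $q$, where the Steinberg deformation ring $R_q^{\St}$ itself is singular but the relevant point is nevertheless a smooth point of its generic fiber. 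Once that is pinned down for each $q$, the passage to the completed tensor product $R_{\loc}$, and the conclusion that each such point lies on a unique component, are formal consequences of Lemma \ref{Lemma-CompletedTensor}.
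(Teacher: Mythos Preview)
Your proposal has a genuine gap. You claim that the points in the support of $M(NQ^2)\otimes_\cO E$ all correspond to forms which are Steinberg at every $q\mid Q$, justifying this by ``the choice of $\ffrm_Q$ forcing $U_q\in\ffrm_Q$'' and then invoking Lemma~\ref{uniformization}(2). This is false. The module $M(NQ^2)$ is built from $H^1(X_0(NQ^2),\cO)_{\ffrm_Q}$, and the eigenforms contributing to this localization are those whose $U_q$-eigenvalue lies in $\ffrm_Q$ (i.e.\ is $\equiv 0$), not those which are Steinberg at~$q$. In particular the oldform $g^Q$ coming from a newform $g$ of minimal level $N(\rhobar)$ (which is \emph{unramified} at the primes of $Q$ not dividing $N(\rhobar)$) contributes, as do newforms of level $NQ^2$ whose local component at $q$ is ramified principal series or supercuspidal. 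Indeed, Lemma~\ref{lem:M(NQ^2) support}, immediately following this one, asserts that \emph{every} irreducible component of $\Spec R_{\loc}$ meets the support. So Lemma~\ref{uniformization}(2), which requires multiplicative reduction at $q$, does not cover the general point in the support, and your argument as written establishes smoothness only on the Steinberg components.

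The paper's proof avoids this by not attempting to pin down the local type at all: it observes that for \emph{any} classical newform $g$, the local representation $\rho_{g,\iota}|_{G_q}$ is generic in the sense that $H^2(G_q,\ad(\rho_{g,\iota}))=0$ (a consequence of weight--monodromy, with Lemma~\ref{uniformization}(2) recovering the special Steinberg case), and then quotes \cite[Proposition 1.2.2]{Allen} to conclude that the corresponding point of $\Spec R_q^\square$ is smooth. Your Shotton-based strategy could be made to work, but you would have to go through each possible local type at $q$ separately and check genericity in each case (e.g.\ Ramanujan to rule out $\alpha_q/\beta_q=q^{\pm1}$ in the unramified case, etc.), which is exactly what the cohomological criterion packages uniformly.
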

\begin{proof}
To see this it is enough to observe that for a classical newform $g$, the  associated Galois representation $\rho_{g,\iota}
$ for $\iota: \overline \Q \to  \overline \Q_p$, is generic (see \cite[Lemma 1.1.5]{Allen}) at places $q \in Q$. This is a standard consequence of the weight-monodromy conjectures that are known in the case of Galois representations attached to classical newforms (and which yield that $H^2(G_q,\ad(\rho_{g,\iota}))=0$). In the special case of forms $g$ of weight 2 that are Steinberg at a prime $q$, the genericity of  $\rho_{g,\iota}|_{G_q}
$  follows from the fact that  the associated $\GL_2$-type abelian variety $A_g$ has multiplicative reduction at $q$  (see Lemma \ref{uniformization}(2)). This shows, for instance see \cite[Proposition 1.2.2]{Allen}, that $\rho_{g,\iota}
$  corresponds to a smooth point of  $\Spec R_q^\square$.
\end{proof}

\begin{lemma}\label{lem:M(NQ^2) support}
	Each irreducible component of $\Spec R_{\loc}$ contains at least one point in the support of $M(NQ^2)\otimes_{\cO}E$.
\end{lemma}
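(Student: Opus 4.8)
The plan is to combine Proposition~\ref{prop:R_v-Loc} (which identifies the irreducible components of $\Spec R_{\loc}$) with Shotton's classification \cite{Shotton} of the components of the local rings $R_q^\square$, and then to realise every combination of local components by a classical newform using level-lowering and level-raising.

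First I would reduce the statement to a local one at the primes $q\mid Q$. By Proposition~\ref{prop:R_v-Loc} every irreducible component of $\Spec R_{\loc}$ has the form
$\Spec\big[\widehat{\bigotimes}_{q\mid Q}R_q^\square/\frp^{(q)}\big]\widehat{\otimes}\big[\widehat{\bigotimes}_{\ell\mid N}R^{\min}_\ell\big]\widehat{\otimes}R^{\fl}_p$
for a choice, for each $q\mid Q$, of a minimal prime $\frp^{(q)}$ of $R_q^\square$ (the factors $R^{\min}_\ell$, $R^{\fl}_p$ are regular by Proposition~\ref{prop:R_v-Atq}(1), hence contribute no indexing data). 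On the other hand, since $M(NQ^2)$ is free of rank one over $\T$ by Theorem~\ref{fred}, its support after inverting $p$, viewed in $\Spec R_{\loc}$ through the composite $R_{\loc}\to R^\square\onto R\cong\T$, is exactly the finite set of points $x_g$ attached to the newforms $g$ with residual representation isomorphic to $\rhobar$ that contribute to $H^1(X_0(NQ^2),\cO)_{\ffrm_Q}$; here $x_g$ is the point $(\rho_{g,\iota}|_{G_q})_{q\mid Q}$, together with the minimal (resp. flat) conditions at $N$ (resp. $p$), which $\rho_{g,\iota}$ satisfies automatically. By Lemma~\ref{lem:R_loc smooth} the point $x_g$ lies on exactly one component of $\Spec R_{\loc}$, namely the one indexed by the tuple $(\frp^{(q)}_g)_{q\mid Q}$, where $\frp^{(q)}_g$ is the minimal prime of $R_q^\square$ through which $\rho_{g,\iota}|_{G_q}$ factors. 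So it suffices to show that every tuple $(\frp^{(q)})_{q\mid Q}$ of minimal primes of the $R_q^\square$ arises in this way.

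Next I would invoke \cite{Shotton}, where the minimal primes of $R_q^\square$ are enumerated for each of the relevant residual types of $\rhobar|_{G_q}$ ($\rhobar|_{G_q}$ ramified; unramified with $q\not\equiv-1\bmod p$; unramified with $q\equiv-1\bmod p$): in every case they correspond to the finitely many inertial types at $q$ compatible with $\rhobar|_{G_q}$ and with determinant $\epsilon$ (the unramified type, the Steinberg type, and, when $q$ is a trivial prime, certain tamely ramified principal-series type deformations of conductor $q^2$), and each such type is realised by $\rho_{g,\iota}|_{G_q}$ for a suitable newform $g$. The remaining task is to produce, for a prescribed tuple of these local types, a single newform $g$ realising all of them simultaneously and still contributing to $H^1(X_0(NQ^2),\cO)_{\ffrm_Q}$. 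For this I would start from a newform $g_0\in S_2(\Gamma_0(N(\rhobar)))$ with residual representation $\rhobar$ (which exists by Ribet's level-lowering theorem \cite{RibetInv100}; it is automatically Steinberg at the primes $q\mid Q$ dividing $N(\rhobar)$ and unramified at the others), and then modify it prime-by-prime at the primes of $Q$: to pass from the unramified type to the Steinberg type at some $q$ one applies Ribet's level-raising theorem \cite{RibCong}, and to reach a ramified type of conductor $q^2$ at a trivial prime $q$ one applies the corresponding level-raising-by-$q^2$ results in the style of \cite{DT2}. The numerical hypotheses required are automatic here: every $q\mid Q$ is Steinberg for $\rhobar$, so $\tr\rhobar(\Frob_q)\equiv\pm(q+1)\bmod p$, and a $q\mid Q$ unramified in $\rhobar$ is a trivial prime, so $q\equiv1\bmod p$. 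Finally one checks, via the standard degeneracy computation at $q$ (the characteristic polynomial of $U_q$ on the $q$-old space at level $NQ^2$ is divisible by $X$), that the resulting oldform of level $NQ^2$ has a $U_q$-eigenvalue $\equiv0\bmod\varpi$ for each $q\mid Q$, so that $g$ indeed contributes to $H^1(X_0(NQ^2),\cO)_{\ffrm_Q}$.

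The main obstacle is the bookkeeping in these last steps: one must match Shotton's description of the components of $R_q^\square$ with the available modular inertial types at $q$, and check that the level-raising operations at the various primes $q\mid Q$ can be performed independently and simultaneously without leaving the localisation at $\ffrm_Q$, so that the newform $g$ obtained really does contribute to $H^1(X_0(NQ^2),\cO)_{\ffrm_Q}$ and really does realise the prescribed minimal prime at \emph{every} prime of $Q$ (rather than, say, the unramified component when the Steinberg one was wanted). An alternative, perhaps cleaner, way to produce $g$ is to run the Taylor--Wiles--Kisin patching argument (Theorem~\ref{thm:patching}) for the global deformation problem in which the local condition $R_q^\square/\frp^{(q)}$ is imposed at each $q\mid Q$: this identifies the corresponding deformation ring with a Hecke algebra, which is non-zero as soon as one exhibits a single modular form of the right type, and any $\overline\Q_p$-point of it yields the desired $g$.
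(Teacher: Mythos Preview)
Your overall strategy matches the paper's: reduce via Proposition~\ref{prop:R_v-Loc} to tuples of minimal primes of the $R_q^\square$, identify these using Shotton's results, and then realise each tuple by a newform using level-raising in the style of \cite{DT2}. That is exactly what the paper does.

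However, there is a genuine gap. Your claim that ``a $q\mid Q$ unramified in $\rhobar$ is a trivial prime, so $q\equiv1\bmod p$'' is false: a prime $q\in Q$ with $\rhobar$ unramified at $q$ can perfectly well have $q\equiv-1\pmod p$ (then $\rhobar(\Frob_q)$ has distinct eigenvalues $\pm1$) or $q\not\equiv\pm1\pmod p$. Because of this misreading, your enumeration of the components of $R_q^\square$ is incomplete, and you miss precisely the case the paper singles out as requiring extra work: when $q\equiv-1\pmod p$ and $\rhobar$ is unramified at $q$, Shotton's description gives \emph{two} Steinberg components $R_q^{\St(+1)}$ and $R_q^{\St(-1)}$, indexed by the two eigenvalues of $\rhobar(\Frob_q)$. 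Ordinary level-raising produces a Steinberg newform at $q$, but does not let you choose which of the two components the lift lands on. The paper handles this by proving a refinement of \cite[Theorem~3]{DT2}: starting from a $q$-old eigenform $h$, the two $U_q$-stabilisations at level $NQ$ have $U_q$-eigenvalues congruent to $+1$ and $-1$ respectively, and both reductions lie in the kernel of $U_q^2-1$, so level-raising can be performed with either prescribed sign. Your argument does not address this, so as written it would only hit one of the two Steinberg components at each such prime.

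Your discussion of tamely ramified principal-series types of conductor $q^2$ at trivial primes is fine (and \cite{DT2} does cover this), but it is orthogonal to the subtlety above. Your proposed alternative via patching is essentially circular here: the nonvanishing of the relevant Hecke algebra is exactly the existence of a modular point on the prescribed component, which is what you are trying to prove.
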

\begin{proof} 
The irreducible components of $\Spec R_{\loc}$ are given by Proposition \ref{prop:R_v-Loc}, combined with the description of the irreducible components of $\Spec R_q^\square$ given in \cite[Section 5]{Shotton}.  Thus  the components of  $\Spec R_q^\square$ for $q \in Q$ are labelled by inertial Weil-Deligne parameters compatible with the residual representation,  with the exception  that when  $q$ congruent to $-1$ mod $p$  and $\rhobar$ is unramified at $q$, there are two  Steinberg components $R_q^{\st(+1)}$ and $R_q^{\st(-1)}$,  the related  inertial Weil-Deligne parameter being $(\tau,N)=(0,N)$ with $N$ a  non-zero nilpotent matrix in $M_2(E)$, corresponding to the two eigenvalues $\pm 1$ of $\rhobar(\Frob_q)$

We claim that by  using the analysis in \cite{DT2}  (see \cite[Theorem1]{DT2}) of the local behavior of lifts of $\rhobar$  (at all primes away from $p$) arising from newforms  one  sees   that the support of $M(NQ^2)\otimes_\cO E$ contains a point in each of the components of $\Spec R_{\loc}$.    To  justify this claim,   in addition to the work of  loc. cit.,  we have to take  care of the additional components that arise in the case 
of $q$ congruent to $-1$ mod $p$.  For this we  need a slight refinement of \cite[Theorem 3]{DT2}. Namely we can find  a newform $g$ of weight 2 and  level $NQ$, that gives rise to $\rhobar$, that is in particular Steinberg at all places in $Q$,  as in loc. cit., with the additional flexibility    that  for each  $q \in Q$  that is  $-1$ mod $p$ and unramified in $\rhobar$,  we can  further specify the eigenvalue of  $U_q$ acting $g$ to be either $1$ or $-1$.   To prove this refinement, using the level raising methods of \cite{RibCong} and \cite{DT2}, we use the fact that if $h$ is a $q$-old eigenform in $S_2(\Gamma_0(NQ/q),\cO)$ that gives rise to $\rhobar$, then it has two stabilizations of level $NQ$ that are eigenforms for $U_q$ with eigenvalues that are respectively $1$ and $-1$ modulo the maximal ideal of $\cO$, and the reductions of both these forms are in the kernel of $U_q^2-1$.

\end{proof}

The Taylor--Wiles--Kisin method applied in our situation now gives the following:

\begin{theorem}\label{thm:patching}
There exist integers $g,d\ge 0$, rings
\begin{align*}
R_\infty &= R_{\loc}[[x_1,\ldots,x_g]]\\
R_\infty^{\st} &= R_{\loc}^{\st}[[x_1,\ldots,x_g]]\\
S_\infty &= \cO[[y_1,\ldots,y_d]]
\end{align*}
and modules $M_\infty$ and $M_\infty^{\st}$ over $R_\infty$ and $R_\infty^{\st}$, respectively, satisfying the following:
\begin{enumerate}
\item $\dim S_\infty = \dim R_\infty = \dim R_\infty^{\st}$.
\item There exists an $\cO$-module morphism $i:S_\infty\to R_\infty$ (inducing a morphism $i^{\st}:S_\infty\to R_\infty^{\st}$ via the quotient map $R_\infty\to R_\infty^{\st}$) making $M_\infty$ and $M_\infty^{\st}$ into finite free $S_\infty$-modules. From now on, we will view $R_\infty$ and $R_\infty^{\st}$ as $S_\infty$-algebras via this map $i$.
\item $M_\infty$ and $M_\infty^{\st}$ are maximal Cohen--Macaulay modules over $R_\infty$ and $R_\infty^{\st}$, respectively.
\item $R_\infty$ and $R_\infty^{\st}$ act faithfully on $M_\infty$ and $M_\infty^{\st}$, respectively.
\item We have isomorphisms $R_\infty\otimes_{S_\infty} \cO \cong R$, $R_\infty^{\st}\otimes_{S_\infty} \cO \cong R^{\st}$, $M_\infty\otimes_{S_\infty}\cO\cong M(NQ^2)$ and $M_\infty^{\st}\otimes_{S_\infty}\cO\cong M^{\st}(N)$.
\item The maps $i:S_\infty \to R_\infty$ and $i^{\st}: S_\infty \to R_\infty^{\st}$ are injective, and these maps make $R_\infty$ and $R_\infty^{\st}$ into finite free $S_\infty$-modules.
\item The surjective maps $R\to \T$ and $R^{\st}\to \T^{\st}$ are isomorphisms. In particular, $R$ and $R^{\st}$ are finite over $\cO$, reduced and $\varpi$-torsion free.
\end{enumerate}
\end{theorem}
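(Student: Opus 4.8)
The strategy is the Taylor--Wiles--Kisin patching method, run simultaneously for the ``full'' and the ``Steinberg'' towers; the two stay linked because $R^{\square,\st}=R^\square\otimes_{R_{\loc}}R^{\st}_{\loc}$ and there are compatible quotient maps $R_{\loc}\onto R^{\st}_{\loc}$ and (via Jacquet--Langlands) $\T\onto\T^{\st}$, so a single patching datum over $R_{\loc}$ produces both sides at once. The inputs I would draw on from earlier in the paper are: Proposition~\ref{prop:R_v-Atq} and Proposition~\ref{prop:R_v-Loc}, giving that $R_{\loc}$ and $R^{\st}_{\loc}$ are flat over $\cO$ of the \emph{same} relative dimension, that $R_{\loc}$ is a reduced complete intersection with explicitly described equidimensional irreducible components, and that $R^{\st}_{\loc}$ is a Cohen--Macaulay \emph{domain}; Lemma~\ref{Lemma-CompletedTensor}, ensuring these properties pass through the completed tensor products defining $R_{\loc}$, $R^{\st}_{\loc}$; Lemmas~\ref{lem:R_loc smooth} and~\ref{lem:M(NQ^2) support}, which say $M(NQ^2)$ is supported, at smooth points, on every irreducible component of $\Spec R_{\loc}$; and the Taylor--Wiles hypothesis from \S\ref{sec:trivial primes} ($\Ad^0\rhobar$ absolutely irreducible, $\rhobar|_{G_{\Q(\zeta_p)}}$ irreducible), together with the auxiliary prime $r$ fixed there, used to rigidify level structures.

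First I would build the Taylor--Wiles systems. By the usual Chebotarev argument exploiting the Taylor--Wiles hypothesis, for each $n\ge1$ choose a finite set $Q_n$ of primes, disjoint from $S$, of cardinality independent of $n$, with $v\equiv1\pmod{p^n}$ and $\rhobar(\Frob_v)$ having distinct eigenvalues for each $v\in Q_n$, chosen so that the dual Selmer group of the deformation problem augmented with the extra ``diagonal'' condition at $Q_n$ vanishes. With $\Delta_{Q_n}$ the maximal $p$-power quotient of $\prod_{v\in Q_n}(\Z/v\Z)^\times$, the framed deformation rings $R^\square_{Q_n}$ and $R^{\square,\st}_{Q_n}$ become $\cO[\Delta_{Q_n}]$-algebras, and the Galois-cohomology bound produces surjections $R_{\loc}[[x_1,\dots,x_g]]\onto R^\square_{Q_n}$ and $R^{\st}_{\loc}[[x_1,\dots,x_g]]\onto R^{\square,\st}_{Q_n}$ for a fixed $g$ (depending on $\#Q_n$ and the number of framing variables). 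On the automorphic side, imposing $\Gamma_1$-level structure at the primes of $Q_n$ and forming the analogues of $M(NQ^2)$ and $M^{\st}(N)$ at this level produces $\cO[\Delta_{Q_n}]$-modules $M_{Q_n}$, $M^{\st}_{Q_n}$ that are \emph{finite free} over $\cO[\Delta_{Q_n}]$ with $\Delta_{Q_n}$-coinvariants canonically $M(NQ^2)$, $M^{\st}(N)$ --- this freeness, with the identification of coinvariants, is the one genuinely geometric input, resting on the local structure of the cohomology at Taylor--Wiles primes (an Ihara-type argument). Putting $S_\infty=\cO[[y_1,\dots,y_d]]$ with $d$ the sum of $\#Q_n$ and the number of framing variables, the ``numerical coincidence'' $\dim R_{\loc}+g=1+d=\dim S_\infty$ --- whose verification is exactly where the dual-Selmer vanishing and the dimension statements of Proposition~\ref{prop:R_v-Loc} enter --- holds.

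Now patch: for each $c$ there are only finitely many isomorphism classes of the pair $(R^\square_{Q_n}/\ffrm_{S_\infty}^{\,c},\,M_{Q_n}/\ffrm_{S_\infty}^{\,c})$ over $S_\infty/\ffrm_{S_\infty}^{\,c}$ (everything being finite over $\cO$), so a diagonal argument extracts compatible limits giving $R_\infty$- and $R^{\st}_\infty$-modules $M_\infty$, $M^{\st}_\infty$ such that: $R_\infty$ (a quotient of $R_{\loc}[[x_1,\dots,x_g]]$) acts faithfully on $M_\infty$, and likewise for $\st$; $M_\infty$, $M^{\st}_\infty$ are finite free over $S_\infty$; and $R_\infty\otimes_{S_\infty}\cO\cong R$, $R^{\st}_\infty\otimes_{S_\infty}\cO\cong R^{\st}$, $M_\infty\otimes_{S_\infty}\cO\cong M(NQ^2)$, $M^{\st}_\infty\otimes_{S_\infty}\cO\cong M^{\st}(N)$. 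This gives parts~2 and~5 and half of~4. For the rest: $R_\infty$ embeds in the finite $S_\infty$-module $\End_{S_\infty}(M_\infty)$, hence is finite over $S_\infty$, so $\dim S_\infty=\dim_{R_\infty}M_\infty\le\dim R_\infty\le\dim S_\infty$, forcing $\dim R_\infty=\dim S_\infty$ and likewise $\dim R^{\st}_\infty=\dim S_\infty$; with the numerical coincidence this is part~1, and moreover $R=R_\infty\otimes_{S_\infty}\cO$ is already finite over $\cO$. The surjection $R_{\loc}[[x_1,\dots,x_g]]\onto R_\infty$ is then between rings of equal dimension; its kernel is $\Ann M_\infty$, which vanishes because $R_{\loc}[[x_1,\dots,x_g]]$ is reduced and, by Lemmas~\ref{lem:R_loc smooth} and~\ref{lem:M(NQ^2) support} carried to the $\infty$-level, $M_\infty$ is supported on every component --- so $R_\infty=R_{\loc}[[x_1,\dots,x_g]]$; on the Steinberg side $R^{\st}_{\loc}[[x_1,\dots,x_g]]$ is a domain of that dimension, so the surjection is automatically injective, i.e.\ $R^{\st}_\infty=R^{\st}_{\loc}[[x_1,\dots,x_g]]$. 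These rings are Cohen--Macaulay (power series over the complete intersection $R_{\loc}$, resp.\ over the Cohen--Macaulay $R^{\st}_{\loc}$), so the finite-free-over-$S_\infty$ modules $M_\infty$, $M^{\st}_\infty$ have depth equal to $\dim S_\infty=\dim R_\infty$ over $R_\infty$, i.e.\ are maximal Cohen--Macaulay (part~3), and Auslander--Buchsbaum makes $R_\infty$, $R^{\st}_\infty$ free over $S_\infty$ with $S_\infty\hookrightarrow R_\infty$ (part~6). Finally, the regular sequence $i(y_1),\dots,i(y_d)$ (regular since $R_\infty$ is Cohen--Macaulay and it cuts $R_\infty$ down to the $\cO$-finite ring $R$) is also $M_\infty$-regular, and faithfulness of the $R_\infty$-action on $M_\infty$ descends along it to faithfulness of the $R$-action on $M(NQ^2)$; since that action factors through $R\onto\T$ and $\T$ itself acts faithfully on $M(NQ^2)$ (as $S(NQ^2)^*_{\ffrm_Q}\cong M(NQ^2)^{\oplus2}$), we get $R\xrightarrow{\sim}\T$, and identically $R^{\st}\xrightarrow{\sim}\T^{\st}$; reducedness and $\varpi$-torsion-freeness then follow since $\T$, $\T^{\st}$ act faithfully on the $\varpi$-torsion-free cohomology of the curves and have reduced generic fibre --- which completes part~7.

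The part I expect to be most delicate is not the Galois-cohomological bookkeeping (standard under the Taylor--Wiles hypothesis) but the module $M_\infty$: one must verify that the level-$Q_n$ Hecke modules really are free over the group rings $\cO[\Delta_{Q_n}]$ with coinvariants $M(NQ^2)$ and $M^{\st}(N)$ --- an input that has to be secured even though $M^{\st}(N)$ itself is \emph{not} free over $\T^{\st}$ --- and then one must prove faithfulness of the $R_\infty$-action on $M_\infty$ on the full side, where $R_{\loc}$ is only reduced (indeed the local rings $R^{\st}_q$ fail to be complete intersections precisely at the trivial primes), so the usual ``$R_\infty$ is a domain'' shortcut is unavailable and the argument must be routed through the support statements of Lemmas~\ref{lem:R_loc smooth} and~\ref{lem:M(NQ^2) support}, all the while keeping the full and Steinberg towers compatibly in step.
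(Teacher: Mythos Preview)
Your outline tracks the paper's proof closely through parts~(1)--(6), and your identification of the delicate inputs---freeness of the finite-level modules over the group rings $\cO[\Delta_{Q_n}]$, and full support of $M_\infty$ via Lemmas~\ref{lem:R_loc smooth} and~\ref{lem:M(NQ^2) support}---is accurate.

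There is, however, a genuine gap in your argument for part~(7). You assert that ``faithfulness of the $R_\infty$-action on $M_\infty$ descends along [the regular sequence $i(y_1),\dots,i(y_d)$] to faithfulness of the $R$-action on $M(NQ^2)$'' and conclude $R\cong\T$ directly from this. But faithfulness does \emph{not} descend along regular sequences, even for maximal Cohen--Macaulay modules over reduced Cohen--Macaulay rings. For instance, take $R_\infty=\cO[[x,y]]/(xy)$ with $S_\infty=\cO[[t]]$ acting via $t\mapsto x+y$, and $M_\infty=R_\infty/(x)\oplus R_\infty/(y)$. Then $M_\infty$ is free over $S_\infty$, MCM and faithful over $R_\infty$, yet $R=R_\infty/(t)\cong\cO[[x]]/(x^2)$ acts on $M=M_\infty/(t)\cong\cO\oplus\cO$ with $x$ annihilating $M$. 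What does descend is the \emph{support}: one gets only $R^{\mathrm{red}}=\T$, and your argument is then circular, since you deduce reducedness of $R$ only \emph{after} claiming $R\cong\T$.

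The paper closes this gap by a second appeal to Lemma~\ref{lem:R_loc smooth}: every closed point $x\in\Spec R[1/\varpi]$ lies in the smooth locus of $\Spec(R_\infty\otimes_\cO E)$, so $(M_\infty\otimes_\cO E)_x$ is free over the regular local ring $(R_\infty\otimes_\cO E)_x$, and hence $(M(NQ^2)\otimes_\cO E)_x$ is free of rank~$1$ over $R[1/\varpi]_x$. Since the action factors through $\T[1/\varpi]$, this forces $R[1/\varpi]\xrightarrow{\sim}\T[1/\varpi]$; then freeness of $R$ over $\cO$ (from part~(6)) shows $\ker(R\to\T)$ is $\cO$-torsion, hence zero. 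Without this extra step the integral isomorphism $R\cong\T$, and with it the reducedness of $R$, is not established.
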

\begin{proof}
As explained in section \ref{sec:trivial primes} the assumptions on $\rhobar$ (namely that it's absolutely irreducible and that $N(\rhobar)$ is squarefree) imply that $\rhobar|_{G_{\Q(\zeta_p)}}:G_{\Q(\zeta_p)}\to \GL_2(k)$, that is, that $\rhobar$ satisfies the ``Taylor--Wiles condition''.

First, as $\rhobar|_{\Q(\zeta_p)}$ is irreducible, we may appeal to Lemma 2 of \cite{DT1} to find an arbitrarily large prime $r$ so that no lift of $\rhobar$ is ramified at $r$. This allows us to impose $\Gamma_1(r^2)$ level structure without affecting any of the objects considered in this theorem, and so we may ignore any issues arising from isotropy (see \cite[Section 4.2]{Manning} or \cite[Section 6.2]{EGS} for details).

We can now apply the Taylor--Wiles--Kisin patching method applied to the modules $M(NQ^2)$ and $M^{\st}(N)$. We will primarily follow the treatment in \cite[Section 4]{Manning}\footnote{One may note that \cite{Manning} only handles the ``minimal level'' case, i.e. only $M^{\st}(N)$ and not $M(NQ^2)$ in our notation, however all results we will cite can easily be seen to hold in our situation with no modifications to the proofs.}

First, by the method outlined in \cite[Section 4.3]{Manning}, there exist: 
\begin{itemize}
	\item Integers $g,d\ge 0$, satisfying $d+1 = \dim R_{\loc}+g = \dim R_{\loc}^{\st}+g$ (see \cite[Lemma 2.5]{Manning} and \cite[Proposition (3.2.5)]{Kisin});
	\item For each $n\ge 1$, an $S_\infty$-algebra $R_n^\square$ with a surjective map $R_\infty\onto R_n$ and an isomorphism $R_n^\square \otimes_{S_\infty} \cO \cong R$, where $S_\infty$ and $R_\infty$ are as in the theorem statement, and the kernel of the structure map $S_\infty \to R_n^\square$ has a specific from (as given in \cite[Proposition 4.7]{Manning}) 
	\item For each $n\ge 1$, a finitely generated $R_n^\square$-module $M_n^\square$, which is free over $\mathrm{im}(S_\infty\to R_n^\square)$ and satisfies $M_n^\square\otimes_{S_\infty}\cO \cong M(NQ^2)$.
\end{itemize}
Briefly, $R_n^\square$ is a (framed) global deformation ring constructed exactly as in Section \ref{ssec:global def} of this paper, except that we also allow deformations with ramification at a carefully chosen set of primes $Q_n$ (constructed in \cite[Propositon (3.2.5)]{Kisin}), disjoint from $S$. The module $M_n^\square$ is simply constructed from the homology group of an appropriate modular curve just as in \cite[Section 4.3]{Manning} (although to keep this construction consistent with the construction of $M(NQ^2)$ given in this paper, we must also modify this space by localizing at the maximal ideal $\ffrm_Q$ of the full Hecke algebra, as in Section \ref{ssec:modular curve} of this paper --- this does not cause any issue for the arguments of \cite{Manning}).

The `ultrapatching' construction described in \cite[Section 4.1]{Manning} (as well as in the proof of Lemma 4.8) then produces an $S_\infty$-algebra $\Rt_\infty$ (which would be called $\mathscr{P}(\mathscr{R}^\square)$ in the notation of that paper) as well as an $\Rt_\infty$-module $M_\infty$, for which:
\begin{itemize}
	\item $M_\infty$ is finite free over $S_\infty$;
	\item $\Rt_\infty\otimes_{S_\infty}\cO \cong R$ and $M_\infty\otimes_{S_\infty}\cO\cong M(NQ^2)$;
	\item There is a surjection $R_\infty\onto \Rt_\infty$ such that the composition
	\[R_{\loc}\into R_\infty\onto \Rt_\infty\onto R\]
	is the map $R_{\loc}\to R$ from the start of this section.
\end{itemize}
Analogously, for each $n\ge 1$ we may also construct a quotient $R_n^{\square,\st}$ of $R_n^\square$ and an $R_n^{\square,\st}$-module $M_n^{\square,\st}$ satisfying analogous properties to $R_n^\square$ and $M_n^\square$ (this time identically to the construction given in \cite{Manning}). Again applying the ultrapatching construction produces an $S_\infty$-algebra $\Rt_\infty^{\st}$ and a $\Rt_\infty^{\square,\st}$-module $M_\infty^{\st}$, which is finite free over $S_\infty$, and satisfies $\Rt_\infty^{\st}\otimes_{S_\infty}\cO \cong R^{\st}$ and $M_\infty^{\st}\otimes_{S_\infty}\cO\cong M^{\st}(N)$.

Moreover, as each $R_n^{\square,\st}$ is a quotient of $R_n^\square$, the natural functorality of the ultrapatching construction (see \cite[Section 4.1]{Manning}) gives a quotient map $\Rt_\infty\onto\Rt_\infty^{\st}$, and one can easily check that the maps we've described fit into the commutative diagram:

\begin{center}
	\begin{tikzpicture}
	\node(11) at (2,2) {$R_{\loc}$};
	\node(21) at (4,2) {$R_\infty$};
	\node(31) at (6,2) {$\Rt_\infty$};
	\node(41) at (8,2) {$R$};
	
	\node(10) at (2,0) {$R_{\loc}^{\st}$};
	\node(20) at (4,0) {$R_\infty^{\st}$};
	\node(30) at (6,0) {$\Rt_\infty^{\st}$};
	\node(40) at (8,0) {$R^{\st}$};	
	
	\draw[->>] (11)--(10);
	\draw[->>] (21)--(20);
	\draw[->>] (31)--(30);
	\draw[->>] (41)--(40);

	\draw[right hook-latex] (11)--(21);
	\draw[->>] (21)--(31);
	\draw[->>] (31)--(41);
	
	\draw[right hook-latex] (10)--(20);
	\draw[->>] (20)--(30);
	\draw[->>] (30)--(40);
	\end{tikzpicture}
\end{center}
where the compositions of the top and bottom rows are the maps $R_\loc\to R$ and $R_{\loc}^{\st}\to R^{\st}$ from before.

Now as $\Rt_\infty$ is an $S_\infty$-algebra, $R_\infty$ is a complete local ring, and $S_\infty = \cO[[y_1,\ldots,y_d]]$ is a power series ring, we may lift the structure map $S_\infty\to \Rt_\infty$ to a map $i:S_\infty\to R_\infty$ making $\pi_\infty:R_\infty \to \Rt_\infty$ into an $S_\infty$-module surjection. This proves (1) and (2).

(3) follows by noting that $(\varpi,i(y_1),\ldots,i(y_d))$ and $(\varpi,i^{\st}(y_1),\ldots,i^{\st}(y_d))$ are regular sequences for $M_\infty$ and $M_\infty^{\st}$, respectively, as $M_\infty$ and $M_\infty^{\st}$ are both free over $S_\infty$.

But now by standard properties of maximal Cohen--Macaulay modules, the supports of $M_\infty$ and $M_\infty^{\st}$ are unions of irreducible components of $\Spec R_\infty$ and $\Spec R_\infty^{\st}$, respectively. As $R_\infty^{\st}$ is a domain, this implies that it acts faithfully on $M_\infty^{\st}$. 

Now as $R_\infty = R_{\loc}[[x_1,\ldots,x_g]]$, the irreducible components of $\Spec R_\infty$ are in bijection with those of $\Spec R_{\loc}$. By Lemma \ref{lem:M(NQ^2) support}, it follows that each irreducible component of $\Spec R_\infty$ contains a point in the support of $M_\infty/(i(y_1),\ldots,i(y_d))\otimes_\cO E = M(NQ^2)\otimes_\cO E$ (which is not contained in any other component, by Lemma \ref{lem:R_loc smooth}). Since the support of $M_\infty$ clearly contains the support of $M_\infty/(i(y_1),\ldots,i(y_d))\otimes_\cO E$, it follows that $M_\infty$ is supported on all of $\Spec R_\infty$. Since $R_\loc$, and hence $R_\infty$, is reduced (by Proposition \ref{prop:R_v-Loc}) it follows that $R_\infty$ acts faithfully on $M_\infty$.

Now by definition, the action of $R_\infty$ on $M_\infty$ factors through $\pi_\infty:R_\infty\to \Rt_\infty$. Since $R_\infty$ acts faithfully on $M_\infty$, it follows that $\pi_\infty$ must be an isomorphism of $S_\infty$-algebras. By an identical argument $\pi_\infty^{\st}:R_\infty^{\st}\to \Rt_\infty^{\st}$ is also an isomorphism of $S_\infty$-algebras. (5) now follows immediately from the properties of $\Rt_\infty$ and $\Rt_\infty^{\st}$ mentioned above.

Now as $M_\infty$ is supported on every geometric point of $\Spec R_\infty$, $M(NQ^2) = M_\infty\otimes_{S_\infty}\cO$ is supported on every geometric point of $\Spec (R_\infty\otimes_{S_\infty}\cO) = \Spec R$, and so as the action of $R$ on $M(NQ^2)$ factors through the surjection $R\to \T$, we must have $R^{red} = \T$ (as $\T$ is reduced). In particular, as $\T$ is finite over $\cO$, $R$ is finite over $\cO$ as well. Similarly $R^{\st}$ is finite over $\cO$.

In particular, $R/\varpi = R_\infty/(\varpi,i(y_1),\ldots,i(y_d))$ and $R^{\st}/\varpi = R_\infty^{\st}/(\varpi,i^{\st}(y_1),\ldots,i^{\st}(y_d))$ are both finite, and hence are zero dimensional. It follows that $(\varpi,i(y_1),\ldots,i(y_d))$ and $(\varpi,i^{\st}(y_1),\ldots,i^{\st}(y_d))$ are systems of parameters for $R_\infty$ and $R_\infty^{\st}$, respectively. As both of these rings are Cohen--Macaulay by Proposition \ref{prop:R_v-Loc}, it follows that these are regular sequences. Equivalently, treating $R_\infty$ and $R_\infty^{\st}$ as $S_\infty$-modules, these both have $(\varpi,y_1,\ldots,y_d)$ as a regular sequence, and so are maximal Cohen--Macaulay over $S_\infty$. But all finitely generated maximal Cohen--Macaulay modules over $S_\infty$ are free (by the Auslander-Buchsbaum formula), so (6) follows.

In particular, $R = R_\infty\otimes_{S_\infty}\cO$ and $R^{\st} = R_\infty^{\st}\otimes_{S_\infty}\cO$ are finite flat over $\cO$.

Now similarly to \cite[Proposition (3.3.1)]{Kisin}, take any $x\in \Spec R[1/\varpi]\subseteq \Spec (R_\infty\otimes_\cO E)$. As noted above, $x$ is in the support of $M(NQ^2)$ and so Lemma \ref{lem:R_loc smooth} gives that $\Spec (R_\infty\otimes_\cO E)$ is formally smooth at $x$, which implies that $(M_\infty\otimes_\cO E)_x$ is projective, and hence free, over $(R_\infty\otimes_\cO E)_x$. It follows that $(M(NQ^2)\otimes_\cO E)_x = (M_\infty\otimes_\cO E)_x/(i(y_1),\ldots,i(y_d))$ is free (of rank $1$) over $R[1/\varpi]_x$. Thus $M(NQ^2)\otimes_\cO E$ is locally free of rank $1$ over $R[1/\varpi]$. As $R[1/\varpi]$ has dimension $0$, and is thus a direct product of local $E$-algebras, it follows that $M(NQ^2)\otimes_\cO E$ is free of rank $1$ over $R[1/\varpi]$.

Since the action of $R[1/\varpi]$ on $M(NQ^2)\otimes_\cO E$ factors through $R[1/\varpi]\onto \T[1/\varpi]$, it follows that the map $R[1/\varpi]\onto \T[1/\varpi]$ is an isomorphism, and so the kernel of the surjection $R\onto\T$ is a torsion $\cO$-module. As noted above $R$ is free over $\cO$, and thus has no nontrivial torsion submodules. It follows that the map $R\to \T$ is an isomorphism.\footnote{See also the arguments in \cite[Section 5]{Snowden} or \cite[Section 4.3]{Manning} for the observation that $R_\infty$ being Cohen--Macaulay implies an integral $R=\T$ theorem.} Similarly $R^{\st}\to \T^{\st}$ is an isomorphism.  This proves (7).
\end{proof}

\section{Patching and the growth of cotangent spaces}\label{sec:patchandgrow}

\subsection{Continuous K\"ahler differentials}

The module $\Omega_{B/A}$ of K\"ahler differentials of $B$ over $A$ (for a ring map $A\to B$) can be a rather poorly behaved object when $B$ is merely \emph{topologically} finitely generated over $A$ rather than literally finitely generated. As the local deformation rings we will be considering in our arguments are merely topologically finitely generated, it will often be necessary for us to consider the module of \emph{continuous} K\"ahler differentials $\cOmega_{B/A}$ in place of $\Omega_{B/A}$. In this section, we review the definition and basic properties of continuous K\"ahler differentials.

For the reminder of this section, for any $A\in \CNLO$ we will always use $\ffrm_A$ to denote the unique maximal ideal of $A$. For any $A,B\in\CNLO$ we will say that a homomorphism $f:A\to B$ is local if $\ffrm_A = f^{-1}(\ffrm_B)$ (or equivalently $f(\ffrm_A) \subseteq \ffrm_B$).

For any $A,B\in\CNLO$ and any local homomorphism $A\to B$ we will define
\[\cOmega_{B/A} = \invlim_n \Omega_{(B/\ffrm_B^n)/(A/\ffrm_A^n)} = \invlim_n \Omega_{(B/\ffrm_B^n)/A}\]
(where the equality $\Omega_{(B/\ffrm_B^n)/(A/\ffrm_A^n)} =\Omega_{(B/\ffrm_B^n)/A}$ follows from \cite[\href{https://stacks.math.columbia.edu/tag/00RR}{Lemma 00RR}]{stacks-project}). Note that if $B$ is topologically generated as an $A$-algebra by $b_1,\ldots,b_N\in B$ then each $\Omega_{(B/\ffrm_B^n)/A}$, and thus $\cOmega_{B/A}$ itself, is generated as a $B$-module by $\df b_1,\ldots, \df b_N$.

First we observe the following:

\begin{lemma}\label{lem:diff completion}
Let $A\in\CNLO$ and let $\cR$ be a finitely generated as $A$-algebra. Let $f:A\to \cR$ be the structure map, and let $\ffrm_{\cR}\subseteq \cR$ be a maximal ideal for which $f^{-1}(\ffrm_{\cR}) = \ffrm_A$, and define $R = \cR_{\ffrm_{\cR}}\in \CNLO$. Then 
\[\cOmega_{R/A} = \Omega_{\cR/A}\otimes_{\cR}R.\]
\end{lemma}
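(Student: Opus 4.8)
The plan is to reduce the identity to a comparison of two inverse limits and then exploit the Leibniz rule. Write $B := \cR_{\ffrm_\cR}$ for the localization, so that $R$ is the $\ffrm_B$-adic completion of $B$ and $R/\ffrm_R^n \cong B/\ffrm_B^n$ for every $n$; by the very definition of $\cOmega$ this gives $\cOmega_{R/A} = \invlim_n \Omega_{(B/\ffrm_B^n)/A}$. On the other side, since $\cR$ is of finite type over $A$ the $\cR$-module $\Omega_{\cR/A}$ is finitely generated, and compatibility of K\"ahler differentials with localization yields $\Omega_{B/A} = \Omega_{\cR/A}\otimes_\cR B$, a finitely generated module over the Noetherian ring $B$. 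Hence $\Omega_{\cR/A}\otimes_\cR R = \Omega_{B/A}\otimes_B R = \widehat{\Omega_{B/A}} = \invlim_n \Omega_{B/A}/\ffrm_B^n\Omega_{B/A}$, so the lemma becomes the assertion that the two towers $\bigl(\Omega_{B/A}/\ffrm_B^n\Omega_{B/A}\bigr)_n$ and $\bigl(\Omega_{(B/\ffrm_B^n)/A}\bigr)_n$ have the same inverse limit.

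To compare them I would use, for each $n$, the conormal (second fundamental) exact sequence attached to the surjection $B \twoheadrightarrow B/\ffrm_B^n$:
\[\ffrm_B^n/\ffrm_B^{2n} \longrightarrow \Omega_{B/A}/\ffrm_B^n\Omega_{B/A} \longrightarrow \Omega_{(B/\ffrm_B^n)/A} \longrightarrow 0,\]
so that, setting $K_n := \operatorname{im}\bigl(\ffrm_B^n/\ffrm_B^{2n} \to \Omega_{B/A}/\ffrm_B^n\Omega_{B/A}\bigr)$ (the image of $d$ applied to $\ffrm_B^n$), there are short exact sequences of inverse systems $0 \to K_n \to \Omega_{B/A}/\ffrm_B^n\Omega_{B/A} \to \Omega_{(B/\ffrm_B^n)/A} \to 0$. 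The key observation is that the Leibniz rule forces $d(\ffrm_B^{n+1}) \subseteq \ffrm_B^n\Omega_{B/A}$ (apply the product rule to a product of $n+1$ elements of $\ffrm_B$), which says exactly that the transition map $K_{n+1}\to K_n$ --- induced by the reduction $\Omega_{B/A}/\ffrm_B^{n+1}\Omega_{B/A}\to \Omega_{B/A}/\ffrm_B^n\Omega_{B/A}$ --- is the zero map. Thus $(K_n)_n$ is pro-zero, whence $\invlim_n K_n = 0$ and $\varprojlim_n^1 K_n = 0$.

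Taking inverse limits of the short exact sequences then produces an exact sequence $0 \to \invlim_n K_n \to \invlim_n \Omega_{B/A}/\ffrm_B^n\Omega_{B/A} \to \invlim_n \Omega_{(B/\ffrm_B^n)/A} \to \varprojlim_n^1 K_n$, and since the two outer terms vanish we conclude $\cOmega_{R/A} = \invlim_n \Omega_{(B/\ffrm_B^n)/A} \cong \invlim_n \Omega_{B/A}/\ffrm_B^n\Omega_{B/A} = \Omega_{\cR/A}\otimes_\cR R$. I do not expect a genuine obstacle: the content is the elementary estimate $d(\ffrm_B^{n+1})\subseteq\ffrm_B^n\Omega_{B/A}$ together with its interpretation as pro-vanishing of the tower $(K_n)$; the only thing to be careful about is bookkeeping --- which quotient maps occur in the conormal sequences, and the routine facts that completion of a finitely generated module over a Noetherian local ring is computed by $\otimes_B R$ and that a pro-zero tower has vanishing $\invlim$ and $\varprojlim^1$.
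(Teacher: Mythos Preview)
Your proof is correct and rests on the same key observation as the paper's: the Leibniz estimate $d(\ffrm_B^{n+1})\subseteq \ffrm_B^{n}\Omega_{B/A}$, which forces the conormal term to disappear in the limit. The organization differs slightly. The paper invokes \cite[\href{https://stacks.math.columbia.edu/tag/02HQ}{Lemma 02HQ}]{stacks-project} (whose proof is exactly your Leibniz argument) to get, for each fixed $k$ and all $n>k$, an isomorphism $\Omega_{\cR/A}\otimes_\cR \cR/\ffrm_\cR^k \cong \Omega_{(\cR/\ffrm_\cR^n)/A}\otimes_\cR \cR/\ffrm_\cR^k$; it then takes the limit in $n$ via Mittag--Leffler and concludes by comparing the two finitely generated $R$-modules modulo every $\ffrm_R^k$. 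You instead take the inverse limit of the conormal sequences directly and phrase the same estimate as saying the tower $(K_n)$ is pro-zero, which kills both $\varprojlim K_n$ and $\varprojlim^1 K_n$. Your packaging is arguably cleaner in that it avoids the two-index bookkeeping and the final ``equal mod $\ffrm^k$ for all $k$ implies equal'' step; the paper's version has the advantage of citing a ready-made Stacks lemma. Either way, the mathematical content is identical.
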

\begin{proof}
By \cite[\href{https://stacks.math.columbia.edu/tag/02HQ}{Lemma 02HQ}]{stacks-project} for any $k$ and any $n>k$ we have a canonical isomorphism 
\[\Omega_{\cR/A} \otimes_{\cR} \cR/\ffrm_{\cR}^k \xrightarrow{\sim} \Omega_{(\cR/\ffrm_{\cR}^n)/A}\otimes_{\cR} \cR/\ffrm_{\cR}^k\]
and so taking inverse limits we have
\[\Omega_{\cR/A} \otimes_{\cR} \cR/\ffrm_{\cR}^k \xrightarrow{\sim} \invlim_n\left(\Omega_{(\cR/\ffrm_{\cR}^n)/A}\otimes_{\cR} \cR/\ffrm_{\cR}^k\right).\]
Now for each $n$, $\cR/\ffrm_{\cR}^n$ is certainly a finitely generated $A$-algebra, and so $\Omega_{(\cR/\ffrm_{\cR}^n)/A}$ is a finitely generated $\cR/\ffrm_{\cR}^n$-module. In particular, it has finite length. As $\cR/\ffrm_{\cR}^k$ is a finitely presented $\cR$-module, the above work, and the Mittag--Leffler conditions (see \cite[\href{https://stacks.math.columbia.edu/tag/0594}{Section 0594}]{stacks-project}), imply that
\begin{align*}
\left(\Omega_{\cR/A}\otimes_{\cR}R\right)\otimes_{\cR} \cR/\ffrm_{\cR}^k &\cong
\Omega_{\cR/A} \otimes_{\cR} \cR/\ffrm_{\cR}^k\cong \invlim_n\left(\Omega_{(\cR/\ffrm_{\cR}^n)/A}\otimes_{\cR} \cR/\ffrm_{\cR}^k\right)\\
&\cong \invlim_n\left(\Omega_{(\cR/\ffrm_{\cR}^n)/A}\right)\otimes_{\cR} \cR/\ffrm_{\cR}^k\cong \cOmega_{R/A}\otimes_{\cR} \cR/\ffrm_{\cR}^k
\end{align*}
for all $k$. Since $\Omega_{\cR/A}\otimes_{\cR}R$ and $\cOmega_{R/A}$ are both finitely generated $R$-modules, the claim follows.
\end{proof}

\begin{lemma}\label{lem:diff finite}
If $A,B\in\CNLO$ are such that $B$ is an $A$-algebra which is finitely generated as an $A$-module, then $\cOmega_{B/A}\cong \Omega_{B/A}$.
\end{lemma}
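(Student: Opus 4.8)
The plan is to reduce to the previous lemma, Lemma~\ref{lem:diff completion}, by realizing both $B$ and its maximal ideal as a localization-completion of a genuinely finitely generated $A$-algebra. First I would observe that since $B$ is finitely generated as an $A$-module, the canonical topology on $B$ as an object of $\CNLO$ agrees with the $\ffrm_A$-adic topology: we have $\ffrm_B^n \supseteq \ffrm_A^n B$ and, conversely, because $B/\ffrm_A B$ is a finite-dimensional $k$-algebra (hence Artinian local), some power of $\ffrm_B$ lies in $\ffrm_A B$, so the two filtrations $\{\ffrm_B^n\}$ and $\{\ffrm_A^n B\}$ are cofinal. In particular $B$ is already $\ffrm_A$-adically complete, so no nontrivial completion is taking place. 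Writing $B$ as a quotient $A[x_1,\ldots,x_N]/I$ by choosing $A$-module generators, set $\cR = A[x_1,\ldots,x_N]/I$; this is a finitely generated $A$-algebra, and $B$ is a quotient of it — but in fact, because $\cR$ is already finite as an $A$-module and $A$ is complete local, $\cR$ is a product of complete local rings, and localizing at the maximal ideal $\ffrm_\cR$ corresponding to the surjection $\cR \onto B$ (equivalently, to the composite $\cR \onto B \onto k$) recovers exactly $B$. Thus $B = \cR_{\ffrm_\cR}$ with $\cR$ finitely generated over $A$ and $\ffrm_\cR$ lying over $\ffrm_A$.

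Now Lemma~\ref{lem:diff completion} applies directly and gives $\cOmega_{B/A} = \Omega_{\cR/A}\otimes_\cR B$. On the other hand, $\Omega_{-/A}$ commutes with localization, so $\Omega_{B/A} = \Omega_{\cR_{\ffrm_\cR}/A} = \Omega_{\cR/A}\otimes_\cR \cR_{\ffrm_\cR} = \Omega_{\cR/A}\otimes_\cR B$ (see \cite[\href{https://stacks.math.columbia.edu/tag/00RT}{Lemma 00RT}]{stacks-project}). Comparing the two formulas yields $\cOmega_{B/A}\cong \Omega_{B/A}$, and one checks the isomorphism is the natural comparison map (the inverse limit of the projections $\Omega_{B/A}\otimes_B B/\ffrm_B^n \to \Omega_{(B/\ffrm_B^n)/A}$), so it is canonical.

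An alternative, more self-contained route avoids the auxiliary ring $\cR$ entirely: since $B$ is finite over $A$, it is $\ffrm_A$-adically complete and $\ffrm_B$-adically complete with cofinal topologies, and $\Omega_{B/A}$ is a finitely generated $B$-module, hence already $\ffrm_B$-adically complete. Then $\cOmega_{B/A} = \invlim_n \Omega_{(B/\ffrm_B^n)/A}$, and using \cite[\href{https://stacks.math.columbia.edu/tag/02HQ}{Lemma 02HQ}]{stacks-project} (base change of differentials along $B\to B/\ffrm_B^n$) together with the fact that $\Omega_{B/A}$ is finitely generated, one identifies this inverse limit with the $\ffrm_B$-adic completion of $\Omega_{B/A}$, which is $\Omega_{B/A}$ itself. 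I expect the main (minor) obstacle to be bookkeeping the topological point — checking carefully that $\{\ffrm_B^n\}$ and $\{\ffrm_A^n B\}$ are cofinal so that $\Omega_{(B/\ffrm_B^n)/A}$ has finite length and the Mittag--Leffler/completeness arguments go through — but this is routine given that $B/\ffrm_A B$ is Artinian, and once set up, the identification of the two modules is immediate from Lemma~\ref{lem:diff completion}.
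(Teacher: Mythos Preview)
Your proposal is correct and follows the same idea as the paper, but the paper's proof is a single sentence: since $B$ is finite as an $A$-module it is in particular finitely generated as an $A$-algebra, so one may take $\cR = R = B$ directly in Lemma~\ref{lem:diff completion}, giving $\cOmega_{B/A} = \Omega_{B/A}\otimes_B B = \Omega_{B/A}$. Your detour through an auxiliary presentation $\cR = A[x_1,\ldots,x_N]/I$ is unnecessary---as you have defined it, this $\cR$ \emph{is} $B$, so the subsequent discussion of $\cR$ being a product of local rings and localizing to recover $B$ is vacuous.
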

\begin{proof}
$B$ is already finitely generated as an $A$-algebra, so we may simply take $R=\cR=B$ in Lemma \ref{lem:diff completion}.
\end{proof}

\begin{lemma}\label{lem:diff aug}
Take any $R\in\CNLO$ and any augmentation $\lambda:R\to \cO$. Defining $\Phi_{\lambda,R}:= (\ker \lambda)/(\ker \lambda)^2$ as in section \ref{sec:congruence}, we have $\Phi_{\lambda,R}\cong \cOmega_{R/\cO}\otimes_\lambda\cO$

In particular, in the setup of Lemma \ref{lem:diff completion} with $A=\cO$, if $\lambda_{\cR}:\cR\to \cO$ is an augmentation extending to an augmentation $\lambda:R\to \cO$ then we have $\Phi_{\lambda,R}\cong \Omega_{\cR/A}\otimes_{\lambda_{\cR}}\cO\cong (\ker \lambda_{\cR})/(\ker \lambda_{\cR})^2$.
\end{lemma}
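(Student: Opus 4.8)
The plan is to identify both sides of the first isomorphism with $\ker(\lambda)/\ker(\lambda)^2$ by exhibiting an explicit pair of mutually inverse $\cO$-linear maps, and then to deduce the ``in particular'' clause from Lemma~\ref{lem:diff completion} together with the classical (non-continuous) identification already recorded in Section~\ref{sec:congruence}.

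Write $I=\ker\lambda$, and note that $\lambda$ is a retraction of the structure map $\cO\to R$, so $r-\lambda(r)\in I$ for every $r\in R$ (with $\lambda(r)$ also viewed in $R$ via the structure map). First I would show that $D\colon R\to I/I^2$, $D(r)=\overline{r-\lambda(r)}$, is a continuous $\cO$-derivation, where $R$ acts on $I/I^2$ through $\lambda$. It is visibly additive and $\cO$-linear; the Leibniz rule holds because $D(rs)-rD(s)-sD(r)=\overline{(r-\lambda(r))(s-\lambda(s))}=0$ in $I/I^2$; and continuity is routine: since $R$ is Noetherian, $I/I^2$ is a finite $\cO$-module, hence $\ffrm_R$-adically complete and separated, and the Artin--Rees lemma gives $D(\ffrm_R^n)\subseteq\ffrm_R^{\,n-c}(I/I^2)$ for a fixed $c$ (using $\lambda(\ffrm_R^n)\subseteq\varpi^n\cO$). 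By the universal property of $\cOmega_{R/\cO}$ with respect to continuous derivations into finite $R$-modules, $D$ then factors through an $R$-linear map $\cOmega_{R/\cO}\to I/I^2$, which, since $I$ annihilates $I/I^2$, descends to an $\cO$-linear map $\beta\colon\cOmega_{R/\cO}\otimes_\lambda\cO\to I/I^2$.

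In the other direction I would define $\gamma\colon I/I^2\to\cOmega_{R/\cO}\otimes_\lambda\cO$ by $\gamma(\bar x)=\df x\otimes 1$; this is well defined and $\cO$-linear because $\df(xy)\otimes1=(\lambda(x)\,\df y+\lambda(y)\,\df x)\otimes1=0$ for $x,y\in I$, and $\df a=0$ for $a$ in the image of $\cO$. Then $\beta\gamma=\mathrm{id}$ is immediate since $\lambda(x)=0$ for $x\in I$, and $\gamma\beta=\mathrm{id}$ follows by evaluating on the $\cO$-module generators $\df b_i\otimes1$ of $\cOmega_{R/\cO}\otimes_\lambda\cO$ (for $b_1,\dots,b_N$ topological generators of $R$ over $\cO$, which exist for any object of $\CNLO$): $\gamma\beta(\df b_i\otimes1)=\gamma\big(\overline{b_i-\lambda(b_i)}\big)=\df(b_i-\lambda(b_i))\otimes1=\df b_i\otimes1$. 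Hence $\beta$ and $\gamma$ are mutually inverse, giving $\Phi_{\lambda,R}=I/I^2\cong\cOmega_{R/\cO}\otimes_\lambda\cO$.

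For the ``in particular'' clause, in the situation of Lemma~\ref{lem:diff completion} with $A=\cO$ and $R=\cR_{\ffrm_{\cR}}$, that lemma gives $\cOmega_{R/\cO}\cong\Omega_{\cR/\cO}\otimes_{\cR}R$; since the composite $\cR\to R\xrightarrow{\lambda}\cO$ is $\lambda_{\cR}$, tensoring down yields $\cOmega_{R/\cO}\otimes_\lambda\cO\cong\Omega_{\cR/\cO}\otimes_{\lambda_{\cR}}\cO$, and combining this with the first part and the classical identification $\Omega_{\cR/\cO}\otimes_{\lambda_{\cR}}\cO\cong(\ker\lambda_{\cR})/(\ker\lambda_{\cR})^2$ (the consequence of \cite[Lemmas 00RP, 02HP]{stacks-project} noted in Section~\ref{sec:congruence}, applied to the honest finitely generated $\cO$-algebra $\cR$) finishes the proof. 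The only genuine obstacle is the verification that $D$ lands in a module to which the universal property of $\cOmega_{R/\cO}$ applies, i.e. that $D$ is continuous; an alternative route would observe that $R/I^2$ is module-finite over $\cO$, use Lemma~\ref{lem:diff finite} to identify $\cOmega_{(R/I^2)/\cO}$ with $\Omega_{(R/I^2)/\cO}$, and then invoke a continuous conormal sequence for $R\onto R/I^2$ to compare $\cOmega_{R/\cO}\otimes_\lambda\cO$ with $\cOmega_{(R/I^2)/\cO}\otimes_\lambda\cO$, but that requires a clean reference for the continuous conormal sequence, which I would rather avoid in favor of the self-contained $D$/$\gamma$ argument above.
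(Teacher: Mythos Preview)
Your argument is correct, but it takes a genuinely different route from the paper's proof. The paper works directly from the definition $\cOmega_{R/\cO}=\invlim_n\Omega_{(R/\ffrm_R^n)/\cO}$: it first commutes the inverse limit with $\otimes_R R/I$ (using that each $\Omega_{(R/\ffrm_R^n)/\cO}$ has finite length and Mittag--Leffler), then at each finite level applies the split conormal sequence \cite[Lemma~02HP]{stacks-project} to the retraction $\cO\to R/\ffrm_R^n\xrightarrow{\overline\lambda_n}\cO/\varpi^n$ to get $\Omega_{(R/\ffrm_R^n)/\cO}\otimes_R R/I\cong \overline I_n/\overline I_n^2$, and finally passes back to the limit to recover $I/I^2$. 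Your approach instead builds the isomorphism in one step via the explicit derivation $D(r)=\overline{r-\lambda(r)}$ and its putative inverse $\gamma(\bar x)=\df x\otimes1$.

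The trade-off is this: the paper's argument never needs a universal property for $\cOmega_{R/\cO}$, only its definition as an inverse limit, whereas your construction of $\beta$ silently invokes that $\cOmega_{R/\cO}$ corepresents continuous $\cO$-derivations into $\ffrm_R$-adically complete modules. That fact is true (and your Artin--Rees estimate $D(\ffrm_R^n)\subseteq\ffrm_R^{\,n-c}(I/I^2)$ is exactly the input one needs: compose $D$ with the projection to $(I/I^2)/\ffrm_R^n(I/I^2)$, factor through $R/\ffrm_R^{n+c}$, use ordinary K\"ahler differentials there, and pass to the limit), but strictly speaking it is not stated or proved in the paper, so you are importing an extra lemma. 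Once that is granted, your $\beta\gamma=\mathrm{id}$ and $\gamma\beta=\mathrm{id}$ checks are clean and the argument is arguably more transparent than the paper's limit manipulations. Your treatment of the ``in particular'' clause via Lemma~\ref{lem:diff completion} and the classical identification is essentially the same as the paper's.
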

\begin{proof}
Let $I = \ker \lambda$, so that $R/I\cong \cO$. As in the proof of Lemma \ref{lem:diff completion} we get that
\begin{align*}
\cOmega_{R/A}\otimes_\lambda\cO &= \cOmega_{R/A}\otimes_R R/I = \invlim_n\left(\Omega_{(R/\ffrm_{R}^n)/\cO}\right)\otimes_{R} R/I = \invlim_n\left(\Omega_{(R/\ffrm_{R}^n)/\cO}\otimes_{R} R/I\right).
\end{align*}
Now for each $n$, $\lambda:R\to \cO$ gives a surjection $\overline{\lambda}_n:R/\ffrm_{R}^n\to \cO/\varpi^n$. Let $\overline{I}_n = \ker \overline{\lambda}_n = (I+\ffrm_R^n)/\ffrm_R^n \cong I/(I\cap \ffrm_R^n)$. Now note that the structure map $\cO\to \cO/\varpi^n\to R/\ffrm_R^n$ provides a section to $\overline{\lambda}_n$ so \cite[\href{https://stacks.math.columbia.edu/tag/02HP}{Lemma 02HP}]{stacks-project} (as well as the fact that $\Omega_{(\cO/\varpi^n)/\cO} = 0$) give a natural isomorphism
\[
\Omega_{(R/\ffrm_{R}^n)/\cO}\otimes_{R} R/I \cong \Omega_{(R/\ffrm_{R}^n)/\cO}\otimes_{R/\ffrm_{R}^n} (R/\ffrm_{R}^n)/\overline{I}_n \cong \overline{I}_n/\overline{I}_n^2.
\]
Taking inverse limits (and again applying the Mittag--Leffler conditions to $0\to\overline{I}_n^2\to\overline{I}_n\to \overline{I}_n/\overline{I}_n^2\to 0$) gives the isomorphism $\cOmega_{R/A}\otimes_\lambda\cO\cong I/I^2 = \Phi_{\lambda,R}$.

The second claim follows by noting that $\Omega_{\cR/A}\otimes_{\lambda_{\cR}}\cO\cong (\ker \lambda_{\cR})/(\ker \lambda_{\cR})^2$ (by a similar, but simpler, argument to the one above) and $(\ker \lambda)/(\ker \lambda)^2\cong (\ker \lambda_{\cR})/(\ker \lambda_{\cR})^2$.
\end{proof}

\begin{lemma}\label{lem:diff tensor}
Given $A,B,C\in \CNLO$ and local homomorphisms $A\to B$ and $A\to C$ making $B$ and $C$ into $A$-algebras we have an isomorphism
\[\cOmega_{(B\cotimes_A C)/A} \cong \left(\cOmega_{B/A}\cotimes_AC\right)\oplus \left(B\cotimes_A\cOmega_{C/A}\right)\]
\end{lemma}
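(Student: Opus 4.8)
The plan is to reduce the statement to the classical decomposition of the Kähler differentials of a tensor product at each finite truncation, and then pass to the inverse limit. Write $D = B\cotimes_A C$, and for $n\ge 1$ set $B_n = B/\ffrm_B^n$, $C_n = C/\ffrm_C^n$ and $D_n = B_n\otimes_A C_n$; the latter is an honest tensor product of Artinian local $A$-algebras, and since $B\otimes_A C$ is Noetherian and $B_n\otimes_A C_n$ is $\ffrm$-adically complete one has $D/(\ffrm_B^n D + \ffrm_C^n D) = D_n$. The first step is purely topological: since $\ffrm_D = \ffrm_B D + \ffrm_C D$, we have $\ffrm_D^{2n}\subseteq \ffrm_B^n D + \ffrm_C^n D\subseteq \ffrm_D^n$ for every $n$, so the inverse systems of ideals $\{\ffrm_D^m\}_m$ and $\{\ffrm_B^n D + \ffrm_C^n D\}_n$ are cofinal in one another; consequently the inverse systems of $D$-modules $\{\Omega_{(D/\ffrm_D^m)/A}\}_m$ and $\{\Omega_{D_n/A}\}_n$ are mutually cofinal, and hence
\[\cOmega_{D/A} = \invlim_m \Omega_{(D/\ffrm_D^m)/A} \cong \invlim_n \Omega_{D_n/A}.\]

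Next I would apply, for each $n$, the standard formula for the module of Kähler differentials of a tensor product — which may be cited, or proved in one line from the identity $\mathrm{Der}_A(D_n, M) = \mathrm{Der}_A(B_n, M)\oplus \mathrm{Der}_A(C_n, M)$ for a $D_n$-module $M$ followed by Yoneda — to obtain a natural isomorphism
\[\Omega_{D_n/A}\cong \big(\Omega_{B_n/A}\otimes_{B_n}D_n\big)\oplus\big(\Omega_{C_n/A}\otimes_{C_n}D_n\big).\]
Taking $\invlim_n$ and using that inverse limits commute with finite direct sums, the theorem is reduced to the two symmetric identifications $\invlim_n\big(\Omega_{B_n/A}\otimes_{B_n}D_n\big)\cong \cOmega_{B/A}\cotimes_A C$ and $\invlim_n\big(\Omega_{C_n/A}\otimes_{C_n}D_n\big)\cong B\cotimes_A\cOmega_{C/A}$.

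For the first of these (the second being obtained by exchanging $B$ and $C$) I would use that $D_n = B_n\otimes_A C_n$, so $\Omega_{B_n/A}\otimes_{B_n}D_n = \Omega_{B_n/A}\otimes_A C_n$, and then view $\invlim_n(\Omega_{B_n/A}\otimes_A C_n)$ as the limit of the double inverse system $\{\Omega_{B_s/A}\otimes_A C_t\}_{s,t}$ along its cofinal diagonal. Interchanging the nested limits, this equals $\invlim_t\big(\invlim_s(\Omega_{B_s/A}\otimes_A C_t)\big)$, and now $\invlim_s$ may be moved past $-\otimes_A C_t$: each $\Omega_{B_s/A}$ is a finite length module (it is finitely generated over the Artinian ring $B_s$), so the system is Mittag–Leffler, and $C_t = C/\ffrm_C^t$ is a finitely presented $A$-module — here one first checks that the Artinian local $\cO$-algebra $C/\ffrm_C^t$ is module-finite over $\cO$ (it has a finite filtration with finite-dimensional $k$-vector space quotients), hence over $A$. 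This yields $\invlim_s(\Omega_{B_s/A}\otimes_A C_t) = (\invlim_s\Omega_{B_s/A})\otimes_A C_t = \cOmega_{B/A}\otimes_A C_t$, and therefore $\invlim_n(\Omega_{B_n/A}\otimes_A C_n) = \invlim_t(\cOmega_{B/A}\otimes_A C_t) = \cOmega_{B/A}\cotimes_A C$ by the definition of the completed tensor product of a finite $B$-module with $C$ over $A$.

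The reduction to finite level and the finite-level tensor formula are routine; I expect the only genuinely delicate part to be the last paragraph — justifying the interchange of the nested inverse limits with the tensor products, i.e. verifying the Mittag–Leffler hypotheses and the module-finiteness of $C/\ffrm_C^t$ over $A$. This rests on exactly the same circle of ideas (finite-length systems are Mittag–Leffler, so $\invlim$ commutes with tensoring by a finitely presented module) already used in the proof of Lemma \ref{lem:diff completion}, so it should go through without difficulty.
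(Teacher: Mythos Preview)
Your proof is correct and follows essentially the same route as the paper: pass to the cofinal system $D_n=(B/\ffrm_B^n)\otimes_A(C/\ffrm_C^n)$, apply the classical decomposition $\Omega_{D_n/A}\cong(\Omega_{B_n/A}\otimes_A C_n)\oplus(B_n\otimes_A\Omega_{C_n/A})$, and take inverse limits. The paper simply cites \cite[Proposition 16.5]{Eisenbud} for the finite-level step and then says ``taking inverse limits gives the result'' without further comment, whereas you supply the missing justification for $\invlim_n(\Omega_{B_n/A}\otimes_A C_n)\cong\cOmega_{B/A}\cotimes_A C$ via the diagonal-cofinality and Mittag--Leffler argument --- this extra care is welcome but does not constitute a different approach.
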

\begin{proof}
Let $R = B\cotimes_A C$. For any $n\ge 1$ define the ideal $I_n\subseteq R$ by
\[I_n = \ffrm_B^nR+\ffrm_C^nR = \ker\left(R\onto (B/\ffrm_B^n)\otimes_A (C/\ffrm_C^n)\right).\]
As $\{I_n\}$ is cofinal with $\{\ffrm_R^n\}$ we get that 
\[\cOmega_{R/A} \cong \invlim_n \Omega_{(R/I_n)/A} \cong \invlim_n \Omega_{[(B/\ffrm_B^n)\otimes_A (C/\ffrm_C^n)]/A}.\]
But now by \cite[Proposition 16.5]{Eisenbud},
\[
\Omega_{[(B/\ffrm_B^n)\otimes_A (C/\ffrm_C^n)]/A}\cong \left(\cOmega_{(B/\ffrm_B^n)/A}\otimes_A(C/\ffrm_C^n)\right)\oplus \left((B/\ffrm_B^n)\otimes_A\cOmega_{(C/\ffrm_C^n)/A}\right)
\]
so taking inverse limits gives the result.
\end{proof}

Lastly, a standard property of K\"ahler differentials (cf. \cite[\href{https://stacks.math.columbia.edu/tag/00RS}{Lemma 00RS}]{stacks-project}) states that for any ring maps $A\to B\to C$ we have an exact sequence
\[\Omega_{B/A}\otimes_BC\to \Omega_{C/A}\to \Omega_{C/B}\to 0.\]
We will need the following continuous version of this below:

\begin{lemma}\label{lem:diff A>B>C}
Let $A\to B\to C$ be local ring homomorphisms, for $A,B,C\in \CNLO$. Then there is an exact sequence
\[
\cOmega_{B/A}\otimes_BC\to \cOmega_{C/A}\to \cOmega_{C/B}\to 0.
\]
\end{lemma}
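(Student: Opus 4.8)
The plan is to deduce the continuous version from the classical right-exact sequence for Kähler differentials by passing to the inverse limit over the quotients by powers of the maximal ideals, exactly in the style of the preceding lemmas (\ref{lem:diff tensor} in particular). First I would fix, for each $n\ge 1$, the finite-length quotients $A_n = A/\ffrm_A^n$, $B_n = B/\ffrm_B^n$ and $C_n = C/\ffrm_C^n$, and observe that the maps $A\to B\to C$ are local, so they induce compatible maps $A_n\to B_n\to C_n$ for all $n$ (after possibly re-indexing, using that the preimages of $\ffrm_B^n$ and $\ffrm_C^n$ contain suitable powers of $\ffrm_A$, and that $\ffrm_B^nC$ is cofinal with $\ffrm_C^n$). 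For each $n$ the classical sequence (\cite[\href{https://stacks.math.columbia.edu/tag/00RS}{Lemma 00RS}]{stacks-project}) gives
\[
\Omega_{B_n/A_n}\otimes_{B_n}C_n\to \Omega_{C_n/A_n}\to \Omega_{C_n/B_n}\to 0,
\]
and using \cite[\href{https://stacks.math.columbia.edu/tag/00RR}{Lemma 00RR}]{stacks-project} we may rewrite $\Omega_{B_n/A_n}=\Omega_{B_n/A}$, etc., so this matches the modules appearing in the definition of $\cOmega$.

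Next I would take the inverse limit over $n$. The key point is that each $\Omega_{B_n/A}$, $\Omega_{C_n/A}$, $\Omega_{C_n/B}$ is a finitely generated module over the Artinian (finite length) ring $B_n$ or $C_n$, hence of finite length; therefore the inverse systems $\{\Omega_{B_n/A}\otimes_{B_n}C_n\}_n$, $\{\Omega_{C_n/A}\}_n$, $\{\Omega_{C_n/B}\}_n$ all satisfy the Mittag–Leffler condition. Consequently $\varprojlim$ is exact on the relevant short exact sequences of inverse systems, and the limit of the three-term right-exact sequences is again right-exact. This yields
\[
\varprojlim_n\left(\Omega_{B_n/A}\otimes_{B_n}C_n\right)\to \cOmega_{C/A}\to \cOmega_{C/B}\to 0.
\]
It remains to identify the first term with $\cOmega_{B/A}\otimes_B C$. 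Since $\Omega_{B_n/A}$ has finite length and $C_n$ is a finite $B_n$-module, base change and the Mittag–Leffler argument (as in the proof of Lemma \ref{lem:diff completion}) give $\varprojlim_n(\Omega_{B_n/A}\otimes_{B_n}C_n)\cong (\varprojlim_n \Omega_{B_n/A})\otimes_B C = \cOmega_{B/A}\otimes_B C$; here one uses that $\cOmega_{B/A}$ is a finitely generated $B$-module (topological generators $\df b_i$) so that $-\otimes_B C$ commutes with the completion, and that $C\otimes_B B/\ffrm_B^n$ is cofinal among the $C_n$.

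The main obstacle, such as it is, is purely bookkeeping: making the cofinality and re-indexing of the systems $\{\ffrm_A^n\}$, $\{\ffrm_B^n\}$, $\{\ffrm_C^n\}$ precise so that the transition maps are genuinely compatible, and checking carefully that $\varprojlim_n(\Omega_{B_n/A}\otimes_{B_n}C_n)$ really is $\cOmega_{B/A}\otimes_B C$ rather than some completion thereof — this is where finite generation of $\cOmega_{B/A}$ over $B$ (which holds because $B$ is topologically finitely generated over $A$, $A,B\in\CNLO$) does the work. No genuinely new idea is needed beyond the Mittag–Leffler / finite-length arguments already used repeatedly in this section.
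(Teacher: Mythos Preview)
Your approach is essentially the paper's: reduce to the classical right-exact sequence on finite-length quotients and pass to inverse limits using Mittag--Leffler. There is, however, one genuine gap. The parenthetical claim that ``$\ffrm_B^nC$ is cofinal with $\ffrm_C^n$'' is false in general: for $B=\cO$ and $C=\cO[[x]]$ one has $\ffrm_B^nC=\varpi^nC$, which is not even $\ffrm_C$-primary. This undermines your single-index justification of the identification $\varprojlim_n(\Omega_{B_n/A}\otimes_{B_n}C_n)\cong\cOmega_{B/A}\otimes_BC$.

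The paper avoids this by separating the two indices: it fixes $k$, forms $A\to B/\ffrm_B^n\to C/\ffrm_C^k$ for all $n>k$, and first takes $\varprojlim_n$ (using that $C/\ffrm_C^k$ is finitely presented over $B$, so the tensor product commutes with this limit) to obtain
\[
\cOmega_{B/A}\otimes_B C/\ffrm_C^k\;\to\; \Omega_{(C/\ffrm_C^k)/A}\;\to\; \Omega_{(C/\ffrm_C^k)/B}\;\to\; 0,
\]
and only then takes $\varprojlim_k$, using that $\cOmega_{B/A}$ is finitely generated over $B$ so $\cOmega_{B/A}\otimes_BC$ is $\ffrm_C$-adically complete. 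Your diagonal limit can be rescued along these lines (the diagonal $\{(n,n)\}$ is cofinal in the product poset, hence computes the same double limit), but the cofinality-of-filtrations reasoning you wrote is not a valid substitute.
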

\begin{proof}
Since the maps $A\to B\to C$ are local ring homomorphisms, they clearly induce homomorphisms $A\to B/\ffrm_B^n\to C/\ffrm_C^k$ for all $n>k$. Thus we have an exact sequence
\[\Omega_{(B/\ffrm_B^n)/A}\otimes_BC/\ffrm_C^k\to \Omega_{(C/\ffrm_C^k)/A}\to \Omega_{(C/\ffrm_C^k)/B}\to 0\]
(where we used the fact that $\Omega_{(C/\ffrm_C^k)/(B/\ffrm_B^n)} = \Omega_{(C/\ffrm_C^k)/B}$).

As $C/\ffrm_C^k$ is a finite group, it is certainly finitely presented as a $B$-module. Thus as each $\Omega_{(B/\ffrm_B^n)/A}$ has finite length, the Mittag--Leffler conditions then imply that
\[
\invlim_n\left(\Omega_{(B/\ffrm_B^n)/A}\otimes_BC/\ffrm_C^k\right)\cong \invlim_n\left(\Omega_{(B/\ffrm_B^n)/A}\right)\otimes_BC/\ffrm_C^k \cong \cOmega_{B/A}\otimes_BC/\ffrm_C^k,
\]
and so we have an exact sequence 
\[\cOmega_{B/A}\otimes_BC/\ffrm_C^k\to \Omega_{(C/\ffrm_C^k)/A}\to \Omega_{(C/\ffrm_C^k)/B}\to 0\]
for each $k$. As $\cOmega_{B/A}$ is a finitely generated $B$-module, each module in the above sequence again has finite length so taking the inverse limit with respect to $k$, and using the Mittag--Leffler conditions again, give the desired exact sequence.
\end{proof}

\subsection{Relative local cotangent spaces at an augmentation}
\label{sec:local-calc}

We consider primes $q \in Q$, recall these are Steinberg for $\rhobar$. We begin with a general definition that we need.

\begin{definition}\label{relative}
	Given $R,S \in \CNL_\cO$, a surjective map $\alpha: R \ra S$ in $\CNL_\cO$,  and an augmentation  $\lambda: S \ra \cO$ whose pull back to $R$ we denote by $\lambda_R$, we  define the \emph{relative cotangent space} $\Phi_{\lambda,R/S}$ to be the kernel of the surjective map $\Phi_{\lambda_R,R} \ra \Phi_{\lambda,S}$, i.e., the kernel of the map 
	$ \cOmega_{R/\cO} \otimes_{\lambda_R} \cO \ra \cOmega_{S/\cO} \otimes_{\lambda_S} \cO$.
\end{definition}

\begin{definition}\label{invariants}
	We fix an augmentation $\lambda: R_q^\St \ra \cO$ such that the corresponding  representation of $G_q \ra \GL_2(\cO)$   is of the form \[  \left( \begin{array}{cc} \varepsilon & \ast \\ 0 & 1 \end{array} \right)\] up to twist by an unramified character of order dividing 2, and which is ramified (i.e. generic). We define the local invariants $m_q$ (respectively, $n_q$) to be the largest integer $n$ such that $\rho_\lambda(I_q)$ (respectively, $\rho_\lambda(G_q)$) mod $\varpi^n$ has trivial projective image.
\end{definition}

\begin{lemma}
	We have the inequalities $n_q \leq m_q$, and $n_q \leq {\rm ord}_\varpi(q-1)$
\end{lemma}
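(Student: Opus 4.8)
The plan is to prove the two inequalities separately. The first, $n_q\le m_q$, is purely formal: since $I_q\subseteq G_q$, any integer $n$ for which $\rho_\lambda(G_q)$ has trivial image in $\PGL_2(\cO/\varpi^n)$ a fortiori has $\rho_\lambda(I_q)$ with trivial image there as well; hence the set of integers defining $n_q$ is contained in the one defining $m_q$, and so $n_q\le m_q$.

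For $n_q\le\ord_\varpi(q-1)$ I would first put $\rho_\lambda|_{G_q}$ into a convenient normal form. By hypothesis (Definition \ref{invariants}) $\rho_\lambda|_{G_q}\cong\chi\otimes\rho_0$ with $\chi$ unramified, $\chi^2=1$, and $\rho_0\colon G_q\to\GL_2(\cO)$ of the shape $\rho_0(g)=\begin{pmatrix}\varepsilon(g)&b(g)\\0&1\end{pmatrix}$ for a continuous cocycle $b\colon G_q\to\cO$. Conjugation by an element of $\GL_2(\cO)$, twisting by the scalar-valued character $\chi$, and reduction modulo $\varpi^n$ all commute with the projection $\GL_2\to\PGL_2$, and $n_q$ depends only on the projectivisation of $\rho_\lambda$; so I may replace $\rho_\lambda$ by $\rho_0$. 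One may also assume $n:=n_q\ge 1$, the inequality being vacuous otherwise.

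Next I would simply evaluate at a Frobenius element $\phi\in G_q$. Since $q\ne p$ the cyclotomic character $\varepsilon$ is unramified at $q$, so $\varepsilon(\phi)=q$ (with the arithmetic normalisation; the geometric one only replaces $q$ by $q^{-1}$ throughout, which changes nothing), and hence $\rho_0(\phi)=\begin{pmatrix}q&b(\phi)\\0&1\end{pmatrix}$. By the definition of $n_q$ this matrix is scalar in $\GL_2(\cO/\varpi^{n})$; but an upper-triangular matrix over $\cO/\varpi^{n}$ can be scalar only if its two diagonal entries coincide, and here this forces $q\equiv 1\pmod{\varpi^{n}}$, i.e.\ $\ord_\varpi(q-1)\ge n=n_q$, as wanted.

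I do not expect a genuine obstacle: once the normal form is available the argument is two lines. The one point requiring a little care is the reduction to $\rho_0$ in literal upper-triangular form over $\cO$ --- equivalently, the existence of a $G_q$-stable, $\cO$-free, saturated rank-one submodule of the lattice, obtained by intersecting it with the unique $G_q$-stable $E$-line of $\rho_\lambda\otimes E$ coming from the Steinberg shape. As this is exactly how Definition \ref{invariants} already phrases the hypothesis, I would invoke it rather than reprove it. Note also that the ramifiedness (genericity) of $\lambda$ is not used for this lemma; only the shape of $\rho_\lambda|_{G_q}$ and the relation $\det\rho_\lambda=\varepsilon$ enter.
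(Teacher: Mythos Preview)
Your proof is correct and follows essentially the same approach as the paper. The paper handles the first inequality identically (via $I_q\le G_q$) and for the second invokes the characteristic polynomial of $\rho_\lambda(\Frob_q)$, which is $x^2\pm(q+1)x+q=(x\mp q)(x\mp 1)$; your argument makes the same point by reading off the diagonal entries $(q,1)$ after untwisting, which is just a slightly more explicit formulation of the same observation.
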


\begin{proof}
	The first inequality follows from $I_q \leq G_q$. The second inequality follows from the fact that the characteristic
	polynomial of $\rho_\lambda({\rm Frob}_q)= x^2 \pm (q+1)x+q$  (where ${\rm Frob}_q$ is any lift of the Frobenius  automorphism of the residue field of $q$).
\end{proof}

We have the following key proposition.

\begin{proposition}\label{key-local-comp} Consider a prime $q \in Q$.
We fix an augmentation $\lambda: R_q^\St \ra \cO$ such that the corresponding  representation of $G_q \ra \GL_2(\cO)$   is of the form \[ \left( \begin{array}{cc} \varepsilon & \ast \\ 0 & 1 \end{array} \right)\] up to twist by an unramified character of order dividing 2, and  which is ramified (i.e. generic). Then the following hold for the relative cotangent space at $\lambda$, where cases 2 and 3 only occur if $\rhobar$ is unramified:

\begin{enumerate}

\item For the morphism $R_q^\square \ra R_q^\St$, the module $\Phi_{\lambda,R_q^\square/R_q^\St}$ is finite over $\cO$ of length $ m_q + \ord_\cO(q^2-1)  -2n_q$.

\item For the morphism $R_q^\square \ra R_q^\uni$, the module $\Phi_{\lambda,R_q^\square/R_q^\uni}$ is finite cyclic over $\cO$ of length $ \ord_\cO(q^2-1) -n_q$.

\item For the morphism $R_q^\uni \ra R_q^\St$, the module $\Phi_{\lambda,R_q^\uni/R_q^\St}$ is finite cyclic over $\cO$ of length $ m_q -n_q$.

\end{enumerate}

\end{proposition}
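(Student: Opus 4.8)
The surjections $R_q^\square\twoheadrightarrow R_q^\uni\twoheadrightarrow R_q^\St$ induce surjections $\Phi_{\lambda,R_q^\square}\twoheadrightarrow\Phi_{\lambda,R_q^\uni}\twoheadrightarrow\Phi_{\lambda,R_q^\St}$, and passing to kernels gives a short exact sequence
\[0\to\Phi_{\lambda,R_q^\square/R_q^\uni}\to\Phi_{\lambda,R_q^\square/R_q^\St}\to\Phi_{\lambda,R_q^\uni/R_q^\St}\to 0.\]
The hypothesis that $\lambda$ is generic (ramified) means, by the argument of Lemma \ref{lem:R_loc smooth}, that $\ker\lambda$ is a smooth point of the three-dimensional generic fibre of $R_q^\square$, hence lies on a single irreducible component, namely the Steinberg one; the same holds for $R_q^\uni$ (whose only other component is the unramified one, not containing $\ker\lambda$) and trivially for the domain $R_q^\St$. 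Hence all three maps above are isomorphisms after inverting $\varpi$, so the three relative cotangent spaces are finite $\cO$-modules and, by additivity of length, (1) follows from (2) and (3). For the same reason each of $\Phi_{\lambda,R_q^\square}$, $\Phi_{\lambda,R_q^\uni}$, $\Phi_{\lambda,R_q^\St}$ is the direct sum of a free $\cO$-module of rank $3$ and a finite torsion submodule, and a short diagram chase gives $\ell_\cO(\Phi_{\lambda,A/B})=\ell_\cO((\Phi_{\lambda,A})_{\mathrm{tors}})-\ell_\cO((\Phi_{\lambda,B})_{\mathrm{tors}})$ for each surjection $A\twoheadrightarrow B$ above. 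So it is enough to compute the $\cO$-torsion of these three cotangent modules, noting that it vanishes for the smooth ring $R_q^{\min}\cong\cO[[X_1,X_2,X_3]]$.

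\textbf{Explicit presentations.}
I would make these computations from explicit presentations, following \cite{Shotton}. Writing $G_q=\langle\phi,\tau\mid\phi\tau\phi^{-1}=\tau^q\rangle$ topologically, a framed fixed-determinant lift of $\rhobar|_{G_q}$ is given by the entries of $\rho(\phi)$ and $\rho(\tau)$ together with framing variables, subject to the two determinant conditions and the entries of $\rho(\phi)\rho(\tau)\rho(\phi)^{-1}=\rho(\tau)^q$. Distinguishing the cases $\rhobar$ ramified at $q$; $\rhobar$ unramified at $q$ and $q\not\equiv\pm1\pmod p$; $q\equiv 1\pmod p$; and $q\equiv -1\pmod p$ (where the choice of $\beta_q$ intervenes), one recalls from \cite{Shotton} the resulting presentations of $R_q^\square$ and $R_q^\St$ as quotients of power series rings over $\cO$. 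Using $R_q^{\min}\cong\cO[[X_1,X_2,X_3]]$ and the defining property $\Spec R_q^\uni=\Spec R_q^\St\cup\Spec R_q^{\min}$, one then writes $R_q^\uni$ itself as an explicit quotient of a power series ring over $\cO$ (for instance by realising $R_q^\uni$ as the fibre product of $R_q^\St$ and $R_q^{\min}$ over their common quotient along the two components); this incidentally proves Proposition \ref{prop:R_v-Atq}(5).

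\textbf{Matching the invariants; the main obstacle.}
In these coordinates $\lambda$ is the point $(\rho_\lambda(\phi),\rho_\lambda(\tau))$: after the unramified twist of order $\le 2$, $\rho_\lambda(\tau)$ is unipotent with off-diagonal entry $c$ of valuation $\ord_\cO(c)=m_q$ — precisely the statement that the projective image of $\rho_\lambda(I_q)$ is trivial mod $\varpi^{m_q}$ but not mod $\varpi^{m_q+1}$ — while $\rho_\lambda(\phi)$ has eigenvalue ratio $q$ and $n_q$ records where the projective image of all of $\rho_\lambda(G_q)$ becomes scalar; the factor $q^2-1$ arises from linearising the relation $\rho(\phi)\rho(\tau)\rho(\phi)^{-1}=\rho(\tau)^q$ at $\lambda$ (the integral shadow of $H^0(G_q,\ad^0\rho_\lambda(1)\otimes E/\cO)\cong\cO/(q^2-1)$). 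One computes each $\Phi_{\lambda,R}$ as the cokernel of the Jacobian of the defining relations evaluated at $\lambda$, brings the resulting matrix over $\cO$ to Smith normal form, reads off the torsion lengths, and combines them through the two displayed length identities to obtain the stated values $\ord_\cO(q^2-1)-n_q$, $m_q-n_q$ and $m_q+\ord_\cO(q^2-1)-2n_q$. I expect the main obstacle to be the bookkeeping in the trivial-prime cases $q\equiv\pm1\pmod p$ with $\rhobar$ unramified at $q$, where $R_q^\St$ and $R_q^\uni$ are genuinely singular — neither complete intersections nor Gorenstein — so the defining relations are not a regular sequence and one must track with care which of them contribute $\cO$-torsion to the cotangent space at $\lambda$, keeping the elementary divisors $\varpi^{m_q}$, $\varpi^{n_q}$ and $\varpi^{\ord_\cO(q^2-1)}$ correctly separated and correctly coupled to the choice of $\beta_q$.
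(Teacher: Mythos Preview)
Your approach is essentially the one the paper takes: write $R_q^\square$, $R_q^\uni$, $R_q^\St$ as explicit quotients of a power series ring $\cR$ (using Shotton's presentations and the tame relation $\sigma\tau\sigma^{-1}=\tau^q$), evaluate the differentials of the relations at $\lambda$ to get sublattices $\Lambda^\square\subset\Lambda^\uni\subset\Lambda^\St$ of $\cOmega_{\cR/\cO}\otimes_\lambda\cO\cong\cO^8$, and identify $\Phi_{\lambda,R^?/R^{??}}\cong\Lambda^{??}/\Lambda^?$ via elementary divisors. The paper carries this out in full, with explicit $5\times 8$ basis matrices for each $\Lambda^?$ obtained by Gauss elimination and explicit $5\times 5$ transition matrices between them.

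One point to flag: your reduction via the identity $\ell_\cO(\Phi_{\lambda,A/B})=\ell_\cO((\Phi_{\lambda,A})_{\mathrm{tors}})-\ell_\cO((\Phi_{\lambda,B})_{\mathrm{tors}})$ only yields \emph{lengths}, so it would establish (1)--(3) as length statements but not the cyclicity asserted in (2) and (3). Knowing the torsion of $\Phi_{\lambda,R_q^\square}$ and of $\Phi_{\lambda,R_q^\uni}$ separately does not determine the isomorphism type of the relative quotient. The paper's direct computation of $\Lambda^{??}/\Lambda^?$ (rather than of the two cokernels $\cO^8/\Lambda^?$ separately) is what delivers the cyclic structure; in practice your Smith-normal-form step, done on the transition matrix between bases of $\Lambda^?$ and $\Lambda^{??}$ rather than on each Jacobian alone, is exactly this. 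Also note that your reduction of (1) to (2)+(3) via the short exact sequence is only available when $\rhobar$ is unramified at $q$; the paper handles the ramified case by the same direct lattice comparison $\Lambda^\St/\Lambda^\square$.
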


\begin{proof}
	By possibly twisting with the unramified character of order $2$, we may assume that our representation is of the form  
	\[ \left( \begin{array}{cc} \varepsilon & \ast \\ 0 & 1 \end{array} \right).\]
	We first recall well-known explicit descriptions of the rings $R_q^\square \ra R_q^\uni \ra R_q^\St$ introduced in Subsection~\ref{Subsec:LocDefRings}. We have two limiting conditions: We require our deformations to have determinant the $p$-adic cyclotomic character.  Moreover the deformations of $\rhobar$ factor via the tame quotient $G_q^t$ of $G_q$, because the order of inertia under $\rhobar$ divides $p\neq q$. The group $G_q^t$ can be identified with the profinite completion of the group $\langle\sigma,\tau\mid \sigma\tau\sigma^{-1}=\tau^q\rangle$ in such a way that $\sigma$ maps to Frobenius in the unramified quotient of $G_q$. 
	
	Imposing the conditions just stated on the images of $\sigma$ and $\tau$ in a matrix group, and choosing the augmentation $\lambda$ as the origin of our coordinates, leads to the following description of the universal deformation $\rho_q^\square  \colon G^t_q \to \GL_2(R^\square_q)$: Let $\cR:= \cO[[a,b,c,e,\alpha,\beta,\gamma,\delta]]$, let $A,B\in\GL_2(\cR)$ be the matrices
	\[ A:=\left( \begin{array}{cc} q+a & b +s\\ c & 1-a-e \end{array} \right),\quad B:=\left( \begin{array}{cc} 1+\alpha & \beta+t \\ \gamma & 1+\delta \end{array}\right)\]
	with $s,t\in\cO$ and $t$ non-zero, and let $\cI_q^\square=(r_1^\square,\ldots,r_6^\square)\subset\cR$ be the ideal generated by $r_1^\square=\det A-q=a-(q+a)(a+e)-(b+s)c$, $r_2^\square=\det B-1=\alpha+\delta+\alpha\delta-(\beta+t)\gamma$ and 
	\[ \left( \begin{array}{cc} r^\square_3 & r^\square_4 \\ r^\square_5 & r^\square_6 \end{array} \right)=AB-B^qA. \]
	Then $R_q^\square=\cR/\cI_q^\square$ and $\rho_q^\square$ is defined by $\sigma\mapsto A \pmod {\cI_q^\square}$ and $\tau \mapsto B \pmod {\cI_q^\square}$. It is not hard to see that $\cI_q^\square$ is generated by $r_1^\square$ and $r_2^\square$ and three additional elements from $\{r_3^\square,r_4^\square,r_5^\square,r_6^\square\}$. The ring $R_q^\square$ is a complete intersection ring, as we shall recall from work of Shotton in Proposition~\ref{prop:R_v-Atq}.

The ring $R_q^\St $ is described as a quotient of $\cR$ by the following ideal $\cI_q^\St$ by using either \cite[proof of Proposition~5.8]{Shotton} or Lemma~\ref{Lem:OnShotton} below; we give further details in Remarks~\ref{Rem-DetailsOnRst-Using7.12} and~\ref{Rem-CompareToShotton}.
Explicitly $\cI_q^\St$ is generated by the six $2\times 2$-minors $r_1^\St,\ldots,r_6^\St$ of  
\begin{equation}\label{eq-MinorsMatrix}
\left(
\begin{array}{cccc}
 \alpha & \beta+t  & (q\!-\!1\!+\!a)  & b+s \\
\gamma  & \delta  & c  & -a-e 
\end{array}
\right).
\end{equation}
together with $r_7^\St=\alpha+\delta$ and $r_8^\St=e$. For explicit use below, let $r_1^\St$ and $r_2^\St$ denote $2\times2$ minors of the above matrix formed by the first two columns, and the last two columns, respectively. The ring-theoretic properties of $R_q^\St$ are given in Proposition~\ref{prop:R_v-Atq}.

To describe $R_q^\uni$, let $\cI^\unr$ denote the ideal of $\cR$ containing $\cI_q^\square$ so that $R_q^\unr=\cR/\cI_q^\unr$ describes unramified deformations of $\rhobar$, i.e. $\cI_q^\unr=(\alpha,\beta+t,\gamma,\delta,r_1^\square)$. Then $R^\uni_q=\cR/\cI_q^\uni$ where $\cI_q^\uni=\cI_q^\St\cap\cI_q^\unr$: Explicitly one has $\cI^\uni_q=(r_1^\uni,\ldots,r_{10}^\uni)$, where $r_1^\uni=r_1^\square$ and $r_2^\uni=r_2^\square$ are the determinant conditions from $\cI_q^\square$, $r_i^\uni=r_i^\St$ for $i=3,\ldots,7$ and $r_8^\uni=\alpha e$,  $r_9^\uni=(\beta+t) e$,  $r_{10}^\uni=\gamma e$. The exact sequence
\[ 0\to \cR/I_q^\uni \to R_q^\unr\times R_q^\st \to \cR/(\cI_q^\st+\cI_q^\unr) \to 0\]
allows one to verify that $\cR/I_q^\uni$ is Cohen-Macaulay (this also requires \cite[Exercise 18.13]{Eisenbud}), reduced, flat over $\cO$ and of relative dimension $3$ and that $\Spec \cR/I_q^\uni$ is the reduced schematic closure in $\Spec R_q^\square$ of $\Spec R_q^\St\cup R_q^\unr$. This proves the claim $R^\uni_q=\cR/\cI_q^\uni$.

Note that our generating sets of the ideals $\cI_q^?$ are not claimed to be minimal. But for the computations to come, this is not relevant. 

	\smallskip
	
	To compute the relative cotangent spaces, observe first that $\cOmega_{\cR/\cO}\otimes_\cR\cO$ is the free $\cO$-module of rank $8$ with basis $\df a$, $\df b$, $\df c$, $\df e$, $\df \alpha$, $\df \beta$, $\df \gamma$, $\df \delta$. We identify this module with $\cO^8$ via this basis and regard it as a space of row vectors. We shall write $\df r|_\lambda$ for the evaluation of a differential $\df r$, $r\in\cR$, under $\lambda$. Evaluation under $\lambda$ simply means that the coefficients in $\cR$ of $\df r$ are evaluated at the origin, i.e., we set all of $a,b,c,e,\alpha,\beta,\gamma,\delta$ to zero. For $?\in\{\square,\uni,\St\}$ we denote by $\Lambda^?\subset \cOmega_{\cR/\cO}\otimes_\cR\cO$ the $\cO$-submodule spanned by elements $\df r_j|_\lambda$ such that the $\df r_j$ generate $\cI^?_q$. Then $\cOmega_{R_q^?/\cO}\otimes_\cR\cO$ is the quotient of $\cOmega_{\cR/\cO}\otimes_\cR\cO$ by $\Lambda^?$.
	We shall compute $\cO$-basis of the $\cO$-submodules $\Lambda^?$ of $\cOmega_{\cR/\cO}\otimes_\cR\cO$. The relative cotangent spaces are the $\cO$-modules
\begin{equation}\label{eq-RelCotangAndLattices}
 \Phi_{\lambda,R^?_q/R_q^{??}}\cong \Lambda^{??}/\Lambda^?.
\end{equation}
The quotient is described by the square matrix expressing the basis we find for $\Lambda^?$ in terms of the basis we find for $ \Lambda^{??}$. Finding $\Phi_{\lambda,R^?_q/R_q^{??}}$ is thereby reduced to the elementary divisor theorem.

	The computation of $\Lambda^\square$ proceeds~as~follows: Let $N=B-I$, so that $N|_\lambda=\Mat{0}{t}{0}{0}$ and $(N|_\lambda)^2=0$.
	\[\df r_1^\square=(1-a-e)\df a- (q+a) (\df a+\df e) 
	-(b+s)\df c-c\df b\rightsquigarrow \df r_1^\square|_\lambda=\df a-q\df a-q\df e-s\df c.\]
	\[\df r_2^\square=\df \alpha+\df \delta+\alpha \df\delta+\delta\df \alpha-(\beta+t)\df\gamma-\gamma\df\beta \rightsquigarrow \df r_2^\square|_\lambda=\df \alpha+\df \delta-t\df\gamma .\]
	\[\df \bigg( \begin{array}{cc} r^\square_3 & r^\square_4 \\ r^\square_5 & r^\square_6 \end{array} \bigg)=\df\Bigg( AN-\sum_{i=1}^q\binom{q}i N^i A\Bigg)=
	(\df A)N-\sum_{i=1}^q\binom{q}i N^i \df A +A(\df N)- \sum_{i=1}^q\binom{q}i \sum_{j=1}^i N^{j-1} (\df N)N^{i-j} A . \]
	
	\[\rightsquigarrow
	\Mat{\df r^\square_3|_\lambda}{\df r^\square_4|_\lambda}{\df r^\square_5|_\lambda}{ \df r^\square_6|_\lambda}=
	\df A \Mat0{t}00 -\binom{q}1 \Mat0t00 \df A +\bigg(\!\Mat{q}s01 -qI\!\bigg) \df N-  \df N  \bigg(\!\binom{q}1 \Mat{q}s01 -qI\!\bigg)\]
	\[-\binom{q}2 \bigg( \df N \Mat0t00 + \Mat0t00 \df N\bigg) \Mat{q}s01 -  \binom{q}3 \Mat0t00 \df N \Mat0t00\Mat{q}s01 =
	\]
\[\left(\!\!\!
\setlength\arraycolsep{3.3pt}
\begin{array}{cccc}\setlength\arraycolsep{0pt}
- qt \df c+s\df \gamma-(q^2-q)\df \alpha &
t\df a+qt\df a+qt\df e+s\df \delta-qs\df \alpha \\
-(q-1)\df \gamma-(q^2-q)\df \gamma&
t\df c-(q-1)\df \delta-qs\df \gamma
\end{array}
\!\!\!\right)
-t\binom{q}2\!
\left(\!\!\!
\setlength\arraycolsep{3.3pt}
\begin{array}{cccc}
q\df\gamma&
s\df\gamma+\df \alpha+\df\delta\\
0&
\df\gamma
\end{array}
\!\!\!\right)
-t^2\binom{q}3\!
\left(\!\!\!
\setlength\arraycolsep{3.3pt}
\begin{array}{cccc}
0&
\df\gamma\\
0&
0
\end{array}
\!\!\!\right)\!.
\]
This gives 6 row vectors in $\cO^8$ that span $\Lambda^\square$ and that we write as a matrix. Gau\ss\ elimination shows that the rows of the following matrix form an $\cO$-basis of $\Lambda^\square$, 
	\[
	\left( \begin{array}{cccccccc} 
	q-1&0&s&q&0&0&0&0\\ 
	0&0&0&0&1&0&-t&1\\ 
	0&0&0&0&0&0&q^2-1&0\\ 
	0&0&t&0&0&0&\zeta&1-q\\ 
	2t&0&0&0&0&0&\xi&2s\\ 
	\end{array} \right) ,
	\]
	where we abbreviate $\xi=-qs^2-st-t^2\binom{q+1}3$ and $\zeta=-qs-t\binom{q}2$.
	
To compute a basis of $\Lambda^\uni$, observe first that $\df r^\uni_i=\df r^\square_i$, for $i=1,2$, and that their evaluation under $\lambda$ is given by the first two rows of the last matrix. A straightforward computation gives $\df r^\uni_i|_\lambda$, $i=7,\ldots,10$ as
\[ \df r^\uni_7|_\lambda=\df \alpha+\df \delta, \df r^\uni_8|_\lambda=(\alpha\df e+e\df \alpha)|_\lambda=0,  \df r^\uni_9|_\lambda= \ldots=t\df e,  \df r^\uni_{10}|_\lambda= \ldots=0.  \]
When computing $\df(\ldots)|_\lambda$ for the $2\times 2$-minors of the matrix in \eqref{eq-MinorsMatrix}, observe that the evaluation under $\lambda$ of the second row is identically zero. One deduces that $\df(\ldots)|_\lambda$ of these $2\times 2$-minors are the $2\times 2$-minors of the matrix
\begin{equation}\nonumber
\left(
\begin{array}{cccc}
0 & t  & q-1  & s \\
\df \gamma  & \df \delta  & \df c  & -\df a-\df e 
\end{array}
\right).
\end{equation}
The `inner' four of the resulting 6 minors provide us with the missing generators for $\Lambda^\uni$. Gau\ss\ elimination on the resulting $10\times 8$ matrix shows that a basis of $\Lambda^\uni$ over $\cO$ is given by the rows of 
	\[
	\left( \begin{array}{cccccccc} 
	q-1&0&s&q&0&0&0&0\\ 
	0&0&0&0&1&0&0&1\\ 
	0&0&0&0&0&0&f&0\\ 
	0&0&t&0&0&0&0&1-q\\ 
	t&0&0&0&0&0&0&s\\ 
	\end{array} \right) 
	\]
with $f=\gcd(s,t,q-1)$.  We clearly have $\Lambda^\uni\supset \Lambda^\square$. Writing the basis of $\Lambda^\square$ in terms of the basis of $\Lambda^\uni$ gives the matrix
	\[
	\left( \begin{array}{ccccc} 
	1&0&0&0&0\\ 
	0&1&0&0&0\\ 
	0&{\textstyle \frac{-t}f }&{\textstyle \frac{q^2-1}f}&{\textstyle \frac \zeta{f}}&{\textstyle \frac \xi{f}}\\
	0&0&0&1&0\\ 
	0&0&0&0&2\\ 
	\end{array} \right) .
	\]
	From the last matrix and the isomorphism \eqref{eq-RelCotangAndLattices}, we find
	\[\Phi_{\lambda,R^\square/R^\uni}\cong \cO/\frac{q^2-1}{\gcd(s,t,q-1)}\cO=\cO/ ((q^2-1)\varpi^{-n_q}).\]
	
	Finding a set of generators of $\Lambda^\St$ over $\cO$ is analogous to the other two cases. In fact, out of the $8$ spanning vectors in $\cO^8$ from the $\df r_i^\St|_\lambda$, we already computed six and the other two are easy. When performing Gau\ss\ elimination, one has to distinguish two cases. \\
	Case I: $(q-1)$ divides $s$. Then a basis of $\Lambda^\St$ over $\cO$ is given by the rows of 
\begin{equation}\label{eq-Stbg-Gens}
	\left( \begin{array}{cccccccc} 
	0&0&0&1&0&0&0&0\\ 
	0&0&0&0&1&0&0&1\\ 
	0&0&0&0&0&0&f&0\\ 
	0&0&t&0&0&0&0&1-q\\ 
	f&0&f\frac{s}{q-1}&0&0&0&0&0\\ 
	\end{array} \right) ,
\end{equation}
where we note that $\gcd(t,q-1)=\gcd(s,t,q-1)=f$ in case I.\\
Case II: $s$ divides $(q-1)$. Then a basis of $\Lambda^\St$ over $\cO$ is given by the first three rows of the matrix \eqref{eq-Stbg-Gens} together with the rows of
\begin{equation}\nonumber
	\left( \begin{array}{cccccccc} 
	f\frac{q-1}s&0&f&0&0&0&0&0\\ 
	t&0&0&0&0&0&0&s\\ 
	\end{array} \right) ,
\end{equation}
where in case II we use $\gcd(s,t)=\gcd(s,t,q-1)=f$.

In both cases we have $\Lambda^\St\supset \Lambda^\uni$, and writing the basis of $\Lambda^\uni$ in terms of the basis of $\Lambda^\St$ gives the matrices

	\[
	\hbox{Case I: }\left( \begin{array}{ccccc} 
	q&0&0&0&0\\ 
	0&1&0&0&0\\ 
	0&0&1&0&0\\
	0&0&0&1&\frac{s}{1-q}\\ 
	\frac{q-1}f&0&0&0&\frac{t}f\\ 
	\end{array} \right) , \quad \hbox{Case II: }	
	\left( \begin{array}{ccccc} 
	q&0&0&0&0\\ 
	0&1&0&0&0\\ 
	0&0&1&0&0\\
	\frac{s}f&0&0&\frac{t}f&0\\ 
	0&0&0&\frac{1-q}s&1\\ 
	\end{array} \right) .
	\]
	In either case we find:
	\[\Phi_{\lambda,R^\uni/R^\St}\cong \cO/\cO{\textstyle \frac{t}f}=\cO/ (\varpi^{m_q-n_q}).\]
	From the two expressions for $\Phi_{\lambda,R^?/R^{??}}$ the claims in the proposition are immediate. We only gave the computations for $\rhobar$ unramified, but similar arguments allow one also to deduce 1 without 2 and 3 by directly comparing $\Lambda^\square$ with $\Lambda^\st$.
		\end{proof}

	


Now let $R_\infty = R_{\loc}[[x_1,\ldots,x_g]]$ and $R_\infty^{\St} = R_{\loc}^{\St}[[x_1,\ldots,x_g]]$ be as in section \ref{sec:patch} and let $\lambda:R_\infty^{\St}\to\cO$ be an augmentation, which then induces augmentations $R_q^{\St}\to\cO$ for each $q\in Q$.

\begin{corollary}\label{cor:relative Phi for R_infty} The relative cotangent space  $\Phi_{R_{\infty}/ R_\infty^\St}$ at $\lambda$ of the morphism $R_{\infty}  \ra R_\infty^\St$ is of length 
 \[ \sum_{q \in Q}(m_q+\ord_\cO(q^2-1)-2n_q)\] as an $\cO$-module.
\end{corollary}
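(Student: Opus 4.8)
The plan is to reduce the computation of $\Phi_{R_\infty/R_\infty^\St}$ at $\lambda$ to the purely local computation already carried out in Proposition \ref{key-local-comp}. First I would observe that adjoining the formal variables $x_1,\ldots,x_g$ has no effect on the relative cotangent space: since $R_\infty = R_{\loc}[[x_1,\ldots,x_g]]$ and $R_\infty^\St = R_{\loc}^\St[[x_1,\ldots,x_g]]$, and the surjection $R_\infty \onto R_\infty^\St$ is base-changed from $R_{\loc}\onto R_{\loc}^\St$, Lemma \ref{lem:diff aug} (computing $\Phi_{\lambda,-}$ as $\cOmega_{-/\cO}\otimes_\lambda\cO$) together with the standard behavior of (continuous) Kähler differentials under adjoining power series variables gives a canonical identification $\Phi_{\lambda,R_\infty/R_\infty^\St}\cong \Phi_{\lambda|_{R_{\loc}},R_{\loc}/R_{\loc}^\St}$. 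So it suffices to compute the relative cotangent space of $R_{\loc}\onto R_{\loc}^\St$ at the induced augmentation.

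Next I would use the explicit product structure of $R_{\loc}$ and $R_{\loc}^\St$ from Subsection \ref{Subsec:LocDefRings}. Recall
\[
R_{\loc} = \left[\widehat{\bigotimes_{q|Q}}R_q^\square\right]\widehat{\otimes}\left[\widehat{\bigotimes_{\ell|N}}R_\ell^{\min}\right]\widehat{\otimes}R_p^{\fl},
\qquad
R_{\loc}^\St = \left[\widehat{\bigotimes_{q|Q}}R_q^\St\right]\widehat{\otimes}\left[\widehat{\bigotimes_{\ell|N}}R_\ell^{\min}\right]\widehat{\otimes}R_p^{\fl},
\]
and the map $R_{\loc}\onto R_{\loc}^\St$ is the completed tensor product (over $\cO$) of the maps $R_q^\square\onto R_q^\St$ for $q|Q$ with the identity maps on all the remaining factors. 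Applying Lemma \ref{lem:diff tensor} repeatedly, $\cOmega_{R_{\loc}/\cO}$ decomposes as a direct sum, one summand $\cOmega_{R_q^\square/\cO}$ for each $q|Q$ (suitably completed-tensored with the other factors) plus summands coming from the $R_\ell^{\min}$ and $R_p^{\fl}$; the same holds for $R_{\loc}^\St$ with $\cOmega_{R_q^\St/\cO}$ in place of $\cOmega_{R_q^\square/\cO}$, and the identity on all other summands. After applying $-\otimes_\lambda\cO$ and taking the kernel of $\Phi_{\lambda,R_{\loc}}\onto \Phi_{\lambda,R_{\loc}^\St}$, all the ``identity'' summands cancel, leaving
\[
\Phi_{\lambda,R_{\loc}/R_{\loc}^\St}\cong \bigoplus_{q|Q}\Phi_{\lambda_q,R_q^\square/R_q^\St},
\]
where $\lambda_q:R_q^\St\to\cO$ is the augmentation induced by restricting $\lambda$. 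Here one must check that the augmentation $\lambda_q$ is ramified (generic) at each $q$, which is exactly the shape of $\rho_f|_{G_q}$ recorded in Section \ref{hecke} (via Lemma \ref{uniformization}) — this is where the genericity hypothesis enters, and it is what licenses the use of Proposition \ref{key-local-comp}. Finally, invoking part (1) of Proposition \ref{key-local-comp}, each summand $\Phi_{\lambda_q,R_q^\square/R_q^\St}$ is finite over $\cO$ of length $m_q + \ord_\cO(q^2-1) - 2n_q$, and summing over $q|Q$ gives the stated length $\sum_{q\in Q}(m_q+\ord_\cO(q^2-1)-2n_q)$.

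The main obstacle I anticipate is the bookkeeping in the second step: making the direct-sum decomposition of $\cOmega_{R_{\loc}/\cO}$ from iterated applications of Lemma \ref{lem:diff tensor} genuinely functorial with respect to the surjection $R_{\loc}\onto R_{\loc}^\St$, so that the ``identity'' summands really do cancel upon taking the kernel of $\Phi_{\lambda,R_{\loc}}\onto\Phi_{\lambda,R_{\loc}^\St}$. One has to be a little careful that $\Phi_{\lambda,-}$ is right-exact (so that $\Phi_{\lambda,R_{\loc}}\onto\Phi_{\lambda,R_{\loc}^\St}$ really is surjective, as in Definition \ref{relative}) and that the splitting of differentials is compatible with the augmentation; once the decomposition is set up compatibly, the cancellation and the final count are immediate from Proposition \ref{key-local-comp}.
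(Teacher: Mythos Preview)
Your proposal is correct and follows essentially the same route as the paper: the paper bundles the non-$Q$ factors together with the power series variables into a single ring $T$ so that $R_\infty = \big[\widehat{\bigotimes}_{q|Q}R_q\big]\cotimes_\cO T$ and $R_\infty^\St = \big[\widehat{\bigotimes}_{q|Q}R_q^{\St}\big]\cotimes_\cO T$, then applies Lemma~\ref{lem:diff tensor} to obtain $\Phi_{\lambda,R_\infty}\cong \bigoplus_{q|Q}\Phi_{\lambda,R_q}\oplus \Phi_{\lambda,T}$ and similarly for $R_\infty^\St$, from which $\Phi_{\lambda,R_\infty/R_\infty^\St}=\bigoplus_{q|Q}\Phi_{\lambda,R_q/R_q^\St}$ and Proposition~\ref{key-local-comp} finish the argument. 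Your two-step version (first strip off the $x_i$, then decompose $R_{\loc}$) is an equivalent reorganization of the same computation.
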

\begin{proof}
Write $T = \left[\widehat{\bigotimes}_{\ell|N}R^{\min}_\ell\right]\widehat{\otimes}R^{\fl}_p[[x_1,\ldots,x_g]]$ so that
$R_\infty = \left[\widehat{\bigotimes}_{q|Q}R_q\right]\cotimes_\cO T$ and $R_\infty^\St = \left[\widehat{\bigotimes}_{q|Q}R_q^{\St}\right]\cotimes_\cO T$. Lemma \ref{lem:diff tensor} now gives
\begin{align*}
\cOmega_{R_\infty/\cO} &\cong \left[\bigoplus_{q|Q} R_\infty\cotimes_{R_q}\cOmega_{R_q/\cO}\right]\oplus (R_\infty\cotimes_T\cOmega_{T/\cO})&
&\text{and}&
\cOmega_{R_\infty^{\St}/\cO} &\cong \left[\bigoplus_{q|Q} R_\infty^{\St}\cotimes_{R_q^{\St}}\cOmega_{R_q^{\St}/\cO}\right]\oplus (R_\infty^{\St}\cotimes_T\cOmega_{T/\cO}),
\end{align*}
and so $\Phi_{\lambda,R_\infty}\cong \bigoplus_{q|Q}\Phi_{\lambda,R_q}\oplus \Phi_{\lambda,T}$ and $\Phi_{\lambda,R_\infty^{\St}}\cong \bigoplus_{q|Q}\Phi_{\lambda,R_q^{\St}}\oplus \Phi_{\lambda,T}$. But this implies $\Phi_{\lambda, R_\infty/R_\infty^\St}=\bigoplus_{q| Q} \Phi_{\lambda, R_q/R_q^\St}$, so the claim follows by Proposition \ref{key-local-comp}.
\end{proof}

\subsection{Presentation of $R_q^\St$}
	The following lemma gives a coordinate free explanation of how to obtain the relations  in \cite[Proposition~5.8]{Shotton} for $R_q^\St$ that were used in the above proof.

\begin{lemma}\label{Lem:OnShotton}Let $R$ be a ring of characteristic zero and let $q>1$ be an integer. Let $A,B$ be $2\times2$-matrices over $R$. Then the following hold:
\begin{enumerate}
\item 
Suppose $A$ and $B$ have characteristic polynomials $\chi_A=(T-q)(T-1)$ and $\chi_B=(T-1)^2$ in $R[T]$. Then $AB=B^qA$ if and only if $(B-I)(A-I)=0$.
\item For $A:=\Mat{q+a}{b}{c}{1-e}$ and $B:=\Mat{1+\alpha}{\beta}{\gamma}{1+\delta}$ the following are equivalent:
\begin{enumerate}
\item $\chi_A=(T-q)(T-1)$ and $\chi_B=(T-1)^2$ and $(B-I)(A-I)=0$.
\item $e=a$, $\delta=-\alpha$ and all $2\times 2$-minors of the following matrix vanish
\begin{equation}\label{MinorsOfWhich}
\left(
\begin{array}{cccc}
 \alpha & \beta  & (q\!-\!1\!+\!a)  & b \\
\gamma  & -\alpha  & c  & -a 
\end{array}
\right).
\end{equation}
\end{enumerate}
\end{enumerate}
\end{lemma}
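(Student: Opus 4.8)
The plan is to treat part (1) as a purely matrix-algebraic computation over the ring $R$, and then deduce part (2) by expanding the condition $(B-I)(A-I)=0$ in coordinates and matching it against the vanishing of the $2\times 2$-minors of \eqref{MinorsOfWhich}. For part (1), the key observation is that under the hypotheses $\chi_A=(T-q)(T-1)$ and $\chi_B=(T-1)^2$, the Cayley--Hamilton theorem gives $A^2=(q+1)A-qI$ and $B^2=2B-I$, so that $(B-I)^2=0$, i.e. $N:=B-I$ is a square-zero matrix. First I would show that $B^q=I+qN$: indeed, since $N^2=0$, the binomial theorem yields $B^q=(I+N)^q=I+qN$ (all higher terms vanish because $R$ has characteristic zero, so the integer coefficients $\binom{q}{i}$ are harmless and the matrix powers $N^i$ vanish for $i\ge 2$). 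Then $AB=A(I+N)=A+AN$ and $B^qA=(I+qN)A=A+qNA$, so $AB=B^qA$ is equivalent to $AN=qNA$, i.e. $AN-qNA=0$. Now I would rewrite this using $A-I$: since $AN = (A-I)N + N$ and $NA = N(A-I)+N$, the equation $AN=qNA$ becomes $(A-I)N + N = q N(A-I) + qN$, i.e. $(A-I)N - qN(A-I) = (q-1)N$. This still mixes $(A-I)N$ and $N(A-I)$; to finish I would instead argue directly that $AN = qNA$ is equivalent to $NA = N$, hence to $N(A-I)=0$. The cleanest route: from $\tr N = 0$ and $\det N = 0$ (forced by $\chi_B=(T-1)^2$) together with $N\neq 0$ case, $N$ has rank $\le 1$; writing $N = vw^{T}$ for column/row vectors (after possibly localizing, or just arguing with $2\times 2$ entries directly), $AN=qNA$ reads $(Av)w^T = q v (w^T A)$. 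One checks that $w^T A = w^T$ is forced (since $w^T N = w^T v w^T = (w^Tv)w^T = 0$ as $N^2=0$, so $w^Tv=0$, and then comparing gives $Av = qv$ — i.e. $v$ is the $q$-eigenvector of $A$ — and $w^T A = w^T$). Conversely $N(A-I)=0$ means $w^T(A-I)=0$, i.e. $w^TA = w^T$; combined with $\chi_A = (T-q)(T-1)$ this forces $Av = qv$, and then $AN = (Av)w^T = q v w^T$ while $NA = v(w^TA) = vw^T = N$, so $AN = qN = qNA$. This gives the equivalence $AB=B^qA \iff (B-I)(A-I)=0$, at least when $N\ne 0$; the case $N=0$ (i.e. $B=I$) is trivial since both sides hold. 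I should be slightly careful doing this over a general ring $R$ rather than a field — the rank-one factorization of $N$ need not exist globally — so in the write-up I would most likely just do the $2\times 2$ entry-by-entry computation, which is elementary and avoids this issue entirely.

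For part (2), I would first note that for the given $A$ and $B$, the conditions $\chi_A = (T-q)(T-1)$ and $\chi_B=(T-1)^2$ translate (via trace and determinant) into $\tr A = q+1$, i.e. $(q+a)+(1-e) = q+1$, i.e. $e = a$; and $\tr B = 2$, i.e. $(1+\alpha)+(1+\delta)=2$, i.e. $\delta = -\alpha$. Given these, the determinant conditions $\det A = q$ and $\det B = 1$ become $(q+a)(1-a) - bc = q$ and $(1+\alpha)(1-\alpha)-\beta\gamma = 1$, which one recognizes as two of the $2\times 2$-minors of the matrix \eqref{MinorsOfWhich} (namely the minor of columns $3,4$ and the minor of columns $1,2$, after substituting $e=a$, $\delta=-\alpha$): $\det A - q = a - qa - a^2 - bc = -(q-1+a)(-a) - (q-1+a)\cdot 0 + \ldots$ — more precisely the columns of \eqref{MinorsOfWhich} are labeled so that the minor formed by columns $3,4$ is $(q-1+a)(-a) - bc$, which equals $\det A - q$ after simplification, and the minor of columns $1,2$ is $\alpha(-\alpha) - \beta\gamma = -\alpha^2 - \beta\gamma = \det B - 1$. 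So the determinant conditions are exactly two of the six minors. It then remains to show that, assuming $e=a$ and $\delta=-\alpha$, the matrix equation $(B-I)(A-I) = 0$ is equivalent to the vanishing of the four remaining $2\times 2$-minors of \eqref{MinorsOfWhich}. Writing $B - I = \Mat{\alpha}{\beta}{\gamma}{-\alpha}$ and $A - I = \Mat{q-1+a}{b}{c}{-a}$, the product $(B-I)(A-I)$ has entries $\alpha(q-1+a) + \beta c$, $\alpha b + \beta(-a)$, $\gamma(q-1+a) + (-\alpha)c$, $\gamma b + (-\alpha)(-a)$. Each of these four entries is (up to sign) precisely one of the four "mixed" $2\times 2$-minors of \eqref{MinorsOfWhich} (columns $\{1,3\}$, $\{2,3\}$ or $\{1,4\}$, $\{2,4\}$): e.g. the $(1,1)$-entry $\alpha(q-1+a) + \beta c$ equals the minor of columns $1$ and $3$ up to sign, $\begin{vmatrix}\alpha & q-1+a\\ \gamma & c\end{vmatrix} = \alpha c - \gamma(q-1+a)$ — I would match these up carefully in the write-up. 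The upshot is that the four entries of $(B-I)(A-I)$ are, up to reindexing and sign, four of the six $2\times2$-minors of \eqref{MinorsOfWhich}, and the other two minors are the determinant conditions; hence (a) $\iff$ (b). Combining this with part (1) — which says $(B-I)(A-I)=0 \iff AB = B^qA$ under the characteristic-polynomial hypotheses — closes the loop.

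The main obstacle I anticipate is bookkeeping rather than conceptual: one has to be scrupulous about which pair of columns of the $2\times 2$ matrix \eqref{MinorsOfWhich} corresponds to which entry of the product $(B-I)(A-I)$ and to the determinant conditions, and to get the signs right, since the lemma is stated with a specific matrix whose column ordering is dictated by Shotton's presentation. I would handle this by laying out all six $2\times 2$-minors explicitly (columns $\{1,2\},\{1,3\},\{1,4\},\{2,3\},\{2,4\},\{3,4\}$), substituting $e=a,\delta=-\alpha$, and literally identifying each with $\det A - q$, $\det B - 1$, or one of the four product entries. A secondary subtlety is making sure part (1) is valid over an arbitrary characteristic-zero ring $R$ and not just a field; as noted above I would circumvent this by doing the $2\times 2$ computation with matrix entries directly (using only $N^2 = 0$, which is an identity of entries, and the binomial expansion of $(I+N)^q$ with integer coefficients), so that no passage to residue fields or rank decompositions is needed. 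With these two points addressed, the remaining steps are routine algebra.
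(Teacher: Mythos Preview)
Your plan is correct and for part~(2) essentially identical to the paper's: one reads off $e=a$ and $\delta=-\alpha$ from the trace conditions, identifies the minors of columns $\{1,2\}$ and $\{3,4\}$ with $\det N=0$ and $\det(A-I)=0$ (equivalently the determinant conditions), and matches the four mixed minors with the four entries of $(B-I)(A-I)$ up to sign. Your bookkeeping concerns are real but routine.

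For part~(1) you and the paper both reduce to the equivalence $AN=qNA\iff N(A-I)=0$ (with $N=B-I$, $N^2=0$). Where you then propose either a rank-one factorization $N=vw^T$ (which you rightly flag as unavailable over a general ring) or a direct entry-by-entry verification, the paper instead uses the adjugate involution $D\mapsto D^\iota$ (so $D+D^\iota=(\tr D)I$ and $(DD')^\iota=(D')^\iota D^\iota$). Setting $C:=(A-qI)N$, one finds $C^\iota=N(A-I)$ using $\tr N=0$ and $\tr A=q+1$; the relation $AN=qNA$ becomes $C=qC^\iota$. From this, $(q+1)C=qC+qC^\iota=q(\tr C)I$ forces $C$ to be scalar (as $q+1$ is a non-zero-divisor in characteristic zero), whence $C=C^\iota$ and $(q-1)C=0$, so $C=0$ and $N(A-I)=0$. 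The converse is immediate. This avoids coordinates entirely and works uniformly over any characteristic-zero ring; your entry-by-entry route would also succeed, but the involution argument is shorter and explains structurally why only $q\pm 1$ being non-zero-divisors is needed.
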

\begin{proof}
(a) Let $N:=B-I$. The conditions on $\chi_A$ and $\chi_B$ are equivalent to $\tr A=q+1$, $\det A=q$, $\tr N=0$ and $\det N=0$. The latter also implies $N^2=0$, so that $B^qA=(I+N)^qA=(I+qN)A$ and $ AB=A(I+N)$. It follows that $AB=B^qA$ is equivalent to $AN=qNA$ and hence to $ (A-qI)N=qN(A-I)$.
Thus we need to show that
\begin{equation}\label{eq;St-Eq}
 (A-qI)N=qN(A-I) \Longleftrightarrow N(A-I)=0.
\end{equation}
Set $C:=(A-qI)N$ and recall that the main involution is given by  the linear map
$$D:=\Mat{w}{x}{y}{z}\mapsto D^\iota:=\Mat{0}{1}{-1}{0}D^t \Mat{0}{1}{-1}{0}^{-1}=\Mat{z}{-x}{-y}{w}.$$
It satisfies $D+D^\iota=\tr D\cdot I$ and $(DD')^\iota=(D')^\iota D^\iota$. Hence $\tr A=q-1$ and $\tr N=0$ yield $(A-I)^\iota=(qI-A)$ and $N^\iota=-N$ and the anti commutativity implies that $C^\iota=N(A-I)$, so that the left hand side of \eqref{eq;St-Eq} is equivalent to $C=qC^\iota$.

Now if $N(A-I)=0$, then $C^\iota=0$ and hence $C=qC^\iota=0=C^\iota$. Conversely if $C=qC^\iota$, then $(q+1)C=qC+qC^\iota=q\tr(C)$, and so $C$ must be a scalar matrix because $q+1$ is a non-zero divisor. But then $C^\iota=C$ and thus $C^\iota=qC$ and $q-1$ non-zero imply that $C=0$ and hence $N(A-I)=C^\iota=0$.

(b) The equations $e=a$ and $d=a$ are equivalent to $\tr (A-I)=q-1$ and $\tr N=0$. The six $2\times 2$-minors in (ii) express the following conditions on $A$ and $N$: the left minor means $\det(N)=0$, the right minor that $\det(A-I)=0$, and the remaining minors express that $N(A-I)=0$. These are precisely the conditions obtained in (a).
\end{proof}

\begin{remark}\label{Rem-DetailsOnRst-Using7.12}
Denote by $\widetilde R_q^\St$ the quotient of $\cR=\cO[[a,b,c,e,\alpha,\beta,\gamma,\delta]]$ by the ideal $\cI_q^\St=(r_1^\St,\ldots,r_8^\St)$ from the proof of Proposition~\ref{key-local-comp} -- except for a slight change of notation, this is the ring described by the relations coming from Lemma~\ref{Lem:OnShotton}. To be able to identify this ring with $R_q^\St$ we need to show that it is reduced and flat over $\cO$, because Lemma~\ref{Lem:OnShotton} a priori only gives equations cutting out the closed points on the generic fiber. We now give a proof. The arguments have much overlap with those used in \cite{Shotton}.

As the $r_i^\St$, $i=1,\ldots,6$, are the $2\times 2$-minors of \eqref{eq-MinorsMatrix}, the ring $\cO[\alpha,\beta,\gamma,\delta,a,b,c,e]/(r^\St_1,\ldots,r^\St_6)$ is determinantal. By \cite[Section~2]{Bruns-Vetter}, this quotient ring is a normal Cohen-Macaulay domain of dimension $6$, and hence its completion, say $\cT$, at the maximal ideal $(\alpha,\beta,\gamma,\delta,a,b,c,e)$ is Cohen-Macaulay of dimension~$6$. A short calculation shows that the quotient of $\cT$ modulo the $6$-term sequence $r_7^\St,r_8^\St,\varpi,\gamma-\beta,c+b,\beta+b$ is finite, and hence the sequence is regular. It follows that $\widetilde R_q^\St=\cT/(r_7^\St,r_8^\St)$ is Cohen-Macaulay and flat over $\cO$ of relative dimension~$3$. To prove its reducedness, we may thus invert $p$, and rely on results on $R_q^\square[\frac1p]$. 

Because  $\widetilde R_q^\St$ is flat over $\cO$, it has characteristic zero, and hence Lemma~\ref{Lem:OnShotton} shows that the matrices $A$, $B$ from the proof of Proposition~\ref{key-local-comp} satisfy $AB=B^qA$. The universal property of  $R_q^\square$ thus yields a surjective homomorphism $R_q^\square\to \widetilde R_q^\St$ in $\CNL$.
By the previous paragraph, the target of $R_q^\square[\frac1p] \to  \widetilde R_q^\St[\frac1p]$ is equidimensional of dimension~$3$.~Adapting \cite[Section 1.3]{BLGGT14} to a fixed determinant situation shows that $\Spec R_q^\square[\frac1p]$ contains a Zariski dense set of \emph{robustly smooth} points and hence all irreducible components of  $\Spec R_q^\square[\frac1p]$ are generically formally smooth of dimension~$3$. We deduce that $\widetilde R_q^\St[\frac1p]$ is reduced.
\end{remark}

\begin{remark}\label{Rem-CompareToShotton}
Let us also give the coordinate change between the relations found here for $R_q^\St\cong \widetilde R_q^\St=\cR/(r_1^\St,\ldots,r_8^\St)$, and those given in \cite[proof of Proposition~5.8, p.~1467]{Shotton}: The ring $\widetilde R^\St_q$ is to be compared with the ring $\cS:=\cB/(J_0+J_1+(T-q+1))$ from op.cit.~where $\cB=\cO[[X_1,\ldots,X_4,Y_1,\ldots,Y_4,T]]$, $J_0$ are the $2\time 2$-minors of the $2\times4$-matrix with rows $(X_1 \ \ldots X_4)$ and $(Y_1\ \ldots \ Y_4)$ and $J_1=(X_1+Y_2,Y_3-X_4+2\frac{q-1}{q+1})$; note that in the definition of $J_1$ in \cite{Shotton} the variables $Y_3$ and $X_3$ as well as $X_4$ and $Y_4$ are mixed up. By going through the proof of  \cite[Proposition~5.8]{Shotton}, the change of variables from $\cS$ is $X_1=\alpha$, $X_2=\beta$, $Y_1=\gamma$, $Y_2=-\alpha$, $X_3=\frac{-2b}{q+1}$, $X_4=\frac{a+q-1}{q+1}$, $Y_3=\frac{2a}{q+1}$, $Y_4=\frac{2c}{q+1}$ and $T=q-1$. It is immediate to see that the relations here and there agree. 
\end{remark}

\subsection{Patching and global cotangent spaces}

 Consider a global augmentation $\lambda: R^\St \ra \cO$ arising from a newform in $ f\in S_2(\Gamma_0(NQ))$ which is new at $Q$, and the augmentations it induces by pull back to other global deformation rings and local deformation rings, which by abuse of notation we denote by $\lambda$ again.  We assume that the residual representation  $\rhobar$ of $\rho_{f,\iota}$ satisfies the conditions of \S \ref{sec:trivial primes}. Further  if $q$ is  $-1$ mod $p$, such that $\rhobar$ is unramified at $q$ and $q \in Q$, and  $\rho_{f,\iota}|_{G_q}$ is  \[  \left( \begin{array}{cc} \varepsilon & \ast \\ 0 & 1 \end{array} \right)\]   up to twist by an unramified character of order dividing 2, then we choose $\beta_q$ of \S \ref{sec_deformation_theory}  to be $1$ or $-1$ accordingly.

The main result of this section is:

\begin{theorem}\label{mc1}
   
There is an isomorphism $\Phi_{\lambda,R_\infty/R_\infty^{\St} } \xrightarrow{\sim}\Phi_{\lambda,R/R^{\St}}$.
\end{theorem}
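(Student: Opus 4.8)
The key point is that Theorem \ref{thm:patching} presents $R$ and $R^{\St}$ as $R_\infty$ and $R_\infty^{\St}$ modulo a regular sequence coming from $S_\infty = \cO[[y_1,\ldots,y_d]]$, in a way compatible with the quotient map $R_\infty\onto R_\infty^{\St}$. So I would set up the following commutative-algebra statement and prove it abstractly: if $R_\infty\onto R_\infty^{\St}$ is a surjection in $\CNL_\cO$, and $\mathbf{y} = (y_1,\ldots,y_d)$ is a sequence in $S_\infty$ whose images under $i:S_\infty\to R_\infty$ (and hence under $i^{\st}:S_\infty\to R_\infty^{\St}$) form a regular sequence on both rings, with $R_\infty/\mathbf{y} = R$, $R_\infty^{\St}/\mathbf{y} = R^{\St}$, and if $\lambda:R^{\St}\to\cO$ pulls back along everything, then $\Phi_{\lambda,R_\infty/R_\infty^{\St}}\cong \Phi_{\lambda,R/R^{\St}}$. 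The augmentation $\lambda$ on $R_\infty^{\St}$ is the composite $R_\infty^{\St}\onto R^{\St}\xrightarrow{\lambda}\cO$, and since $\lambda$ kills the images of the $y_j$, the cotangent spaces $\Phi_{\lambda,R_\infty}$ and $\Phi_{\lambda,R}$ (and the Steinberg analogues) sit in a clean relationship.

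\textbf{Main steps.} First, using Lemma \ref{lem:diff aug}, rewrite everything in terms of continuous Kähler differentials: $\Phi_{\lambda,R_\infty} = \cOmega_{R_\infty/\cO}\otimes_\lambda\cO$, etc., and $\Phi_{\lambda,R_\infty/R_\infty^{\St}} = \ker(\Phi_{\lambda,R_\infty}\to\Phi_{\lambda,R_\infty^{\St}})$. Next, apply the conormal/Jacobi--Zariski exact sequence (the continuous version, Lemma \ref{lem:diff A>B>C}) to the composites $S_\infty\to R_\infty\to R$ and $S_\infty\to R_\infty^{\St}\to R^{\St}$. Concretely, since $R = R_\infty/(i(\mathbf{y}))$ with $i(\mathbf{y})$ a regular sequence, the conormal module $(i(\mathbf{y}))/(i(\mathbf{y}))^2$ is free of rank $d$ over $R$, and the standard exact sequence reads
\[
(i(\mathbf{y}))/(i(\mathbf{y}))^2 \to \cOmega_{R_\infty/\cO}\otimes_{R_\infty} R \to \cOmega_{R/\cO}\to 0,
\]
and likewise for the Steinberg rings. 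Tensoring down along $\lambda$ and comparing the two rows via the quotient map $R_\infty\onto R_\infty^{\St}$ gives a commutative diagram of the form
\[
\begin{CD}
\cO^d @>>> \Phi_{\lambda,R_\infty} @>>> \Phi_{\lambda,R} @>>> 0\\
@VVV @VVV @VVV \\
\cO^d @>>> \Phi_{\lambda,R_\infty^{\St}} @>>> \Phi_{\lambda,R^{\St}} @>>> 0,
\end{CD}
\]
where the left vertical map is the identity on $\cO^d$ (because the conormal generators $i(\mathbf{y})$ map to $i^{\st}(\mathbf{y})$ and both conormal modules are free on these same images — this is where I must be careful that the map of conormal modules really is an isomorphism, which holds since $\mathbf{y}$ is a regular sequence in both $R_\infty$ and $R_\infty^{\St}$ and its image generates the relevant kernels). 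The snake lemma (or a direct diagram chase) applied to this diagram, together with surjectivity of $R_\infty\to R_\infty^{\St}$ forcing $\Phi_{\lambda,R_\infty}\to\Phi_{\lambda,R_\infty^{\St}}$ and $\Phi_{\lambda,R}\to\Phi_{\lambda,R^{\St}}$ to be surjective, yields the isomorphism of kernels $\Phi_{\lambda,R_\infty/R_\infty^{\St}}\xrightarrow{\sim}\Phi_{\lambda,R/R^{\St}}$.

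\textbf{The main obstacle.} The delicate point is showing the left vertical map $\cO^d\to\cO^d$ between the (tensored-down) conormal modules is an isomorphism — i.e. that the composite $S_\infty\to R_\infty\onto R_\infty^{\St}$ is still the inclusion of a ring over which $R^{\St}$ is a quotient by the \emph{same} regular sequence, and that no drop of rank occurs when passing to the Steinberg side. This is exactly what parts (2), (3), (6) of Theorem \ref{thm:patching} guarantee: $i^{\st} = (\text{quotient})\circ i$ is injective, makes $R_\infty^{\St}$ finite free over $S_\infty$, and $M_\infty^{\st}$ free over $S_\infty$, so $(\varpi,i^{\st}(y_1),\ldots,i^{\st}(y_d))$ is a regular sequence and $R_\infty^{\St}\otimes_{S_\infty}\cO = R^{\St}$. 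Hence the conormal module of $R^{\St}$ in $R_\infty^{\St}$ is free of rank $d$ on the images of the $y_j$, matching the $R$ side, and the vertical comparison map is visibly the identity after choosing these generators. Once this is in place, the rest is the formal diagram chase above. A secondary subtlety is justifying that the leftmost map of the top row, $\cO^d\to\Phi_{\lambda,R_\infty}$, need \emph{not} be injective — but this does not matter: the snake lemma only needs the diagram and the surjectivity on the right, and the cokernel identification $\Phi_{\lambda,R}\cong\Phi_{\lambda,R_\infty}/(\text{image of }\cO^d)$ holds by right-exactness regardless.
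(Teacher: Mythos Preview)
Your overall setup is right and matches the paper's approach: build the two right-exact rows
\[
\cO^d \to \Phi_{\lambda,R_\infty} \to \Phi_{\lambda,R}\to 0,\qquad
\cO^d \to \Phi_{\lambda,R_\infty^{\St}} \to \Phi_{\lambda,R^{\St}}\to 0
\]
from the conormal (or Jacobi--Zariski) sequence, observe that the left vertical is the identity on $\cO^d$, and compare kernels. But the final paragraph contains a genuine error. You assert that injectivity of $\cO^d\to\Phi_{\lambda,R_\infty}$ (and its Steinberg analogue) ``does not matter'' for the snake lemma. It does. With only right-exact rows and $\alpha=\mathrm{id}$ on the left, the snake lemma yields that $\Phi_{\lambda,R_\infty/R_\infty^{\St}}\to\Phi_{\lambda,R/R^{\St}}$ is \emph{surjective} (since $\coker\alpha=0$), but not that it is injective. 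A toy counterexample: take $A=A'=\cO$, $B=\cO^2$, $B'=\cO$, with $A\to B$ sending $1\mapsto(1,0)$, $A'\to B'$ the zero map, and $B\to B'$ the second projection. Then $C=C'=\cO$ with $\gamma=\mathrm{id}$, the left vertical is the identity, yet $\ker\beta=\cO$ while $\ker\gamma=0$. So your diagram chase cannot produce injectivity without more input.

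What the paper actually does to close this gap is prove that both maps $\cO^d\to\Phi_{\lambda,R_\infty}$ and $\cO^d\to\Phi_{\lambda,R_\infty^{\St}}$ are \emph{injective}, so that the rows become short exact and the snake lemma applies in full. The argument is: (i) $\Phi_{\lambda,R}$ and $\Phi_{\lambda,R^{\St}}$ are $\cO$-torsion, because $R$ and $R^{\St}$ are finite and reduced over $\cO$ (Theorem~\ref{thm:patching}(7)); (ii) $\Phi_{\lambda,R_\infty}$ and $\Phi_{\lambda,R_\infty^{\St}}$ have $\cO$-rank exactly $d$, which is proved by passing to the generic fibre and using that $\lambda$ lands in the smooth locus of $\Spec R_{\loc}[1/\varpi]$ (Lemma~\ref{lem:R_loc smooth}) and that $\Spec R_{\loc}^{\St}[1/\varpi]$ is smooth of dimension $d$ (from Shotton's explicit presentations). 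Since the cokernel of $\cO^d\to\Phi_{\lambda,R_\infty}$ is torsion and both source and target have the same $\cO$-rank $d$, the map has torsion kernel, hence zero kernel. This rank computation is precisely the missing idea in your sketch; the regularity of $\mathbf{y}$ alone does not supply it.
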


  \begin{proof}
  
We begin by observing that  Theorem \ref{thm:patching} shows that there exists rings $R_\infty$, $R_\infty^\St$ and $S_\infty = \cO[[y_1,\ldots,y_d]]$
with $\dim R_\infty = \dim R_\infty^\St = \dim S_\infty = d+1$, and a commutative diagram
\begin{center}
	\begin{tikzpicture}
	\node(11) at (2,2) {$S_\infty$};
	\node(21) at (4,2) {$R_\infty$};
	\node(31) at (6,2) {$R$};
	
	\node(10) at (2,0) {$S_\infty$};
	\node(20) at (4,0) {$R_\infty^\St$};
	\node(30) at (6,0) {$R^\St$};
	\node(40) at (8,0) {$\cO$};

	\draw[-latex] (11)--(10) node[midway,right]{$=$};
	\draw[->>] (21)--(20);
	\draw[->>] (31)--(30);

	\draw[right hook-latex] (11)--(21);
	\draw[->>] (21)--(31);
	
	\draw[right hook-latex] (10)--(20);
	\draw[->>] (20)--(30);
	\draw[->>] (30)--(40) node[midway,above]{$\lambda$};
	\end{tikzpicture}
\end{center}
where $R = R_\infty\otimes_{S_\infty}\cO = R_\infty/(y_1,\ldots,y_d)$ and $R^\St = R_\infty^\St\otimes_{S_\infty}\cO = R_\infty^\St/(y_1,\ldots,y_d)$. We will also use $\lambda$ to denote the maps $\lambda:R\onto \cO$, $\lambda:R_\infty\onto \cO$ and $\lambda:R_\infty^\St\onto \cO$ induced by $\lambda:R^{\St}\onto \cO$.

Now by Lemma \ref{lem:diff A>B>C} the local ring maps $\cO\to S_\infty\to R_\infty$ give rise to an exact sequence 
\[\cOmega_{S_\infty/\cO}\otimes_{S_\infty} R_\infty\to \cOmega_{R_\infty/\cO}\to \cOmega_{R_\infty/S_\infty}\to 0.\]
Applying $\underline{\quad}\otimes_{S_\infty}\cO = \underline{\quad}\otimes_{R_\infty}R$ to this sequence gives
\[\cOmega_{S_\infty/\cO}\otimes_{S_\infty} R\to \cOmega_{R_\infty/\cO}\otimes_{R_\infty}R\to \cOmega_{R_\infty/S_\infty}\otimes_{S_\infty}\cO\to 0.\]
Now we have $\cOmega_{S_\infty/\cO} = \Omega_{\cO[y_1,\ldots,y_d]/\cO}\otimes_{\cO[y_1,\ldots,y_d]}S_\infty \cong \cO[y_1,\ldots,y_d]^d\otimes_{\cO[y_1,\ldots,y_d]}S_\infty \cong S_\infty^d$. Also by Lemma \ref{lem:diff finite} and \cite[\href{https://stacks.math.columbia.edu/tag/00RV}{Lemma 00RV}]{stacks-project},
\[\cOmega_{R_\infty/S_\infty}\otimes_{S_\infty}\cO\cong \Omega_{R_\infty/S_\infty}\otimes_{S_\infty}\cO \cong \Omega_{R/\cO},\]
so the above exact sequence becomes
\[R^d\to \cOmega_{R_\infty/\cO}\otimes_{R_\infty}R\to \Omega_{R/\cO}\to 0.\]
Finally applying $\underline{\quad}\otimes_{\lambda}\cO$ to this, and noting that $\left(\cOmega_{R_\infty/\cO}\otimes_{R_\infty}R\right)\otimes_{\lambda}\cO = \cOmega_{R_\infty/\cO}\otimes_{\lambda}\cO =\Phi_{\lambda,R_\infty}$, we get the exact sequence
\[\cO^d\to \Phi_{\lambda,R_\infty}\to \Phi_{\lambda,R}\to 0.\]
By an analogous argument with $R^\St_\infty$ and $R^\St$ in place of $R_\infty$ and $R$, we now get a commutative diagram 
\begin{center}
	\begin{tikzpicture}
	\node(11) at (2,2) {$\cO^d$};
	\node(21) at (4,2) {$\Phi_{\lambda,R_\infty}$};
	\node(31) at (7,2) {$\Phi_{\lambda,R}$};
	\node(41) at (9,2) {$0$};
	
	\node(10) at (2,0) {$\cO^d$};
	\node(20) at (4,0) {$\Phi_{\lambda,R_\infty^\St}$};
	\node(30) at (7,0) {$\Phi_{\lambda,R^\St}$};
	\node(40) at (9,0) {$0$};
	
	\node(22) at (4,4) {$\Phi_{\lambda,R_\infty/R_\infty^\St}$};
	\node(32) at (7,4) {$\Phi_{\lambda,R/R^\St}$};
	
	\node(23) at (4,6) {$0$};
	\node(33) at (7,6) {$0$};
	
	\draw[-latex] (11)--(10) node[midway,right]{$=$};
	\draw[-latex] (21)--(20);
	\draw[-latex] (31)--(30);
	
	\draw[-latex] (23)--(22);
	\draw[-latex] (22)--(21);
	\draw[-latex] (33)--(32);
	\draw[-latex] (32)--(31);
	
	\draw[-latex] (22)--(32);
	
	\draw[-latex] (11)--(21);
	\draw[-latex] (21)--(31);
	\draw[-latex] (31)--(41);
	
	\draw[-latex] (10)--(20);
	\draw[-latex] (20)--(30);
	\draw[-latex] (30)--(40);
	\end{tikzpicture}
\end{center}
with exact rows and columns.

Recall from Proposition \ref{prop:R_v-Atq} that for any $q\in Q$ there is a finitely generated $\cO$-algebra $\cR^{\St}_q$ for which $R_q^{\St}$ is the completion of $\cR^{\St}_q$ at a maximal ideal. More specifically, we may take $\cR_q^\St = \cO[\alpha,\beta,\gamma,a,b,c]/I_q$, where $I_q$ is generated by the $2\times 2$ minors of the matrix in \eqref{MinorsOfWhich} if $q$ is a trivial prime, and $\cR_q^{\St} = \cO[a,b,c]$ otherwise. Now let
\[\cR_\infty^\St = \left[\bigotimes_{q|Q}\cR_q^\St\right]\otimes\left[\bigotimes_{\ell|N}\cO[X_1,X_2,X_3]\right]\otimes\cO[X_1,X_2,X_3,X_4]\otimes\cO[x_1,\ldots,x_g]\] so that Proposition \ref{prop:R_v-Atq} gives that $R_\infty^{\st}$ is the completion of $\cR_\infty^{\st}$ at a maximal ideal $\ffrm$.

Similarly, by the computations in \cite[Section 5]{Shotton}, there is also a natural choice of finitely generated $\cO$-algebra $\cR_\infty$ such that $\cR_\infty^{\st}$ is a quotient of $\cR_\infty$ (making $\Spec \cR_\infty^{\st}$ an irreducible component of $\Spec \cR_\infty$) and $R_\infty$ is the completion of $\cR_\infty$ at the same maximal ideal $\ffrm$.

Now notice that
\[\Phi_{\lambda,R_\infty}\otimes_{\cO}E = \Omega_{\cR_\infty/\cO}\otimes_{\lambda}E 
= \left(\Omega_{\cR_\infty/\cO}\otimes_{\cR_\infty}\cR_\infty[1/\varpi]\right)\otimes_{\lambda}E
= \Omega_{\cR_\infty[1/\varpi]/\cO}\otimes_{\lambda}E = \Omega_{\cR_\infty[1/\varpi]/E}\otimes_{\lambda} E,\]
by Lemma \ref{lem:diff aug} and \cite[\href{https://stacks.math.columbia.edu/tag/00RT}{Lemma 00RT}]{stacks-project}, and similarly 
\begin{align*}
\Phi_{\lambda,R_\infty^\St}\otimes_{\cO}E &= \Omega_{\cR_\infty^\St[1/\varpi]/E}\otimes_{\lambda} E\\
\Phi_{\lambda,R}\otimes_{\cO}E &= \Omega_{R[1/\varpi]/E}\otimes_{\lambda} E\\
\Phi_{\lambda,R^\St}\otimes_{\cO}E &= \Omega_{R^\St[1/\varpi]/E}\otimes_{\lambda} E
\end{align*}
Now observe that $R$ and $R^\St$ are finite over $\cO$ and reduced (by Theorem \ref{thm:patching}). This implies that $R[1/\varpi]$ and $R^\St[1/\varpi]$ are both finite direct products of finite extensions of $E$. It follows that $\Phi_{\lambda,R}\otimes_{\cO}E = \Phi_{\lambda,R^\St}\otimes_{\cO}E = 0$, and so $\Phi_{\lambda,R}$ and $\Phi_{\lambda,R^\St}$ are both (finitely generated) torsion $\cO$-modules.

As computed in \cite[Proposition 5.8]{Shotton}, for each $q$, $\Spec \cR_q^\St[1/\varpi]$ is a smooth, irreducible variety of dimension $3$ over $E$ (specifically this follows from a computation identical to the computation that $\Spec(R^\square(\rhobar,\tau_{1,\mathrm{ns}})\otimes E)$ is formally smooth in \cite[Proposition 5.8]{Shotton}). It follows that $\Spec\cR_\infty^\St[1/\varpi]$ is a smooth, irreducible variety of dimension $d = \dim_{\cO}R_\infty^\St$ over $E$. By \cite[\href{https://stacks.math.columbia.edu/tag/02G1}{Lemma 02G1}]{stacks-project}, it follows that $\Omega_{\cR_\infty^\St[1/\varpi]/E}$ is locally free of rank $d$ over $\cR_\infty^\St[1/\varpi]$, and so $\Phi_{\lambda,R_\infty}\otimes_{\cO}E = \Omega_{\cR_\infty^\St[1/\varpi]/E}\otimes_{\lambda} E\cong E^d$.

Recall that $\Spec\cR_\infty^\St[1/\varpi]$ is an irreducible component of $\Spec\cR_\infty[1/\varpi]$. Treating $\lambda:\cR_\infty\twoheadrightarrow \cR_\infty^\St\xrightarrow{\lambda} \cO\hookrightarrow E$ as an $E$ point of $\Spec\cR_\infty[1/\varpi]$, we have that $\lambda$ is not contained in any irreducible components of $\Spec\cR_\infty[1/\varpi]$, other than $\Spec\cR_\infty^\St[1/\varpi]$ (as $\Spec \cR_\infty[1/\varpi]$ must be smooth at $\lambda$, by Lemma \ref{lem:R_loc smooth}). It follows from this that
\[\Phi_{\lambda,R_\infty}\otimes_{\cO}E \cong \Omega_{\cR_\infty[1/\varpi]/E}\otimes_{\lambda} E\cong \Omega_{\cR_\infty^\St[1/\varpi]/E}\otimes_{\lambda} E\cong E^d\] 
(the second equality follows from \cite[\href{https://stacks.math.columbia.edu/tag/02G1}{Lemma 02G1}]{stacks-project} again, after noting that $\cR_\infty[1/\varpi]_{(J)}\cong \cR_\infty^\St[1/\varpi]_{(J)}$, where $J = \ker(\cR_\infty[1/\varpi]\xrightarrow{\lambda}E)$, and so $\left(\Omega_{\cR_\infty[1/\varpi]/E}\right)_{(J)}\cong\left(\Omega_{\cR_\infty^\St[1/\varpi]/E}\right)_{(J)}$).

Thus $\Phi_{\lambda,R_\infty}$ and $\Phi_{\lambda,R_\infty^\St}$ both have rank $d$ as $\cO$-modules.

But now $\Phi_{\lambda,R_\infty}$ and $\cO^d$ have the same rank as $\cO$-modules and $\Phi_{\lambda,R}=\coker(\cO^d\to \Phi_{\lambda,R_\infty})$ is a torsion $\cO$-module. Hence $\ker(\cO^d\to \Phi_{\lambda,R_\infty})$ is a torsion $\cO$-module, and is thus $0$. Therefore the sequence
\[0\to\cO^d\to \Phi_{\lambda,R_\infty}\to \Phi_{\lambda,R}\to 0\]
from above is exact. By identical reasoning,
\[0\to\cO^d\to \Phi_{\lambda,R_\infty^\St}\to \Phi_{\lambda,R^\St}\to 0\]
is also exact. The above commutative diagram, together with the snake lemma, now gives that the natural map $\Phi_{\lambda,R_\infty/R_\infty^\St}\to \Phi_{\lambda,R/R^\St}$ is an isomorphism.
\end{proof}

\begin{corollary}\label{cor:main}
\[\ell_\cO({\Phi_{\lambda,R/R^{\St}}})= \ell_\cO({\Phi_{\lambda,\T/\T^{\St}}})=\sum_{q \in Q}( m_q+\ord_\cO(q^2-1)-2n_q).\] 
\end{corollary}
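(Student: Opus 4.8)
The plan is to assemble this corollary directly from the results already established. The identity $\ell_\cO(\Phi_{\lambda,R/R^{\St}}) = \ell_\cO(\Phi_{\lambda,\T/\T^{\St}})$ is immediate from Theorem \ref{thm:patching}(7), which asserts that the surjections $R\to\T$ and $R^{\St}\to\T^{\St}$ are isomorphisms, compatibly with the quotient maps $R\to R^{\St}$ and $\T\to\T^{\St}$; hence the relative cotangent spaces $\Phi_{\lambda,R/R^{\St}}$ and $\Phi_{\lambda,\T/\T^{\St}}$ are canonically identified. So the content is the second equality.

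For that, first I would invoke Theorem \ref{mc1}, which gives an isomorphism $\Phi_{\lambda,R_\infty/R_\infty^{\St}} \xrightarrow{\sim} \Phi_{\lambda,R/R^{\St}}$. This reduces the computation of $\ell_\cO(\Phi_{\lambda,R/R^{\St}})$ to computing the length of the relative cotangent space at the patched level. Then I would apply Corollary \ref{cor:relative Phi for R_infty}, which computes exactly this: it states $\ell_\cO(\Phi_{\lambda,R_\infty/R_\infty^{\St}}) = \sum_{q\in Q}(m_q + \ord_\cO(q^2-1) - 2n_q)$. Chaining these two isomorphisms/equalities with the $R=\T$ identification gives the displayed formula. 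One point to be careful about is that $\lambda$ here is the augmentation arising from the newform $f\in S_2(\Gamma_0(NQ))$ new at $Q$; I must check that the hypotheses of Corollary \ref{cor:relative Phi for R_infty} (and hence of Proposition \ref{key-local-comp}) are met, namely that for each $q\in Q$ the induced local augmentation $R_q^{\St}\to\cO$ corresponds to a representation of $G_q$ of the form $\left(\begin{smallmatrix}\varepsilon & * \\ 0 & 1\end{smallmatrix}\right)$ up to unramified quadratic twist, and is \emph{ramified} (generic). This is exactly the shape of $\rho_f|_{G_q}$ recorded in Section \ref{hecke} (and in Lemma \ref{uniformization}(2), since $A_f$ has multiplicative reduction at $q$), with the choice of $\beta_q$ fixed precisely so that $\rho_f|_{G_q}$ gives a point of $\Spec R_q^{\St(\beta_q)}$; ramifiedness also comes from Lemma \ref{uniformization}(2).

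I do not expect a genuine obstacle here: this corollary is purely a bookkeeping consequence of Theorem \ref{thm:patching}, Theorem \ref{mc1}, and Corollary \ref{cor:relative Phi for R_infty}, all proven above. If anything, the only subtlety worth a sentence is verifying that the local augmentations induced by the global $\lambda$ are the generic Steinberg-type augmentations to which Proposition \ref{key-local-comp} applies, which I would dispatch by the remark in Section \ref{hecke} together with Lemma \ref{uniformization}.

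\begin{proof}
By Theorem \ref{thm:patching}(7) the surjections $R\to \T$ and $R^{\St}\to\T^{\St}$ are isomorphisms, and they are compatible with the quotient maps $R\to R^{\St}$ and $\T\to\T^{\St}$ and with the augmentation $\lambda$. Hence $\Phi_{\lambda,R/R^{\St}}\cong \Phi_{\lambda,\T/\T^{\St}}$, giving the first equality.

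For the second, recall that $\lambda$ arises from the newform $f\in S_2(\Gamma_0(NQ))$ new at $Q$. By the discussion in Section \ref{hecke} together with Lemma \ref{uniformization}(2), for each $q\in Q$ the abelian variety $A_f$ has multiplicative reduction at $q$, so $\rho_{f,\iota}|_{G_q}$ is, up to an unramified quadratic twist, of the form $\left(\begin{array}{cc}\varepsilon & \ast\\ 0 & 1\end{array}\right)$ and is ramified; moreover $\beta_q$ was chosen so that $\rho_{f,\iota}|_{G_q}$ defines a point of $\Spec R_q^{\St(\beta_q)}$. Thus the induced local augmentation $R_q^{\St}\to\cO$ satisfies the hypotheses of Proposition \ref{key-local-comp}, and therefore those of Corollary \ref{cor:relative Phi for R_infty}. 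That corollary gives
\[
\ell_\cO\big(\Phi_{\lambda,R_\infty/R_\infty^{\St}}\big) = \sum_{q\in Q}\big(m_q+\ord_\cO(q^2-1)-2n_q\big).
\]
Finally, Theorem \ref{mc1} provides an isomorphism $\Phi_{\lambda,R_\infty/R_\infty^{\St}}\xrightarrow{\sim}\Phi_{\lambda,R/R^{\St}}$, so $\ell_\cO(\Phi_{\lambda,R/R^{\St}})$ equals the same sum. Combining with the first equality completes the proof.
\end{proof}
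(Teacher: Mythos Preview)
Your proof is correct and takes essentially the same approach as the paper, which simply cites Theorem \ref{thm:patching}, Corollary \ref{cor:relative Phi for R_infty}, and Theorem \ref{mc1}. Your version is more explicit in verifying that the local augmentations induced by $\lambda_f$ satisfy the genericity hypothesis of Proposition \ref{key-local-comp}, which the paper handles in the paragraph preceding Theorem \ref{mc1} rather than in the proof of the corollary itself.
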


\begin{proof}
This is a consequence of Theorem \ref{thm:patching}, Corollary \ref{cor:relative Phi for R_infty} and Theorem \ref{mc1}.
\end{proof}

\begin{remark}

The above result proves that the size of the  relative global cotangent space at an augmentation of a global deformation ring  is the same as the   size of the  relative local  cotangent space.  We prove this  using a combination of level raising results (as used in the proof of Lemma \ref{lem:M(NQ^2) support}) and patching.
Similar  local-global results are proved in \cite{Wiles} when the change in the local deformation condition is from unramified to unrestricted ramification.    The arguments are different caused by the fact that  the rings  $R^\st=\T^\st$  are not complete intersections and hence  have non-trivial Wiles defect with respect to our fixed augmentation $\lambda$  (while in \cite{Wiles} the minimal deformation and Hecke rings are c.i. and hence satisfy Wiles numerical criterion). Wiles  computes an upper bound on change of cotangent space (when allowing ramification at a prime), and a matching lower bound on change in congruence module,  which together implies an exact formula  for change of cotangent space and congruence module.  In contrast, in our work  here to compute the Wiles defect for $R^\st,\T^\st$, we have to  compute the exact change of cotangent spaces (between $R$ and $R^{\St}$) and congruence modules (between $\T$ and $\T^\st$) independently of each other.

\end{remark}

\section{Relating congruence modules}\label{sec:congruences}

We recall the main result of \cite{Manning} explicitly determines the structure of $M_\infty^{\st}$ as an $R_\infty^{\st}$-module (or at least of $M_\infty^{\st}/\varpi$ as an $R_\infty^{\st}/\varpi$-module). In particular, provided that $Q$ is divisible by at least one trivial prime for $\rhobar$, $M_\infty^{\st}$ is not free over $R_\infty^{\st}$. We will not need to use the full description of $M_\infty^{\st}$ in this paper, however we will use the following consequence of these computations (see \cite{Manning} Theorem 1.2, or more specifically Propositions 4.13 and 4.14):

\begin{theorem}\label{thm:sat}
In both the definite and indefinite cases, the map $R^{\st}\to \End_{R^{\st}}(M^{\st}(N))$ is an isomorphism.
\end{theorem}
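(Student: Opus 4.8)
The plan is to deduce Theorem~\ref{thm:sat} from the patched module computations of \cite{Manning} by first proving the analogous statement at the level of the patched ring $R_\infty^{\st}$, and then descending along the quotient $R_\infty^{\st}\to R^{\st}$. Concretely, I would first invoke \cite[Propositions 4.13 and 4.14]{Manning} (or the structural description underlying Theorem 1.2 of loc.~cit.) to obtain an explicit presentation of $M_\infty^{\st}$, or at least of $M_\infty^{\st}/\varpi$ over $R_\infty^{\st}/\varpi$, as a module over the explicit ring $R_\infty^{\st} = R_{\loc}^{\st}[[x_1,\dots,x_g]]$ whose structure is controlled by Shotton's computation of the local Steinberg deformation rings. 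From that presentation one reads off that the natural map $R_\infty^{\st}\to \End_{R_\infty^{\st}}(M_\infty^{\st})$ is an isomorphism: injectivity is immediate since $R_\infty^{\st}$ is a domain (Proposition~\ref{prop:R_v-Loc}) acting faithfully on $M_\infty^{\st}$ (Theorem~\ref{thm:patching}(4)), and surjectivity is the content of the $\Manning$-type computation — every $R_\infty^{\st}$-linear endomorphism of $M_\infty^{\st}$ is multiplication by an element of $R_\infty^{\st}$.

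Next I would descend. Recall from Theorem~\ref{thm:patching} that $R^{\st} = R_\infty^{\st}\otimes_{S_\infty}\cO = R_\infty^{\st}/(i^{\st}(y_1),\dots,i^{\st}(y_d))$, that $M^{\st}(N) = M_\infty^{\st}\otimes_{S_\infty}\cO = M_\infty^{\st}/(i^{\st}(y_1),\dots,i^{\st}(y_d))$, and crucially that $M_\infty^{\st}$ is \emph{free} over $S_\infty = \cO[[y_1,\dots,y_d]]$, so that $(\varpi, i^{\st}(y_1),\dots,i^{\st}(y_d))$ — and in fact $(i^{\st}(y_1),\dots,i^{\st}(y_d))$ alone after noting $M_\infty^{\st}$ is $\varpi$-torsion free — is a regular sequence on $M_\infty^{\st}$. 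Since $M_\infty^{\st}$ is a maximal Cohen--Macaulay module over the Cohen--Macaulay ring $R_\infty^{\st}$ and the $i^{\st}(y_j)$ form a regular sequence on it, one has $\End_{R_\infty^{\st}}(M_\infty^{\st})\otimes_{S_\infty}\cO \cong \End_{R^{\st}}(M^{\st}(N))$; this is the standard fact that forming $\Hom$ out of a Cohen--Macaulay module commutes with quotienting by a regular sequence that is also regular on the source (one can check it by writing a finite free presentation and using that $\mathrm{Tor}$ and $\mathrm{Ext}$ against the Koszul complex vanish in the relevant degrees). Reducing the isomorphism $R_\infty^{\st}\xrightarrow{\sim}\End_{R_\infty^{\st}}(M_\infty^{\st})$ modulo $(i^{\st}(y_1),\dots,i^{\st}(y_d))$ then yields $R^{\st}\xrightarrow{\sim}\End_{R^{\st}}(M^{\st}(N))$, which is the claim.

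I expect the main obstacle to be the base-change step for $\End$: verifying that $\End_{R_\infty^{\st}}(M_\infty^{\st})\otimes_{S_\infty}\cO \cong \End_{R^{\st}}(M^{\st}(N))$ requires a genuine (though standard) homological argument using that $M_\infty^{\st}$ is maximal Cohen--Macaulay over $R_\infty^{\st}$ and $S_\infty$-free, rather than a formal manipulation; one must be careful that quotienting by the regular sequence does not introduce extra endomorphisms. A secondary point requiring care is the precise extraction of the surjectivity statement $R_\infty^{\st}\twoheadrightarrow\End_{R_\infty^{\st}}(M_\infty^{\st})$ from \cite{Manning}: the cited results there describe $M_\infty^{\st}/\varpi$ over $R_\infty^{\st}/\varpi$, so I would either argue that surjectivity mod $\varpi$ plus $\varpi$-adic completeness and finiteness implies surjectivity integrally (Nakayama over the complete local ring $R_\infty^{\st}$, noting $\End_{R_\infty^{\st}}(M_\infty^{\st})$ is a finite $R_\infty^{\st}$-module since $M_\infty^{\st}$ is), or cite the integral form of the statement if \cite{Manning} provides it directly. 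Everything else — faithfulness, the identification of the quotients with $R^{\st}$ and $M^{\st}(N)$, Cohen--Macaulayness — is already in Theorem~\ref{thm:patching} and Proposition~\ref{prop:R_v-Loc}.
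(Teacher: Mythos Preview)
Your proposal is correct and takes essentially the same approach as the paper, which simply cites \cite{Manning} (Theorem~1.2, or more precisely Propositions~4.13 and~4.14) without giving an independent argument. What you have written is a faithful reconstruction of how those cited results are proved: one first establishes $R_\infty^{\st}\xrightarrow{\sim}\End_{R_\infty^{\st}}(M_\infty^{\st})$ from the explicit structure of the patched module, and then descends along the regular sequence $(i^{\st}(y_1),\dots,i^{\st}(y_d))$ using the maximal Cohen--Macaulay property; the two points you flag as requiring care (the base-change of $\End$ along a regular sequence, and lifting surjectivity from $M_\infty^{\st}/\varpi$ via Nakayama) are exactly the technical ingredients handled in \cite{Manning}.
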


\begin{remark}
There is a small subtlety here in that \cite{Manning} excludes the case where $D$ ramifies at a prime which is $-1\pmod{p}$, whereas we have not excluded this case. Fortunately the restriction in loc. cit. is purely for the sake of convenience, so this does not present an issue. Under the notation of this paper we have $R_q^{\st(\beta_q)}\cong \cO[[A,B,C]]$ at all primes for which $q\equiv -1\pmod{p}$, which means that the ring $R_\infty^{\st}$ considered in this paper will coincide with the ring $R_\infty$ considered in loc. cit. in all cases. Thus the proof given in loc. cit. will establish Theorem \ref{thm:sat} in all cases considered in this paper, without modifications.
\end{remark}

\begin{corollary}\label{cor:sat}
For an augmentation $\lambda:R^{\st}\to \cO$ as above,  we have $\card{\cng{\lambda}(R^{\st})}= \card{\cng{\lambda}(M^{\st}(N))}$, and $\delta_{\lambda,R^{\st}}(R^{\st}) = \delta_{\lambda,R^{\st}}(M^{\st}(N)) = \delta_{\lambda,R^{\st}}(S^Q(\Gamma_0^Q(N))_{\ffrm_Q'})$. 
\end{corollary}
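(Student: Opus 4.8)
The plan is straightforward given the machinery already assembled. The statement Corollary \ref{cor:sat} packages together three equalities, and each follows almost formally from results stated above.

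\begin{proof}[Proof sketch for Corollary \ref{cor:sat}]
First I would apply Theorem \ref{thm:surjective} with $R = R^{\st}$ and $M = M^{\st}(N)$. By classical generic multiplicity one (as recorded just after Theorem \ref{fred}), $M^{\st}(N)$ has $\lambda$-rank $1$, so Theorem \ref{thm:surjective} applies and gives $\card{\cng{\lambda}(R^{\st})}\ge \card{\cng{\lambda}(M^{\st}(N))}$, with equality whenever the natural map $R^{\st}\to \End_{R^{\st}}(M^{\st}(N))$ is an isomorphism. But that isomorphism is precisely the content of Theorem \ref{thm:sat}. Hence $\card{\cng{\lambda}(R^{\st})} = \card{\cng{\lambda}(M^{\st}(N))}$.

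For the Wiles defects, recall Definition \ref{WilesDefect}: since $\rank_\lambda M^{\st}(N) = 1 = \rank_\lambda R^{\st}$ (the latter because $\lambda$ is an augmentation, so $R^{\st}[\ker\lambda]$ has $\lambda$-rank $1$), and $\Phi_{\lambda,R^{\st}}$ is finite by Lemma \ref{lem:cot fin} (using that $R^{\st}$ is reduced and finite over $\cO$, by Theorem \ref{thm:patching}), we have
\[
\delta_{\lambda,R^{\st}}(R^{\st}) = \frac{\log\card{\Phi_{\lambda,R^{\st}}} - \log\card{\cng{\lambda}(R^{\st})}}{\log\card{\cO/p}}, \qquad
\delta_{\lambda,R^{\st}}(M^{\st}(N)) = \frac{\log\card{\Phi_{\lambda,R^{\st}}} - \log\card{\cng{\lambda}(M^{\st}(N))}}{\log\card{\cO/p}}.
\]
The two numerators agree by the equality of congruence module cardinalities just established, so $\delta_{\lambda,R^{\st}}(R^{\st}) = \delta_{\lambda,R^{\st}}(M^{\st}(N))$.

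Finally, for the last equality, recall from the lemma after the definition of $M^{\st}(N)$ that $S^Q(\Gamma_0^Q(N))_{\ffrm_Q'}^* = M^{\st}(N)^{\oplus 2}$ as $\T$-modules (in the indefinite case; in the definite case $M^{\st}(N) = S^Q(\Gamma_0^Q(N))_{\ffrm_Q'}^*$ directly). By Lemma \ref{lem:cng dual} we have $\cng{\lambda}(S^Q(\Gamma_0^Q(N))_{\ffrm_Q'}) \cong \cng{\lambda}(S^Q(\Gamma_0^Q(N))_{\ffrm_Q'}^*)$, and then Lemma \ref{lem:cng direct sum} gives $\cng{\lambda}(M^{\st}(N)^{\oplus 2}) \cong \cng{\lambda}(M^{\st}(N))^{\oplus 2}$; equivalently, as already noted in the text following that lemma, $\delta_{\lambda,\T^{\st}}(S^Q(\Gamma_0^Q(N))_{\ffrm_Q'}) = \delta_{\lambda,\T^{\st}}(M^{\st}(N))$. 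Since $R^{\st}\xrightarrow{\sim}\T^{\st}$ by Theorem \ref{thm:patching}, the defect computed over $R^{\st}$ equals the one over $\T^{\st}$, giving $\delta_{\lambda,R^{\st}}(M^{\st}(N)) = \delta_{\lambda,R^{\st}}(S^Q(\Gamma_0^Q(N))_{\ffrm_Q'})$. This completes the chain of equalities.
\end{proof}

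The only genuine input here is Theorem \ref{thm:sat}, imported from \cite{Manning}; everything else is bookkeeping with the commutative-algebra lemmas of Section \ref{sec:congruence} and the patching output of Theorem \ref{thm:patching}. There is essentially no obstacle: the one point requiring minor care is confirming that the hypotheses of Theorem \ref{thm:surjective} (namely $\lambda$-rank exactly $1$, and $R^{\st}$ reduced, finite flat, $\varpi$-torsion free) are all in place — but these are exactly what generic multiplicity one and Theorem \ref{thm:patching}(7) provide.
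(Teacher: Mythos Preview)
Your proposal is correct and follows exactly the same approach as the paper, which simply says the corollary ``follows upon combining Theorem \ref{thm:surjective} and Theorem \ref{thm:sat}.'' You have merely spelled out the bookkeeping (rank-$1$ check, the definite/indefinite split for $S^Q(\Gamma_0^Q(N))_{\ffrm_Q'}$ versus $M^{\st}(N)$) that the paper leaves implicit.
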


\begin{proof}

This follows upon combining  Theorem \ref{thm:surjective}  and Theorem \ref{thm:sat}.
\end{proof}

Thus to prove our main result, it will suffice to compute $\delta_{\lambda,\T^{\st}}(S^Q(\Gamma_0^Q(N))_{\ffrm_Q'})$, which will be our goal in the remainder of this paper.

\section{Computation of change of the $\eta$-invariant}\label{sec:ribet}

We  put ourselves in the setting of \S \ref{hecke}. Consider a quaternion algebra $D$ over $\Q$  that is ramified at a  finite set of primes $Q$ (and hence  is definite if $|Q|$ is odd, and indefinite if $|Q|$ is even). We have the augmentation  $\lambda_f: \T^\st \ra \cO$ which corresponds (by Jacquet--Langlands)  to a newform $f \in S_2(\Gamma_0(NQ))$,  with residual representation $\rhobar$. Recall that $N|N(\rhobar)|NQ$.  We  recall from \S \ref{hecke}  that $\T^{\St}$ acts  faithfully on $H^1(X_0^Q(N),\cO)_{\ffrm_Q'}$, with $\ffrm_Q'$ characterized by the property that it is the maximal ideal of the full Hecke algebra  acting on   $H^1(X_0^Q(N),\cO)$ that contains  the kernel of the augmentation arising from $f$.  We also have  from \S \ref{hecke} the Hecke algebra $\T$   that acts faithfully  on $S(NQ^2)_{\ffrm_Q}=H^1(X_0(NQ^2), \cO)_{\ffrm_Q}$, the oldform $f^Q$ of level $NQ^2$ that gives rise to the maximal ideal $\ffrm_Q$ of the full Hecke algebra acting on  $S(NQ^2)=H^1(X_0(NQ^2), \cO)$,  and $S(NQ^2)_{\ffrm_Q}^*=M(NQ^2)^{\oplus 2}$. 

Let $\cA_f$ stand for the isogeny class of the abelian variety $A_f$ (which is an optimal quotient of $J_0(NQ)$). The residual representations arising from the class $\cA_f$ with respect to the fixed embedding $K_f \hookrightarrow  \overline \Q_p$ are all isomorphic to our fixed absolutely irreducible $\rhobar$. 
We have the following result which is deduced from  the methods of  \cite{RiTa}.

 \begin{proposition}\label{change-of-eta}

  Assume that $Q$ contains a prime  $t$ that divides $N(\rhobar)$ and we are in the  indefinite case  (i.e.,  that $Q$ is of even  cardinality).
 We have the equality of lengths of $\cO$-modules:
  \[\ell_\cO(\Psi_\lambda(M(NQ^2)))= \ell_\cO(\Psi_\lambda(M^{\st}(N))) + \sum_{q \in Q} (m_q+\ord_\cO(q^2-1)).\]

  \end{proposition}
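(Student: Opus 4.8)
The strategy is to reduce the claimed identity for congruence modules of Hecke algebras to a statement about congruence modules attached to cohomology, and then to apply the results of \cite{RiTa} prime by prime across $Q$. First I would pass from $\Psi_\lambda(M(NQ^2))$ and $\Psi_\lambda(M^{\st}(N))$ to the congruence modules $\Psi_\lambda(S(NQ^2)_{\ffrm_Q})$ and $\Psi_\lambda(S^Q(\Gamma_0^Q(N))_{\ffrm_Q'})$ of the full cohomology groups. This is harmless: by the lemma of \S\ref{hecke} (following \cite{Carayol2}) we have $S(NQ^2)_{\ffrm_Q}^* = M(NQ^2)^{\oplus 2}$ and $S^Q(\Gamma_0^Q(N))_{\ffrm_Q'}^* = M^{\st}(N)^{\oplus 2}$, and by Lemma~\ref{lem:cng direct sum} and Lemma~\ref{lem:cng dual} the congruence module of a module and its square and its dual are simply related (the lengths of $\Psi_\lambda(M^{\oplus 2})$ and $\Psi_\lambda(M)$ differ only by a factor of $2$, which cancels on both sides of the identity once one tracks the bookkeeping, or one works directly with the self-dual cohomology groups which carry Poincaré pairings, using Lemma~\ref{lem:cng det self-dual}). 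So it suffices to prove
\[\ell_\cO(\Psi_\lambda(S(NQ^2)_{\ffrm_Q})) = \ell_\cO(\Psi_\lambda(S^Q(\Gamma_0^Q(N))_{\ffrm_Q'})) + \sum_{q\in Q}(m_q + \ord_\cO(q^2-1)).\]

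Next I would set up the comparison as an iteration over the primes dividing $Q$, interpolating between the Shimura curve $X_0^Q(N)$ and the modular curve $X_0(NQ^2)$. Since $Q$ has even cardinality and contains a prime $t\mid N(\rhobar)$, I can peel off primes from the discriminant of $D$ two at a time via the Jacquet--Langlands correspondence, or more efficiently reduce to a sequence of one-prime level-raising/level-switching steps of the type treated in \cite{RiTa}. The key input is that \cite{RiTa} computes, for a single Steinberg prime $q$, the change in the cohomological congruence module when one moves between a form of level prime to $q$ (on one quaternion algebra) and the corresponding newform of level divisible by $q$ (possibly on another), and that this change is governed by the local behavior of $\rho_f$ at $q$ --- precisely the quantity $m_q + \ord_\cO(q^2-1)$, which by Definition~\ref{invariants} measures the depth of the congruence of $\rho_f|_{G_q}$ to a representation with trivial projective image on $I_q$, together with the "Frobenius" contribution $\ord_\cO(q^2-1)$ coming from the Euler factor at $q$. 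I would invoke Lemma~\ref{uniformization}(2) to guarantee that $\rho_f|_{G_q}$ has exactly the Steinberg shape $\begin{pmatrix}\epsilon & *\\ 0 & 1\end{pmatrix}$ (up to an unramified quadratic twist) needed to apply the local computations, and to ensure the relevant character groups of Néron models are free $\cO_v$-modules of rank one, so that Tate/Mumford uniformization delivers the arithmetic of the congruence in the form used by \cite{RiTa}.

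The main obstacle --- and the step requiring the most care --- is translating the results of \cite{RiTa} into exactly the normalization and generality needed here: \cite{RiTa} is stated for congruence modules of Jacobians/cohomology under a single level change, with a fixed determinant condition and specific quaternion algebras, whereas I need the cumulative effect of all primes in $Q$ simultaneously, with the precise localizations at $\ffrm_Q$ and $\ffrm_Q'$ used in this paper, and with the $\beta_q$-choices at primes $q\equiv -1\pmod p$ matched to the component of $\Spec R_q^{\St(\beta_q)}$ through which $\rho_f$ factors. Ensuring the local terms add up with no error terms --- in particular that the contribution at each $q$ is exactly $m_q + \ord_\cO(q^2-1)$ and not merely that up to a bounded factor --- will require carefully checking that the pairings on cohomology in \cite{RiTa} are the $\cO$-perfect ones, that no extra congruences are introduced at the auxiliary prime $r$ where $\Gamma_1(r^2)$-structure is imposed, and that the hypothesis "$Q$ contains a prime $t\mid N(\rhobar)$" is exactly what is needed to keep $\rhobar$ ramified somewhere in the discriminant so that \cite{RiTa} applies at the first step of the induction. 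Modulo these normalization checks, the proof is an induction on $|Q|$ with base case the minimal-level $R_\varnothing=\T_\varnothing$ situation and inductive step supplied by \cite{RiTa}.
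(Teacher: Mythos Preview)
Your overall architecture is right --- reduce to self-dual cohomology via the $M^{\oplus2}$ decomposition and Lemmas~\ref{lem:cng det self-dual}, \ref{lem:cng direct sum}, \ref{lem:cng dual}, then track the pairings through a chain of level/discriminant changes. But you have conflated two separate steps into one, and this is where your proposal would get stuck.

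The paper's proof splits the passage from $X_0^Q(N)$ to $X_0(NQ^2)$ into two distinct stages. \textbf{Step~1} goes from $X_0^Q(N)$ to the \emph{modular curve} $X_0(NQ)$ (not $NQ^2$), removing two primes at a time from the quaternionic discriminant and transferring them to the level; this is exactly the Ribet--Takahashi machinery, and the contribution per prime $q\in Q$ is $\varpi^{m_q}$, \emph{not} $\varpi^{m_q}(q^2-1)$. The argument works via optimal quotients $\xi_s\colon J_s\to A_s$ and the ideals $(\xi_{s*}\xi_s^*)$, using Ribet's exact sequences of component groups; the hypothesis $t\mid (Q,N(\rhobar))$ enters precisely here, to force $\Phi_t(A)_\cO=0$ and hence surjectivity of the relevant component-group map. \textbf{Step~2} then goes from $X_0(NQ)$ to $X_0(NQ^2)$ by iterating degeneracy maps $\beta=\delta_0-q^{-1}U_q\delta_1$ and computing $\beta'\beta$; this is where each factor $(q^2-1)$ appears, via \cite[Lemma~2.5]{Wiles} (cf.\ also \cite{DiRi}), and it has nothing to do with \cite{RiTa}.

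So when you write that \cite{RiTa} ``computes \ldots\ the change \ldots\ precisely the quantity $m_q+\ord_\cO(q^2-1)$'', that is not what you will find there: you will only get $m_q$, and you need a second, independent argument to produce the $\ord_\cO(q^2-1)$ term. Once you separate these two steps, the induction you describe is correct (indeed, the paper runs it through the intermediate Shimura curves $X_s=X_0^{Q_s}(Nq_{2r-2s+1}\cdots q_{2r})$), and your remarks about normalizing pairings and matching $\beta_q$ are on target but secondary.
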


  \begin{proof}
  
  Let $Q=\{q_1,q_2,\cdots,q_{2r}\}$ with $q_1=t$ such that $t|N(\rhobar)$, and for $0 \leq s \leq r$,  set  $Q_s=\{q_1,\cdots,q_{2r-2s}\}$ and   $J_s$ to be the Jacobian of  $X_s=X_0^{Q_s}(Nq_{2r-2s+1}\cdots q_{2r})$.  
  
  To study the congruence modules of $H^1(X_s,\cO)_\ffrm$ using the work of section \ref{sec:congruence} we will need a Hecke equivariant perfect pairing on $H^1(X_s,\cO)_\ffrm$. In general the perfect pairing $\langle\ ,\ \rangle_s':H^1(X_s,\cO)\times H^1(X_s,\cO)\to \cO$ arising from Poincar\'e duality will not be Hecke equivariant (instead for any double coset operator $T$ it will satisfy $\langle T^*x,y\rangle_s' = \langle x,T_*y\rangle_s'$ where $T^*$ and $T_*$ are induced respectively by Picard and Albanese functoriality, respectively). As in Lemma 5.5 of \cite{HelmShimCurve} one may modify this to obtain a Hecke equivariant perfect pairing $\langle\ ,\ \rangle_s: H^1(X_s,\cO)\times H^1(X_s,\cO)\to \cO$ defined by $\langle x,y\rangle_s = \langle x,wy\rangle_s'$, where $w$ is an appropriately chosen Atkin--Lehner involution. We will also use $\langle\ ,\ \rangle_s$ to denote the localized perfect pairing $\langle\ ,\ \rangle_s: H^1(X_s,\cO)_\ffrm\times H^1(X_s,\cO)_\ffrm\to \cO$. There is an analogous Hecke equivariant perfect pairing $\langle \ , \ \rangle:H^1(X_0(NQ^2),\cO)_{\ffrm_Q} \times  H^1(X_0(NQ^2),\cO)_{\ffrm_Q} \to \cO$.  The $\cO$-modules $H^1(X_0,\cO)[\ker(\lambda_f)],  H^1(X_r,\cO)[\ker(\lambda_f)],
  H^1(X_0(NQ^2),\cO)[\ker(\lambda_{f^Q})] $ are all free of rank 2, and we choose  $\cO$-bases  $\{A_0,B_0\}$, $\{A_r,B_r\}$ and $\{A,B\}$ respectively.

  \begin{itemize}

  \item Step 1:  We prove the equality $(\langle A_r,B_r\rangle_r)= (\prod_{q \in Q} \varpi^{m_q}) (\langle A_0,B_0\rangle_0)$ of ideals of $\cO$.    Thus is done by using \cite{RiTa} and its extension in \cite{KhareRT}.
  
\item Step 2:    We prove the equality $(\langle A,B\rangle)= (\prod_{q \in Q} (q^2-1)) ( \langle A_r,B_r\rangle_r)$ of ideals   of $\cO$.
This uses the arguments of  \cite[\S 2]{Wiles} in particular \cite[Lemma 2.5]{Wiles} (see also \cite[\S 4]{DiRi}). \end{itemize}

We  go into more of the details of the two steps.

\noindent{\bf Step 1:}   In \cite{RiTa} the authors  study  the change in degree of parametrizations of  (an isogeny class of)  an elliptic curve by Shimura curves as the Shimura curves vary. We use the  extension of \cite{RiTa} in \cite{KhareRT}  to the case of parametrisations of $A_f$, when $A_f$  is not an elliptic curve. Thus we first briefly  recall some of the work of \cite{KhareRT}.

  Let $\xi_s:J_s\rightarrow A_s$  with $A_s \in \cA_f$
be an optimal quotient:  the map $\xi_s$ is equivariant for the Hecke action
and its kernel is connected. 
Analogously we have the optimal
quotient $\xi_{s+1}:J_{s+1} 
\rightarrow A_{s+1}$. (The optimal quotients that we consider
are all isogenous to each other via isogenies defined over ${\bf Q}$ and
which are Hecke equivariant.) We consider  the maps ${\rm Ta}_p(J_s) 
\rightarrow {\rm Ta}_p(A_s)$
and ${\rm Ta}_p(J_{s+1})
\rightarrow {\rm Ta}_p(A_{s+1})$ that $\xi_s$ and $\xi_{s+1}$ induce on 
the corresponding Tate
modules. 
To ease the notation, we set $J:=J_s$
and $J':=J_{s+1}$, $A_s:=A$ and $A_{s+1}:=A'$,
$\xi:=\xi_s$
and $\xi':=\xi_{s+1}$ and  also set $q_{2(r-s)}=q$ and $q_{2(r-s)-1}=q'$.

We have induced maps $\xi'^*:{\rm Ta}_p({(A')}^d)_{{\ffrm},\cO}
\rightarrow {\rm Ta}_p(J')_{{\ffrm},\cO}$,
and $\xi'_*:{\rm Ta}_p(J')_{{\ffrm},\cO} \rightarrow {\rm Ta}_p(A')_{{{\ffrm}},\cO}$ (injective
and surjective respectively by optimality, and their analogs for
$\xi$), where for instance ${\rm Ta}_p({(A')}^d)_{{\ffrm},\cO}:=({\rm
Ta}_p({(A')}^d) \otimes_{{\bf
Z}_p} \cO)_{\ffrm}$ is the localisation at $\ffrm$ of $p$-adic Tate module 
of the dual abelian variety $(A')^d:={\rm Pic}^0(A')$ of $A'$ tensored 
with $\cO$, and the other symbols are defined analogously. Observe
that ${\rm Ta}_p((A')^d)_{{{\sf
m}},\cO}$ 
is free of rank 2 over $\cO$. The module
${\rm Ta}_p((A')^d)_{{{\ffrm}},\cO}$ is
(non-canonically) isomorphic to ${\rm Ta}_p(A')_{{{\ffrm}},\cO}$ 
as a $\cO[G_{\bf Q}]$ module by \cite{Carayol2} using the irreducibility of the residual representation $\rhobar$. We identify these by choosing any such isomorphism: the
ambiguity is only up to elements of $\cO^*$ which is immaterial in the
calculations below. 
Analogously we have the maps induced by $\xi$ 
involving a choice of a $\cO[G_{\bf Q}]$-isomorphism between localisations at $\ffrm$ of 
the Tate modules of $A$ and its dual. 
The maps $\xi'_*\xi'^*$ and  $\xi_*\xi^*$
of ${\rm Ta}_p(A')_{{{\ffrm}},\cO}$ and ${\rm Ta}_p(A)_{{\ffrm},\cO}$ commute with the
$\cO[G_{\bf Q}]$-action 
and by the irreducibility of this action
can be regarded as given by multiplication by elements of $\cO$. We denote the
corresponding ideals of $\cO$ by $(\xi'_*\xi'^*)$ and  $(\xi_*\xi^*)$.

 The method of \cite{RiTa} and \cite[Theorem 4]{KhareRT}  gives the key relation: \[(\xi'_*\xi'^*)=(\omega^{m_{q_{2r-2s}}+ m_{q_{2r-2s-1}}})(\xi_*\xi^*), \] for $0  \leq s \leq r-1$, where recall from Definition \ref{invariants} that  for $q \in Q$, $m_q$ is defined to be the largest integer such that $\rho_f|_{I_q}$ is the identity   mod $\omega^{m_q}$.   We justify  how this result  follows  from  our hypotheses. Lemma  2 and Corollary on pg. 11113 of \cite{RiTa}, and \cite[Proposition 3]{KhareRT},  using the vanishing of $\Phi_t(A)_{\cO}$,  the component group at $t$ of $A$ which is a consequence of $t|N(\rhobar)$, and the fact  that $t$ divides the discriminant of the quaternion algebra  from which $J$ arises,   imply that  the  map $\xi_*:\phi_{q'}(J)_{{\ffrm},\cO} \rightarrow
           \phi_{q'}(A)_{{\ffrm},\cO}$ induced by $\xi$, with $\phi_{q'}(J)_{\ffrm,\cO}, \phi_{q'}(A)_{\ffrm,\cO}$ component groups  at $q'$ of $J$ and $A$ localized at $\ffrm$, is surjective. Using this and Ribet exact sequences \cite[Equation 2, page 209]{KhareRT}, the asserted relation is deduced in \cite[Theorem 4]{KhareRT}.

This relation may be applied to compute change of congruence modules as follows. 
     Let $A_{s},B_{s}$ be  a basis of the free $\cO$-module
  $H^1(X_s,\cO)_\ffrm[\ker \lambda]$ of rank 2 for $0 \leq s \leq r$. 
    Then  $(\langle A_s, B_s\rangle_s)=  ({\xi_s}_*\xi_s^*)$ as $(\langle  \xi_s^*(x), \xi_s^*(y)\rangle_s) =(\langle x, {\xi_s}_*\xi_s^*(y)\rangle)$ where $\{x,y\}$ is an $\cO$-basis of
${\rm Ta}_p(A)_{\ffrm,\cO}$, and  $(\langle x,y\rangle)$ is $\cO$ where we use the perfect pairing on  ${\rm Ta}_p(A)_{\ffrm,\cO}$ induced by the perfect pairings above. Using  \cite[Theorem 4]{KhareRT}, applied successively for $0 \leq s\leq r-1$,  we deduce that $(\langle A_r, B_r \rangle_r)=(\omega^{ \Sigma_{q \in Q} m_q} )  (\langle A_0, B_0 \rangle_0)$.

     \vskip .5cm
     
     \noindent{\bf Step 2:}  Let $q \in Q$ and  consider the  usual degeneracy map \[(\delta_0,\delta_1): H^1(X_0(NQ),\cO)^2 \to H^1(X_0(NQq),\cO)\]  where $\delta_i$ is induced by the map of the upper half plane which sends $z \to q^iz$.  Wiles's  result
    \cite[Lemma 2.5]{Wiles} shows  that the torsion in the cokernel of $(\delta_0,\delta_1)$ is supported only at  Eisenstein maximal ideals  (i.e., the corresponding residual Galois representations are reducible) of the Hecke algebra acting on  $H^1(X_0(NQq),\cO)$. 
     
      We  claim  that the map
     $\beta: H^1(X_0(NQ),\cO) \to H^1(X_0(NQq),\cO)$ given by $\delta_0-q^{-1}U_q\delta_1$ induces an isomorphism  (that we denote by the same symbol) $\beta:H^1(X_0(NQ),\cO)[\ker(\lambda_f)] \to  H^1(X_0(NQq),\cO)[\ker(\lambda_{f^q})]$ (see   \cite[\S 4]{DiRi} for related arguments in a similar situation).  Here $\lambda_{f^q}$ is the homomorphism of the full Hecke algebra acting on $S_2(\Gamma_0(NQq),\cO)$ associated to an oldform $f^q$ in it with associated newform $f$ such that $f^q$  that has the same eigenvalues as $f$ for  Hecke operators $T_r$ (or $U_r$ for $r|NQ$)   for positive integers $r$  that are prime to $q$ and $f^q|U_q=0$.  To see the claim we first note that as $f$ is  a newform  of level $NQ$, the map $\beta:H^1(X_0(NQ),\cO)[\ker(\lambda_f)] \to  H^1(X_0(NQq),\cO)$ is injective (using   an easy implication of \cite[Lemma 2.5]{Wiles} that can be seen by considering the map induced by $\beta$   on $ H^1(X_0(NQ),\cO)[\ker(\lambda_f)] \otimes E$), and hence its image  is a free $\cO$-module of rank 2. Then the fact that  $\beta:H^1(X_0(NQ),\cO)[\ker(\lambda_f)] \to  H^1(X(\Gamma_0(NQq),\cO)[\ker(\lambda_{f^q})]$ is  actually an isomorphism follows from     \cite[Lemma 2.5]{Wiles} and the fact that  $\rho_f$ is residually irreducible.

      Let $\beta'$ be the  map $H^1(X_0(NQq),\cO) \to H^1(X_0(NQ),\cO)$ dual to $\beta$. Then  we see that the induced map (that we denote by the same symbol)  $\beta'\beta:H^1(X_0(NQ),\cO)[\ker(\lambda_f)] \to H^1(X_0(NQ),\cO)[\ker(\lambda_f)]  $ is  (up to a unit) given by multiplication by $q^2-1$ (see  also \cite[\S 4]{DiRi}).
     
     Thus  if $ A_r,B_r$ as above is a basis of  the free $\cO$-module $H^1(X_0(NQ),\cO)[\ker(\lambda_f)]$ of rank 2, then $(\langle \beta(A_r),\beta(B_r)\rangle)=(\langle A_r, \beta'\beta(B_r) \rangle)= ((q^2-1)\langle A_r,B_r \rangle)$, where the first    pairing  is the twisted Poincar\'e  pairing on the cohomology of $X_0(NQq)$.  Iterating this argument $2r$ times, using the degeneracy maps $ H^1(X_0(NQq_1 \ldots q_t),\cO)^2 \to H^1(X_0(NQq_1 \ldots q_tq_{t+1}), \cO)$,  yields the equality of ideals $(\langle A,B\rangle)= (\prod_{q \in Q} (q^2-1)) ( \langle A_r,B_r\rangle_r)$ of $\cO$ finishing  Step 2.

     \vskip .5cm

     Now using  $S(NQ^2)_{\ffrm_Q}^*=M(NQ^2)^{\oplus 2}$ and   invoking   Lemma \ref{lem:cng det self-dual}, Lemma \ref{lem:cng  direct  sum} and Lemma \ref{lem:cng dual},  finishes  the proof of  Proposition \ref{change-of-eta}.

   \end{proof}

  \begin{corollary}\label{eta:cor1}
  Assume that $Q$ contains a prime that divides $N(\rhobar)$ and that $Q$ is of even cardinality.
    We  have the equality $\eta_\lambda(\T)=(\prod_{q \in Q} (q^2-1)\varpi^{m_q})\eta_\lambda(\T^{\st})$.
  \end{corollary}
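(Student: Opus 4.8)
The plan is to deduce Corollary \ref{eta:cor1} from Proposition \ref{change-of-eta} essentially by bookkeeping, translating the statement about lengths of congruence modules of cohomology into the statement about the congruence ideals $\eta_\lambda$ of the Hecke algebras. First I would recall that by Corollary \ref{cor:sat} (applied to the Shimura curve $X_0^Q(N)$, i.e. in the indefinite case since $|Q|$ is even), we have $\card{\cng{\lambda}(R^{\st})} = \card{\cng{\lambda}(M^{\st}(N))}$, and since $R^{\st}\xrightarrow{\sim}\T^{\st}$ by Theorem \ref{thm:patching}(7), this gives $\ell_\cO(\cO/\eta_\lambda(\T^{\st})) = \ell_\cO(\Psi_\lambda(M^{\st}(N)))$. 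Similarly, in the ``full'' case, Theorem \ref{fred} gives that $R\xrightarrow{\sim}\T$ is an isomorphism of complete intersections and $M(NQ^2)$ is free of rank one over $\T$; hence $\Psi_\lambda(M(NQ^2)) \cong \Psi_\lambda(\T) = \cO/\eta_\lambda(\T)$ by Lemma \ref{lem:cng(R)} (the $\lambda$-rank here is $1$, so $\eta_\lambda$ is a genuine ideal and $\cng{\lambda}$ the corresponding quotient). Thus $\ell_\cO(\cO/\eta_\lambda(\T)) = \ell_\cO(\Psi_\lambda(M(NQ^2)))$.

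Next I would simply combine these two identifications with Proposition \ref{change-of-eta}, which gives
\[\ell_\cO(\Psi_\lambda(M(NQ^2))) = \ell_\cO(\Psi_\lambda(M^{\st}(N))) + \sum_{q\in Q}(m_q + \ord_\cO(q^2-1)).\]
Substituting yields $\ell_\cO(\cO/\eta_\lambda(\T)) = \ell_\cO(\cO/\eta_\lambda(\T^{\st})) + \sum_{q\in Q}(m_q + \ord_\cO(q^2-1))$. Since both $\eta_\lambda(\T)$ and $\eta_\lambda(\T^{\st})$ are ideals of the DVR $\cO$, and $\eta_\lambda(\T)\subseteq \eta_\lambda(\T^{\st})$ (this inclusion already follows from the surjection $\T\onto \T^{\st}$ together with the compatibility of the augmentations, or more simply from the functoriality of $\Psi_\lambda$ under surjections of reduced rings; alternatively it is a consequence of Theorem \ref{numerical criterion} comparisons together with the length computation itself), the length identity forces the equality of ideals $\eta_\lambda(\T) = (\prod_{q\in Q}\varpi^{m_q}(q^2-1))\,\eta_\lambda(\T^{\st})$, since $\ord_\cO(\prod_{q\in Q}\varpi^{m_q}(q^2-1)) = \sum_{q\in Q}(m_q + \ord_\cO(q^2-1))$.

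The only genuinely delicate point — and the one I would spell out most carefully — is the passage from ``the orders differ by the expected amount'' to ``the ideals are in the expected ratio''. Over a DVR this is automatic once one knows a containment of ideals in the right direction, so the real content is to pin down that $\eta_\lambda(\T)\subseteq \eta_\lambda(\T^{\st})\cdot(\text{the unit-adjusted power of }\varpi)$, or equivalently to produce the ratio directly. Here I would use that the pairings $\langle\ ,\ \rangle$ and $\langle\ ,\ \rangle_r$ appearing in the proof of Proposition \ref{change-of-eta} are $\cO$-perfect and the bases $\{A,B\}$, $\{A_r,B_r\}$ are honest $\cO$-bases of the relevant free rank-$2$ modules, so that by Lemma \ref{lem:cng det self-dual} the congruence ideals are literally generated by the determinants of the Gram matrices; the relations proved in Steps 1 and 2 of that proof then give the ratio of these determinant ideals on the nose, not merely their lengths. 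Tracing this through (and using $S(NQ^2)_{\ffrm_Q}^* = M(NQ^2)^{\oplus 2}$ together with Lemmas \ref{lem:cng direct sum} and \ref{lem:cng dual} to pass between $M(NQ^2)$ and the full cohomology) gives the sharp ideal-theoretic statement rather than just a statement about orders, which is what makes the corollary follow cleanly.
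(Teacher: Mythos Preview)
Your proof is correct and follows essentially the same approach as the paper: combine Proposition \ref{change-of-eta} with Corollary \ref{cor:sat} (giving $\card{\cng{\lambda}(\T^{\st})}=\card{\cng{\lambda}(M^{\st}(N))}$) and Theorem \ref{fred} (giving $\card{\cng{\lambda}(\T)}=\card{\cng{\lambda}(M(NQ^2))}$ via freeness of $M(NQ^2)$). The only difference is that your final paragraph worrying about translating the length identity into an identity of ideals is unnecessary: in a DVR every nonzero ideal is $(\varpi^n)$ for a unique $n$, so two ideals with the stated length relation are automatically related by multiplication by the correct power of $\varpi$, with no containment argument needed.
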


  \begin{proof} The result follows from Proposition \ref{change-of-eta}, on using Corollary \ref{cor:sat} (which gives that $\card{\cng{\lambda}(\T^{\st})}= \card{\cng{\lambda}(M^{\st}(N))}$) and  Theorem \ref{fred} (which gives that  $M(NQ^2)$ is free of rank one over $\T$ and hence $\card{\cng{\lambda}(\T)}= \card{\cng{\lambda}(M(NQ^2))}$).   
    \end{proof}
  
  \begin{corollary}\label{eta:cor2} $ $
  
  \begin{enumerate}
  
  \item  Assume we are in the definite case (i.e.,  that $|Q|$ is odd), and in the case that $N=1$ assume $N(\rhobar)$ is divisible by at least two primes.  We  have the equality $\eta_\lambda(\T)=(\prod_{q \in Q} (q^2-1)\varpi^{m_q})\eta_\lambda(\T^{\st})$.
  
  \item Assume  we are in the indefinite case  (i.e., that $|Q|$ is even),  and in the case that $(Q,N(\rhobar))=1$ that $N(\rhobar)$ is divisible by at least two primes.
   We  have the equality $\eta_\lambda(\T)=(\prod_{q \in Q} (q^2-1)\varpi^{m_q})\eta_\lambda(\T^{\st})$.

  \end{enumerate}

  \end{corollary}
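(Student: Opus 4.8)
The plan is to derive both parts from Corollary~\ref{eta:cor1} by using the Jacquet--Langlands correspondence to move a prime dividing $N(\rhobar)$ between the quaternion discriminant and the complementary tame level $N$, so as to land in a situation where Corollary~\ref{eta:cor1} applies, while tracking how both sides of the identity change. The form $f$ stays fixed throughout (moving a prime between $N$ and $Q$ preserves the level $NQ$ of $f$). The engine is a \emph{transfer lemma}: if $u\mid N$ is a prime (necessarily $u\mid N(\rhobar)$, so $\rhobar$ is ramified --- hence Steinberg --- at $u$) and we replace the admissible pair $(N,Q)$ by $(N/u,Qu)$ (again admissible, with $f$ new of level $NQ$ and new at every prime of $Qu$), then: (i) $\eta_\lambda(\T^{\st})$ is unchanged; (ii) $\eta_\lambda(\T)$ is multiplied by $(u^2-1)$; (iii) $m_u=0$. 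Granting this, since $u^2-1$ is a non-zero-divisor in $\cO$ and $\prod_{q\in Qu}(q^2-1)\varpi^{m_q}=(u^2-1)\prod_{q\in Q}(q^2-1)\varpi^{m_q}$ by (iii), the identity $\eta_\lambda(\T)=\big(\prod_{q\in Q}(q^2-1)\varpi^{m_q}\big)\eta_\lambda(\T^{\st})$ holds for $(N,Q)$ if and only if it holds for $(N/u,Qu)$; read in the other direction this also lets one move a prime $t\in Q$ with $t\mid N(\rhobar)$ into the tame level.

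For (i), by Theorem~\ref{thm:patching}(7) both occurrences of $\T^{\st}$ are the corresponding unframed global Steinberg deformation rings; since $u$ is ramified for $\rhobar$ one has $R^{\St}_u=R^{\min}_u$ (definition of the Steinberg quotient at a ramified prime, \S\ref{Subsec:LocDefRings}), the set of bad primes $S=\{\,\ell\mid N(\rhobar)pQ\,\}$ is also unchanged, and so the two deformation problems coincide and carry the same augmentation $\lambda_f$. For (ii), $\T$ acts faithfully on $H^1(X_0(NQ^2),\cO)_{\ffrm_Q}$ and its analogue on $H^1(X_0(NQ^2u),\cO)$ (note $N'Q'^2=NQ^2u$); by Theorem~\ref{fred} the relevant modules $M(-)$ are free of rank one, so via Lemmas~\ref{lem:cng det self-dual}, \ref{lem:cng direct sum}, \ref{lem:cng dual} the ideal $\eta_\lambda(\T)$ is generated by $\langle A,B\rangle$ for an $\cO$-basis $A,B$ of the rank-two module $H^1(X_0(NQ^2),\cO)_{\ffrm_Q}[\ker\lambda]$, and likewise at level $NQ^2u$. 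Since $u\nmid Q$, the oldform $f^Q$ is new at $u$, so passing from level $NQ^2$ to $NQ^2u$ is exactly a level-raising step of the type in Step~2 of the proof of Proposition~\ref{change-of-eta}, applied to the single prime $u$: the degeneracy map $\delta_0-u^{-1}U_u\delta_1$ restricts to an isomorphism on the $[\ker\lambda]$-parts whose composite with the dual degeneracy map is a unit times multiplication by $u^2-1$. Finally (iii) is immediate: $u\mid N(\rhobar)$ means $\rhobar$ is ramified at $u$, so $\rho_f|_{I_u}$ is already nontrivial modulo $\varpi$.

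The conclusion is then a short case analysis, reducing to a pair whose discriminant has even cardinality and contains a prime dividing $N(\rhobar)$, so that Corollary~\ref{eta:cor1} applies. In the definite case ($|Q|$ odd): if $N>1$, move any prime $u\mid N$ into the discriminant, getting $(N/u,Qu)$ with $|Qu|$ even and $u\mid N(\rhobar)$ in $Qu$; if $N=1$, then $N(\rhobar)\mid Q$, and by hypothesis $N(\rhobar)$ has two prime factors $t_1,t_2$, so moving $t_1$ into the tame level gives $(t_1,Q/t_1)$ with $|Q/t_1|$ even and $t_2\mid N(\rhobar)$ in $Q/t_1$. In the indefinite case ($|Q|$ even) with $(Q,N(\rhobar))=1$: then $N=N(\rhobar)$, which by hypothesis has two prime factors $u_1,u_2$, and moving both into the discriminant gives $(N/(u_1u_2),Qu_1u_2)$ with even cardinality and $u_1\mid N(\rhobar)$ in the discriminant. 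In each case Corollary~\ref{eta:cor1} supplies the identity for the modified pair and the transfer lemma carries it back to $(N,Q)$; the intermediate pair $(N/u_1,Qu_1)$ in the last case is of definite type, but the transfer lemma is blind to this, since it involves only the deformation rings $R^{\st}(-)$ and the modular curves $X_0(-)$, never the quaternion algebra.

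The step I expect to be the real work is part (ii) of the transfer lemma: one must check that the level-raising computation of Step~2 of Proposition~\ref{change-of-eta} goes through with base level $NQ^2$ in place of $NQ$ and only the single prime $u\mid N$ added --- in particular that after localizing at the maximal ideals $\ffrm_Q$ and $\ffrm_{Qu}$ (which differ at $U_u$) the map $\delta_0-u^{-1}U_u\delta_1$ is injective with saturated rank-two image on the $[\ker\lambda]$-part (using newness of $f^Q$ at $u$, \cite[Lemma~2.5]{Wiles}, and residual irreducibility of $\rhobar$) and that its composite with the dual degeneracy map is a unit multiple of $u^2-1$. Everything else is bookkeeping: confirming that each modified pair stays admissible in the sense of \S\ref{sec:trivial primes}, carries the augmentation $\lambda_f$, and meets the parity and divisibility conditions required to invoke Corollary~\ref{eta:cor1}.
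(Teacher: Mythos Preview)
Your proposal is correct and follows the same reduction strategy as the paper: use Jacquet--Langlands to move a prime of $N(\rhobar)$ between the tame level $N$ and the discriminant $Q$ so as to land in a pair $(N',Q')$ with $|Q'|$ even and $(Q',N(\rhobar))>1$, and then invoke Corollary~\ref{eta:cor1}. Your case analysis is identical to the paper's.

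Where you go beyond the paper is in making the ``transfer lemma'' explicit. The paper's proof only records the Jacquet--Langlands identification $\T^{\st}_{Q,N}=\T^{\st}_{Q',N'}$ and leaves the rest to the reader; but as you correctly observe, applying Corollary~\ref{eta:cor1} to $(N',Q')$ yields the identity with $\T$ at level $N'Q'^2=NQ^2u$ (or $NQ^2u_1u_2$) and with the product over $Q'$, not over $Q$. To carry this back to $(N,Q)$ one genuinely needs your (ii), that $\eta_\lambda(\T)$ changes by the factor $u^2-1$ under the level-raising step $NQ^2\leadsto NQ^2u$ (which is exactly one instance of Step~2 of Proposition~\ref{change-of-eta}, using that $f^Q$ is new at $u$), together with your (iii), that $m_u=0$ since $\rhobar$ is ramified at $u$. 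Your argument for (i) via $R_u^{\St}=R_u^{\min}$ and Theorem~\ref{thm:patching}(7) is a clean alternative to the paper's direct JL identification of the Hecke algebras; either route works. The caution you flag about the intermediate definite pair in case~2 is well taken and correctly resolved: the transfer lemma only involves $R^{\st}$ and the modular curves $X_0(-)$, so parity of $|Q|$ plays no role there.
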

  
  \begin{proof}
 
  Just for this proof, in which we vary $Q$ and $N$, we denote by $ \T_{Q,N}^{\st}$ the  Hecke algebra  previously denoted $\T^{\St}$ that  acts faithfully  on $H^1(X_0^Q(N),\cO)_{\ffrm_Q'}$.  We reduce to the case proved in Corollary \ref{eta:cor1}  by using  the  Jacquet-Langlands correspondence as follows. 
 
 \begin{enumerate}
 \item   Assume that $|Q|$ is odd 
 \begin{itemize}
 \item  If $N=1$,   and $t|(Q,N(\rhobar))$, by the Jacquet-Langlands isomorphism we have that $\T_{Q,1}^{\st}=\T_{Q \backslash \{t\} ,t}^{\st}$, and $(N(\rhobar),Q/t)>1$ by assumption.
 
 \item If $N\neq 1$, and $t|N$ for a prime $t$,  by the Jacquet-Langlands isomorphism we have that $\T_{Q,N}^\st=\T_{Q\cup \{t\},N/t}^{\st}$.
 \end{itemize}

\item  Assume that $Q$ is of even   cardinality and that $Q$ is coprime to $N(\rhobar)$ (as the case when $Q$ and $N(\rhobar)$ are not  coprime and $Q$ is of even order  is covered by Proposition \ref{change-of-eta}).   Thus $N=N(\rhobar)$,  by assumption $N$ is divisible by 2 primes $t_1,t_2$,  the Jacquet-Langlands isomorphism gives  that $\T_{Q,N}^\st=\T_{Q\cup \{t_1,t_2\}, N/t_1t_2}^{\st}$.
   \end{enumerate}
   
  \end{proof}

  \begin{remark} $ $
  \begin{itemize}
  
  \item  We see from Corollary \ref{eta:cor1} that  the change of the congruence module when dropping  trivial primes from  the discriminant is qualitatively the same  as when dropping primes that are non-trivial, in contrast to the change of cotangent spaces in Corollary \ref{cor:main}.
  
\item   One expects the general case of  Proposition \ref{change-of-eta},  without assuming  that $Q$ contains a prime  $t$ that divides $N(\rhobar)$ and   $Q$ is of even  cardinality,  to be true,   but we do not know how to  modify its  proof  to yield this.
  One may remark (at  least in the  indefinite case, i.e.,  that $Q$ is of even cardinality), that Corollary \ref{eta:cor2} can be established without the conditions assumed in it if we knew that the top exact sequence:
  
  \begin{equation}
\begin{CD}
0 @> >> \Hom({\cal X}(J,q')_{{\ffrm},\cO},\cO(1)) @> >>  {\rm Ta}_p(J)_{\ffrm,\cO}
 @> >>   {\cal X}(J,q')_{{\ffrm},\cO}   @>  >> 0
\\ 
&& @V VV @V VV @V VV 
\\
0 @> >>{\rm Hom}({\cal X}(A,q')_{{\ffrm},\cO},\cO(1))  @> >> {\rm Ta}_p(A)_{\ffrm,\cO}
 @> >>  {\cal X}(A,q')_{{\ffrm},\cO}\ @>  >> 0
\end{CD}
\end{equation} 
splits. This would imply that the map of component groups at $\phi(J,q')_\ffrm \ra \phi(A,q')_\ffrm$ is surjective. Further if $q'$ is not 1 mod $p$,  it is easy to see that the top exact sequence indeed splits. On the other hand if $q'$ is a trivial prime, then not knowing this,  we  instead rely on the trick
in \cite{RiTa} which uses the assumption that there is a prime $t$ at which $A$ has trivial component group, and  which furthermore divides the discriminant of the quaternion algebra which gives rise  to $J$.

\item  In forthcoming work we hope to be able to compute the change in congruence modules as in Proposition \ref{change-of-eta} without its  superfluous assumptions, by  computing  the Wiles defect without using the work of \cite{RiTa}.\end{itemize}
  \end{remark}

\section{Computing the Wiles defect}\label{sec:defect}

We can now prove our main theorem:
\begin{theorem}\label{mc}
Let $N$ and $Q$ be relatively prime squarefree integers. Let $p>2$ be a prime not dividing $NQ$ and let $E/\Q_p$ be a finite extension with ring of integers $\cO$, uniformizer $\varpi$ and residue field $k$. Let $\rho_f:G_{\Q}\to \GL_2(\cO)$ be a Galois representation arising from a newform $f\in S_2(\Gamma_0(NQ))$, and let $\rhobar_f:G_{\Q}\to \GL_2(k)$ be the residual representation. Assume that $\rhobar_f$ is irreducible and $N|N(\rhobar_f)$.

Let $R^{\st}$ be the Galois deformation ring of $\rhobar_f$ defined in section \ref{sec_deformation_theory}, parameterizing lifts of $\rhobar_f$ of fixed determinant which are Steinberg at each prime dividing $Q$, finite flat at $p$ and minimal at all other primes.

Let $D$ be the quaternion algebra with discriminant $Q$ and let $\Gamma_0^Q(N)$ be the congruence subgroup for $D$ defined in section \ref{hecke}. Let $\T^Q(N)$ and $S^Q(\Gamma_0^Q(N))$ be the Hecke algebra and cohomological Hecke module at level $\Gamma^Q_0(N)$ and let $\ffrm\subseteq \T^Q(N)$ be the maximal ideal corresponding to $\ffrm$. Let $\T^{\st} = \T^Q(N)_\ffrm$ and let $\lambda:\T^{\st}\to \cO$ be the augmentation corresponding to $f$.

Assume that at least one of the following holds:
\begin{enumerate}
	\item $Q$ is a product of an even number of primes (i.e. $D$ is indefinite),and  $(N(\rhobar),Q)>1$;
	\item    $Q$ is a product of an  odd number of primes (i.e. $D$ is definite), and $N>1$;
	\item $N(\rhobar)$ is divisible by at least two primes.
\end{enumerate}
Then the Wiles defects (defined in Definition \ref{WilesDefect}) of $\T^{\st}$ and $S^Q(\Gamma_0^Q(N))$ with respect to the map $R^{\st}\onto \T^{\st}$ and the augmentation $\lambda$ are
\[
\delta_\lambda(\T^{\St}) = \delta_\lambda(S^Q(\Gamma_0^Q(N))_{\ffrm_Q'}) =  \delta_\lambda(M^\st(N))=\sum_{q|Q}\frac{2n_q}{e}
\]
where $e$ is the ramification index of $\cO$ and for each $q|Q$, $n_q$ is the largest integer for which $\rho_f|_{G_{\Q_q}} \pmod{\varpi^{n_q}}$ is unramified and $\rho_f(\Frob_q)\equiv \pm {\rm Id} \pmod{\varpi^{n_q}}$.
\end{theorem}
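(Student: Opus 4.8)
The plan is to reduce everything to a single length identity for $\T^{\St}$ and then evaluate it by comparison with the full, complete–intersection Hecke algebra $\T$. First I would invoke Corollary~\ref{cor:sat}, which already gives $\delta_{\lambda,\T^{\St}}(\T^{\St}) = \delta_{\lambda,\T^{\St}}(M^{\St}(N)) = \delta_{\lambda,\T^{\St}}(S^Q(\Gamma_0^Q(N))_{\ffrm_Q'})$, so that only $\delta_{\lambda,\T^{\St}}(\T^{\St})$ remains to be computed. Since $\T^{\St}\cong R^{\St}$ is finite flat over $\cO$, reduced and $\varpi$-torsion free by Theorem~\ref{thm:patching}, this defect is defined, and unwinding Definition~\ref{WilesDefect} with $d=\rank_\lambda\T^{\St}=1$ (using $\card{\cO/p}=\card{k}^e$, so that $\log\card{\Phi}/\log\card{\cO/p}=\ell_\cO(\Phi)/e$) reduces the theorem to the claim
\[
\ell_\cO(\Phi_{\lambda,\T^{\St}}) - \ell_\cO(\cng{\lambda}(\T^{\St})) = \sum_{q|Q} 2n_q .
\]

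I would compute the two terms on the left separately, each relative to its counterpart for $\T$. For the cotangent term, the relative cotangent space sits in the short exact sequence $0\to\Phi_{\lambda,\T/\T^{\St}}\to\Phi_{\lambda,\T}\to\Phi_{\lambda,\T^{\St}}\to 0$ of Definition~\ref{relative}, and Corollary~\ref{cor:main} evaluates $\ell_\cO(\Phi_{\lambda,\T/\T^{\St}}) = \sum_{q|Q}(m_q+\ord_\cO(q^2-1)-2n_q)$, so that $\ell_\cO(\Phi_{\lambda,\T^{\St}}) = \ell_\cO(\Phi_{\lambda,\T}) - \sum_{q|Q}(m_q+\ord_\cO(q^2-1)-2n_q)$. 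For the congruence term, $\cng{\lambda}(\T)\cong\cO/\eta_\lambda(\T)$ and $\cng{\lambda}(\T^{\St})\cong\cO/\eta_\lambda(\T^{\St})$ (Lemma~\ref{lem:cng(R)}, both rings having $\lambda$-rank $1$ over themselves), and whichever of Corollary~\ref{eta:cor1} or Corollary~\ref{eta:cor2} applies under the relevant one of the hypotheses (1)--(3) — after, if necessary, using a Jacquet--Langlands isomorphism to move a prime dividing $N(\rhobar)$ in or out of the discriminant of $D$, exactly as in the proof of Corollary~\ref{eta:cor2} — gives $\eta_\lambda(\T) = \big(\prod_{q|Q}(q^2-1)\varpi^{m_q}\big)\eta_\lambda(\T^{\St})$, hence $\ell_\cO(\cng{\lambda}(\T^{\St})) = \ell_\cO(\cng{\lambda}(\T)) - \sum_{q|Q}(\ord_\cO(q^2-1)+m_q)$.

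The final step is to observe that $\T$ is a complete intersection and free of rank $1$ over itself by Theorem~\ref{fred}, so Theorem~\ref{numerical criterion}, applied to the identity map $\T\to\T$ and $\lambda$, forces $\ell_\cO(\Phi_{\lambda,\T}) = \ell_\cO(\cng{\lambda}(\T))$. Subtracting the two comparisons above, this common quantity cancels and leaves exactly $\sum_{q|Q}2n_q$, as needed; dividing by $e$ and reading back through Corollary~\ref{cor:sat} proves all three asserted equalities. I would close by checking that the description of $n_q$ in the statement agrees with Definition~\ref{invariants}: because $\epsilon$ is unramified at $q$ with $\epsilon(\Frob_q)=q$, the mod $\varpi^n$ reduction of $\rho_\lambda(G_q)$ has trivial projective image precisely when $\rho_f|_{G_{\Q_q}}\bmod\varpi^n$ is unramified with $\rho_f(\Frob_q)\equiv\pm\mathrm{Id}\pmod{\varpi^n}$.

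Essentially all of the substance is already packaged in Corollaries~\ref{cor:main}, \ref{cor:sat}, \ref{eta:cor1} and~\ref{eta:cor2} together with Theorems~\ref{fred} and~\ref{thm:patching}, so the argument above is mostly bookkeeping with lengths. The one step I expect to require genuine care is the case analysis showing that each of the alternatives (1), (2), (3) — possibly after a Jacquet--Langlands move — does land inside the hypotheses of Corollary~\ref{eta:cor1} or Corollary~\ref{eta:cor2}: this is exactly where the parity of $|Q|$, the conditions $(N(\rhobar),Q)>1$ or $N>1$, and the ``$N(\rhobar)$ divisible by at least two primes'' hypothesis are used, and it is the only point at which the proof is not a purely formal manipulation.
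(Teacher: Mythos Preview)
Your proposal is correct and follows essentially the same approach as the paper: invoke Corollary~\ref{cor:sat} for the first two equalities, then compute $\delta_\lambda(\T^{\St})$ by comparing with the complete intersection $\T$ via Corollary~\ref{cor:main} (change in cotangent spaces), Corollaries~\ref{eta:cor1}/\ref{eta:cor2} (change in congruence ideals), and Theorems~\ref{fred} and~\ref{numerical criterion} (vanishing defect for $\T$). Your extra remarks on matching the hypotheses (1)--(3) to the corollaries and on reconciling the two descriptions of $n_q$ are correct but already absorbed into Corollary~\ref{eta:cor2} and Definition~\ref{invariants} respectively.
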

  
\begin{proof}
  
The  first  two equalities $\delta_\lambda(\T^{\St}) = \delta_\lambda(S^Q(\Gamma_0^Q(N))_{\ffrm_Q'}) =  \delta_\lambda(M^\st(N))$  follow from Corollary \ref{cor:sat}. For the remaining equality \[\delta_\lambda(\T^\st) = \sum_{q|Q}\frac{2n_q}{e},
\] we study the map $R^{\St} \ra \T^{\St}$, with augmentation $\lambda=\lambda_f$,  in relation  to the    isomorphism  $R \ra \T$  of complete intersections of   Theorem \ref{fred},  that arises when we relax the ramification conditions at $Q$, and the  induced augmentation  $\lambda:\T \to \cO$ as in \S \ref{hecke}.    From Theorem \ref{fred}   and Theorem \ref{numerical criterion} we deduce that   $\card{ \Phi_{\lambda}(R)}=\card{ \Phi_{\lambda}(\T)}=\card{ \cO/\eta_\lambda(\T)}$. 
   Combining this with:
  
  \begin{itemize}
  
  \item 
         Corollary \ref{cor:main} which gives 
         
         $$\ell_\cO(\Phi_{\lambda,\T/\T^{\St}})=\sum_{q \in Q} (m_q+\ord_\cO(q^2-1)-2n_q) ;$$

      \item Corollary  \ref{eta:cor2}  which gives   $\eta_\lambda(\T)=(\prod_{q \in Q} (q^2-1)\varpi^{m_q})\eta_\lambda(\T^{\st})$;
       \end{itemize}
       proves our theorem.
   
  \end{proof}

  \begin{remark}
 The Wiles defect in the situation studied in this paper  (say when $\cO=\Z_p$) is always an even number.  It would be interesting to have a conceptual explanation for this. We hope to return to this question in a future work which will give a computation of the Wiles defect that has a stronger local-global flavor.
\end{remark}

\section{Wiles defect for  semistable elliptic curves and tame regulators}\label{semistable}

The Wiles defect  associated to an augmentation $\lambda_f:\T^\St \ra \cO$ is a global invariant, and as defined seems  hard to compute 
(it is an ``$f$-extrinsic'' invariant in Mazur's suggestive terminology, see \cite{Mazur-Hida}), while the $n_q$'s  which our theorem relates the Wiles defect of $\lambda_f$ to are local invariants of the local Galois representation $\rho_f|_{G_q}$ (and are ``$f$-intrinsic'' in the sense of Mazur).

In this section we illustrate how Theorem \ref{mc} makes the Wiles defect rather easily computable in many examples. 
Let $\cE$ be a semistable elliptic curve over $\Q$ of conductor $N$. Let $p$ be a prime such that the mod $p$ representation $\rhobar$  arising from $\cE$ is irreducible and assume $(N,p)=1$.

\subsection{Inertial invariants $n_q$ for Tate elliptic curves over $\Q_q$}

Let $\rho_{\cE,p}: G_\Q \ra \GL_2(\Z_p)$ be the representation on the $p$-adic Tate module of $\cE$. Then the local invariant $n_q$ at $q|N$ is the maximal integer such that $\rho_{\cE,p}$ mod $p^{n_q}$ is unramified and sends $\Frob_q$ to $\pm {\rm Id}$. The quantity $n_q$ may be computed using Tate's uniformization of $\cE_{\Q_q}$ and Kummer theory. 

Write $v_q\colon\Q_q^\times\to\Z$ and $|\cdot|_q\colon\Q_q^\times\to q^\Z$ for the normalized valuation and absolute value at $q$. Let $q_{\cE_{\Q_q}}\in\Q_q^\times$  be the Tate period of $\cE_{\Q_q}$, define $m_q$ and $t_q$ so that $p^{m_q}|| v_q(q_{\cE_{\Q_q}})$ and $p^{t_q}||q-1$, and set $k_q={\rm min}(m_q,t_q)$ and write $\widetilde{q_{\cE_{\Q_q}}}=q_{\cE_{\Q_q}}\cdot |q_{\cE_{\Q_q}}|_q $ for the unit part of the Tate period. Consider the map $\log_{q,p^{k_q}}:(\Z/q\Z)^* \onto\Z/p^{k_q}\Z$, and denote for an element $x \in (\Z/q\Z)^*$ by $\ind_{q,p^{k_q}}(x)$ the index of the subgroup generated by $\log_{q,p^{k_q}}(x)$ in  $\Z/p^{k_q}\Z$ (tame regulator \'a la \cite{MT}).

\begin{proposition}\label{prop:MT}
   We have the formula $n_q= \log_p \big(\ind_{q,p^{k_q}}(\widetilde{q_{\cE_{\Q_q}} })\big)$ .
\end{proposition}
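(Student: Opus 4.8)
The strategy is to make everything explicit via Tate's uniformization. Over $\Q_q$, the curve $\cE_{\Q_q}$ is isomorphic to $E_{q_\cE}=\overline{\Q_q}^\times/q_\cE^{\Z}$ where $q_\cE=q_{\cE_{\Q_q}}$, and for each $n$ the $p^n$-torsion is generated (as a group) by $\zeta_{p^n}$ and $q_\cE^{1/p^n}$. Thus with respect to a suitable basis, $\rho_{\cE,p}|_{G_q}$ has the shape $\left(\begin{array}{cc}\epsilon&\ast\\0&1\end{array}\right)$, where the upper-left entry is cyclotomic, the lower-right is trivial, and the extension class ``$\ast$'' is the Kummer cocycle $\kappa$ of $q_\cE$, i.e. $\sigma\mapsto \sigma(q_\cE^{1/p^n})/q_\cE^{1/p^n}$ valued in $\mu_{p^n}$. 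First I would record this and reduce the computation of $n_q$ to two conditions on the mod $p^n$ reduction: (i) $\rho_{\cE,p}\bmod p^n$ must be unramified, and (ii) $\rho_{\cE,p}(\Frob_q)\equiv\pm\mathrm{Id}\bmod p^n$. Since $\det=\epsilon$ and $\epsilon(\Frob_q)=1$ already, condition (ii) for the diagonal part forces $\epsilon\equiv 1\bmod p^n$, i.e. $n\le t_q=v_p(q-1)$; combined with condition (i) forcing the cocycle $\kappa$ to be trivial mod $p^n$ on all of $I_q$ (and then the $\pm\mathrm{Id}$ condition on $\Frob_q$ handling the remaining unramified part of $\kappa$), we see $n_q\le k_q=\min(m_q,t_q)$ is automatic, and the exact value of $n_q$ is governed by when the Kummer class of $q_\cE$ dies in $H^1(G_q,\mu_{p^{n}})$ modulo the unramified part.

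\textbf{Key steps.} (1) Analyze the restriction $\kappa|_{I_q}$ via local class field theory / Kummer theory: $\kappa\bmod p^n$ is trivial on $I_q$ exactly when $q_\cE\in (\Q_q^\times)^{p^n}\cdot\langle\text{units giving unramified ext.}\rangle$, which by the structure $\Q_q^\times\cong q^{\Z}\times\Z_q^\times\cong q^{\Z}\times\mu_{q-1}\times(1+q\Z_q)$ translates into: $p^n\mid v_q(q_\cE)$ (i.e. $n\le m_q$) together with the unit part $\widetilde{q_\cE}=q_\cE\cdot|q_\cE|_q$ lying in the appropriate subgroup. (2) Split $\widetilde{q_\cE}=\widetilde{q_\cE}^{\mathrm{teich}}\cdot\widetilde{q_\cE}^{(1)}$ into its Teichmüller part in $\mu_{q-1}$ and its $1$-unit part in $1+q\Z_q$; the $1$-unit part contributes only to the \emph{unramified} direction after raising to $p^n$ for the relevant range (since $1+q\Z_q$ is a pro-$q$ group and $q\ne p$, so $\kappa$ evaluated there is captured by an unramified twist), and hence is irrelevant for the ramification obstruction — only the Teichmüller/$\mu_{q-1}$ part matters, which is detected by $(\Z/q\Z)^\times$. (3) Unwind the definition of $\log_{q,p^{k_q}}\colon(\Z/q\Z)^\times\onto\Z/p^{k_q}\Z$ (projection to the $p$-primary quotient of the cyclic group $(\Z/q\Z)^\times$, identified with $\Z/p^{k_q}\Z$ since $p^{k_q}\| q-1$) and of $\ind_{q,p^{k_q}}(x)$ as the index of $\langle\log_{q,p^{k_q}}(x)\rangle$; then show that the largest $n$ for which the image of $\widetilde{q_\cE}$ in $\Z/p^{k_q}\Z$ lies in $p^n\Z/p^{k_q}\Z$ — which is precisely what kills the ramified part of $\kappa\bmod p^n$ and simultaneously arranges $\Frob_q\mapsto\pm\mathrm{Id}$ on the unramified residue — equals $v_p\big(\ind_{q,p^{k_q}}(\widetilde{q_\cE})\big)=\log_p\big(\ind_{q,p^{k_q}}(\widetilde{q_\cE})\big)$. (4) Finally cross-check the ``$\pm\mathrm{Id}$'' (rather than just ``$\mathrm{Id}$'') subtlety: a sign ambiguity can appear from the quadratic unramified twist distinguishing split vs. non-split multiplicative reduction, and one checks this is already absorbed because $\rho_{\cE,p}(\Frob_q)$ being scalar mod $p^n$ only requires the off-diagonal/unramified Kummer contribution to vanish up to sign, which matches the index computation.

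\textbf{Main obstacle.} The delicate point is step (2)–(3): correctly separating the contributions of the valuation $v_q(q_\cE)$, the Teichmüller part, and the $1$-unit part of $\widetilde{q_\cE}$ to the \emph{ramified} versus \emph{unramified} parts of the reduced Kummer cocycle, and verifying that the ``unramified and $\pm\mathrm{Id}$ on Frobenius'' condition is exactly equivalent to divisibility of $\log_{q,p^{k_q}}(\widetilde{q_\cE})$ by $p^n$ in $\Z/p^{k_q}\Z$. One must be careful that $\min(m_q,t_q)$ enters as the natural ceiling: if $n>m_q$ then $p^n\nmid v_q(q_\cE)$ so ramification cannot be killed, and if $n>t_q$ then $\epsilon\not\equiv 1\bmod p^n$ so the diagonal is not scalar; within the range $n\le k_q$ the remaining obstruction is purely the index of the tame regulator. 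Properly bookkeeping these three thresholds and the sign, and matching them to the statement $n_q=\log_p(\ind_{q,p^{k_q}}(\widetilde{q_\cE}))$ (with the convention $\ind=p^{k_q}$, hence $n_q=k_q$, when $\widetilde{q_\cE}$ is trivial in the $p$-part), is where the real work lies; the rest is a routine unwinding of Kummer theory for $\Q_q^\times$.
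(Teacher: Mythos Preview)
Your approach is correct and is exactly what the paper's one-line proof (``This follows from Kummer theory and Tate's uniformization of $\cE$ at $q$'') has in mind. One minor clean-up: triviality of $\kappa|_{I_q}$ modulo $p^n$ already follows from $p^n\mid v_q(q_\cE)$ alone (every element of $\Z_q^\times$ gives an unramified Kummer class since $p\ne q$), so the condition on $\widetilde{q_\cE}$ belongs entirely to the Frobenius step rather than the inertial one; with that adjustment, your bookkeeping of the thresholds $m_q$, $t_q$, and the tame index is precisely the intended argument.
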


\begin{proof}
 This   follows from Kummer theory and Tate's uniformization of $\cE$ at $q$.
\end{proof}

 By the modularity theorem, we know that $\cE$ arises from $f \in S_2(\Gamma_0(N))$.   Assume that  $N(\rhobar)$ is not prime.
 Let $Q$ be a set of primes which divide $N$,  such that $N$ factors as $N=N'Q$ with $N'|N(\rhobar)$, and  let $\T^\St=\T^Q(N')$ and consider the augmentation $\lambda_{\cE}: \T^\St \ra \Z_p$ arising from $f$. (Our future work will  address removing the condition that $N(\rhobar)$ is not prime, as it will avoid dependence on the results of \cite{RiTa}. )
\begin{corollary}\label{cor:MT}
  
   $$\delta_{\lambda_{\cE}}(\T^\St)=\Sigma_{q \in Q} 2  \log_p( \ind_{q,p^{k_q}}(\widetilde{q_{\cE_{\Q_q}} })).$$

\end{corollary}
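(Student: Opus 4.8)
The plan is to combine the two displayed equations that have just been proved, namely Theorem~\ref{mc} applied to the setup at hand, with the local formula for $n_q$ from Proposition~\ref{prop:MT}. First I would observe that the hypotheses guarantee we are in a case where Theorem~\ref{mc} applies: $\cE$ is a semistable elliptic curve over $\Q$ of conductor $N=N'Q$, so $f\in S_2(\Gamma_0(N))$ is a newform of squarefree level with trivial nebentypus, and $\rhobar=\rhobar_f$ is irreducible by assumption; moreover $N'|N(\rhobar)$, and since $N(\rhobar)$ is assumed not prime it is divisible by at least two primes, so condition (3) in the statement of Theorem~\ref{mc} is satisfied. Taking $\cO=\Z_p$ here, the ramification index is $e=1$, and Theorem~\ref{mc} yields directly
\[
\delta_{\lambda_{\cE},\T^\St}(\T^\St)=\sum_{q|Q}\frac{2n_q}{e}=\sum_{q\in Q}2n_q,
\]
where $n_q$ is the largest integer for which $\rho_f|_{G_{\Q_q}}\bmod p^{n_q}$ is unramified with $\rho_f(\Frob_q)\equiv\pm\Id\bmod p^{n_q}$.

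Next I would identify this local invariant $n_q$ with the quantity appearing in Proposition~\ref{prop:MT}. The representation $\rho_f$ attached to $f$ by Eichler--Shimura coincides (up to the fixed embedding $\iota$) with $\rho_{\cE,p}$ on the $p$-adic Tate module of $\cE$, since $\cE$ is the abelian variety $A_f$ attached to the rational newform $f$; in particular the notion of $n_q$ in Definition~\ref{invariants} / Theorem~\ref{mc} agrees with the $n_q$ defined at the start of \S\ref{semistable} in terms of $\rho_{\cE,p}$. Proposition~\ref{prop:MT} then computes exactly this $n_q$ as $\log_p\!\big(\ind_{q,p^{k_q}}(\widetilde{q_{\cE_{\Q_q}}})\big)$, using Tate uniformization of $\cE_{\Q_q}$ and Kummer theory. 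Substituting this expression for each $q\in Q$ into the sum above gives
\[
\delta_{\lambda_{\cE}}(\T^\St)=\sum_{q\in Q}2\,\log_p\!\big(\ind_{q,p^{k_q}}(\widetilde{q_{\cE_{\Q_q}}})\big),
\]
which is precisely the asserted formula.

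Since essentially all of the content is quoted from earlier results, the only real points to be careful about are bookkeeping ones: checking that the hypotheses of Theorem~\ref{mc} are met under the stated assumption that $N(\rhobar)$ is not prime (so that clause (3) applies regardless of the parity of $|Q|$ or whether $(N(\rhobar),Q)>1$), and checking that the augmentation $\lambda_{\cE}$ of $\T^\St=\T^Q(N')$ arising from $\cE$ is the same as the augmentation $\lambda$ of Theorem~\ref{mc} arising from the newform $f$ --- this is immediate from the modularity of $\cE$ and the Jacquet--Langlands correspondence described in \S\ref{hecke}. There is no serious obstacle here; the corollary is a direct specialization of the main theorem combined with the local computation of Proposition~\ref{prop:MT}, and the proof is genuinely the one-line deduction indicated. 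The only place one might pause is to note that $n_q\geq 1$ for all $q\in Q$ (equivalently $q$ is a trivial prime for $\rhobar$), which is forced by the Steinberg condition at $q$ together with $q\nmid N(\rhobar)$, so that each summand is genuinely positive and the Wiles defect is nonzero exactly when $Q\neq\es$.
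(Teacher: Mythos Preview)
Your argument is correct and follows exactly the paper's approach: the paper's proof is the single line ``This follows from Theorem~\ref{mc}'', and you have simply spelled out the bookkeeping (checking hypothesis (3) via the assumption that $N(\rhobar)$ is not prime, taking $\cO=\Z_p$ so $e=1$, and substituting Proposition~\ref{prop:MT} for each $n_q$).

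One caution about your final parenthetical remark: it is not true that $n_q\ge 1$ for all $q\in Q$. The setup allows primes $q\in Q$ that divide $N(\rhobar)$ (only $N'\mid N(\rhobar)\mid N'Q$ is required), and even for $q\nmid N(\rhobar)$ the Steinberg condition does not force $q$ to be a trivial prime for $\rhobar$; one can have $n_q=0$. Thus the Wiles defect need not be positive whenever $Q\neq\es$. This does not affect the proof of the corollary, since the formula is valid termwise regardless of whether individual $n_q$ vanish, but the closing sentence should be dropped.
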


\begin{proof}
 This follows  from Theorem \ref{mc}.
\end{proof}

\begin{remark}

Let $q$ be a prime of multiplicative reduction of $\cE$ with Tate period $q_{\cE_{\Q_q}}$, and let $j_\cE$ denote the $j$-invariant of $\cE$. It takes some effort to compute $q_{\cE_{\Q_q}}$ to some non-trivial precision. However, it is rather elementary to compute its unit part from $j_\cE$. For this recall from \cite[Thm.~V.3.1(b)]{Silverman2} that $v_q(q_{\cE_{\Q_q}})=-v_q(j_\cE)>0$ and that $j_\cE=\frac1{q_{\cE_{\Q_q}}}+744+O(q)$. Hence $j_\cE |j_\cE|= |j_\cE| \frac{1}{q_{\cE_{\Q_q}}}+O(q)$ is a unit, and we deduce
\[\widetilde{q_{\cE_{\Q_q}}}\equiv j_\cE^{-1} |j_\cE|_q^{-1} \in (\Z_q/q\Z_q)^\times = \F_q^\times.\]

\end{remark}

\subsection{Numerical computations}

It seemed natural to carry out computations for the example given in the introduction, where the initial level is $\Gamma_0(11)$, the prime is $p=3$ and a trivial prime mod $p$ occurs at $q=193$. However, as indicated in Remark~\ref{Remark-OnComputations}, this leads to a form $f$ in level $11\cdot 193$ with $[K_f:\Q]=46$.\footnote{
There are $8$ eigenspaces (over $\Q$) in level $2123$. Two of them have the same Atkin-Lehner sign at 11 as the unique eigenform in level $11$. The eigenspaces have dimension $33$ and $46$, and we call representing forms $f_{33}$ and $f_{46}$. The eigenspace in level $11$ is $\Q$-rational. Hence the mod $3$ reduction of the Hecke algebra of the wanted form in level 2123 needs to have a factor $\F_3$. By \cite[Thm.~2.5]{Edixhoven}, this implies for the mod $3$ Hecke eigenvalue $a_3$ that $a_3^2+1$ lies in $\F_3$. It turns out that no such root exists for the mod $3$ reduction of $T_3$ for $f_{33}$; for $f_{46}$ exactly one such root (with multiplicity $7$) lies in $\F_3$.} Even if in principle one can compute $n_{193}$ using Mumford--Raynaud uniformization of $A_f$ at $193$ and Kummer theory, as in Proposition \ref{prop:MT}, to us the condition $[K_f:\Q]=46$ made computations impossible.

The importance of Proposition~\ref{prop:MT} is that it allows practical computations for a different class of examples. Namely, one starts with an elliptic curve $\cE$ of a `large' squarefree level $\cN$ with associated cusp form $f$; think of $\cN=NQ$ with $N,Q$ as in Theorem~\ref{mc}. Then one searches for prime divisors $q$ of $\cN$ at which $\rho_{f,p}$ admits level lowering modulo a prime $p$. Proposition~\ref{prop:MT} allows one to then compute the local invariant attached to $(\cE,q,p)$. We eliminate cases where $p|\cN$, so that $\rho_{f,p}$ is finite at $p$, and we check that $\bar\rho_{f,p}$ is irreducible, so that level lowering results apply.  We also ensure that the minimal conductor $N(\bar\rho_{f,p})$ of the reduction $\bar\rho_{f,p}$ has at least two prime divisors, so that Theorem~\ref{mc} is applicable unconditionally, for any $Q>1$. Using the data in \cite{lmfdb}, it is also often possible to determine the form $g$ in level  $N(\bar\rho_{f,p})$. This is not needed though to compute the invariant $n_q$ or the Wiles defect for~$Q$.

Concretely, we did an incremental search through all elliptic curves of squarefree conductor $\cN<400000$ in the Cremona tables. For each prime divisor $q$ of $\cN$ we make a list of all odd primes $p$ that divide $q-1$ but not $\cN$. Next we computed for each elliptic curve $\cE$ of conductor $\cN$ the $q$-valuation $v_q(q_{\cE_{\Q_q}})$ and the unit part $\widetilde{q_{\cE_{\Q_q}}}$ of its Tate period at $q$. Finally we compute the quantities $t_{q}=\val_p(q-1)$, $m_{q}=v_p(v_q(q_{\cE_{\Q_q}}))$ and $u_{q}=v_p(\card{\F_q^*/\langle\widetilde{ q_{\cE_{\Q_q}}} \rangle})$. Then $n_{q}=\min \{t_{q},m_{q},u_{q}\}$ is the local invariant from Proposition~\ref{prop:MT} (it also depends on $p$ and $\cE$). For all primes $p$ in our list, found for $\cE$ and some $q$, we determine if the Galois representation $\bar\rho_{\cE,p}$ on the $p$-torsion points of $\cE$ is absolutely irreducible. Then level lowering is possible if and only if $m_q>0$. Using this, we can compute $N(\bar\rho_{f,p})$ and single out those with at least two prime divisors. The Wiles defect can then be computed as $2\sum_{q\in Q}n_q$ over the local invariants. All computations were carried out using SageMath.

For the primes  $p=3,5,7,11$, the smallest examples $(\cE,q,p)$, small in terms of size of conductor $\cN$, with $n_q\ge1$, are the following
\[
(805b1,7,3), \ \ (5673a1, 61,5), \ \ (4171a1,43,7), \ \ (27186m1,23,11),\]
where we use Cremona notation for the labeling of elliptic curves $\cE$; the number to the left of the letter is the conductor of $\cE$. For $\cN< 400000$, we found the following
\[
\begin{array}{c|c|c|c|c}
\hbox{prime }p& 3&5&7&11\\
\hline
\#\{(\cE,q)\mid n_q>0\} &8346&950&43&3
\end{array}
\]
For $p=5,7,11$ we found no example where $n_q\ge2$, and no example with $n_q=1$ at two prime divisors $q$ of $\cN$ (for a fixed $\cE$). For $13\le p\le61$ we found no $(\cE,q)$ with $n_q>0$. From this we suspect that 
for $\cN<400000$ and $p\ge5$
the Wiles defect of $\cE$ is either~$0$ or $2$. 

For $p=3$, we also found pairs $(\cE,q)$ with $n_q=2$ (none with $n_q>2$); there were $25$ such examples with $\cN<400000$; they all had a single prime divisor $q|\cN$ at which the local invariant was non-zero. Hence for $Q\supseteq\{q\}$ the Wiles defect is $4$ in these cases. The two examples $(\cE,q)$ of smallest conductor and with $n_q=2$ are
\[(36613a1,19), \ (104710l1,37).\]
They have  $N(\bar\rho_{\cE,q})=1927=41\cdot47$ and $N(\bar\rho_{\cE,q})=2830=2\cdot5\cdot283$, respectively.

For $p=3$ only, we also found elliptic curves $\cE$ which at two primes $q\neq q'$ dividing $\cN$ had invariants $n_q=n_{q'}=1$. There were $15$ of these. For these and for $Q\supseteq\{q,q'\}$, the Wiles defect is again $4$. In all other cases where the Wiles defect was $2$ when it is non-zero. The two examples $(\cE,q,q')$ of smallest conductor and with $n_q=n_{q'}=1$ are
\[(149149b1,7,13), \ (149149c1,7,13)\]
with $N(\bar\rho_{\cE,q})= 1639=11\cdot149$.

We also determined in many of the above cases the form $g$ congruent to $f$ mod $p$ of minimal level using the database \cite{lmfdb}. One very helpful invariant when determining $g$ were the Atkin-Lehner signs. The following table displays our findings. In the second row we use the labeling of forms given in \cite{lmfdb}; note that as in the case of elliptic forms, the label of the cusp form $g$ contains its level as the left number.

\[
\begin{array}{c|c|c|c|c|c|c|c}
\hbox{triple }(\cE,q,p)& (805b1,7,3)& (5673a1, 61,5) & (4171.a1,43,7) &\ (27186.m1,23,11)&\ (104710l1,37,3)\\
\hline
\hbox{label of $g$} &115.a&93.2.a.a&1731.a.f. &1182.2.a.d &2830g1
\end{array}
\]
The last example gives a rare example of a congruence of elliptic curves, since also $g$ is $\Q$-rational.

\bibliographystyle{amsalpha}
\bibliography{refs}

\newcommand{\etalchar}[1]{$^{#1}$}
\providecommand{\bysame}{\leavevmode\hbox to3em{\hrulefill}\thinspace}
\providecommand{\MR}{\relax\ifhmode\unskip\space\fi MR }
\providecommand{\MRhref}[2]{%
  \href{http://www.ams.org/mathscinet-getitem?mr=#1}{#2}
}
\providecommand{\href}[2]{#2}
\begin{thebibliography}{BLGGT14}

\bibitem[All16]{Allen}
Patrick Allen, \emph{Deformations of polarized automorphic galois
  representations and adjoint selmer groups}, Duke Math. J. \textbf{165}
  (2016), no.~13, 2407--2460.

\bibitem[BLGGT14]{BLGGT14}
Thomas Barnet-Lamb, Toby Gee, David Geraghty, and Richard Taylor,
  \emph{Potential automorphy and change of weight}, Ann. of Math. (2)
  \textbf{179} (2014), no.~2, 501--609. \MR{3152941}

\bibitem[BLGHT11]{BLGHT2011}
Tom Barnet-Lamb, David Geraghty, Michael Harris, and Richard Taylor, \emph{A
  family of {C}alabi-{Y}au varieties and potential automorphy {II}}, Publ. Res.
  Inst. Math. Sci. \textbf{47} (2011), no.~1, 29--98. \MR{2827723}

\bibitem[BV88]{Bruns-Vetter}
Winfried Bruns and Udo Vetter, \emph{Determinantal rings}, Lecture Notes in
  Mathematics, vol. 1327, Springer-Verlag, Berlin, 1988. \MR{953963}

\bibitem[Car94]{Carayol2}
Henri Carayol, \emph{Formes modulaires et repr\'esentations galoisiennes \`a
  valeurs dans un anneau local complet}, {$p$}-adic monodromy and the {B}irch
  and {S}winnerton-{D}yer conjecture ({B}oston, {MA}, 1991), Contemp. Math.,
  vol. 165, Amer. Math. Soc., Providence, RI, 1994, pp.~213--237. \MR{1279611}

\bibitem[CEG{\etalchar{+}}16]{Caraiani+5}
Ana Caraiani, Matthew Emerton, Toby Gee, David Geraghty, Vytautas
  Pa\v{s}k\={u}nas, and Sug~Woo Shin, \emph{Patching and the {$p$}-adic local
  {L}anglands correspondence}, Camb. J. Math. \textbf{4} (2016), no.~2,
  197--287. \MR{3529394}

\bibitem[CHT08]{CHT}
Laurent Clozel, Michael Harris, and Richard Taylor, \emph{Automorphy for some
  {$l$}-adic lifts of automorphic mod {$l$} {G}alois representations}, Publ.
  Math. Inst. Hautes \'Etudes Sci. (2008), no.~108, 1--181, With Appendix A,
  summarizing unpublished work of Russ Mann, and Appendix B by Marie-France
  Vign\'eras. \MR{2470687}

\bibitem[DDT97]{DDT}
Henri Darmon, Fred Diamond, and Richard Taylor, \emph{Fermat's last theorem},
  Elliptic curves, modular forms \& {F}ermat's last theorem ({H}ong {K}ong,
  1993), Int. Press, Cambridge, MA, 1997, pp.~2--140. \MR{1605752}

\bibitem[Dia97]{DiamondMult1}
Fred Diamond, \emph{The {T}aylor-{W}iles construction and multiplicity one},
  Invent. Math. \textbf{128} (1997), no.~2, 379--391. \MR{1440309}

\bibitem[DR97]{DiRi}
Fred Diamond and Ken Ribet, \emph{$\ell$-adic deformations and wiles's ``main
  conjecture''}, Modular forms and {F}ermat's last theorem ({B}oston, {MA},
  1995), Springer, New York, 1997, pp.~357--371. \MR{1638487}

\bibitem[DT94a]{DT1}
Fred Diamond and Richard Taylor, \emph{Lifting modular mod {$l$}
  representations}, Duke Math. J. \textbf{74} (1994), no.~2, 253--269.
  \MR{1272977}

\bibitem[DT94b]{DT2}
\bysame, \emph{Nonoptimal levels of mod {$l$} modular representations}, Invent.
  Math. \textbf{115} (1994), no.~3, 435--462. \MR{1262939}

\bibitem[Edi92]{Edixhoven}
Bas Edixhoven, \emph{The weight in {S}erre's conjectures on modular forms},
  Invent. Math. \textbf{109} (1992), no.~3, 563--594. \MR{1176206}

\bibitem[EGS15]{EGS}
Matthew Emerton, Toby Gee, and David Savitt, \emph{Lattices in the cohomology
  of {S}himura curves}, Invent. Math. \textbf{200} (2015), no.~1, 1--96.
  \MR{3323575}

\bibitem[Eis95]{Eisenbud}
David Eisenbud, \emph{Commutative algebra}, Graduate Texts in Mathematics, vol.
  150, Springer-Verlag, New York, 1995, With a view toward algebraic geometry.
  \MR{1322960}

\bibitem[FKP19]{FKP}
Najmuddin Fakhruddin, Chandrashekhar Khare, and Stefan Patrikis, \emph{Relative
  deformation theory and lifting irreducible galois representations}, arXiv
  preprint arXiv:1904.02374 (2019).

\bibitem[Fuj06]{Fujiwara}
Kazuhiro Fujiwara, \emph{Galois deformations and arithmetic geometry of
  {S}himura varieties}, International {C}ongress of {M}athematicians. {V}ol.
  {II}, Eur. Math. Soc., Z\"{u}rich, 2006, pp.~347--371. \MR{2275601}

\bibitem[Hel07]{HelmShimCurve}
David Helm, \emph{On maps between modular {J}acobians and {J}acobians of
  {S}himura curves}, Israel J. Math. \textbf{160} (2007), 61--117. \MR{2342491}

\bibitem[Hid81a]{Hida}
Haruzo Hida, \emph{Congruence of cusp forms and special values of their zeta
  functions}, Invent. Math. \textbf{63} (1981), no.~2, 225--261. \MR{610538}

\bibitem[Hid81b]{Hida1}
\bysame, \emph{On congruence divisors of cusp forms as factors of the special
  values of their zeta functions}, Invent. Math. \textbf{64} (1981), no.~2,
  221--262. \MR{629471}

\bibitem[HR08]{HR}
Spencer Hamblen and Ravi Ramakrishna, \emph{Deformations of certain reducible
  {G}alois representations. {II}}, Amer. J. Math. \textbf{130} (2008), no.~4,
  913--944. \MR{2427004}

\bibitem[Kha03]{KhareRT}
Chandrashekhar Khare, \emph{On isomorphisms between deformation rings and
  {H}ecke rings}, Invent. Math. \textbf{154} (2003), no.~1, 199--222, With an
  appendix by Gebhard B\"{o}ckle. \MR{2004460}

\bibitem[Kis09]{Kisin}
Mark Kisin, \emph{Moduli of finite flat group schemes, and modularity}, Ann. of
  Math. (2) \textbf{170} (2009), no.~3, 1085--1180. \MR{2600871}

\bibitem[KW09]{KW}
Chandrashekhar Khare and Jean-Pierre Wintenberger, \emph{Serre's modularity
  conjecture. {I}}, Invent. Math. \textbf{178} (2009), no.~3, 485--504.
  \MR{2551763 (2010k:11087)}

\bibitem[Len95]{Lenstra}
H.~W. Lenstra, Jr., \emph{Complete intersections and {G}orenstein rings},
  Elliptic curves, modular forms, \& {F}ermat's last theorem ({H}ong {K}ong,
  1993), Ser. Number Theory, I, Int. Press, Cambridge, MA, 1995, pp.~99--109.
  \MR{1363497}

\bibitem[{LMF}19]{lmfdb}
The {LMFDB Collaboration}, \emph{The l-functions and modular forms database},
  \url{http://www.lmfdb.org}, 2019, [Online; accessed 8 September 2019].

\bibitem[{Man}20]{Manning}
Jeffrey {Manning}, \emph{{Patching and Multiplicity $2^k$ for Shimura Curves}},
  to appear in Algebra and Number Theory (2020), arXiv:1902.06878.

\bibitem[Maz12]{Mazur-Hida}
B.~Mazur, \emph{A brief introduction to the work of haruzo hida}, available at
  http://www.math.harvard.edu/~mazur/papers/Hida.August11.pdf (2012).

\bibitem[MT87]{MT}
B.~Mazur and J.~Tate, \emph{Refined conjectures of the ``{B}irch and
  {S}winnerton-{D}yer type''}, Duke Math. J. \textbf{54} (1987), no.~2,
  711--750. \MR{899413}

\bibitem[Rib84]{RibCong}
Kenneth~A. Ribet, \emph{Congruence relations between modular forms},
  Proceedings of the {I}nternational {C}ongress of {M}athematicians, {V}ol. 1,
  2 ({W}arsaw, 1983), PWN, Warsaw, 1984, pp.~503--514. \MR{804706}

\bibitem[Rib90a]{RibetInv100}
K.~A. Ribet, \emph{On modular representations of {${\rm Gal}(\overline{\bf
  Q}/{\bf Q})$} arising from modular forms}, Invent. Math. \textbf{100} (1990),
  no.~2, 431--476. \MR{1047143}

\bibitem[Rib90b]{RibetMult2}
Kenneth~A. Ribet, \emph{Multiplicities of {G}alois representations in
  {J}acobians of {S}himura curves}, Festschrift in honor of {I}. {I}.
  {P}iatetski-{S}hapiro on the occasion of his sixtieth birthday, {P}art {II}
  ({R}amat {A}viv, 1989), Israel Math. Conf. Proc., vol.~3, Weizmann,
  Jerusalem, 1990, pp.~221--236. \MR{1159117}

\bibitem[RT97]{RiTa}
Kenneth~A. Ribet and Shuzo Takahashi, \emph{Parametrizations of elliptic curves
  by {S}himura curves and by classical modular curves}, Proc. Nat. Acad. Sci.
  U.S.A. \textbf{94} (1997), no.~21, 11110--11114, Elliptic curves and modular
  forms (Washington, DC, 1996). \MR{1491967}

\bibitem[Sho16]{Shotton}
Jack Shotton, \emph{Local deformation rings for {${\rm GL}_2$} and a
  {B}reuil-{M}\'ezard conjecture when {$\ell\ne p$}}, Algebra Number Theory
  \textbf{10} (2016), no.~7, 1437--1475. \MR{3554238}

\bibitem[Sho18]{Shotton2016}
\bysame, \emph{The {B}reuil-{M}\'{e}zard conjecture when {$l\neq p$}}, Duke
  Math. J. \textbf{167} (2018), no.~4, 603--678. \MR{3769675}

\bibitem[Sil94]{Silverman2}
Joseph~H. Silverman, \emph{Advanced topics in the arithmetic of elliptic
  curves}, Graduate Texts in Mathematics, vol. 151, Springer-Verlag, New York,
  1994. \MR{1312368}

\bibitem[Sno18]{Snowden}
Andrew Snowden, \emph{Singularities of ordinary deformation rings}, Math. Z.
  \textbf{288} (2018), no.~3-4, 759--781. \MR{3778977}

\bibitem[{Sta}19]{stacks-project}
The {Stacks Project Authors}, \emph{{Stacks Project}},
  \url{http://stacks.math.columbia.edu}, 2019.

\bibitem[Tay08]{TaylorIharaAvoidance}
Richard Taylor, \emph{Automorphy for some {$l$}-adic lifts of automorphic mod
  {$l$} {G}alois representations. {II}}, Publ. Math. Inst. Hautes \'Etudes Sci.
  (2008), no.~108, 183--239. \MR{2470688}

\bibitem[Tho16]{Thorne}
Jack~A. Thorne, \emph{Automorphy of some residually dihedral {G}alois
  representations}, Math. Ann. \textbf{364} (2016), no.~1-2, 589--648.
  \MR{3451399}

\bibitem[TU18]{TiUr}
Jacques {Tilouine} and Eric {Urban}, \emph{{Integral period relations and
  congruences}}, arXiv e-prints (2018), arXiv:1811.11166.

\bibitem[Wil95]{Wiles}
Andrew Wiles, \emph{Modular elliptic curves and {F}ermat's last theorem}, Ann.
  of Math. (2) \textbf{141} (1995), no.~3, 443--551. \MR{1333035}

\bibitem[WITO69]{WITO}
Kei-ichi Watanabe, Takeshi Ishikawa, Sadao Tachibana, and Kayo Otsuka, \emph{On
  tensor products of {G}orenstein rings}, J. Math. Kyoto Univ. \textbf{9}
  (1969), 413--423. \MR{257062}

\end{thebibliography}

\noindent {\bf Address of authors:} 

(GB) Interdisciplinary Center for Scientific Computing, Universit\"at Heidelberg, INF 205, 69120 Heidelberg, Germany. \textit{Email address}: \texttt{gebhard.boeckle@iwr.uni-heidelberg.de}

(CK) Department of Mathematics, UCLA, Los Angeles, CA 90095-1555,
  USA. \textit{Email address}: \texttt{shekhar@math.ucla.edu}

(JM) Department of Mathematics, UCLA, Los Angeles, CA 90095-1555,
  USA. \textit{Email address}: \texttt{jmanning@math.ucla.edu}

\end{document}